\newtheorem{thm}{Theorem}[section]
\newtheorem{lemma}[thm]{Lemma}
\newtheorem{corollary}[thm]{Corollary}
\newtheorem{prop}[thm]{Proposition}
\newtheorem{conjecture}[thm]{Conjecture}
\theoremstyle{definition}
\newtheorem{notation}[thm]{Notation}
\newtheorem{rem}[thm]{Remark}
\newtheorem{defn}[thm]{Definition}
\newcommand{\isom}{\overset{\sim}{\rightarrow}}
\title{Local $\varepsilon$-isomorphisms for rank two $p$-adic representations of $\mathrm{Gal}(\overline{\mathbb{Q}}_p/\mathbb{Q}_p)$ and a functional equation of Kato's Euler system.}
\author{Kentaro Nakamura}
\date{} 
\begin{document}

\maketitle
\pagestyle{plain}
\footnote{2010 Mathematical Subject Classification 11F80 (primary), 11F85, 11S25 (secondary).
Keywords: $p$-adic Hodge theory, $(\varphi,\Gamma)$-module.}
\begin{abstract}
In this article, we prove (many parts of) the rank two case of the 
Kato's local $\varepsilon$-conjecture using the Colmez's $p$-adic local Langlands correspondence for $\mathrm{GL}_2(\mathbb{Q}_p)$. We show that a Colmez's pairing defined in his study of locally algebraic vectors gives us the conjectural $\varepsilon$-isomorphisms for (almost) all the families of $p$-adic representations of $\mathrm{Gal}(\overline{\mathbb{Q}}_p/\mathbb{Q}_p)$ of rank two, which satisfy the desired interpolation property for the de Rham and trianguline case. For the de Rham and non-trianguline case, we also show this interpolation property for the ``critical" range of Hodge-Tate weights using the Emerton's theorem on the compatibility of 
classical and $p$-adic local Langlands correspondence. As an application, we prove that the Kato's Euler system associated to any Hecke eigen new form satisfies a functional equation which has the same form as predicted by the Kato's global $\varepsilon$-conjecture.

\end{abstract}
\setcounter{tocdepth}{2}
\tableofcontents

\section{Introduction.}
By the ground breaking work of Colmez \cite{Co10b} and many other important works by Berger, Breuil, Dospinescu, Kisin and Paskunas, the $p$-adic local 
Langlands correspondence for $\mathrm{GL}_2(\mathbb{Q}_p)$ is now a theorem (\cite{Pa13}, \cite{CDP14b}). This gives us a correspondence between absolutely irreducible
two dimensional $p$-adic representations of $\mathrm{Gal}(\overline{\mathbb{Q}}_p/\mathbb{Q}_p)$ and absolutely irreducible unitary Banach admissible non-ordinary representations of $\mathrm{GL}_2(\mathbb{Q}_p)$ via the so called the Montreal functor. An important feature of this functor is that 
it also gives us a correspondence between representations with torsion coefficients. From this property, the Colmez's theory is expected to have many applications to problems in number theory concerning the relationship between the $p$-adic variations of the Galois side and those of the automorphic side. For example, Emerton \cite{Em} and Kisin \cite{Ki09} independently applied his theory to the Fontaine-Mazur 
conjecture on the modularity of two dimensional geometric $p$-adic representation of $\mathrm{Gal}(\overline{\mathbb{Q}}/\mathbb{Q})$. 

In the present article, we give another application to the rank two case of a series of Kato's conjectures in \cite{Ka93a}, \cite{Ka93b} on the $p$-adic interpolations of 
special values of $L$-functions and local ($L$-and $\varepsilon$-) constants. There are two main theorems in the article, the first one concerns with the $p$-adic 
local $\varepsilon$-conjecture, where the Colmez's theory crucially enters in, and the second concerns with the  global $\varepsilon$-conjecture, which we roughly explain now (see \S2.1, \S3.1, \S4.1 for more details). In this introduction, we assume $p\not=2$ (for simplicity), fix an isomorphism $\iota:\overline{\mathbb{C}}\isom \overline{\mathbb{Q}}_p$, and set $\zeta^{(l)}:=(\iota(\mathrm{exp}(\frac{2\pi i}{l^n}))_{n\geqq 1}\in \mathbb{Z}_l(1):=\Gamma(\overline{\mathbb{Q}}_p, \mathbb{Z}_l(1))$
 for each prime $l$. Set $\Gamma:=\mathrm{Gal}(\mathbb{Q}(\zeta_{p^{\infty}})/\mathbb{Q})\isom \mathrm{Gal}(\mathbb{Q}_p(\zeta_{p^{\infty}})/\mathbb{Q}_p)$.

 Fix a prime $l$. Let $R$ be a commutative noetherian semi-local $\mathbb{Z}_p$-algebra such that $R/\mathrm{Jac}(R)$ is a finite ring with a $p$-power order, or a finite extension of $\mathbb{Q}_p$. To any $R$-representation $T$ of $G_{\mathbb{Q}_l}$, one can functorially attach a (graded) invertible $R$-module $\Delta_R(T)$ using the determinant of the perfect complex $C^{\bullet}_{\mathrm{cont}}(G_{\mathbb{Q}_l}, T)$ of continuous cochains of $G_{\mathbb{Q}_l}$ with values in $T$. For a pair 
$(R, T)=(L, V)$ such that $L$ is a finite extension of $\mathbb{Q}_p$ and $V$ is any (resp. de Rham)$L$-representation of $G_{\mathbb{Q}_l}$ when $l\not=p$ (resp. $l=p$), one can define a representation $W(V)$  of the Weil-Deligne group $'W_{\mathbb{Q}_l}$ of $\mathbb{Q}_l$ by the Grothendieck local monodromy theorem (resp. the $p$-adic monodromy theorem) when $l\not=p$ (resp. $l=p$). Using the local ($L$-and $\varepsilon$-) constants associated to $W(V)$ (and the Bloch-Kato fundamental exact sequence when $l=p$), one can define a canonical $L$-linear isomorphism which we call the de Rham 
$\varepsilon$-isomorphism 
$$\varepsilon_{L,\zeta^{(l)}}^{\mathrm{dR}}(V):\bold{1}_L\isom \Delta_L(V),$$ where, for any $R$, $\bold{1}_R:=(R,0)$ is the trivial graded invertible $R$-module of degree zero.
The $l$-adic local $\varepsilon$-conjecture \cite{Ka93b} predicts the existence of a canonical isomorphism 
$$\varepsilon_{R,\zeta^{(l)}}(T):\bold{1}_R\isom \Delta_R(T)$$ 
for any pair $(R,T)$ as above which interpolates the de Rham $\varepsilon$-isomorphisms (see Conjecture \ref{2.2} for the precise formulation).  The $l\not=p$ case of this conjecture has been already proved by Yasuda \cite{Ya09}. 
The first main theorem of the present article concerns with the rank two case of the $p$-adic local $\varepsilon$-conjecture (see Theorem \ref{3.0} for more details).

\begin{thm}\label{TheoremA}
Assume $l=p$. For (almost) all the pairs $(R,T)$ as above such that $T$ are of rank one or two, one can canonically define 
$R$-linear isomorphisms 
$$\varepsilon_{R,\zeta^{(p)}}(T):\bold{1}_R\isom \Delta_R(T)$$
which are compatible with base changes, exact sequences and the Tate duality, and satisfy
the following: for any pair $(L, V)$ such that $V$ is de Rham of rank one or two, 
\begin{itemize}
\item[(i)]if $V$ is trianguline, then we have 
$$\varepsilon_{L,\zeta^{(p)}}(V)=\varepsilon_{L,\zeta^{(p)}}^{\mathrm{dR}}(V),$$
\item[(ii)]if $V$ is non-trianguline with distinct Hodge-Tate weights $\{0, k\}$ for $k\geqq 1$, then we have 
$$\varepsilon_{\overline{\mathbb{Q}}_p,\zeta^{(p)}}(V(-r)(\delta))=\varepsilon_{\overline{\mathbb{Q}}_p,\zeta^{(p)}}^{\mathrm{dR}}(V(-r)(\delta))$$
for any pair $(r,\delta)$ such that $0\leqq r\leqq k-1$ and $\delta:\Gamma\rightarrow \overline{\mathbb{Q}}_p^{\times}$ is a homomorphism with finite image.
\end{itemize}

\end{thm}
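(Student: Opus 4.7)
The approach is to construct $\varepsilon_{R,\zeta^{(p)}}(T)$ using Colmez's pairing from his study of locally algebraic vectors, and then verify the listed compatibilities and interpolation properties in turn. The rank one case is essentially due to Kato: for a continuous character $\delta:\mathbb{Q}_p^\times\to R^\times$ one writes down an explicit generator of $\Delta_R(R(\delta))$ in terms of $L$-factors and local $\varepsilon$-factors of $\delta$, and the de Rham interpolation reduces to the explicit reciprocity law. For rank two, the novelty is to apply Colmez's Montreal functor to an absolutely irreducible $R$-representation $T$ of rank two to obtain a unitary admissible Banach $\mathrm{GL}_2(\mathbb{Q}_p)$-representation $\Pi(T)$, and then use Colmez's pairing on $\Pi(T)$ to produce a canonical trivialization of $\Delta_R(T)$. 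Gluing the rank one and rank two constructions over the universal deformation space via the exact-sequence compatibility pins down the definition on reducibility loci; the ``(almost) all'' clause reflects small loci where Colmez's functor degenerates.

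The compatibility with base change, exact sequences, and Tate duality should follow from the functoriality of Colmez's construction, the multiplicativity of determinants, and the intrinsic self-duality of the Colmez pairing, respectively. For the interpolation (i) in the trianguline case, I would pick a triangulation and write $D_{\mathrm{rig}}(V)$ as an extension of two rank one $(\varphi,\Gamma)$-modules over the Robba ring. The construction then reduces by multiplicativity to a product of the rank one $\varepsilon$-isomorphisms (with an explicit exact-sequence correction), and matching the result with $\varepsilon^{\mathrm{dR}}_{L,\zeta^{(p)}}(V)$ becomes a computation with local constants and the Bloch-Kato fundamental exact sequence that is essentially formal once the rank one case is in hand.

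The main obstacle is (ii), the non-trianguline de Rham case, where no triangulation is available and one cannot reduce to rank one. The key ingredient is Emerton's theorem on the compatibility of the classical and $p$-adic local Langlands correspondences: the space of locally algebraic vectors $\Pi(V)^{\mathrm{l.alg}}$ is canonically identified with $\pi(V)\otimes\mathrm{Sym}^{k-1}L^2$, where $\pi(V)$ is the smooth irreducible representation matching the Weil-Deligne representation $W(V)$ under the classical local Langlands correspondence. For a twist $V(-r)(\delta)$ with $r$ in the critical range $0\le r\le k-1$, I would evaluate Colmez's pairing against an explicit algebraic vector of weight $r$ in $\mathrm{Sym}^{k-1}L^2$; this unfolds the pairing into a product of the classical local $\varepsilon$-factor of $\pi(V)\otimes\delta$ and a Bloch-Kato period contribution, which should then be shown to coincide with $\varepsilon^{\mathrm{dR}}_{\overline{\mathbb{Q}}_p,\zeta^{(p)}}(V(-r)(\delta))$. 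The main technical difficulty lies in tracking normalizations, signs, and the precise formulation of Emerton's compatibility, and in verifying that the critical-range assumption $0\le r\le k-1$ is exactly what makes the chosen locally algebraic vector pair nontrivially so that the matching of $\varepsilon$-factors can be carried out.
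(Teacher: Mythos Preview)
Your overall strategy matches the paper's: define $\varepsilon_{R,\zeta^{(p)}}(T)$ via a Colmez pairing, reduce the trianguline de Rham case to rank one, and invoke Emerton's compatibility for the non-trianguline case. However, several points in your sketch are inaccurate or miss the key technical ingredient.

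First, the construction. The isomorphism is not extracted from $\Pi(T)$ directly; it is built from Colmez's involution $w_{\delta_D}$ on $D^{\psi=0}$ together with the extended Iwasawa pairing $\{-,-\}_{0,\mathrm{Iw}}$, namely $[x\otimes z^\vee,y]^{(\zeta)}_{\mathrm{Iw}}=\{w_{\delta_D}(x)\otimes z^\vee\otimes\bold{e}_1,\,y\}_{0,\mathrm{Iw}}$. This yields an isomorphism over $\mathcal{E}_R(\Gamma)$, and the ``(almost) all'' caveat is about \emph{descent to $\Lambda_R(\Gamma)$}: one needs $w_{\delta_D}$ to send $\mathcal{C}(D)$ to $\mathcal{C}(D^*)$, which is proved by specialization to crystalline absolutely irreducible points and Zariski density of those points in the universal deformation space (automatic for $p\geqq 3$, an extra hypothesis for $p=2$). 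There is no gluing along reducibility loci and the Montreal functor does not ``degenerate'' anywhere relevant; the formula via $w_{\delta_D}$ applies uniformly to reducible $D$ as well.

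Second, the trianguline verification is not formal multiplicativity. The crucial input is Dospinescu's explicit computation $w_{\delta_D}((1+X)\bold{e}_{\delta_1})=\delta_1(-1)(1+X)\bold{e}_{\delta_1}$ on the rank-one subobject, proved via the locally analytic vectors of $\Pi(D)$. Only with this formula does the Iwasawa-pairing definition unfold into the product $\varepsilon^{\mathrm{Iw}}_{L,\zeta}(\mathcal{R}_L(\delta_1))\boxtimes\varepsilon^{\mathrm{Iw}}_{L,\zeta}(\mathcal{R}_L(\delta_2))$ from \cite{Na14b}.

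Third, in the non-trianguline case your plan is morally right but omits the mechanism. One works in Colmez's Kirillov model of $\Pi(D)^{\mathrm{alg}}\isom\mathrm{LP}_{\mathrm{c}}(\mathbb{Q}_p^\times,X_\infty^-)^\Gamma$ and proves two explicit formulas: $\alpha_{(r,\delta)}(x)$ and $\beta_{(r,\delta)}(w_{\delta_D}(x))$ (the two sides of the desired equality, expressed via $\mathrm{exp}^*$) are each computed as pairings $[\widetilde{x},-]_{\bold{P}^1}$ against explicit compactly supported Kirillov functions $f_{(r,\delta),m}$ and $h_{(r,\delta),m}$. Emerton's theorem then identifies the $G$-action on $\Pi(D)^{\mathrm{alg}}$ with $\pi_p(D)\otimes\mathrm{Sym}^{k-1}L^2$, and the \emph{classical} formula for $\pi_p(w)$ on the smooth Kirillov model (Bushnell--Henniart) sends $h_{(r,\delta),m}$ to the correct multiple of $f_{(r,\delta),-a-m}$, with exactly the $\varepsilon$-factor $\varepsilon_L(W(D_{\mathrm{rig}}(-r)(\delta)),\zeta)$ appearing. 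The restriction $0\leqq r\leqq k-1$ is what places the relevant test vectors inside $\mathrm{Sym}^{k-1}L^2$, as you anticipated.
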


\begin{rem}\label{2.5}
For $l=p$, this conjecture is much more difficult than that for $l\not=p$, and has been proved only in some special cases before the present article. For the rank one case, it is proved by Kato \cite{Ka93b} (see also \cite{Ve13}). For the cyclotomic deformation (or a more general twists of) crystalline representations, it is proved by Benois-Berger \cite{BB08} and Loeffler-Venjakob-Zerbes \cite{LVZ13}. For the trianguline case, it is proved by the author \cite{Na14b}, more precisely, where we generalized the conjecture for rigid analytic families of $(\varphi,\Gamma)$-modules over the Robba ring, and proved the generalized version of the conjecture for trianguline case which includes all the known cases (except the integrality of 
the local $\varepsilon$-isomorphisms). Hence, the condition (ii) of the theorem above seems to be 
 an essentially new result in the literatures on the local $\varepsilon$-conjecture.

\end{rem}

We next explain the second main result of the article. Let $S$ be a finite set of primes containing $p$. 
Let $\mathbb{Q}_S$ be the maximal Galois extension of $\mathbb{Q}$ which is unramified outside 
$S\cup\{\infty\}$, and set $G_{\mathbb{Q},S}:=\mathrm{Gal}(\mathbb{Q}_S/\mathbb{Q})$. For an $R$-representation $T$ of $G_{\mathbb{Q},S}$, one can also define a graded invertible $R$-module $\Delta^S_R(T)$ using $C^{\bullet}_{\mathrm{cont}}(G_{\mathbb{Q},S},T)$. Set $\Delta_R^{(l)}(T):=\Delta_R(T|_{G_{\mathbb{Q}_l}})$ for each $l\in S$. 
The generalized Iwasawa main conjecture \cite{Ka93a} predicts the existence of the canonical isomorphism 
$$z^S_R(T):\bold{1}_R\isom \Delta_R^S(T)$$ for any pair $(R,T)$ as above which interpolates the special values of the $L$-functions of the motives over $\mathbb{Q}$ with good reduction outside $S$ 
(see Conjecture \ref{GIMC} and the references there for the precise formulation). Ser $T^*:=\mathrm{Hom}_R(T,R)(1)$ be the Tate dual of $T$. By the Poitou-Tate duality, one has a canonical isomorphism $\Delta_R^S(T^*)\isom \boxtimes_{l\in S}\Delta_R^{(l)}(T)\boxtimes \Delta_R(T)$. Then, the global $\varepsilon$-conjecture \cite{Ka93b} asserts that one has the equality 
$z^S_R(T^*)=\boxtimes_{l\in S}\varepsilon_{R,\zeta^{(l)}}(T|_{G_{\mathbb{Q}_l}})\boxtimes z^S_R(T)$, where $\varepsilon_{R,\zeta^{(l)}}(T|_{G_{\mathbb{Q}_l}}):\bold{1}_R\isom \Delta_R^{(l)}(T)$ is the local $\varepsilon$-isomorphism 
defined by \cite{Ya09} for $l\not=p$ and the conjectural $\varepsilon$-isomorphism for $l=p$. 
Set $\Lambda_R(\Gamma):=R[[\Gamma]]$, and define an $R[[\Gamma]]$-representation 
$\bold{Dfm}(T):=T\otimes_R\Lambda_R(\Gamma)$ on which $G_{\mathbb{Q},S}$ acts by 
$g(x\otimes \lambda):=g(x)\otimes [\bar{g}]^{-1}\cdot \lambda$. We set 
$\Delta_{R}^{\mathrm{Iw},*}(T):=\Delta_{\Lambda_R(\Gamma)}^{*}(\bold{Dfm}(T))$ for $*=S, (l)$, and 
set (conjecturally) $z_R^{S,\mathrm{Iw}}(T):=z_{\Lambda_R(\Gamma)}^{S}(\bold{Dfm}(T))$, $\varepsilon_{R,\zeta^{(l)}}^{\mathrm{Iw}}(T):=\varepsilon_{\Lambda_R(\Gamma),\zeta^{(l)}}^{(l)}(\bold{Dfm}(T))$. Define an involution $\iota:\Lambda_{R}(\Gamma)\isom \Lambda_R(\Gamma):[\gamma]\mapsto [\gamma]^{-1}$, and denote $M^{\iota}$ for the base change by $\iota$ for any 
$\Lambda_R(\Gamma)$-module $M$. Then, one has a canonical isomorphism $\Delta_R^{\mathrm{Iw}}(T^*)^{\iota}\isom \Delta^S_{\Lambda_R(\Gamma)}((\bold{Dfm}(T))^*)$.

The second main theorem concerns with the global $\varepsilon$-conjecture for $\bold{Dfm}(T_f)$ for Hecke eigen new forms $f$. Let $N,k\in \mathbb{Z}_{\geqq 1}$. 
Let $f(\tau):=\sum_{n\geqq 1}a_n(f)q^n\in S_{k+1}(\Gamma_1(N))^{\mathrm{new}}$ be 
a normalized Hecke eigen new form of level $N$ and of weight $k+1$. Set 
$f^*(\tau):=\sum_{n\geqq 1}\overline{a_n(f)}q^n$ where $\overline{(-)}$ is the complex conjugation, then $f^*(\tau)$ is also a Hecke eigen new form in $S_{k+1}(\Gamma_1(N))^{\mathrm{new}}$ by the theory of new form. Set $S:=\{l|N\}\cup\{p\}$, and 
$L:=\mathbb{Q}_p(\{\iota(a_n(f))\}_{n\geqq 1})$. For $f_0=f,f^*$, let $T_{f_0}$ be the $\mathcal{O}_L$-representation of 
$G_{\mathbb{Q},S}$ of rank two associated to $f_0$ defined by Deligne \cite{De69}. In \cite{Ka04}, Kato 
defined an Euler system associated to $f_0$ which interpolates the critical values of the twisted 
$L$-functions associated to $f_0^*$.  Set $Q(\Lambda_{\mathcal{O}_L}(\Gamma))$ the total fraction ring of $\Lambda_{\mathcal{O}_L}(\Gamma)$. As a consequence of Kato's theorem proved in \S 12 of \cite{Ka04}, we define in \S 4.2 a canonical $Q(\Lambda_{\mathcal{O}_L}(\Gamma))$-linear isomorphism 
$$\widetilde{z}_{\mathcal{O}_L}^{\mathrm{Iw},S}(T_{f_0}(r)):\bold{1}_{Q(\Lambda_{\mathcal{O}_L}(\Gamma))}\isom \Delta_{\mathcal{O}_L}^{S,\mathrm{Iw}}(T_{f_0}(r))\otimes_{\Lambda_R(\Gamma)}Q(\Lambda_{\mathcal{O}_L}(\Gamma))$$ for any $r\in \mathbb{Z}$,
which should be the base change to $Q(\Lambda_{\mathcal{O}_L}(\Gamma))$ of the conjectural zeta isomorphism $z^{S,\mathrm{Iw}}_{\mathcal{O}_L}(T_{f_0}(r)):\bold{1}_{\Lambda_{\mathcal{O}_L}(\Gamma)}\isom \Delta_{\mathcal{O}_L}^{\mathrm{Iw},S}(T_{f_0}(r))$. We remark that one has a canonical isomorphism $T_{f^*}(1)[1/p]\isom T_f(k)[1/p]$ and one has a canonical isomorphism 
$\Delta^{\mathrm{Iw},S}_{\mathcal{O}_L}(T_{f^*}(1))^{\iota}\isom 
\Delta_{\mathcal{O}_L(\Gamma)}^{S}((\bold{Dfm}(T_f(k)))^*)$. The second main theorem of the present article concerns with the global $\varepsilon$-conjecture for the pair $(\Lambda_{\mathcal{O}_L}(\Gamma), \bold{Dfm}(T_{f_0}(r))$(see Theorem \ref{4.5}).

\begin{thm}\label{TheoremB}
Assume $V:=T_f[1/p]|_{G_{\mathbb{Q}_p}}$ is absolutely irreducible and 
$\bold{D}_{\mathrm{cris}}(V(-r)(\delta))^{\varphi=1}\allowbreak=0$ for any $0\leqq r\leqq k-1$ and 
$\delta:\Gamma\rightarrow \overline{\mathbb{Q}}_p^{\times}$ with finite image. Then one has the following equality 
$$\widetilde{z}^{\mathrm{Iw},S}_{\mathcal{O}_L}(T_{f^*}(1))^{\iota}=\boxtimes_{l\in S}
(\varepsilon_{\mathcal{O}_L,\zeta^{(l)}}^{\mathrm{Iw},(l)}(T_f(k))\otimes \mathrm{id}_{Q(\Lambda_{\mathcal{O}_L}(\Gamma))})\boxtimes \widetilde{z}_{\mathcal{O}_L}^{\mathrm{Iw},S}(T_f(k))$$ 
under the base change to $Q(\Lambda_{\mathcal{O}_L}(\Gamma))$ of the canonical isomorphism 
$$\Delta_{\mathcal{O}_L}^{\mathrm{Iw},S}(T_{f^*}(1))^{\iota}\isom \boxtimes_{l\in S}\Delta_{\mathcal{O}_L}^{(l)}(T_f(k))\boxtimes \Delta_{\mathcal{O}_L}^{\mathrm{Iw}, S}(T_f(k))$$ 
defined by the Poitou-Tate duality, where the isomorphism $$\varepsilon_{\mathcal{O}_L,\zeta^{(l)}}^{\mathrm{Iw},(l)}(T_f(k)):\bold{1}_{\Lambda_{\mathcal{O}_L(\Gamma)}}\isom \Delta_{\mathcal{O}_L}^{\mathrm{Iw},(l)}(T_f(k))$$ is the local $\varepsilon$-isomorphism for the pair $(\Lambda_{\mathcal{O}_L}(\Gamma), \bold{Dfm}(T_f(k))|_{G_{\mathbb{Q}_l}})$ defined by \cite{Ya09} for $l\not=p$, by 
Theorem \ref{TheoremA} for $l=p$. 
\end{thm}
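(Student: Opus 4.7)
The plan is to reduce the claimed identity, which is an equality between two bases of a rank one free $Q(\Lambda_{\mathcal{O}_L}(\Gamma))$-module, to the classical functional equation for the twisted $L$-functions of $f$. Since both sides are bases, they differ by a unit $\lambda\in Q(\Lambda_{\mathcal{O}_L}(\Gamma))^{\times}$. To prove $\lambda=1$ it suffices to check equality after specialization along a Zariski-dense set of continuous characters $\chi:\Gamma\rightarrow \overline{\mathbb{Q}}_p^{\times}$. A natural such set is $\{\chi_{\mathrm{cyc}}^{-r}\delta\mid 0\leqq r\leqq k-1,\ \delta\text{ of finite order}\}$, which is exactly the ``critical range'' where the hypothesis $\bold{D}_{\mathrm{cris}}(V(-r)(\delta))^{\varphi=1}=0$ is in force and where Kato's Euler system interpolates critical $L$-values.

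At such a character $\chi$, Kato's explicit reciprocity law (\S12 of \cite{Ka04}) expresses the specialization of $\widetilde{z}_{\mathcal{O}_L}^{\mathrm{Iw},S}(T_f(k))$ in terms of a critical $L$-value of the twist of $f^*$ by $\chi_{\mathrm{cyc}}^{k-r}\delta^{-1}$ divided by an appropriate Deligne period of $f$; symmetrically, the specialization of $\widetilde{z}_{\mathcal{O}_L}^{\mathrm{Iw},S}(T_{f^*}(1))^{\iota}$ is controlled by the corresponding $L$-value of $f$ attached to the inverse character. For the local $\varepsilon$-terms at $l\neq p$, Yasuda's construction in \cite{Ya09} specializes to the classical de Rham $\varepsilon$-isomorphism by its very definition; for $l=p$ the same interpolation is supplied by Theorem \ref{TheoremA}, using case (i) or (ii) according as $V$ is trianguline or not at $p$. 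After unfolding each de Rham $\varepsilon$-isomorphism via the Bloch--Kato fundamental exact sequence, the local contribution at each $l\in S$ decomposes into the Deligne--Langlands $\varepsilon$-factor times a local $L$-factor correction.

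Assembling the specializations, and tracking the Poitou--Tate duality together with the canonical identification $T_{f^*}(1)[1/p]\isom T_f(k)[1/p]$, the identity to prove at each critical $\chi$ reduces to the classical functional equation
\[
\Lambda(f,\chi,s)=\varepsilon(f,\chi)\,\Lambda(f^*,\chi^{-1},k+1-s),
\]
where $\varepsilon(f,\chi)=\prod_{l\in S}\varepsilon_l(f,\chi)$ is the product of the local $\varepsilon$-factors just computed, evaluated at the critical integer $s=r+1$. This is a classical theorem, so together with the density argument this yields $\lambda=1$.

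The main obstacle is the coherent bookkeeping of normalizations. One must verify that Kato's choice of periods in his zeta element, the Poitou--Tate duality on the Iwasawa cohomology, the involution $\iota$ on $\Lambda_{\mathcal{O}_L}(\Gamma)$, the identification $T_{f^*}(1)\isom T_f(k)$ up to $p$-isogeny, and the period-ring normalizations defining $\varepsilon^{\mathrm{dR}}$ all align in a single consistent framework under each critical specialization. The decisive new input at the prime $p$ in the non-trianguline range is Theorem \ref{TheoremA}(ii); once it is in hand, the remaining compatibilities are technically delicate but conceptually routine checks involving the structure of the Bloch--Kato exact sequence and the explicit interpolation formula for Kato's Euler system.
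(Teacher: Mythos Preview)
Your broad strategy—reduce to the classical functional equation via Kato's explicit reciprocity law and the interpolation properties of the local $\varepsilon$-isomorphisms—matches the paper's. However, the mechanism you propose (specialize at a Zariski-dense set of critical characters and compare) is not what the paper does, and as stated it has a gap.

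The paper does \emph{not} specialize the identity at individual characters. Instead it first unwinds both sides over $Q(\Lambda_{\mathcal{O}_L}(\Gamma))$ and reduces Theorem~\ref{4.5} to an explicit equality of two elements of $D_{f^*}(1)^{\psi=1}$ (Theorem~\ref{4.8}): one built from $w_{\delta_{D_f(k)}}(\bold{z}^{(p)}_{\gamma}(f)(k))$, the other from $\bold{z}^{(p)}_{\gamma_*}(f^*)(1)$. To prove that equality, the paper invokes Corollary~\ref{3.12.123}, which asserts that
\[
(D^*)^{\psi=1}\longrightarrow \prod_{(r,\delta)}\bold{D}^0_{\mathrm{dR}}(D_{\mathrm{rig}}^*(r)(\delta^{-1})),\qquad y\longmapsto (\mathrm{exp}^*(y_{r,\delta^{-1}}))_{(r,\delta)}
\]
is \emph{injective} under the standing hypotheses. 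This injectivity (proved via the Kirillov model and the pairing $[-,-]_{\bold{P}^1}$, and using the hypothesis $\bold{D}_{\mathrm{cris}}(V(-r)(\delta))^{\varphi=1}=0$ in an essential way) is the paper's replacement for your density argument. Checking that the two elements agree after each $\mathrm{exp}^*$ is then exactly Kato's reciprocity law combined with Proposition~\ref{key} (the pointwise content of Theorem~\ref{TheoremA}), and that is what collapses to the functional equation.

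Your density argument faces two concrete obstacles. First, the zeta isomorphisms $\widetilde{z}$ live only over $Q(\Lambda_{\mathcal{O}_L}(\Gamma))$, so ``specialize at $\chi$'' is not a priori defined; you would need to show the ratio $\lambda$ lies in a subring on which evaluation at critical characters makes sense. Second, and more seriously, Kato's explicit reciprocity law computes $\mathrm{exp}^*$ of the specialized Euler system element, not the specialization of the determinant-line isomorphism $\widetilde{z}$; bridging that gap is precisely the unwinding that produces Theorem~\ref{4.8}. Making your outline rigorous would therefore force you through the same reduction the paper performs, with the density step ultimately justified by (or replaced with) Corollary~\ref{3.12.123}.
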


The cases for which the assumptions do not hold correspond to the ordinary case or the exceptional zero case, which we will treat in the next article \cite{NY}.

The following conjecture is a part of the generalized Iwasawa main conjecture for the pair $(\Lambda_{\mathcal{O}_L}(\Gamma), \bold{Dfm}(T_{f_0}(r))$. 

 \begin{conjecture}\label{4.6.1}
 For any $r\in \mathbb{Z}$, the isomorphism $\widetilde{z}_{\mathcal{O}_L}^{\mathrm{Iw},S}(T_{f_0}(r))$ uniquely 
 descends to 
 a $\Lambda_{\mathcal{O}_L}(\Gamma)$-linear isomorphism 
 $$z^{\mathrm{Iw},S}_{\mathcal{O}_L}(T_{f_0}(r)):\bold{1}_{\Lambda_{\mathcal{O}_L(\Gamma)}}\isom 
 \Delta^{\mathrm{Iw},S}_{\mathcal{O}_L}(T_{f_0}(r)),$$
 i.e. one has  $\widetilde{z}_{\mathcal{O}_L}^{\mathrm{Iw},S}(T_{f_0}(r))=z^{\mathrm{Iw},S}_{\mathcal{O}_L}(T_{f_0}(r))
 \otimes \mathrm{id}_{Q(\Lambda_{\mathcal{O}_L}(\Gamma))}.$
 
 \end{conjecture}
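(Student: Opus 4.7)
The plan is to reduce Conjecture \ref{4.6.1} to (the opposite direction of) the Iwasawa main conjecture for $T_{f_0}(r)$.

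First, I would unwind the definition of $\widetilde{z}_{\mathcal{O}_L}^{\mathrm{Iw},S}(T_{f_0}(r))$ given in \S 4.2. It is built from Kato's zeta element $z_{\gamma}(f_0,r) \in H^1_{\mathrm{Iw}}(\mathbb{Q}_S/\mathbb{Q}, T_{f_0}(r))$ via a trivialization of the determinant of the Iwasawa cohomology complex, and the obstruction to integrality comes from the fact that one only knows a priori that $z_{\gamma}(f_0,r)$ generates a rank-one $Q(\Lambda_{\mathcal{O}_L}(\Gamma))$-subspace of $H^1_{\mathrm{Iw}}\otimes Q(\Lambda_{\mathcal{O}_L}(\Gamma))$, not that it generates a $\Lambda_{\mathcal{O}_L}(\Gamma)$-saturated rank-one summand.

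Second, under the absolute irreducibility hypothesis of Theorem \ref{TheoremB}, one has $H^0_{\mathrm{Iw}}(\bold{Dfm}(T_{f_0}(r))) = 0$, $H^2_{\mathrm{Iw}}$ is a torsion $\Lambda_{\mathcal{O}_L}(\Gamma)$-module, and $H^1_{\mathrm{Iw}}$ has $\Lambda_{\mathcal{O}_L}(\Gamma)$-rank one. From the canonical identification
\[ \Delta^{\mathrm{Iw},S}_{\mathcal{O}_L}(T_{f_0}(r)) \cong \det\nolimits^{-1}_{\Lambda_{\mathcal{O}_L}(\Gamma)}\! H^1_{\mathrm{Iw}} \otimes \det\nolimits_{\Lambda_{\mathcal{O}_L}(\Gamma)}\! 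H^2_{\mathrm{Iw}} \]
(up to degree shifts), the descent of $\widetilde{z}$ becomes equivalent to the equality of characteristic ideals, up to a unit of $\Lambda_{\mathcal{O}_L}(\Gamma)$,
\[ \mathrm{char}_{\Lambda_{\mathcal{O}_L}(\Gamma)}\bigl(H^1_{\mathrm{Iw}}/\Lambda_{\mathcal{O}_L}(\Gamma) \cdot z_{\gamma}(f_0,r)\bigr) = \mathrm{char}_{\Lambda_{\mathcal{O}_L}(\Gamma)}\bigl(H^2_{\mathrm{Iw}}\bigr). \]
One divisibility in this equality is Kato's divisibility established in \cite{Ka04} \S 12, so the problem reduces to the opposite divisibility.

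The opposite divisibility is exactly the remaining content of the Iwasawa main conjecture for the Hecke eigen newform $f_0$. It is known in the ordinary case by Skinner--Urban, and in a number of non-ordinary settings by subsequent work (notably by Wan), but is not currently available under the sole hypotheses of Theorem \ref{TheoremB}. This is the main obstacle, and is the reason why the statement is recorded here as a conjecture rather than a theorem; a complete proof therefore awaits further progress on the modular Iwasawa main conjecture, after which the descent (and hence Conjecture \ref{4.6.1}) would follow unconditionally from the characteristic-ideal reduction carried out in the first two steps.
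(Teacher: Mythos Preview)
The statement in question is recorded in the paper as a \emph{conjecture}, not a theorem, and the paper does not attempt to prove it. The only remarks the paper makes are (a) that uniqueness is automatic because $\Lambda_{\mathcal{O}_L}(\Gamma)\hookrightarrow Q(\Lambda_{\mathcal{O}_L}(\Gamma))$ is injective, (b) that truth for one $r$ implies truth for all $r$, and (c) that the conjecture ``is a part of the generalized Iwasawa main conjecture'' for the pair $(\Lambda_{\mathcal{O}_L}(\Gamma),\bold{Dfm}(T_{f_0}(r)))$. So there is no proof in the paper to compare your proposal against.

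Your proposal is therefore not in conflict with the paper: you correctly recognise the conjectural status and give a plausible reduction to the (remaining direction of the) Iwasawa main conjecture for $f_0$, which is exactly the content of the paper's parenthetical remark. A couple of points in your sketch could be tightened if you pursue it further: the definition of $\widetilde{z}_{\mathcal{O}_L}^{\mathrm{Iw},S}(T_{f_0}(r))$ in \S4.2 also involves the factor $\prod_{l\in S\setminus\{p\}}L^{(l)}_{\mathrm{Iw}}(T_{f_0^*}(1+k-r))^{\iota}$ and the $\Delta_2$-piece, so the characteristic-ideal comparison you write down should be adjusted to absorb these (they are units in $Q(\Lambda_{\mathcal{O}_L}(\Gamma))$ but need to be tracked integrally); and the absolute-irreducibility hypothesis you invoke comes from Theorem~\ref{TheoremB}, whereas the conjecture is stated without it. But since the paper itself leaves the statement open, your discussion of the obstruction is an accurate reflection of the situation.
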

 
  We remark that the uniqueness in the conjecture is trivial since the natural map 
 $\Lambda_{\mathcal{O}_L}(\Gamma)\rightarrow Q(\Lambda_{\mathcal{O}_L}(\Gamma))$ is injective, and, if the conjecture is true for an $r\in \mathbb{Z}$, then it is true for all $r\in \mathbb{Z}$.  As an immediate corollary of the theorem, we obtain the following.
 
 \begin{corollary}
 The conjecture $\ref{4.6.1}$ is true for $f$ 
 if and only if it is true for $f^*$.
 \end{corollary}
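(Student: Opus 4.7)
The plan is to read the desired equivalence directly off Theorem \ref{TheoremB} together with the remark that descent of $\widetilde{z}_{\mathcal{O}_L}^{\mathrm{Iw},S}(T_{f_0}(r))$ for one integer $r$ forces descent for every $r$. Concretely, it suffices to prove that Conjecture \ref{4.6.1} holds for $f$ at $r=k$ if and only if it holds for $f^*$ at $r=1$, because the indicated remark (asserted just after the statement of Conjecture \ref{4.6.1}) then propagates the descent to all twists.

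The first step is to take the equality of Theorem \ref{TheoremB},
$$\widetilde{z}^{\mathrm{Iw},S}_{\mathcal{O}_L}(T_{f^*}(1))^{\iota}=\boxtimes_{l\in S}\bigl(\varepsilon_{\mathcal{O}_L,\zeta^{(l)}}^{\mathrm{Iw},(l)}(T_f(k))\otimes \mathrm{id}_{Q(\Lambda_{\mathcal{O}_L}(\Gamma))}\bigr)\boxtimes \widetilde{z}_{\mathcal{O}_L}^{\mathrm{Iw},S}(T_f(k)),$$
and observe that each of the local factors on the right-hand side is, by its very construction (Yasuda's theorem \cite{Ya09} for $l\neq p$ and Theorem \ref{TheoremA} for $l=p$), a genuine $\Lambda_{\mathcal{O}_L}(\Gamma)$-linear isomorphism, not merely defined after inverting all regular elements. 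Consequently the equality above, read in the graded invertible $Q(\Lambda_{\mathcal{O}_L}(\Gamma))$-module obtained by base change, exhibits $\widetilde{z}^{\mathrm{Iw},S}_{\mathcal{O}_L}(T_{f^*}(1))^{\iota}$ and $\widetilde{z}_{\mathcal{O}_L}^{\mathrm{Iw},S}(T_f(k))$ as differing only by a $\Lambda_{\mathcal{O}_L}(\Gamma)$-isomorphism: one descends if and only if the other does.

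The second step is to transfer the descent property across the involution $\iota$. Since $\iota:\Lambda_{\mathcal{O}_L}(\Gamma)\isom \Lambda_{\mathcal{O}_L}(\Gamma)$ is a ring automorphism that extends to an automorphism of $Q(\Lambda_{\mathcal{O}_L}(\Gamma))$, the natural injection $\Lambda_{\mathcal{O}_L}(\Gamma)\hookrightarrow Q(\Lambda_{\mathcal{O}_L}(\Gamma))$ is $\iota$-equivariant, and $\widetilde{z}^{\mathrm{Iw},S}_{\mathcal{O}_L}(T_{f^*}(1))^{\iota}$ descends to $\Lambda_{\mathcal{O}_L}(\Gamma)$ if and only if $\widetilde{z}^{\mathrm{Iw},S}_{\mathcal{O}_L}(T_{f^*}(1))$ does. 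Combining this with the previous step yields the equivalence: Conjecture \ref{4.6.1} for $(f^*,r=1)$ is equivalent to Conjecture \ref{4.6.1} for $(f,r=k)$.

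Finally, invoking the remark that descent for a single $r$ implies descent for all $r$ (which follows from the compatibility of zeta isomorphisms with twists by the characters $\delta:\Gamma\to\overline{\mathbb{Q}}_p^{\times}$ of finite image, together with the identifications $\Delta_{\mathcal{O}_L}^{\mathrm{Iw},S}(T_{f_0}(r))$ obtained by twisting the universal deformation), one upgrades the single-$r$ equivalence to the full equivalence claimed in the corollary. I do not expect a genuine obstacle here; the content is entirely bookkeeping, and the only subtlety to watch is that the local $\varepsilon$-isomorphisms produced in Theorem \ref{TheoremA} are honestly defined over $\Lambda_{\mathcal{O}_L}(\Gamma)$ (not merely after inverting non-zero-divisors), which is however explicit in the formulation of that theorem.
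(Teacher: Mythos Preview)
Your proposal is correct and follows exactly the approach the paper intends: the corollary is stated as ``an immediate corollary of the theorem,'' and you have spelled out precisely why---the local $\varepsilon$-isomorphisms in Theorem~\ref{TheoremB} are honest $\Lambda_{\mathcal{O}_L}(\Gamma)$-isomorphisms, the Poitou--Tate identification is defined integrally, and the involution $\iota$ preserves descent, so descent of one zeta isomorphism is equivalent to descent of the other. One small remark: your parenthetical justification for the ``descent at one $r$ implies all $r$'' step is slightly misstated (it is not about finite-order twists but about the ring automorphism $g_{\chi^r}$ of $\Lambda_{\mathcal{O}_L}(\Gamma)$ relating $\bold{Dfm}(T_{f_0}(r))$ for different $r$, as in Lemma~\ref{2.10.1} and the twisting isomorphism in \S4.2), though since you are invoking the paper's own remark this does not affect the validity of your argument.
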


Now, we briefly describe the contents of different sections. In \S2, \S3, we study the $p$-adic local $\varepsilon$-conjecture. We first remark that many results in these sections heavily depend on many deep results in the theory of the $p$-adic local Langlands correspondence for $\mathrm{GL}_2(\mathbb{Q}_p)$ (\cite{Co10a},\cite{Co10b},\cite{Do11}, \cite{Em}). In particular, our local $\varepsilon$-isomorphism 
defined in Theorem \ref{TheoremA} is nothing else but the Colmez's pairing defined in VI.6 of \cite{Co10b}. Our contribution is (to find the relation between the Colmez's pairing and the local $\varepsilon$-isomorphism and) to show that this pairing satisfies the interpolation property (i.e. the condition (i), (ii) in the theorem). Section 2 is mainly for preliminaries. In \S2.1, we first recall the $l$-adic and the $p$-adic local $\varepsilon$-conjecture. In \S2.2, we recall the theory of $(\varphi,\Gamma)$-modules and restate the $p$-adic local $\varepsilon$-conjecture in terms of $(\varphi,\Gamma)$-modules. In \S2.3, we propose a conjecture (Conjecture \ref{2.11}) on a conjectural definition of the local $\varepsilon$-isomorphism for any 
$(\varphi,\Gamma)$-modules of any rank using the Colmez's convolution pairing defined in \cite{Co10a}. Section 3 is devoted to the proof of Theorem \ref{TheoremA}, in particular, we prove  Conjecture \ref{2.11} for the rank two case. In \S3.1, we state the main theorem. In \S3.2, we define our local $\varepsilon$-isomorphism using the Colmez's pairing defined in VI.6 of \cite{Co10b}, and prove Conjecture \ref{2.11} for the rank two case, which is essentially a consequence of the $\mathrm{GL}_2(\mathbb{Q}_p)$-compatibility (a notion defined in \S III of \cite{CD14}) of the $(\varphi,\Gamma)$-modules of rank two. The subsections \S3.3 and \S3.4 are the technical hearts of this article, where we show that our $\varepsilon$-isomorphisms satisfy the conditions (i) and (ii) in Conjecture \ref{TheoremA}. In \S3.3, we show the interpolation property for the trianguline case (i.e. the condition (i) in the theorem) by comparing the local $\varepsilon$-isomorphism defined in \S3.2 with that defined in our previous work \cite{Na14b}, where we use a result of Dospinescu \cite{Do11} on the explicit description of the action of $w:=\begin{pmatrix}0& 1\\ 1& 0\end{pmatrix}\in \mathrm{GL}_2(\mathbb{Q}_p)$ on the locally analytic vectors (see Theorem \ref{3.1}). In \S3.4, we show the interpolation property for the non-trianguline case (i.e. the condition (ii) in the theorem). For an absolutely irreducible $(\varphi,\Gamma)$-module $D$ of rank two such that $V(D)$ is de Rham with distinct Hodge-Tate weights $\{0,k\}$ ($k\geqq 1$), using the Colmez's theory of Kirillov model of locally algebraic vectors in VI of \cite{Co10b}, we prove two explicit formulas formulas (Proposition \ref{formula1}, Proposition \ref{formula2}) which respectively (essentially) describe $\varepsilon_L(V(-r)(\delta))$ and 
$\varepsilon_L^{\mathrm{dR}}(V(-r)(\delta))$ for  any  pair $(r,\delta)$ such that $0\leqq r\leqq k-1$ and $\delta:\Gamma\rightarrow L^{\times}$ with finite image. As a corollary of these formula, we prove a result (Corollary \ref{3.12.123}) which enables us to characterize an element in the Iwasawa cohomology by the dual exponential maps. This corollary is a crucial in the proof of Theorem \ref{TheoremB}. Finally, using the Emerton's theorem \cite{Em} on the compatibility of the $p$-adic and the classical local Langlands correspondence and the classical explicit formula of the action of  $w$ on the (classical) Kirillov model, we prove the condition (ii) in Theorem \ref{TheoremA} for the non-trianguline case.

The final section \S4 is devoted to the proof of Theorem \ref{TheoremB}. In \S4.1, we (roughly) recall the generalized Iwasawa main conjecture and the global $\varepsilon$-conjecture. In \S4.2, we (re-)state our second main theorem (Theorem \ref{4.5}) and define our (candidate of) zeta isomorphism $\widetilde{z}_{\mathcal{O}_L}^{\mathrm{Iw},S}(T_f(r))$ using the ($p$-th layer of) Kato's Euler system \cite{Ka04} associated to $f$. In the final subsection \S4.3, we prove Theorem \ref{TheoremB}  (Theorem \ref{4.5}), where we reduce the theorem to the classical functional equation of the (twisted) $L$-function of $f$ using the Kato's explicit reciprocity law, Theorem \ref{TheoremA} and Corollary \ref{3.12.123}.

\begin{notation}
For a field $F$, set $G_F:=\mathrm{Gal}(F^{\mathrm{sep}}/F)$ the absolute Galois group of $F$. For each $m\in \mathbb{Z}_{\geqq 0}$, 
set $\mu_{m}(F)$ the group of 
the $m$-th roots of unity in $F$. 
For each prime $p$, set $\Gamma(F, \mathbb{Z}_p(1)):=\varprojlim_{n\geqq 1}\mu_{p^n}(F)$. For $F=\mathbb{Q}_p$, let $W_{\mathbb{Q}_p}\subseteq G_{\mathbb{Q}_p}$ be the Weil group of $\mathbb{Q}_p$, $I_p\subseteq W_{\mathbb{Q}_p}$ be the inertia subgroup. Let $\mathrm{rec}_{\mathbb{Q}_p}: \mathbb{Q}_p^{\times} 
\isom W_{\mathbb{Q}_p}^{\mathrm{ab}}$ be the reciprocity map of the local class field theory normalized so that $\mathrm{rec}_{\mathbb{Q}_p}(p)$ is a lift of the geometric Frobenius $\mathrm{Fr}_p\in G_{\mathbb{F}_p}$. Set $\Gamma:=\mathrm{Gal}(\mathbb{Q}(\mu_{p^{\infty}})/\mathbb{Q})\isom \mathrm{Gal}(\mathbb{Q}_p(\mu_{p^{\infty}})/\mathbb{Q}_p)$, and let $\chi:\Gamma\isom \mathbb{Z}_p^{\times}$ be the $p$-adic cyclotomic character which we also see as a character of $G_{\mathbb{Q}_p}$. Set $H_{\mathbb{Q}_p}:=\mathrm{Ker}(\chi)\subseteq G_{\mathbb{Q}_p}$. For each $b\in \mathbb{Z}_p^{\times}$, define 
$\sigma_b\in \Gamma$ such that $\chi(\sigma_b)=b$. For a perfect field $k$ of characteristic $p$, 
we denote $W(k)$ for the ring of Witt vectors, on which the lift $\varphi$ of the $p$-th power Frobenius on $k$ acts. Let $[-]:k\rightarrow W(k)$ be the Teichm\"ulcer lift. 
Set $\widetilde{\bold{E}}^+:=\varprojlim_{n\geqq 0}\mathcal{O}_{\mathbb{C}_p}/p$ where the projective limit with respect to $p$-th power map, $\widetilde{\bold{E}}:=\mathrm{Frac}(\widetilde{\bold{E}}^+)$, $\widetilde{\bold{A}}^+:=W(\widetilde{\bold{E}}^+)$, $\widetilde{\bold{A}}:=W(\widetilde{\bold{E}})$, 
$\widetilde{\bold{B}}^+:=\widetilde{\bold{A}}^+[1/p]$ and $\widetilde{\bold{B}}:=\widetilde{\bold{A}}[1/p]$. Let $\theta: \widetilde{\bold{A}}^+\rightarrow \mathcal{O}_{\mathbb{C}_p}$ be the continuous $\mathbb{Z}_p$-algebra homomorphism 
defined by $\theta([(\bar{x}_n)_{n\geqq 0}]):=\mathrm{lim}_{n\rightarrow \infty} x_n^{p^n}$ for 
any $(\bar{x}_n)_{n\geqq 0}\in \widetilde{\bold{E}}^+$, where
$x_n\in \mathcal{O}_{\mathbb{C}_p}$ is a lift of $\bar{x}_n\in \mathcal{O}_{\mathbb{C}_p}/p$. Set
$\bold{B}^+_{\mathrm{dR}}:=\varprojlim_{n\geqq 1} \widetilde{\bold{A}}^+[1/p]/\mathrm{Ker}(\theta)^n[1/p]$. For each $\mathbb{Z}_p$-basis $\zeta=(\zeta_{p^n})_{n\geqq 0}\in \Gamma(\overline{\mathbb{Q}}_p, \mathbb{Z}_p(1))$, define $t_{\zeta}:=\mathrm{log}([(\bar{\zeta}_{p^n})_{n\geqq 0}])\in \bold{B}^+_{\mathrm{dR}}$, which is a uniformizer of $\bold{B}^+_{\mathrm{dR}}$. Set $\bold{B}_{\mathrm{dR}}:=\bold{B}^+_{\mathrm{dR}}[1/t_{\zeta}]$.  For a commutative ring $R$, 
we denote by $\bold{D}^-(R)$ the derived category of bounded below complex of $R$-modules, 
by $\bold{D}_{\mathrm{perf}}(R)$ the full subcategory of perfect complexes of $R$-modules. 
We denote by $\bold{P}_{\mathrm{fg}}(R)$ the category of finite projective $R$-modules. For any $P\in \bold{P}_{\mathrm{fg}}(R)$, we denote by $r_P$ its $R$-rank, by
$P^{\vee}:=\mathrm{Hom}_R(P, R)$ its dual. For $P_1, P_2\in\bold{P}_{\mathrm{fg}}(R)$ and 
$\langle,\rangle:P_1\times P_2\rightarrow R$ a perfect pairing of $R$-modules, we always identify $P_2$ with $P_1^{\vee}$ by the isomorphism $P_2\isom P_1^{\vee}: x\mapsto [y\mapsto \langle y,x\rangle]$.
\end{notation}
\section{Preliminaries and conjectures}
In this section, we first recall  the ($l$-adic and the $p$-adic) local $\varepsilon$-conjecture.
Then, after reviewing the (Iwasawa) cohomology theory of $(\varphi,\Gamma)$-modules, 
we formulate a conjecture on a conjectural definition of the $p$-adic local 
$\varepsilon$-isomorphism using (a multivariable version of) the Colmez's convolution pairing.

\subsection{Review of the local $\varepsilon$-conjecture}
In this subsection, we quickly recall the local $\varepsilon$-conjecture. See the original articles 
\cite{Ka93b}, \cite{FK06} (the latter one includes the non-commutative version) or \cite{Na14b} for more details.

The local $\varepsilon$-conjecture is formulated using the theory of the determinant functor, for which 
we use the Knudsen-Mumford's one \cite{KM76}, which we briefly recall here (see also \S3.1 of \cite{Na14b}).

Let $R$ be a commutative ring. We define a category $\mathcal{P}_{R}$, whose objects are the 
pairs $(L,r)$ where $L$ is an invertible $R$-module and $r:\mathrm{Spec}(R)\rightarrow \mathbb{Z}$ is a
 locally constant function, whose morphisms are defined by $\mathrm{Mor}_{\mathcal{P}_R}((L,r), (M,s)):=
 \mathrm{Isom}_R(L, M)$ if $r=s$, or empty otherwise. We call the objects of this category graded invertible 
 $R$-modules. For $(L,r), (M,s)$, define its product by $(L,r)\boxtimes (M,s):=(L\otimes_R M, r+s)$ with the natural associativity constraint and the commutativity constraint $(L,r)\boxtimes (M,s)\isom (M,s)\boxtimes (L, r):l\otimes m\mapsto (-1)^{r s}m\otimes l$. We always identify $(L,r)\boxtimes (M,s)=(M,s)\boxtimes (L, r)$ by this constraint isomorphism. 
  The unit object for the product 
 is $\bold{1}_R:=(R, 0)$. For each $(L,r)$, we set $(L,r)^{-1}:=(L^{\vee},-r)$, which is the inverse of $(L, r)$ by the isomorphism 
 $i_{(L,r)}:(L,r)\boxtimes(L^{\vee},-r)\isom \bold{1}_R$ induced by the evaluation map 
 $L\otimes_RL^{\vee}\isom R:x\otimes f\mapsto f(x)$. 
 For a ring homomorphism $f:R\rightarrow R'$, one has a 
 base change functor $(-)\otimes_RR':\mathcal{P}_R\rightarrow\mathcal{P}_{R'}$ defined by $(L,r)\mapsto (L,r)\otimes_{R}R':=(L\otimes_RR',r\circ f^*)$ where 
 $f^*:\mathrm{Spec}(R')\rightarrow \mathrm{Spec}(R)$. For a category $\mathcal{C}$, denote by $(\mathcal{C},\mathrm{is})$ the category such that the objects are the same as $\mathcal{C}$ and the morphisms are all the isomorphisms in $\mathcal{C}$. Define a functor
 $$\mathrm{Det}_R:(\bold{P}_{\mathrm{fg}}(R),\mathrm{is})\rightarrow \mathcal{P}_R:
 P\mapsto (\mathrm{det}_RP,r_P)$$
 where we set $\mathrm{det}_RP:=\wedge_R^{r_P}P$. Note that $\mathrm{Det}_R(0)=\bold{1}_R$ is the unit object. For a short exact sequence $0\rightarrow P_1\rightarrow P_2\rightarrow P_3\rightarrow 0$ 
 in $\bold{P}_{\mathrm{fg}}(R)$, we always identify $\mathrm{Det}_R(P_1)\boxtimes \mathrm{Det}_R(P_3)$ with $\mathrm{Det}_R(P_2)$ by the following functorial isomorphism (put $r_i:=r_{P_i}$)
 \begin{equation}\label{17e}
  \mathrm{Det}_R(P_1)\boxtimes \mathrm{Det}_R(P_3)\isom \mathrm{Det}_R(P_2)
  \end{equation}
 induced by $$ (x_1\wedge\cdots\wedge x_{r_1})\otimes(\overline{x_{r_1+1}}\wedge\cdots\wedge \overline{x_{r_2}})\mapsto x_1\wedge\cdots\wedge x_{r_1}\wedge x_{r_1+1}\wedge\cdots \wedge x_{r_2}
 $$ where $x_1,\cdots, x_{r_1}$ (resp. $\overline{x_{r_1+1}},\cdots, \overline{x_{r_2}}$) 
 are local sections of $P_1$ (resp. $P_3$) and $x_i\in P_2$ ($i=r_1+1, \cdots,  r_2$) is a lift of $\overline{{x}_i}\in P_3$. For a bounded complex $P^{\bullet}$ in $\bold{P}_{\mathrm{fg}}(R)$, define $\mathrm{Det}_R(P^{\bullet})\in 
  \mathcal{P}_R$ by 
  $$\mathrm{Det}_R(P^{\bullet}):=\boxtimes_{i\in \mathbb{Z}}\mathrm{Det}_R(P^i)^{(-1)^i}.$$ 
    By the result of \cite{KM76}, $\mathrm{Det}_R$  naturally extends to a functor
  $$\mathrm{Det}_R:(\bold{D}_{\mathrm{perf}}(R),\mathrm{is})\rightarrow \mathcal{P}_R$$
  such that the isomorphism (\ref{17e}) extends to the following situation: 
  for any exact sequence $0\rightarrow P_1^{\bullet}
  \rightarrow P_2^{\bullet}\rightarrow P_3^{\bullet}\rightarrow 0$ of bounded below complexes of $R$-modules such that each $P_i^{\bullet}$ is a perfect complex, then there exists a canonical isomorphism 
   \begin{equation*}
    \mathrm{Det}_R(P^{\bullet}_1)\boxtimes \mathrm{Det}_R(P^{\bullet}_3)\isom  \mathrm{Det}_R(P^{\bullet}_2).
   \end{equation*}
  If 
  $P^{\bullet}\in \bold{D}_{\mathrm{perf}}(R)$ satisfies that $\mathrm{H}^i(P^{\bullet})[0]\in \bold{D}_{\mathrm{perf}}(R)$ for 
  any $i$, there exists a canonical isomorphism 
  $$\mathrm{Det}_R(P^{\bullet})\isom \boxtimes_{i\in \mathbb{Z}}\mathrm{Det}_R(
  \mathrm{H}^i(P^{\bullet})[0])^{(-1)^i}.$$

  For $(L,r)\in \mathcal{P}_R$, define $(L, r)^{\vee}:=(L^{\vee}, r)\in \mathcal{P}_R$, which induces an anti-equivalence $(-)^{\vee}:\mathcal{P}_R\isom \mathcal{P}_R$.
  For $P\in \bold{P}_{\mathrm{fg}}(R)$ and an $R$-basis $\{e_1,\cdots, e_{r_P}\}$, we denote by 
  $\{e_1^{\vee},\cdots, e_{r_P}^{\vee}\}$ its dual basis of $P^{\vee}$. Then 
  one has a canonical isomorphism $\mathrm{Det}_R(P^{\vee})\isom \mathrm{Det}_R(P)^{\vee}$ defined by the isomorphism 
  $$\mathrm{det}_R(P^{\vee})\isom (\mathrm{det}_RP)^{\vee}:e_1^{\vee}\wedge\cdots\wedge e_{r_P}^{\vee}
  \mapsto (e_1\wedge\cdots\wedge e_{r_P})^{\vee}.$$
  This isomorphism naturally extends to $(\bold{D}_{\mathrm{perf}}(R), \mathrm{is})$, i.e. for any $P^{\bullet}\in \bold{D}_{\mathrm{perf}}(R)$, 
  there exists a canonical isomorphism 
  \begin{equation}
\mathrm{Det}_R(\bold{R}\mathrm{Hom}_R(P^{\bullet}, R))\isom \mathrm{Det}_R(P^{\bullet})^{\vee}. \end{equation}

Now, we start to recall the local $\varepsilon$-conjecture. Fix a prime $p$. From now on until the end of the article, we use the notation $R$ to represent a commutative topological $\mathbb{Z}_p$-algebra satisfying one of the following conditions (i) or (ii).
\begin{itemize}
\item[(i)] $R$ is a $\mathrm{Jac}(R)$-adically complete noetherian semi-local ring such that $R/\mathrm{Jac}(R)$ is a finite ring (equipped with the $\mathrm{Jac}(R)$-adic topology), 
where $\mathrm{Jac}(R)$ is the Jacobson radical of $R$,
\item[(ii)] $R$ is a finite extension of $\mathbb{Q}_p$ (equipped with the $p$-adic topology). \end{itemize}
We note that a ring $R$ (satisfying (i) or (ii)) satisfies (i) if and only if $p\not\in R^{\times}$.
We use the notation $L$ instead of $R$ if we consider only the case 
(ii).

In this article, we mainly treat representations (of $G_{\mathbb{Q}_p}$ or $\mathrm{GL}_2(\mathbb{Q}_p)$, etc.) defined over such a ring $R$. Let $G$ be a topological group. We say that $T$ is an $R$-representation of $G$ if $T$ is a 
finite projective $R$-module with a continuous $R$-linear $G$-action. 
For a continuous homomorphism $\delta:G\rightarrow R^{\times}$, we set $R(\delta):=R\bold{e}_{\delta}$ the $R$-representation of rank one with a fixed basis $\bold{e}_{\delta}$ on which $G$ acts by 
 $g(\bold{e}_{\delta}):=\delta(g)\bold{e}_{\delta}$. We always identify $R(\delta^{-1})$ with the $R$-dual $R(\delta)^{\vee}$ by 
 $R(\delta^{-1})\isom R(\delta)^{\vee}:\bold{e}_{\delta^{-1}}\mapsto \bold{e}_{\delta}^{\vee}$, 
 and identify $R(\delta_1)\otimes_R R(\delta_2)\isom 
 R(\delta_1\cdot \delta_2)$ by $\bold{e}_{\delta_1}\otimes \bold{e}_{\delta_2}\isom 
 \bold{e}_{\delta_1\cdot \delta_2}$ for any $\delta_1,\delta_2:G\rightarrow R^{\times}$. We set $T(\delta):=T\otimes_{R}R(\delta)$.
 For an $R$-representation $T$ of $G$, we set $T(\delta):=T\otimes_{R}R(\delta)$ and denote
by $\mathrm{C}_{\mathrm{cont}}^{\bullet}(G, T)$ the complex of continuous cochains of $G$ with values in $T$, i.e. defined by $\mathrm{C}_{\mathrm{cont}}^i(G, T):=\{c:G^{\times i}\rightarrow T:\text{ continuous maps }\}$ for each $i\geqq 0$ with the usual boundary map. We also regard
$\mathrm{C}_{\mathrm{cont}}^{\bullet}(G, T)$ as an object of $\bold{D}^-(R)$.

Now, we fix another prime $l$ (we don't assume $l\not=p$). Let $T$ be an $R$-representation of $G_{\mathbb{Q}_l}$. We set $\mathrm{H}^i(\mathbb{Q}_l, T):=\mathrm{H}^i(\mathrm{C}^{\bullet}_{\mathrm{cont}}(G_{\mathbb{Q}_l}, T))$. For each 
$r\in \mathbb{Z}$, we set $T(r):=T\otimes_{\mathbb{Z}_p}\Gamma(\overline{\mathbb{Q}}_l, \mathbb{Z}_p(1))^{\otimes r}$. We denote by $T^*:=T^{\vee}(1)$  the Tate dual of $T$. By the classical theory of the Galois cohomology of local fields, it is known that one has $\mathrm{C}_{\mathrm{cont}}^{\bullet}(G_{\mathbb{Q}_l},T)\in \bold{D}_{\mathrm{perf}}(R)$. Using the determinant functor, we define the following graded invertible $R$-module 
$$\Delta_{R,1}(T):=\mathrm{Det}_R(C_{\mathrm{cont}}^{\bullet}(G_{\mathbb{Q}_l},T)),$$
which is of degree $-r_T$  (resp. of degree $0$) when $l=p$ (resp. when $l\not=p$) by the Euler-Poincar\'e formula. 

For $a\in R^{\times}$ ($a\in \mathcal{O}_L^{\times}$ if $R=L$), we set 
$$R_a:=\{x\in W(\overline{\mathbb{F}}_p)\hat{\otimes}_{\mathbb{Z}_p}R|(\varphi\otimes \mathrm{id}_R)(x)=(1\otimes a)\cdot x\},$$ which is an invertible $R$-module. For $T$ as above, we freely regard $\mathrm{det}_RT$ as a continuous homomorphism $\mathrm{det}_RT:G_{\mathbb{Q}_l}^{\mathrm{ab}}\rightarrow R^{\times}$. 
Define a constant 
$$a_l(T):=\mathrm{det}_RT(\mathrm{rec}_{\mathbb{Q}_l}(p))\in R^{\times},$$ and define another graded invertible $R$-module 
$$\Delta_{R,2}(T):=\begin{cases}(R_{a_l(T)}, 0) & (l\not=p)\\
(\mathrm{det}_RT\otimes_RR_{a_p(T)}, r_T) & (l=p).\end{cases}$$
Finally, we set 
$$\Delta_{R}(T):=\Delta_{R,1}(T)\boxtimes\Delta_{R,2}(T)$$ 
which we call the local fundamental line.

The local fundamental line is compatible with the functorial operations, i.e. for any 
$R\rightarrow R'$, one has a canonical isomorphism 
$$\Delta_{R}(T)\otimes_RR'\isom \Delta_{R'}(T\otimes_RR'),$$
for any exact sequence $0\rightarrow T_1\rightarrow T_2\rightarrow T_3\rightarrow 0$ of 
$R$-representations of $G_{\mathbb{Q}_l}$, one has a canonical isomorphism 
$$\Delta_R(T_2)\isom \Delta_R(T_1)\boxtimes\Delta_R(T_3),$$
and one has the following canonical isomorphism 
$$\Delta_R(T)\isom \begin{cases} \Delta_R(T^*)^{\vee} & (l\not=p) \\
\Delta_R(T^*)^{\vee}\boxtimes(L(r_T), 0)& (l=p)\end{cases}$$
defined as the product of the following two isomorphisms
$$\Delta_{R,1}(T)\isom \Delta_{R,1}(T^*)^{\vee},$$ 
which is induced by the Tate duality $C^{\bullet}_{\mathrm{cont}}(G_{\mathbb{Q}_l}, T)\isom 
\bold{R}\mathrm{Hom}_R(C^{\bullet}_{\mathrm{cont}}(G_{\mathbb{Q}_l}, T^*), R),$ and 
$$\Delta_{R,2}(T)\isom \begin{cases} \Delta_{R,2}(T^*)^{\vee} & (l\not=p) \\
\Delta_{R,2}(T^*)^{\vee}\boxtimes(L(r_T), 0)& (l=p)\end{cases}$$
which is defined by $x\mapsto [y\mapsto x\otimes y]$ for $x\in R_{a_l(T)}, y\in R_{a_l(T^*)}$ when $l\not=p$ , by $x\otimes y\mapsto [z\otimes w\mapsto y\otimes w]\otimes z(x)$ for 
$x\in \mathrm{det}_RT, y\in R_{a_p(T)}, z\in \mathrm{det}_R(T^*)=(\mathrm{det}_RT)^{\vee}(r_T), 
w\in R_{a_p(T^*)}$ when $l=p$
(remark that one has $R_{a_l(T)}\otimes_RR_{a_l(T^*)}=R$ since one has $a_l(T)\cdot a_{l}(T^*)=1$ for any $l$).

The local $\varepsilon$-conjecture concerns with the existence of a compatible family of trivializations $\varepsilon_{R,\zeta}(T):\bold{1}_R\isom\Delta_{R}(T)$, which we call the local $\varepsilon$-isomorphisms, for all the pairs $(R, T)$ as above which interpolate the trivializations 
$\varepsilon^{\mathrm{dR}}_{L,\zeta}(V):\bold{1}_L\isom\Delta_{L}(V)$, which we call the de Rham $\varepsilon$-isomorphisms, for all the pairs $(L,V)=(R,T)$ such that $V$ is de Rham (resp. any) if $l=p$ (resp. if $l\not=p$), whose definition we briefly recall now. 

We first recall the $\varepsilon$-constants defined for the representations of the Weil-Deligne group $'W_{\mathbb{Q}_l}$ of $\mathbb{Q}_l$. Let $K$ be a field of characteristic zero which contains all the $l$-power roots of unity. 
For a $\mathbb{Z}_l$-basis $\zeta=\{\zeta_{l^n}\}_{n\geqq 0}\in \Gamma(K, \mathbb{Z}_l(1)):=\varprojlim_{n\geqq 0}\mu_{l^n}(K)$, define an additive character 
$$\psi_{\zeta}:\mathbb{Q}_l\rightarrow K^{\times} \text{ by }
\psi_{\zeta}(\frac{1}{l^n})=\zeta_{l^n}$$ for any $n\geqq 0$. By the theory of local constants \cite{De73}, one can attach a constant $$\varepsilon(\rho, \psi, dx)\in K^{\times}$$ to 
any smooth $K$-representation $\rho=(M,\rho)$ of $W_{\mathbb{Q}_l}$ (i.e. $M$ is a finite dimensional $K$-vector space with a $K$-linear smooth action $\rho$ of $W_{\mathbb{Q}_l}$), which depends on the choices of an additive character $\psi:\mathbb{Q}_l^{\times}\rightarrow K^{\times}$ and a ($K$-valued)
Haar measure $dx$ on $\mathbb{Q}_l$. In this article, we consider this constant only for the pair $(\psi_{\zeta},dx)$ such that $\int_{\mathbb{Z}_l}dx=1$, which we denote by $$\varepsilon(\rho,\zeta)
:=\varepsilon(\rho,\psi_{\zeta},dx)$$ for simplicity. For a $K$-representation $M=(M,\rho,N)$ of the Weil-Deligne group $'W_{\mathbb{Q}_l}$ (i.e. $\rho:=(M,\rho)$ is a smooth $K$-representation of $W_{\mathbb{Q}_l}$ with a $K$-linear endomorphism $N:M\rightarrow M$ such that 
$\widetilde{\mathrm{Fr}_l}\circ N=l^{-1}\cdot N\circ\widetilde{\mathrm{Fr}_l}$ for any lift $\widetilde{\mathrm{Fr}_l}\in W_{\mathbb{Q}_l}$ of the geometric 
Frobenius $\mathrm{Fr}_l\in G_{\mathbb{F}_l}$), its $\varepsilon$-constant is defined by 
$$\varepsilon(M,\zeta):=\varepsilon(\rho,\zeta)\cdot \mathrm{det}_K(-\mathrm{Fr}_l|M^{I_l}/(M^{N=0})^{I_l}).$$

Now we recall the definition of de Rham $\varepsilon$-isomorphism 
$\varepsilon^{\mathrm{dR}}_{L,\zeta}(V): \bold{1}_L\isom \Delta_{L}(V)$ for any (resp. de Rham) 
$L$-representation $V$ of $G_{\mathbb{Q}_l}$ when $l\not=p$ (resp. $l=p$). Fix a $\mathbb{Z}_l$-basis $\zeta=\{\zeta_{l^n}\}_{n\geqq 0}\in \Gamma(\overline{\mathbb{Q}}_p, \mathbb{Z}_l(1))$. 
By the Grothendieck's local monodromy theorem (resp. the $p$-adic local monodromy theorem and the Fontaine's functor $\bold{D}_{\mathrm{pst}}(-)$) when 
$l\not=p$ (resp. $l=p$), one can functorially define 
an $L$-representation $$W(V)=(W(V), \rho, N)$$ of $'W_{\mathbb{Q}_l}$. 
Set $L_{\infty}:=L\otimes_{\mathbb{Q}_p}\mathbb{Q}_p(\mu_{l^{\infty}})$, and decompose it $L_{\infty}=\prod_{\tau}L_{\tau}$ into the product of fields $L_{\tau}$. Then, we define a constant $$\varepsilon_{L}(W(V), \zeta)\in L_{\infty}^{\times}$$ as the product of the $\varepsilon$-constants $\varepsilon(W(V)_{\tau},\zeta_{\tau})\in L_{\tau}^{\times}$ of $W(V)_{\tau}
:=W(V)\otimes_LL_{\tau}$ 
for all $\tau$, where $\zeta_{\tau}\in\Gamma(L_{\tau}, \mathbb{Z}_l(1))$ is the image 
of $\zeta$ in $L_{\tau}$. Set $$\bold{D}_{\mathrm{st}}(V):=W(V)^{I_l}, \bold{D}_{\mathrm{cris}}(V):=\bold{D}_{\mathrm{st}}(V)^{N=0}$$ on which the Frobenius $\varphi_l:=\mathrm{Fr}_l$ naturally acts. 
Remark that one has $\bold{D}_{\mathrm{cris}}(V)=V^{I_l}$ if $l\not=p$. Set $$\bold{D}_{\mathrm{dR}}(V):=(\bold{B}_{\mathrm{dR}}\otimes_{\mathbb{Q}_p}V)^{G_{\mathbb{Q}_p}}, \bold{D}^i_{\mathrm{dR}}(V):=(t^i\bold{B}^+_{\mathrm{dR}}\otimes_{\mathbb{Q}_p}V)^{G_{\mathbb{Q}_p}}\text{ and }t_V:=\bold{D}_{\mathrm{dR}}(V)/\bold{D}^0_{\mathrm{dR}}(V)$$ (resp. $\bold{D}_{\mathrm{dR}}(V)=\bold{D}_{\mathrm{dR}}^i(V)=t_V:=0$) when $l=p$ (resp. $l\not=p$). 

Using these preliminaries, we first define an isomorphism 
$$\theta_{L}(V):\bold{1}_L\isom \Delta_{L,1}(V)\boxtimes\mathrm{Det}_L(\bold{D}_{\mathrm{dR}}(V))$$
which is naturally induced by the following exact sequence of $L$-vector spaces
\begin{multline}\label{exp}
0\rightarrow \mathrm{H}^0(\mathbb{Q}_l, V)\rightarrow \bold{D}_{\mathrm{cris}}(V)\xrightarrow{(a)}
\bold{D}_{\mathrm{cris}}(V)\oplus t_V\xrightarrow{(b)}\mathrm{H}^1(\mathbb{Q}_l, V)\\
\xrightarrow{(c)}\bold{D}_{\mathrm{cris}}(V^*)^{\vee}\oplus \bold{D}_{\mathrm{dR}}^0(V)\xrightarrow{(d)}\bold{D}_{\mathrm{cris}}(V^{*})^{\vee}\rightarrow \mathrm{H}^2(\mathbb{Q}_l, V)\rightarrow 0,
\end{multline}
where the map (a) is the sum of $1-\varphi_l:\bold{D}_{\mathrm{cris}}(V)\rightarrow \bold{D}_{\mathrm{cris}}(V)$ and  the canonical map $\bold{D}_{\mathrm{cris}}(V)\rightarrow t_V$, and the maps 
(b) and (c) are defined by using the Bloch-Kato's exponential and its dual when $l=p$, and the map (d) is the dual of (a) for $V^*$ (see \cite{Ka93b}, \cite{FK06} and \cite{Na14b} for the precise definition). Define a constant $\Gamma_L(V)\in \mathbb{Q}^{\times}$ by 
$$\Gamma_L(V):=\begin{cases} 1& (l\not=p) \\
\prod_{r\in\mathbb{Z}}\Gamma^*(r)^{-\mathrm{dim}_L\mathrm{gr}^{-r}\bold{D}_{\mathrm{dR}}(V)}& (l=p),\end{cases}$$
where we set 
$$\Gamma^*(r):=\begin{cases} (r-1)! & (r\geqq 1)\\ \frac{(-1)^r}{(-r)!} & (r\leqq 0).\end{cases}$$

We next define an isomorphism $$\theta_{\mathrm{dR},L}(V,\zeta):\mathrm{Det}_L(\bold{D}_{\mathrm{dR}}(V))\isom \Delta_{L,2}(V)$$ which is induced by the isomorphism 
$$ \mathrm{det}_L\bold{D}_{\mathrm{dR}}(V)=L\isom L_{a_l(V)}:x\mapsto \varepsilon_{L}(W(V), \zeta)\cdot x$$
when $l\not=p$ (remark that one has $\varepsilon_{L}(W(V), \zeta)\in L_{a_l(V)}$ when $l\not=p$), by the inverse of the isomorphism 
$$L_{a_p(T)}\otimes_L\mathrm{det}_LV\isom \mathrm{det}_L\bold{D}_{\mathrm{dR}}(V)(\subseteq \bold{B}_{\mathrm{dR}}\otimes_{\mathbb{Q}_p}\mathrm{det}_LV): x\mapsto \frac{1}{\varepsilon_L(W(V),\zeta)}\cdot\frac{1}{t_{\zeta}^{h_V}}\cdot x$$
when $l=p$, where we set $h_M:=\sum_{r\in \mathbb{Z}}r\cdot \mathrm{dim}_L\mathrm{gr}^{-r}\bold{D}_{\mathrm{dR}}(V)$.
Finally, we define the de Rham $\varepsilon$-isomorphism
$$\varepsilon_{L,\zeta}^{\mathrm{dR}}(V):\bold{1}_L\isom\Delta_L(V)$$ as the following composites
\begin{multline*}
\varepsilon_{L,\zeta}^{\mathrm{dR}}(V):\bold{1}_L\xrightarrow{\Gamma_L(V)\cdot\theta_L(V)}
\Delta_{L,1}(V)\boxtimes \mathrm{Det}_L(\bold{D}_{\mathrm{dR}}(V))\\
\xrightarrow{\mathrm{id}\boxtimes \theta_{\mathrm{dR},L}(V,\zeta)}
\Delta_{L,1}(V)\boxtimes\Delta_{L,2}(V)=\Delta_L(V).
\end{multline*}

The local $\varepsilon$-conjecture (Conjecture 1.8 \cite{Ka93b}, Conjecture 3.4.3 \cite{FK06}, and Conjecture 3.8 \cite{Na14b}) is the following, which is now a theorem when $l\not=p$ by \cite{Ya09}.

\begin{conjecture}\label{2.2}
Fix a prime $l$. Then, there exists a unique compatible family of isomorphisms
$$\varepsilon_{R,\zeta}(T):\bold{1}_R\isom\Delta_R(T)$$ 
for all the triples $(R,T,\zeta)$ such that $T$ is an $R$-representation of $G_{\mathbb{Q}_l}$ and $\zeta$ is a $\mathbb{Z}_l$-basis of $\Gamma(\overline{\mathbb{Q}}_p,\mathbb{Z}_l(1))$, which satisfies the following properties.
\begin{itemize}
\item[(1)]For any continuous $\mathbb{Z}_p$-algebra homomorphism $R\rightarrow R'$, one has
$$\varepsilon_{R,\zeta}(T)\otimes\mathrm{id}_{R'}=\varepsilon_{R',\zeta}(T\otimes_{R}R')$$ under the canonical isomorphism $$\Delta_{R}(T)\otimes_RR'\isom\Delta_{R'}(T\otimes_RR').$$
\item[(2)]For any exact sequence $0\rightarrow T_1\rightarrow T_2\rightarrow T_3\rightarrow 0$ of $R$-representations of $G_{\mathbb{Q}_l}$, one has
$$\varepsilon_{R,\zeta}(T_2)=\varepsilon_{R,\zeta}(T_1)\boxtimes\varepsilon_{R,\zeta}(T_3)$$ under the canonical isomorphism $$\Delta_R(T_2)\isom\Delta_R(T_1)\boxtimes\Delta_R(T_3).$$
\item[(3)]For each $a\in \mathbb{Z}_l^{\times}$, one has
$$\varepsilon_{R,\zeta^a}(T)=\mathrm{det}_RT(\mathrm{rec}_{\mathbb{Q}_l}(a))\cdot \varepsilon_{R,\zeta}(T).$$
\item[(4)]One has the following commutative diagrams 
\begin{equation*}
\begin{CD}
\Delta_{R}(T)@>\mathrm{can}>> \Delta_{R}(T^*)^{\vee} \\
@A \varepsilon_{R,\zeta^{-1}}(T) AA  @ VV\varepsilon_{R,\zeta}(T^*)^{\vee} V \\
\bold{1}_R@> \mathrm{id} >> \bold{1}_R
\end{CD}
\end{equation*}
when $l\not=p$, and 
\begin{equation*}
\begin{CD}
\Delta_{R}(T)@>\mathrm{can}>> \Delta_{R}(T^*)^{\vee}\boxtimes(R(r_T),0) \\
@A \varepsilon_{R,\zeta^{-1}}(T) AA  @ VV\varepsilon_{R,\zeta}(T^*)^{\vee}\boxtimes [\bold{e}_{r_T}\mapsto 1]V \\
\bold{1}_R@> \mathrm{can} >> \bold{1}_R\boxtimes \bold{1}_R
\end{CD}
\end{equation*}
when $l=p$.
\item[(5)]For any triple $(L,V,\zeta)$ such that $V$ is any $($resp. de Rham$)$ if $l\not=p$ $($resp. if $l=p$$)$, one has
$$\varepsilon_{L,\zeta}(V)=\varepsilon_{L,\zeta}^{\mathrm{dR}}(V).$$
\end{itemize}

\end{conjecture}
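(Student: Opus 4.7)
The plan is to separate uniqueness from existence and reduce both to a small class of test representations using conditions (1) and (2). For uniqueness, any two candidate families differ by a unit $u(T) \in R^\times$; property (1) makes $u$ stable under base change, and property (5) forces $u = 1$ on all de Rham specializations. The uniqueness argument then reduces to showing that the de Rham locus (for $l=p$, the trianguline plus critically twisted non-trianguline locus) is Zariski dense in every connected component of the moduli of $R$-representations, forcing $u \equiv 1$ globally.

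For existence when $l \neq p$, following Yasuda, I would reduce via (2) and Brauer-style induction to rank-one characters $\delta : G_{\mathbb{Q}_l} \to R^\times$, for which $\varepsilon_{R,\zeta}(R(\delta))$ can be built explicitly as a generalized Gauss sum from $\delta|_{\mathbb{Z}_l^\times}$ and $\zeta$, paired with the twist datum $R_{a_l(R(\delta))}$ encoding $\delta(\mathrm{rec}_{\mathbb{Q}_l}(p))$. At rank one, conditions (3) and (4) reduce to direct Gauss-sum identities and the abelian local functional equation. Extending to arbitrary $R$-representations requires interpolating Weil-Deligne $\varepsilon$-factors in $p$-adic families; the key input is that the Weil-Deligne data is locally constant on $\mathrm{Spec}(R)$ after trivializing inertia on a finite \'etale cover, so the monodromy correction $\det_K(-\mathrm{Fr}_l \mid M^{I_l}/(M^{N=0})^{I_l})$ extends coherently across the family.

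For $l = p$, I would handle rank one following Kato, using the Perrin-Riou/Coleman machinery to produce $\varepsilon_{R,\zeta}(R(\delta))$ and verifying (5) via the explicit reciprocity law that relates it to Bloch-Kato's dual exponential on de Rham specializations. For rank $\ge 2$, the fundamental obstacle, which is the very point of the paper, is that there is no classical $\varepsilon$-factor theory available for non-de Rham $p$-adic Galois representations, so the naive deformation bootstrap breaks down. My plan is to substitute Colmez's pairing from the $p$-adic local Langlands correspondence for $\mathrm{GL}_2(\mathbb{Q}_p)$; this produces a candidate family on the rank-two locus, and compatibility with (1)-(4) should follow formally from the exactness and duality properties of Colmez's Montreal functor. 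The hardest step I anticipate is verifying condition (5) on the non-trianguline de Rham locus, since Colmez's pairing is built from Banach-admissible representations while $\varepsilon^{\mathrm{dR}}$ is packaged in Weil-Deligne data; the bridge must be Emerton's compatibility theorem between the $p$-adic and classical local Langlands correspondences, combined with an explicit computation of the action of $w = \bigl(\begin{smallmatrix} 0 & 1 \\ 1 & 0 \end{smallmatrix}\bigr)$ on the Kirillov model of locally algebraic vectors, matched against the classical functional equation of the associated $\mathrm{GL}_2$-representation. For rank $> 2$ the conjecture remains open under this strategy, since no analogue of the $p$-adic Langlands correspondence is available.
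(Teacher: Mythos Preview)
The statement you are addressing is a \emph{conjecture} in the paper, not a theorem; there is no proof in the paper to compare your proposal against. The paper states Conjecture~2.2, recalls that Yasuda settled the $l\neq p$ case (Remark~2.4), and then devotes the rest of the article to establishing partial results in rank two for $l=p$ (Theorem~3.0). Your proposal is therefore not a proof of the conjecture but rather a survey of the known approaches together with the paper's own rank-two strategy, and it should be read as such.

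With that caveat, your outline of the rank-two $l=p$ case is broadly faithful to the paper's method: the candidate $\varepsilon$-isomorphism is indeed built from Colmez's pairing (the involution $w_{\delta_D}$ and the Iwasawa pairing), trianguline verification of condition~(5) goes through Dospinescu's explicit formula for $w_{\delta_D}$ on $\mathcal{R}_L(\delta_1)^{\psi=0}$, and the non-trianguline case uses Emerton's compatibility plus the Bushnell--Henniart formula for $w$ on the classical Kirillov model. However, you overstate what is achieved: even in rank two the paper does \emph{not} verify condition~(5) on the full non-trianguline de Rham locus, only at the critical twists $V(-r)(\delta)$ with $0\le r\le k-1$ and $\delta$ of finite order (Theorem~3.0~(ii)). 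Your sentence ``verifying condition (5) on the non-trianguline de Rham locus'' suggests the full condition is within reach by this method, which the paper does not claim.

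Your uniqueness sketch also has a gap. You propose to show $u\equiv 1$ by density of the ``trianguline plus critically twisted non-trianguline'' locus, but this set is not known to be Zariski dense in the universal deformation space; the paper instead uses density of absolutely irreducible \emph{crystalline} points (Proposition~3.3) to descend the isomorphism from $\mathcal{E}_R(\Gamma)$ to $\Lambda_R(\Gamma)$, and even this requires a hypothesis when $p=2$. Uniqueness in the full conjecture is asserted relative to condition~(5) holding for \emph{all} de Rham $V$, and the density input one actually has is of crystalline points, which suffices because crystalline implies trianguline.
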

\begin{rem}\label{2.3}
There exists a non-commutative version of this conjecture, but we only consider the commutative case in this article. See \cite{FK06} for the non-commutative version.

\end{rem}
\begin{rem}\label{Ya}
When $l\not=p$, this conjecture has been already proved by Yasuda \cite{Ya09}. More precisely,  he proved that the correspondence $$(L,V,\zeta)\mapsto \varepsilon_{0,L}(V,\zeta):=\mathrm{det}_L(-\varphi_l|V^{I_l})\cdot \varepsilon_{L}(W(V), \zeta)\in L_{a_l(V)}$$ defined for all the triples $(L,V,\zeta)$ as in the condition (5) (for $l\not=p$) in Conjecture \ref{2.2} uniquely  extends to a correspondence $$(R,T,\zeta)\mapsto \varepsilon_{0,R}(T,\zeta)\in R_{a_l(T)}$$ for all the triples $(R,T,\zeta)$ as in the conjecture, which satisfies the similar properties (1)-(5) in the conjecture. Then, the isomorphism $\varepsilon_{R,\zeta}(T):\bold{1}_R\isom \Delta_R(T)$ is defined as the product of the isomorphism 
$\bold{1}_R\isom\Delta_{R,2}(T)$ induced by the isomorphism $$R\isom R_{a_l(T)}:x\mapsto \varepsilon_{0,R}(T,\zeta)\cdot x$$ with the isomorphism $\bold{1}_R\isom \Delta_{R,1}(T)$ defined by 
$$\bold{1}_R\isom \mathrm{Det}_R(C_{\mathrm{cont}}^{\bullet}(I_l, T))\boxtimes\mathrm{Det}_R(C_{\mathrm{cont}}^{\bullet}(I_l, T))^{-1}\isom 
\mathrm{Det}_R(C_{\mathrm{cont}}^{\bullet}(G_{\mathbb{Q}_l}, T)), $$ 
where the first isomorphism is the canonical one and the second isomorphism is induced by the canonical quasi-isomorphism 
$$C_{\mathrm{cont}}^{\bullet}(G_{\mathbb{Q}_l}, T)\isom [C_{\mathrm{cont}}^{\bullet}(I_l, T)\xrightarrow{1-\varphi_l^{-1}}C_{\mathrm{cont}}^{\bullet}(I_l, T)].$$

\end{rem}

Before proceeding to the next subsection, we recall here the notion of the cyclotomic deformations of $R$-representations, which will play an important role in this article. 
For any $R$ such that $p\not\in R^{\times}$, we set $\Lambda_R(\Gamma):=R[[\Gamma]]$ the Iwasawa algebra of $\Gamma$ with coefficients in $R$, and set $\Lambda_L(\Gamma):=\Lambda_{\mathcal{O}_L}(\Gamma)[1/p]$ for any $L$. For an $R$-representation $T$ of $G_{\mathbb{Q}_l}$, we define a $\Lambda_R(\Gamma)$-representation $\bold{Dfm}(T)$ which we call the cyclotomic deformation of $T$ by 
$$\bold{Dfm}(T):=T\otimes_R\Lambda_R(\Gamma)$$ on which $G_{\mathbb{Q}_l}$ acts by 
$$g(x\otimes \lambda):=g(x)\otimes [\bar{g}]^{-1}\cdot\lambda$$ for $g\in G_{\mathbb{Q}_l}$, $x\in T$, $\lambda\in \Lambda_R(\Gamma)$. We set
$$\Delta_{R,*}^{\mathrm{Iw}}(T):=\Delta_{\Lambda_R(\Gamma),*}(\bold{Dfm}(T)) \text{ and } \mathrm{H}^i_{\mathrm{Iw}}(\mathbb{Q}_l, T):=\mathrm{H}^i(\mathbb{Q}_l, \bold{Dfm}(T))$$
for for $*=1,2,$ or $*=\phi$ (the empty set).
For a continuous homomorphism $\delta:\Gamma\rightarrow R^{\times}$, define a continuous $R$-algebra homomorphism $$f_{\delta}:\Lambda_R(\Gamma)\rightarrow R:[\gamma]\mapsto \delta(\gamma)^{-1}$$ for any $\gamma\in \Gamma$. Then, one has a canonical isomorphism of $R$-representations of $G_{\mathbb{Q}_l}$
$$\bold{Dfm}(T)\otimes_{\Lambda_R(\Gamma),f_{\delta}}R\isom T(\delta):(x\otimes \lambda)\otimes a\mapsto a\cdot f_{\delta}(\lambda)\cdot x\otimes\bold{e}_{\delta}$$ for $x\in T, \lambda\in \Lambda_R(\Gamma), a\in R$. By the compatibility with base changes, this isomorphism induces a canonical isomorphism $$\Delta_{R}^{\mathrm{Iw}}(T)\otimes_{\Lambda_R(\Gamma),f_{\delta}}R\isom \Delta_R(T(\delta)).$$ 
Let $\iota:\Lambda_R(\Gamma)\isom \Lambda_R(\Gamma)$ be the involution of the topological 
$R$-algebra defined by 
$\iota([\gamma])=[\gamma]^{-1}$ for any $\gamma\in \Gamma$. For any $\Lambda_R(\Gamma)$-module $M$, we set $M^{\iota}:=M\otimes_{\Lambda_R(\Gamma),\iota}\Lambda_R(\Gamma)$, i.e. $M^{\iota}=M$ as $R$-module on which $\Lambda_R(\Gamma)$ acts by $\lambda\cdot_{\iota} x:=\iota(\lambda)\cdot x$ for $\lambda\in \Lambda_R(\Gamma), x\in M$, where $\cdot_{\iota}$ is the action on $M^{\iota}$ and $\cdot$ is the usual action on $M$. One has a canonical isomorphism of 
$\Lambda_R(\Gamma)$-representations of $G_{\mathbb{Q}_l}$
$$\bold{Dfm}(T^*)^{\iota}\isom \bold{Dfm}(T)^*:x\otimes \lambda\mapsto [y\otimes \lambda'\mapsto \iota(\lambda)\cdot \lambda'\otimes x(y)]$$
for $x\in T^*$, $y\in T$, $\lambda,\lambda'\in \Lambda_R(\Gamma)$, which naturally induces a canonical isomorphism 
\begin{equation}\label{dual}
\Delta^{\mathrm{Iw}}_{R,*}(T^*)^{\iota}\isom \Delta_{\Lambda_R(\Gamma),*}(\bold{Dfm}(T)^*)
\end{equation}
for $*=1,2,$ or $*=\phi$.


\subsection{Review of the theory of \'etale $(\varphi,\Gamma)$-modules}
From now on until the end of $\S 3$, we concentrate on the case where $l=p$, and fix a basis $\bold{e}_1:=\zeta\in \mathbb{Z}_p(1):=\Gamma(\overline{\mathbb{Q}}_p, \mathbb{Z}_p(1))$. Set $\bold{e}_r:=\bold{e}_1^{\otimes r}\in \mathbb{Z}_p(r)$ for $r\in \mathbb{Z}$. 
For $R$ as in $\S2.1$ such that $p\not \in R^{\times}$, we set $\mathcal{E}_R:=\varprojlim_{n\geqq 1}(R/\mathrm{Jac}(R)^n[[X]][1/X])$, and set $\mathcal{E}_L:=\mathcal{E}_{\mathcal{O}_L}[1/p]$ for $R=L$, on which $\varphi$ and $\Gamma$ acts as continuous $R$-algebra homomorphism by $\varphi(X):=(1+X)^p-1$, $\gamma(X):=(1+X)^{\chi(\gamma)}-1$ for $\gamma\in \Gamma$. 

For $R$ such that $p\not\in R^{\times}$, we say that $D$ is an \'etale $(\varphi,\Gamma)$-module over $\mathcal{E}_R$ if $D$ is a finite projective $\mathcal{E}_R$-module equipped with a Frobenius structure $\varphi:\varphi^*D:=D\otimes_{\mathcal{E}_R,\varphi}\mathcal{E}_R\isom D$ and a commuting continuous semi-linear action $\Gamma\times D\rightarrow D:(\gamma,x)\mapsto \gamma(x)$. For $R=L$, we say that $D$ is an \'etale $(\varphi,\Gamma)$-module over $\mathcal{E}_L$ if $D$ is the base change to $\mathcal{E}_L$ of an \'etale $(\varphi,\Gamma)$-module over $\mathcal{E}_{\mathcal{O}_L}$. 
We denote by $D^{\vee}:=\mathrm{Hom}_{\mathcal{E}_R}(D, \mathcal{E}_R)$ the dual $(\varphi,\Gamma)$-module of $D$, by $D(r):=D\otimes_{\mathbb{Z}_p}\mathbb{Z}_p(r)$ the $r$-th Tate twist of $D$ (for $r\in \mathbb{Z}$), by $D^*:=D^{\vee}(1)$ the Tate dual of $D$. One has the Fontaine's equivalence $T\mapsto D(T)$ between the category of $R$-representations of $G_{\mathbb{Q}_p}$ and that of \'etale $(\varphi,\Gamma)$-modules over $\mathcal{E}_R$. In the construction of 
this equivalence, we need to embed the ring $\mathcal{E}_{\mathbb{Z}_p}$ ($\mathcal{E}_R$ for $R=\mathbb{Z}_p$) into the Fontaine ring $\widetilde{\bold{A}}^+(:=W(\widetilde{\bold{E}}^+))$ by 
$X\mapsto X_{\zeta}:=[(\bar{\zeta}_{p^n})_{n\geqq 0}]-1\in \widetilde{\bold{A}}^+$, which depends on the choice of the fixed basis $\zeta=(\zeta_{p^n})_{n\geqq 0}\in \mathbb{Z}_p(1)$. We remark that, for a different choice $\zeta^a$ of a basis for $a\in \mathbb{Z}_p^{\times}$, one has $X_{\zeta^a}=
(X_{\zeta}+1)^a-1$. Define a left inverse $\psi:D\rightarrow D$ of $\varphi$ by 
$$\psi:D=\sum_{i=1}^{p-1}(1+X)^i\varphi(D)\rightarrow D:\sum_{i=0}^{p-1}(1+X)^i\varphi(x_i)\mapsto x_0.$$ 

We next recall the cohomology theory of $(\varphi,\Gamma)$-modules. 
Let $\Gamma_{\mathrm{tor}}\subseteq \Gamma$ denote the torsion subgroup of $\Gamma$. 
Define a finite subgroup $\Delta\subseteq \Gamma_{\mathrm{tor}}$ by 
$\Delta:=\{1\}$ when $p>2$ and $\Delta:=\Gamma_{\mathrm{tor}}$ when $p=2$. 
Then $\Gamma/\Delta$ has a topological generator $\bar{\gamma}$,  which we fix. We also fix a lift $\gamma\in \Gamma$ of 
$\bar{\gamma}$.

\begin{rem}\label{2.6}
When $p=2$ and $p\not\in R^{\times}$, the cohomology theory of $(\varphi,\Gamma)$-modules over 
$\mathcal{E}_R$ is a little more subtle than that in other cases since one has $|\Gamma_{\mathrm{tor}}|=p$ in this case. 
To avoid this subtlety, we treat $(\varphi,\Gamma)$-modules over the rings of the form $R=R_0[1/p]$, where $R_0$ is a topological $\mathbb{Z}_p$-algebra satisfying the condition (i) in \S2.1. 
For such $R$, we set $\mathcal{E}_{R}:=\mathcal{E}_{R_0}[1/p]$, and say that an $\mathcal{E}_{R}$-module $D$ is an \'etale $(\varphi,\Gamma)$-module over $\mathcal{E}_{R}$ if it is the base change to $\mathcal{E}_R$ of an \'etale $(\varphi,\Gamma)$-module over $\mathcal{E}_{R_0}$. 
From now on until the end of this article, we use the notation $R$ to represent topological $\mathbb{Z}_p$-algebras of the form $R_0$ or $R_0[1/p]$ (resp. $R_0[1/2]$) as above when 
$p\geqq 3$ (resp. $p=2$), and we only consider the $R$-representations of $G_{\mathbb{Q}_l}$ or $G_{\mathbb{Q},S}$, and \'etale $(\varphi,\Gamma)$-modules over $\mathcal{E}_{R}$ for such $R$. 

\end{rem}
\begin{defn}\label{2.11}
For  any \'etale $(\varphi,\Gamma)$-module $D$ over $\mathcal{E}_{R}$, define complexes 
$C^{\bullet}_{\varphi,\gamma}(D)$ and $C^{\bullet}_{\psi,\gamma}(D)$ of $R$-modules
concentrated in degree $[0,2]$, and define a morphism $\Psi_D$ between them as follows:
\begin{equation}\label{a}
\begin{CD}
C^{\bullet}_{\varphi,\gamma}(D)@.= [D^{\Delta} @> (\gamma-1,\varphi-1)>> 
D^{\Delta}\oplus D^{\Delta} @> (\varphi-1)\oplus(1-\gamma) >> D^{\Delta}] \\
@V \Psi_D VV  @ VV\mathrm{id} V @ VV \mathrm{id}\oplus -\psi V @ VV -\psi V \\
C^{\bullet}_{\psi,\gamma}(D)@.= [D^{\Delta} @> (\gamma-1,\psi-1)>> 
D^{\Delta}\oplus D^{\Delta} @> (\psi-1)\oplus(1-\gamma) >> D^{\Delta}].
\end{CD}
\end{equation}
\end{defn}

For $i\in \mathbb{Z}_{\geqq 0}$, we denote by $\mathrm{H}^i_{\varphi,\gamma}(D)$ (resp. $\mathrm{H}^i_{\psi,\gamma}(D)$) the $i$-th cohomology of 
$C^{\bullet}_{\varphi,\gamma}(D)$ (resp.$C^{\bullet}_{\psi,\gamma}(D)$). It is known that 
the map $\Psi_D:C^{\bullet}_{\varphi,\gamma}(D)\rightarrow C^{\bullet}_{\psi,\gamma}(D)$ is quasi-isomorphism by (for example) Proposition I.5.1 and Lemme I.5.2 of \cite{CC99}. In this article, we freely identify 
$C^{\bullet}_{\varphi,\gamma}(D)$  (resp. $\mathrm{H}^i_{\varphi,\gamma}(D)$) 
with $C^{\bullet}_{\psi,\gamma}(D)$ in $\bold{D}^-(R)$ (resp. $\mathrm{H}^i_{\psi,\gamma}(D)$) via the 
quasi-isomorphism $\Psi_D$.

For \'etale $(\varphi,\Gamma)$-modules $D_1, D_2$ over $\mathcal{E}_R$, one has an $R$-bilinear cup product pairing 
$$C^{\bullet}_{\varphi,\gamma}(D_1)\times C^{\bullet}_{\varphi,\gamma}(D_2)
\rightarrow C^{\bullet}_{\varphi,\gamma}(D_1\otimes D_2),$$
which induces the cup product pairing 
$$\cup:\mathrm{H}^i_{\varphi,\gamma}(D_1)
\times \mathrm{H}^{j}_{\varphi,\gamma}(D_2)\rightarrow \mathrm{H}^{i+j}_{\varphi,\gamma}(D_1\otimes D_2).$$ 
For example, this pairing is explicitly defined by the formulae
$$x\cup [y]:=[x\otimes y] \text{ for } i=0, j=2,$$
$$[x_1,y_1]\cup [x_2,y_2]:=[x_1\otimes \gamma(y_2)-y_1\otimes \varphi(x_2)] \text{ for } i=j=1.$$

\begin{defn}\label{dual}
 Using the cup product,
the evaluation map $\mathrm{ev}:D^*\otimes D\rightarrow \mathcal{E}_R(1):f\otimes x \mapsto f(x)$, the comparison isomorphism $\mathrm{H}^{2}(\mathbb{Q}_p, R(1))
\isom \mathrm{H}^2_{\varphi,\gamma}(\mathcal{E}_R(1))$ (see below), and the Tate's trace map $\mathrm{H}^2(\mathbb{Q}_p, R(1))\isom R$, one gets the Tate duality pairings 
$$C^{\bullet}_{\varphi,\gamma}(D^*)\times C^{\bullet}_{\varphi,\gamma}(D)
\rightarrow  R[-2]$$
and 
$$\langle-, -\rangle_{\mathrm{Tate}}:\mathrm{H}^i_{\varphi,\gamma}(D^*)\times \mathrm{H}^{2-i}_{\varphi,\gamma}(D)\rightarrow R.$$
\end{defn}


 Let $T$ be an $R$-representation of $G_{\mathbb{Q}_p}$. 
 By the result of \cite{He98}, one has a canonical functorial isomorphism 
$$C^{\bullet}_{\mathrm{cont}}(G_{\mathbb{Q}_p}, T)\isom C^{\bullet}_{\varphi,\gamma}(D(T))$$ 
in $\bold{D}^-(R)$ and a canonical functorial $R$-linear isomorphism 
$$\mathrm{H}^i(\mathbb{Q}_p, T)\isom \mathrm{H}^i_{\varphi,\gamma}(D(T)).$$ 
 In particular, we obtain a canonical isomorphism 
$$\Delta_{R,1}(T)\isom\mathrm{Det}_R(C^{\bullet}_{\varphi,\gamma}(D(T)))=:\Delta_{R,1}(D(T)).$$

For an \'etale $(\varphi,\Gamma)$-module $D$. We freely regard the rank one $(\varphi,\Gamma)$-module $\mathrm{det}_{\mathcal{E}_R}D$ as a character $\mathrm{det}_{\mathcal{E}_R}D:G_{\mathbb{Q}_p}^{\mathrm{ab}}\rightarrow R^{\times}$ by the Fontaine's equivalence. Then, the $(\varphi,\Gamma)$-module $\mathrm{det}_{\mathcal{E}_R}D$ has a basis $\bold{e}$ on which $\varphi$ and $\Gamma$ act by 
$$\varphi(\bold{e})=\mathrm{det}_{\mathcal{E}_R}D(\mathrm{rec}_{\mathbb{Q}_p}(p))\cdot \bold{e},\,\, \gamma'(\bold{e})=\mathrm{det}_{\mathcal{E}_R}D(\gamma')\cdot \bold{e}$$ for $\gamma'\in \Gamma$, where we regard $\Gamma$ as a subgroup of $G_{\mathbb{Q}_p}^{\mathrm{ab}}$ by the canonical isomorphism 
$\mathrm{Gal}(\mathbb{Q}_p^{\mathrm{ab}}/\mathbb{Q}_p^{\mathrm{ur}})\isom \Gamma$. Using the character $\mathrm{det}_{\mathcal{E}_R}D$, we define 
$$\mathcal{L}_R(D):=\{x\in \mathrm{det}_{\mathcal{E}_R}D|\varphi(x)=\mathrm{det}_{\mathcal{E}_R}D(\mathrm{rec}_{\mathbb{Q}_p}(p))\cdot x, \gamma'(x)=\mathrm{det}_{\mathcal{E}_R}D(\gamma')\cdot x \text{ for  }\gamma'\in \Gamma\},$$
which is a free $R$-module of rank one, 
and define the following graded invertible $R$-modules 
$$\Delta_{R,2}(D):=(\mathcal{L}_R(D), r_D)\text{ and }\Delta_R(D):=\Delta_{R,1}(D)\boxtimes \Delta_{R,2}(D).$$
By (the proof of) Lemma 3.1 of \cite{Na14b}, there exists a canonical isomorphism 
$$\Delta_{R,2}(T)\isom \Delta_{R,2}(D(T))$$ 
for any $R$-representation $T$ of $G_{\mathbb{Q}_p}$. Therefore, 
we obtain a canonical isomorphism 
$$\Delta_R(T)\isom \Delta_R(D(T)),$$
by which we identify the both sides. 

We next recall the theory of the Iwasawa cohomology of $(\varphi,\Gamma)$-modules. 
For an \'etale $(\varphi,\Gamma)$-module $D$ over $\mathcal{E}_R$, we define the cyclotomic deformation 
$\bold{Dfm}(D)$ which is an \'etale $(\varphi,\Gamma)$-module over 
$\mathcal{E}_{\Lambda_R(\Gamma)}$ by 
$$\bold{Dfm}(D):=D\otimes_{\mathcal{E}_R}\mathcal{E}_{\Lambda_R(\Gamma)}$$ as $\mathcal{E}_{\Lambda_R(\Gamma)}$-module on which $\varphi$ and $\Gamma$ act by 
$$\varphi(x\otimes y):=\varphi(x)\otimes \varphi(y),\,\, \gamma'(x\otimes y):=\gamma'(x)\otimes [\gamma']^{-1}\cdot \gamma'(y)$$ for $x\in D, y\in \mathcal{E}_{\Lambda_R(\Gamma)}$, $\gamma'\in \Gamma$. Then, one has a canonical isomorphism $$D(\bold{Dfm}(T))\isom 
\bold{Dfm}(D(T)).$$ Hence, if we set 
$$\mathrm{H}^i_{\mathrm{Iw}, \varphi,\gamma}(D):=\mathrm{H}^i_{\varphi,\gamma}(\bold{Dfm}(D)) \text{ and }\Delta_{R}^{\mathrm{Iw}}(D):=\Delta_{\Lambda_R(\Gamma)}(\bold{Dfm}(D)), \text{ etc}.,$$ then we obtain the following canonical isomorphisms 
$$\mathrm{H}^i_{\mathrm{Iw}}(\mathbb{Q}_p, T)\isom \mathrm{H}^i_{\mathrm{Iw},\varphi,\gamma}(D(T))\text{ and } \Delta_R^{\mathrm{Iw}}(T)\isom \Delta_R^{\mathrm{Iw}}(D(T)), \text{ etc. }$$ for any $R$-representation $T$ of 
$G_{\mathbb{Q}_p}$. For any continuous homomorphism $\delta:\Gamma\rightarrow R^{\times}$, the base change with respect to $f_{\delta}:\Lambda_R(\Gamma)\rightarrow R:[\gamma']\mapsto \delta(\gamma')^{-1}$  induces canonical isomorphisms 
$$\bold{Dfm}(D)\otimes_{\Lambda_R(\Gamma),f_{\delta}}R\isom 
D(\delta):(x\otimes y)\otimes 1\mapsto f_{\delta}(y)x\otimes\bold{e}_{\delta}$$ and 
$$\Delta_R^{\mathrm{Iw}}(D)\otimes_{\Lambda_R(\Gamma),f_{\delta}}R\isom \Delta_R(D(\delta)),$$ and induces a canonical specialization map
$$\mathrm{sp}_{\delta}:\mathrm{H}^i_{\mathrm{Iw},\varphi,\gamma}(D)\rightarrow \mathrm{H}^i_{\varphi,\gamma}(D(\delta)).$$ 

We remark that the continuous action of $\Gamma$ on $D$ uniquely extends to a $\Lambda_R(\Gamma)$-module structure on $D$. We define a complex $C^{\bullet}_{\psi}(D)$ of $\Lambda_R(\Gamma)$-modules which concentrated in degree $[1,2]$ by 
$$C^{\bullet}_{\psi}(D):[D\xrightarrow{\psi-1}D].$$
By the result of \cite{CC99}, there exists a canonical isomorphism
$$C^{\bullet}_{\psi}(D)\isom C^{\bullet}_{\psi,\gamma}(\bold{Dfm}(D))$$
in $\bold{D}^-(\Lambda_R(\Gamma))$. In particular, there exists a canonical isomorphism 
$$\mathrm{can}:D^{\psi=1}\isom \mathrm{H}^1_{\mathrm{Iw}, \psi,\gamma}(D)$$ of $\Lambda_R(\Gamma)$-modules which is explicitly defined by 
$$x\mapsto \left[\left(\frac{p-1}{p}\cdot \mathrm{log}(\chi(\gamma))p_{\Delta}(x\otimes 1), 0\right)\right],$$ where $p_{\Delta}:=\frac{1}{|\Delta|}\sum_{\sigma\in \Delta} [\sigma]\in \mathbb{Z}[1/2][\Delta]$ (remark that we have $\frac{p-1}{p}\cdot \mathrm{log}(\chi(\gamma))\cdot p_{\Delta}\in \mathbb{Z}_p[\Delta]$ for any $p$). Hence, if we define a specialization map
$$\iota_{\delta}:D^{\psi=1}\rightarrow \mathrm{H}^1_{\psi,\gamma}(D(\delta)):x\mapsto x_{\delta}:=\left[\left(\frac{p-1}{p}\cdot \mathrm{log}(\chi(\gamma))\cdot p_{\Delta}(x\otimes\bold{e}_{\delta}),0\right)\right]$$for any continuous homomorphism 
$\delta:\Gamma\rightarrow R^{\times}$,  then it makes the 
diagram
\begin{equation}
\begin{CD}
D^{\psi=1}@> \mathrm{can} >> \mathrm{H}^1_{\mathrm{Iw},\psi,\gamma}(D)\\
@VV \iota_{\delta} V  @ VV \mathrm{sp}_{\delta} V \\
\mathrm{H}^1_{\psi,\gamma}(D(\delta)) @> \mathrm{id} > >
\mathrm{H}^1_{\psi,\gamma}(D(\delta))
\end{CD}
\end{equation}
commutative.

We next consider the Tate dual of $\bold{Dfm}(D)$. For this, we first remark that 
the involution $\iota:\Lambda_R(\Gamma)\isom \Lambda_R(\Gamma):[\gamma']\mapsto [\gamma']^{-1}$ naturally induces an $\mathcal{E}_R$-linear  involution $\iota:\mathcal{E}_{\Lambda_R(\Gamma)}\isom 
\mathcal{E}_{\Lambda_R(\Gamma)}$. For an \'etale $(\varphi,\Gamma)$-module $D$ over $\mathcal{E}_R$, define an \'etale 
$(\varphi,\Gamma)$-module $D\otimes_{\mathcal{E}_R}\widetilde{\mathcal{E}_{\Lambda_R(\Gamma)}}$ over $\mathcal{E}_{\Lambda_R(\Gamma)}$ by
$$D\otimes_{\mathcal{E}_R}\widetilde{\mathcal{E}_{\Lambda_R(\Gamma)}}=D\otimes_{\mathcal{E}_R}\mathcal{E}_{\Lambda_R(\Gamma)}$$ as $\mathcal{E}_{\Lambda_R(\Gamma)}$-module on which 
$\varphi$ and $\Gamma$ act by $$\varphi(x\otimes y)=\varphi(x)\otimes\varphi(y)\text{ and 
 }\gamma'(x\otimes y)=\gamma'(x)\otimes [\gamma']\cdot \gamma'(y)$$ for $x\in D$, $y\in \mathcal{E}_{\Lambda_R(\Gamma)}, \gamma'\in \Gamma$. Then, the isomorphism 
 $$D\otimes_{\mathcal{E}_R}\mathcal{E}_{\Lambda_R(\Gamma)}\isom D\otimes_{\mathcal{E}_R}\mathcal{E}_{\Lambda_R(\Gamma)}:x\otimes y\mapsto x\otimes \iota(y)$$ induces an isomorphism 
 $$D\otimes_{\mathcal{E}_R}\widetilde{\mathcal{E}_{\Lambda_R(\Gamma)}}\isom \bold{Dfm}(D)^{\iota}$$ of $(\varphi,\Gamma)$-modules over $\mathcal{E}_{\Lambda_R(\Gamma)}$. Since one has a canonical isomorphism 
$$(D\otimes_{\mathcal{E}_R}\widetilde{\mathcal{E}_{\Lambda_R(\Gamma)}})\otimes_{\Lambda_R(\Gamma),f_{\delta}}R\isom D(\delta^{-1}):(x\otimes y)\otimes 1\mapsto f_{\delta}(y)x\otimes \bold{e}_{\delta^{-1}}$$ for any $\delta:\Gamma\rightarrow R^{\times}$, we obtain a canonical specialization map 
\begin{equation*}
\tilde{\iota}_{\delta}:D^{\psi=1,\iota}\isom \mathrm{H}^1_{\psi,\gamma}(\bold{Dfm}(D)^{\iota})
\isom \mathrm{H}^1_{\psi,\gamma}(D\otimes_{\mathcal{E}_R}\widetilde{\mathcal{E}_{\Lambda_R(\Gamma)}})\rightarrow \mathrm{H}^1_{\psi,\gamma}(D(\delta^{-1}))
\end{equation*}
which is explicitly defined by 
$$x\mapsto \left[\left(\frac{p-1}{p}\cdot \mathrm{log}(\chi(\gamma))\cdot p_{\Delta}(x\otimes\bold{e}_{\delta^{-1}}),0\right)\right].$$

We apply this to the Tate dual $D^*$ of $D$. Since one has a canonical isomorphism 
$$\bold{Dfm}(D^*)^{\iota}\isom D^*\otimes_{\mathcal{E}_R}\widetilde{\mathcal{E}_{\Lambda_R(\Gamma)}}
\isom \bold{Dfm}(D)^{*},$$ we obtain canonical isomorphisms
$$\Delta_{R}^{\mathrm{Iw}}(D^*)^{\iota}\isom \Delta_{\Lambda_R(\Gamma)}(\bold{Dfm}(D)^*)$$
 and 
$$\mathrm{can}:(D^*)^{\psi=1,\iota}\isom \mathrm{H}^1_{\psi,\gamma}(\bold{Dfm}(D^*)^{\iota})\isom 
\mathrm{H}^1_{\psi,\gamma}(\bold{Dfm}(D)^*),$$ which makes
 the diagram 

\begin{equation}
\begin{CD}
(D^*)^{\psi=1,\iota}@> \mathrm{can} >> \mathrm{H}^1_{\psi,\gamma}(\bold{Dfm}(D)^*)\\
@VV \tilde{\iota}_{\delta} V  @ VV \mathrm{sp}_{\delta} V \\
\mathrm{H}^1_{\psi,\gamma}(D(\delta)^*) @> \mathrm{id} > >
\mathrm{H}^1_{\psi,\gamma}(D(\delta)^{*})
\end{CD}
\end{equation}
for any $\delta:\Gamma\rightarrow R^{\times}$ commutative, where the right vertical arrow is the specialization map with respect to the base change 
$$\bold{Dfm}(D)^{*}\otimes_{\Lambda_R(\Gamma),f_{\delta}}R\isom (\bold{Dfm}(D)\otimes_{\Lambda_R(\Gamma),f_{\delta}}R)^*\isom D(\delta)^{*}.$$ 

Using these preliminaries, we define a $\Lambda_R(\Gamma)$-bilinear pairing 
$$\{-,-\}_{\mathrm{Iw}}:(D^*)^{\psi=1,\iota}\times D^{\psi=1}\rightarrow \Lambda_R(\Gamma)$$
which we call the Iwasawa pairing by the following commutative diagram 
\begin{equation}
\begin{CD}
 (D^*)^{\psi=1,\iota}\times D^{\psi=1}@> \mathrm{can}\times \mathrm{can} >> \mathrm{H}^1_{\psi,\gamma}(\bold{Dfm}(D)^*)\times \mathrm{H}^1_{\psi,\gamma}(\bold{Dfm}(D))\\
@VV\{-,-\}_{\mathrm{Iw}}  V  @ VV \langle-,-\rangle_{\mathrm{Tate}}  V \\
\Lambda_R(\Gamma)@> \mathrm{id}> >
\Lambda_R(\Gamma).
\end{CD}
\end{equation}
 From the arguments above, we obtain the commutative diagram
\begin{equation}\label{CD6}
\begin{CD}
(D^*)^{\psi=1,\iota}\times D^{\psi=1}@> \tilde{\iota}_{\delta}\times \iota_{\delta} >>\mathrm{H}^1_{\psi,\gamma}(D(\delta)^*)\times \mathrm{H}^1_{\psi,\gamma}(D(\delta))\\
@VV\{-,-\}_{\mathrm{Iw}}  V  @ VV \langle-,-\rangle_{\mathrm{Tate}}  V \\
\Lambda_R(\Gamma)@> f_{\delta}> >
R
\end{CD}
\end{equation}
for any $\delta:\Gamma\rightarrow R^{\times}$.
\begin{rem}\label{2.9.1}
We remark that the pairing $\{-,-\}_{\mathrm{Iw}}$ coincides with the Colmez's Iwasawa pairing 
which is defined in \S VI.1 of \cite{Co10a} in a different way.
\end{rem}

 For a continuous homomorphism $\delta:\Gamma\rightarrow R^{\times}$, we define a continuous 
 $R$-algebra automorphism 
 $g_{\delta}:\Lambda_R(\Gamma)\isom \Lambda_R(\Gamma)$ by $g_{\delta}([\gamma'])=\delta(\gamma')^{-1}\cdot[\gamma']$ for $\gamma'\in \Gamma$.
 \begin{lemma}\label{2.10.1}For any $\delta:\Gamma\rightarrow R^{\times}$, 
 one has the following commutative diagram
  \begin{equation}
\begin{CD}
(D^*)^{\psi=1,\iota}\times D^{\psi=1}@>  \{-,-\}_{\mathrm{Iw}}>> \Lambda_R(\Gamma) 
\\
@V (x,y)\mapsto (x\otimes \bold{e}_{\delta^{-1}}, y\otimes \bold{e}_{\delta}) VV  @ VV g_{\delta} V \\
(D(\delta)^*)^{\psi=1,\iota}\times D(\delta)^{\psi=1}@>  \{-,-\}_{\mathrm{Iw}}>> \Lambda_R(\Gamma).
\end{CD}
\end{equation}

 \end{lemma}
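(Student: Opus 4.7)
The plan is to verify the identity by computing the image of both sides under every specialization map $f_{\delta'}:\Lambda_R(\Gamma)\to R$ attached to a continuous character $\delta':\Gamma\to R^\times$, using the characterization of the Iwasawa pairing provided by diagram (\ref{CD6}). An elementary calculation on the topological generators gives $f_{\delta'}\circ g_\delta=f_{\delta\delta'}$, since both continuous $R$-algebra maps send $[\gamma']$ to $\delta(\gamma')^{-1}\delta'(\gamma')^{-1}$. Combining this with (\ref{CD6}) applied to the character $\delta\delta'$ yields
\[
f_{\delta'}\bigl(g_\delta(\{x,y\}_{\mathrm{Iw}})\bigr)=f_{\delta\delta'}(\{x,y\}_{\mathrm{Iw}})=\langle\tilde{\iota}_{\delta\delta'}(x),\iota_{\delta\delta'}(y)\rangle_{\mathrm{Tate}}.
\]
On the other hand, applying (\ref{CD6}) to the pair $(D(\delta),\delta')$ (i.e.\ replacing $D$ by $D(\delta)$ throughout) gives
\[
f_{\delta'}(\{x\otimes\mathbf{e}_{\delta^{-1}},y\otimes\mathbf{e}_\delta\}_{\mathrm{Iw}})=\langle\tilde{\iota}_{\delta'}(x\otimes\mathbf{e}_{\delta^{-1}}),\iota_{\delta'}(y\otimes\mathbf{e}_\delta)\rangle_{\mathrm{Tate}},
\]
where the Tate pairing takes place on the cohomology of $D(\delta)(\delta')$ and its Tate dual.

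The next step is to compare the two Tate pairings. Using the canonical isomorphism of $(\varphi,\Gamma)$-modules $D(\delta)(\delta')\isom D(\delta\delta')$ induced by the convention $\mathbf{e}_\delta\otimes\mathbf{e}_{\delta'}\mapsto\mathbf{e}_{\delta\delta'}$ (and its Tate dual $D(\delta)(\delta')^*\isom D(\delta\delta')^*$), the explicit formula
\[
\iota_{\delta'}(y\otimes\mathbf{e}_\delta)=\bigl[\bigl(\tfrac{p-1}{p}\log\chi(\gamma)\cdot p_\Delta((y\otimes\mathbf{e}_\delta)\otimes\mathbf{e}_{\delta'}),0\bigr)\bigr]
\]
and its analog for $\tilde{\iota}_{\delta'}(x\otimes\mathbf{e}_{\delta^{-1}})$ show that these cocycles transport precisely to $\iota_{\delta\delta'}(y)$ and $\tilde{\iota}_{\delta\delta'}(x)$ respectively. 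Naturality of the Tate pairing under the above canonical isomorphism then forces the two pairings to coincide, so both sides of the claim have the same image under every $f_{\delta'}$.

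Finally, promoting ``equal after every specialization'' to equality in $\Lambda_R(\Gamma)$ is done either directly (for $R$ a finite extension of $\mathbb{Q}_p$, the maps $f_{\delta'}$ for characters valued in large enough finite extensions are jointly injective, and the semi-local case reduces by base change compatibility), or avoided entirely by the intrinsic observation that the map $v\otimes\mathbf{e}_\delta\otimes\mu\mapsto v\otimes g_\delta(\mu)$ defines a canonical $(\varphi,\Gamma)$-equivariant isomorphism $\bold{Dfm}(D(\delta))\isom g_\delta^*\bold{Dfm}(D)$ of $(\varphi,\Gamma)$-modules over $\mathcal{E}_{\Lambda_R(\Gamma)}$, so that the lemma follows at once from naturality of the Tate pairing under the ring automorphism $g_\delta$. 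The only real point to watch is the consistent bookkeeping of the natural identifications for twists and Tate duals; the main ``obstacle'' is thus bureaucratic rather than conceptual, once the formalism of diagram (\ref{CD6}) is in place.
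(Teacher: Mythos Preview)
Your second approach — the ``intrinsic observation'' that $v\otimes\mathbf{e}_\delta\otimes\mu\mapsto v\otimes g_\delta(\mu)$ gives an isomorphism $\bold{Dfm}(D(\delta))\isom g_\delta^*\bold{Dfm}(D)$ of $(\varphi,\Gamma)$-modules over $\mathcal{E}_{\Lambda_R(\Gamma)}$, together with its Tate-dual companion, so that the claim follows from naturality of the Tate pairing under the ring automorphism $g_\delta$ — is exactly the paper's proof (the paper writes it as $\bold{Dfm}(D)\isom g_{\delta,*}(\bold{Dfm}(D(\delta)))$, which is the same statement). So on that route there is nothing to add.

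Your first approach, via the specialization diagram~(\ref{CD6}) and the identity $f_{\delta'}\circ g_\delta=f_{\delta\delta'}$, is a genuine alternative and the computation is correct. The only soft spot is the final step, promoting ``equal under every $f_{\delta'}$'' to equality in $\Lambda_R(\Gamma)$. For $R=L$ a finite extension of $\mathbb{Q}_p$ this is fine: the $\overline{L}$-valued characters of $\Gamma$ are Zariski dense, so the $f_{\delta'}$ are jointly injective. For general $R$ satisfying condition~(i) your appeal to ``base change compatibility'' is a bit thin as stated; one still needs to argue that $\Lambda_R(\Gamma)$ injects into a product of rings on which the specialization argument already works (e.g.\ reduce to $R$ local and pass to $R[1/p]$, or observe directly that continuous $R$-valued characters of $\Gamma$ separate elements of $\Lambda_R(\Gamma)$ via Weierstrass preparation). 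None of this is hard, but it is not automatic, and the paper's direct argument sidesteps it entirely. What your specialization route buys is a more hands-on feel for why $g_\delta$ is the right twist; what the paper's route buys is uniformity in $R$ with no density argument needed.
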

 \begin{proof}
For a $\Lambda_R(\Gamma)$-module $M$, we define a $\Lambda_R(\Gamma)$-module $g_{\delta,*}(M):=M$ on which $\Lambda_R(\Gamma)$-acts by $g_{\delta}$. Then, 
we have isomorphisms $\bold{Dfm}(D)\isom g_{\delta,*}(\bold{Dfm}(D(\delta))):x\otimes y\mapsto 
(x\otimes \bold{e}_{\delta})\otimes g_{\delta}(y)$ and $\bold{Dfm}(D^*)^{\iota}\isom g_{\delta,*}(\bold{Dfm}(D(\delta)^*)^{\iota})
:x\otimes y\mapsto (x\otimes\bold{e}_{\delta^{-1}})\otimes g_{\delta^{-1}}(y)$, and these induce the following commutative diagram 

\begin{equation}
\begin{CD}
 \mathrm{H}^1_{\psi,\gamma}(\bold{Dfm}(D)^*)\times \mathrm{H}^1_{\psi,\gamma}(\bold{Dfm}(D))@> \isom >> g_{\delta, *}(\mathrm{H}^1_{\psi,\gamma}(\bold{Dfm}(D(\delta)^*)))\times g_{\delta,*}(\mathrm{H}^1_{\psi,\gamma}(\bold{Dfm}(D(\delta))))\\
@VV\langle-,-\rangle_{\mathrm{Tate}} V  @ VV \langle-,-\rangle_{\mathrm{Tate}}  V \\
\Lambda_R(\Gamma)@> g_{\delta}> >
g_{\delta, *}(\Lambda_R(\Gamma)).
\end{CD}
\end{equation}
By definition of $\{-,-\}_{\mathrm{Iw}}$, the lemma follows from this commutative diagram.

 \end{proof}
 
 Take an isomorphism $\Gamma\isom \Gamma_{\mathrm{tor}}\times \mathbb{Z}_p$ of topological groups. Let $\gamma_0\in \Gamma$ be the element corresponding to $(e,1)$ by this isomorphism, 
 where $e\in \Gamma_{\mathrm{tor}}$ is the identity element. 
 For $R$ such that $p\not\in R^{\times}$, define $\mathcal{E}_R(\Gamma):=\Lambda_R(\Gamma)[
 \frac{1}{[\gamma_0]-1}]^{\wedge}$ the $\mathrm{Jac}(R)$-adic completion of $\Lambda_R(\Gamma)[
 \frac{1}{[\gamma_0]-1}]$, which does not depend on the choice of the decomposition 
 $\Gamma\isom \Gamma_{\mathrm{tor}}\times \mathbb{Z}_p$. For $R=R_0[1/p]$ such that $p\not\in R_0^{\times}$, define $\mathcal{E}_R(\Gamma):=\mathcal{E}_{R_0}(\Gamma)[1/p]$. Here, we recall some properties of the base changes to $\mathcal{E}_R(\Gamma)$ of $\Delta_R(D)$ and $\{-,-\}_{\mathrm{Iw}}$, which are proved in \cite{Co10a}. By III.4 of \cite{Co10a}, $\gamma_0-1$ acts on $D^{\psi=0}$ as a topological automorphism and the induced action of $\Lambda_R(\Gamma)[\frac{1}{[\gamma_0]-1}]$ on $D^{\psi=0}$ uniquely extends to an action of $\mathcal{E}_R(\Gamma)$, which makes $D^{\psi=0}$ a 
 finite projective $\mathcal{E}_R(\Gamma)$-module of rank $r_D$. By VI.1 of \cite{Co10a}, the $\Lambda_R(\Gamma)$-linear homomorphism $D^{\psi=1}\xrightarrow{1-\varphi} D^{\psi=0}$ induces an isomorphism 
 $$D^{\psi=1}\otimes_{\Lambda_R(\Gamma)}\mathcal{E}_R(\Gamma)\isom D^{\psi=0}$$ of 
 $\mathcal{E}_R(\Gamma)$-modules, and one has
 $$D^{\varphi=1}\otimes_{\Lambda_R(\Gamma)}\mathcal{E}_R(\Gamma)=(D/(\psi-1)D)\otimes_{\Lambda_R(\Gamma)}\mathcal{E}_R(\Gamma)=0$$ (since $D^{\varphi=1}$ and $D/(\psi-1)D$ are finite generated $R$-modules). In particular, we obtain a canonical isomorphism
  $$C^{\bullet}_{\psi}(D)\otimes^{\bold{L}}_{\Lambda_R(\Gamma)}\mathcal{E}_R(\Gamma)\isom 
 D^{\psi=0}[-1]$$
 in $\bold{D}^-(\mathcal{E}_R(\Gamma))$, and this induces a canonical isomorphism 
 $$\Delta^{\mathrm{Iw}}_{R,1}(D)\otimes_{\Lambda_R(\Gamma)}\mathcal{E}_R(\Gamma)\isom (\mathrm{det}_{\mathcal{E}_R(\Gamma)}D^{\psi=0}, r_D)^{-1}.$$
 Moreover, since we have $\mathcal{L}_{\Lambda_R(\Gamma)}(\bold{Dfm}(D))=\mathcal{L}_R(D)\otimes_R\Lambda_R(\Gamma)$, we obtain the following canonical isomorphism 
 \begin{equation}\label{9.11}
 \Delta_R^{\mathrm{Iw}}(D)\otimes_{\Lambda_R(\Gamma)}\mathcal{E}_R(\Gamma)
 \isom (\mathrm{det}_{\mathcal{E}_R(\Gamma)}D^{\psi=0}\otimes_R\mathcal{L}_R(D)^{\vee}, 0)^{-1}.
 \end{equation}

 Using the isomorphism $\Delta_{R}^{\mathrm{Iw}}(D^*)^{\iota}\isom 
 \Delta_{\Lambda_R(\Gamma)}(\bold{Dfm}(D)^*)$, we similarly obtain the following canonical isomorphism 
$$\Delta_{\Lambda_R(\Gamma)}(\bold{Dfm}(D)^*)\otimes_{\Lambda_R(\Gamma)}\mathcal{E}_R(\Gamma)\isom (\mathrm{det}_{\mathcal{E}_R(\Gamma)}(D^*)^{\psi=0, \iota}\otimes_R\mathcal{L}_R(D^*)^{\vee}, 0)^{-1}.$$ Finally, by Proposition VI.1.2 of \cite{Co10a}, the Iwasawa pairing 
$\{-,-\}_{\mathrm{Iw}}:(D^*)^{\psi=1,\iota}\times D^{\psi=1}\rightarrow \Lambda_R(\Gamma)$ uniquely extends to an $\mathcal{E}_R(\Gamma)$-bilinear perfect pairing 
$$\{-,- \}_{0,\mathrm{Iw}}:(D^*)^{\psi=0,\iota}\times D^{\psi=0}\rightarrow \mathcal{E}_R(\Gamma)$$
such that $\{(1-\varphi)x, (1-\varphi)y\}_{0,\mathrm{Iw}}=\{x,y\}_{\mathrm{Iw}}$ for any 
$x\in (D^*)^{\psi=1}, y\in D^{\psi=1}$. 

 \begin{rem}\label{rankone}
 As we mentioned in Remark \ref{2.5}, Conjecture \ref{2.2} is known for the rank one case by \cite{Ka93b}. 
 Using the isomorphism (\ref{9.11}), the base change to $\mathcal{E}_R(\Gamma)$ of the 
 local $\varepsilon$-isomorphism $\varepsilon_{R,\zeta}^{\mathrm{Iw}}(D(T)):=
 \varepsilon_{R,\zeta}^{\mathrm{Iw}}(T)$ for any $R$-representation $T$ of $G_{\mathbb{Q}_p}$ of rank one defined in \cite{Ka93b} is explicitly described as follows, which will play an important role in this article.
 Let $\delta:\mathbb{Q}_p^{\times}\rightarrow R^{\times}$ be a continuous homomorphism corresponding to a character $\delta:G_{\mathbb{Q}_p}^{\mathrm{ab}}\rightarrow 
 R^{\times}$ by the local class field theory. Then, the $(\varphi,\Gamma)$-module $D(R(\delta))$ corresponding to $R(\delta)$ is isomorphic to $\mathcal{E}_R(\delta):=\mathcal{E}_R\bold{e}_{\delta}$ 
 on which $(\varphi,\Gamma)$ acts by $\varphi(\bold{e}_{\delta})=\delta(p)\cdot\bold{e}_{\delta}, 
 \gamma'(\bold{e}_{\delta})=\delta(\gamma')\cdot \bold{e}_{\delta}$ ($\gamma'\in \Gamma$). 
 For $\mathcal{E}_R(\delta)$, one has an $\mathcal{E}_R(\Gamma)$-linear isomorphism 
 $$\mathcal{E}_R(\Gamma)\isom \mathcal{E}_R(\delta)^{\psi=0}:\lambda\mapsto \lambda\cdot((1+X)^{-1}\bold{e}_{\delta}),$$ and, under the isomorphism (\ref{9.11}) for $D=\mathcal{E}_R(\delta)$, the base change to $\mathcal{E}_R(\Gamma)$ of the local $\varepsilon$-isomorphism $\varepsilon_{R,\zeta}^{\mathrm{Iw}}(\mathcal{E}_R(\delta)):\bold{1}_{\Lambda_R(\Gamma)}\isom\Delta_R^{\mathrm{Iw}}(\mathcal{E}_R(\delta))$ which is defined in \cite{Ka93b} is the natural one induced by 
 the isomorphism 
 $$\mathcal{E}_R(\Gamma)\isom \mathcal{E}_R(\delta)^{\psi=0}\otimes_R (R\bold{e}_{\delta})^{\vee}
 :\lambda\mapsto (\lambda\cdot((1+X)^{-1}\bold{e}_{\delta}))\otimes \bold{e}_{\delta}^{\vee}.$$
This fact easily follows from the another definition of $\varepsilon^{\mathrm{Iw}}_{R,\zeta}(\mathcal{E}_R(\delta))$ given in \S 4.1 (and Remark 4.9 and Lemma 4.10) of \cite{Na14b}.
 
 \end{rem}

\subsection{A conjectural definition of the local $\varepsilon$-isomorphism}
In this subsection, we first recall the definition of (a multivariable version of) the Colmez's convolution 
pairing. After that, we propose a conjectural definition of the local 
$\varepsilon$-isomorphism using the convolution pairing.

Let $D_1,\cdots, D_{n+1}$ be 
\'etale $(\varphi,\Gamma)$-modules over $\mathcal{E}_R$, and let 
$$M:D_1\times D_2\times \cdots \times D_n\rightarrow D_{n+1}$$ be 
an $\mathcal{E}_R$-multilinear pairing compatible with $\varphi$ and $\Gamma$, i.e. 
we have $$M(\varphi(x_1), \cdots, \varphi(x_n))=\varphi(M(x_1,\cdots, x_n))$$ and 
$$M(\gamma'(x_1),\cdots, \gamma'(x_n))=\gamma'(M(x_1,\cdots x_n))$$ for any $x_i\in D_i$ and 
$\gamma'\in \Gamma$. 
For such a data, we define a map 
$$M_{\mathbb{Z}_p^{\times}}^{(\zeta)}:D_1^{\psi=0}\times \cdots \times D_n^{\psi=0}\rightarrow D_{n+1}^{\psi=0}:(x_1,\cdots, x_n)\mapsto M^{(\zeta)}_{\mathbb{Z}_p^{\times}}(x_1,\cdots x_n)=:(*)
$$ by the formula
$$(*):=\lim_{n\rightarrow\infty} \sum_{i_1, \cdots, i_n
\in \mathbb{Z}_p^{\times} \text{ mod } p^n}(1+X)^{i_1\cdots i_n}\varphi^n(
M(\sigma_{j_1}\cdot \psi^n((1+X)^{-i_1} x_1), \cdots, \sigma_{j_n}\cdot \psi^n((1+X)^{-i_n}x_n))),$$
where we set $j_k:=\prod_{k'\not=k}i_{k'}$. This is a multivariable version of the Colmez's convolution pairing defined V.4 of \cite{Co10a}, whose well-definedness can be proved in the same way as in Proposition V.4.1 of \cite{Co10a}. We remark that this pairing $M_{\mathbb{Z}_p^{\times}}^{(\zeta)}$ depends on the choice of the parameter 
$X=X_{\zeta}$, i.e. the choice of $\zeta\in \mathbb{Z}_p(1)$. We can easily check that this dependence can be written by the formula
$$M^{(\zeta^a)}_{\mathbb{Z}_p^{\times}}=[\sigma_a]^{-(n-1)}\cdot M_{\mathbb{Z}_p^{\times}}^{(\zeta)}$$ for any $a\in \mathbb{Z}_p^{\times}$. Moreover, we have
$$M^{(\zeta)}_{\mathbb{Z}_p^{\times}}(x_1,\cdots, \gamma'(x_i), \cdots, x_n)=\gamma'(M_{\mathbb{Z}_p^{\times}}(x_1,\cdots, x_i,\cdots,x_n))$$ for any $i$ and 
$\gamma'\in \Gamma$, in particular, $M^{(\zeta)}_{\mathbb{Z}_p^{\times}}$ is $\mathcal{E}_R(\Gamma)$-multilinear.

We next formulate a conjecture on the conjectural definition of the local $\varepsilon$-isomorphism. 
Let $D$ be an \'etale $(\varphi,\Gamma)$-module over $\mathcal{E}_R$. Applying the convolution pairing to the highest wedge product
$$\wedge :D^{\times r_D}\rightarrow \mathrm{det}_{\mathcal{E}_R}D:(x_1,\cdots, x_{r_D})\mapsto 
x_1\wedge\cdots \wedge x_{r_D},$$
we obtain an $\mathcal{E}_R(\Gamma)$-multilinear pairing 
$$\wedge^{(\zeta)}_{\mathbb{Z}_p^{\times}}:(D^{\psi=0})^{\times r_D}\rightarrow (\mathrm{det}_{\mathcal{E}_R}D)^{\psi=0}.$$
It is easy to see that this map is alternating. Hence this induces an $\mathcal{E}_R(\Gamma)$-linear morphism 
$$\wedge^{(\zeta)}_{\mathbb{Z}_p^{\times}}:\mathrm{det}_{\mathcal{E}_R(\Gamma)}D^{\psi=0}\rightarrow 
(\mathrm{det}_{\mathcal{E}_R}D)^{\psi=0}.$$

Concerning the relationship between this map with the local $\varepsilon$-isomorphism, we 
propose the following conjecture which grew out from discussions with S.Yasuda. Recall that we have 
canonical isomorphisms $\Delta_{R}^{\mathrm{Iw}}(D)\otimes_{\Lambda_R(\Gamma)}\mathcal{E}_R(\Gamma)\isom (\mathrm{det}_{\mathcal{E}_R(\Gamma)}D^{\psi=0}\otimes_R\mathcal{L}_R(D)^{\vee}, r_D)^{-1}$ and $\mathcal{L}_R(D)=\mathcal{L}_R(\mathrm{det}_{\mathcal{E}_R}D)$.
\begin{conjecture}\label{2.11}
\begin{itemize}
\item[(1)]For any $D$, the map $\wedge^{(\zeta)}_{\mathbb{Z}_p^{\times}}:\mathrm{det}_{\mathcal{E}_R(\Gamma)}D^{\psi=0}\rightarrow 
(\mathrm{det}_{\mathcal{E}_R}D)^{\psi=0}$ is isomorphism.
\item[(2)]If $(1)$ holds for $D$, then 
the isomorphism 
$$\wedge_{\mathbb{Z}_p^{\times}}^{(\zeta)}:\Delta_R^{\mathrm{Iw}}(D)\otimes_{\Lambda_R(\Gamma)}\mathcal{E}_R(\Gamma)
\isom \Delta_{R}^{\mathrm{Iw}}(\mathrm{det}_{\mathcal{E}_R}D)\otimes_{\Lambda_R(\Gamma)}\mathcal{E}_R(\Gamma)$$ induced by the isomorphism $$\mathrm{det}_{\mathcal{E}_R(\Gamma)}D^{\psi=0}\otimes_R\mathcal{L}_R(D)^{\vee}
\isom (\mathrm{det}_{\mathcal{E}_R}D)^{\psi=0}\otimes_R\mathcal{L}_R(\mathrm{det}_{\mathcal{E}_R}D)^{\vee}$$ defined by $(x_1\wedge \cdots \wedge x_{r_D})\otimes y\mapsto \wedge_{\mathbb{Z}_p^{\times}}^{(\zeta)}(x_1\wedge \cdots \wedge x_{r_D})\otimes y$ uniquely descends to a $\Lambda_R(\Gamma)$-linear isomorphism 
$$\wedge_{\mathbb{Z}_p^{\times}}^{(\zeta)}:\Delta_R^{\mathrm{Iw}}(D)\isom \Delta_{R}^{\mathrm{Iw}}(\mathrm{det}_{\mathcal{E}_R}D).$$

\item[(3)]If $(2)$ holds for $D$, then the conjectural $\varepsilon$-isomorphism $\varepsilon_{R,\zeta}^{\mathrm{Iw}}(D):\bold{1}_{\Lambda_R(\Gamma)}\isom\Delta_R^{\mathrm{Iw}}(D)$ satisfies the commutative diagram 
\begin{equation*}
\begin{CD}
\Delta_R^{\mathrm{Iw}}(D)@> \wedge_{\mathbb{Z}_p^{\times}}^{(\zeta)} >> \Delta_R^{\mathrm{Iw}}(\mathrm{det}_{\mathcal{E}_R}D)\\
@AA \varepsilon_{R,\zeta}^{\mathrm{Iw}}(D)A  @ AA \varepsilon_{R,\zeta}^{\mathrm{Iw}}(\mathrm{det}_{\mathcal{E}_R}D) A \\
\bold{1}_{\Lambda_R(\Gamma)}@>> \mathrm{id} > \bold{1}_{\Lambda_R(\Gamma)}, 
\end{CD}
\end{equation*}
where the isomorphism $\varepsilon_{R,\zeta}^{\mathrm{Iw}}(\mathrm{det}_{\mathcal{E}_R}D)$ is the $\varepsilon$-isomorphism defined by Kato \cite{Ka93b} $($or Remark $\ref{rankone})$.

\end{itemize}
\end{conjecture}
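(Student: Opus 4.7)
The plan is to attack the three parts of Conjecture \ref{2.11} sequentially, using triangulable specializations as the organizing technical device. For part (1), I would first reduce to the case $R = L$ a finite extension of $\mathbb{Q}_p$ by base change (both $\mathrm{det}_{\mathcal{E}_R(\Gamma)}D^{\psi=0}$ and $(\mathrm{det}_{\mathcal{E}_R}D)^{\psi=0}$ are invertible $\mathcal{E}_R(\Gamma)$-modules, so being an isomorphism can be checked on a dense set of fibres). Then I would further specialize to trianguline $D$ with a complete flag $0 = D_0 \subset D_1 \subset \cdots \subset D_{r_D} = D$ whose rank-one subquotients are $\mathcal{E}_L(\delta_i)$. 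The induced filtration on $D^{\psi=0}$ has rank-one graded pieces $\mathcal{E}_L(\delta_i)^{\psi=0}$, each identified via Remark \ref{rankone} with $\mathcal{E}_L(\Gamma)$ through $\lambda \mapsto \lambda\cdot(1+X)^{-1}\bold{e}_{\delta_i}$. After bookkeeping the $\sigma_{j_k}$ factors, the multivariable convolution $\wedge^{(\zeta)}_{\mathbb{Z}_p^{\times}}$ reduces to iterated Colmez convolutions on rank one, where explicit computation on the generators $(1+X)^{-1}\bold{e}_{\delta_i}$ should exhibit a non-vanishing image inside $(\mathrm{det}_{\mathcal{E}_L}D)^{\psi=0} = \mathcal{E}_L(\prod_i \delta_i)^{\psi=0}$.

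For part (2), assuming (1), the descent from $\mathcal{E}_R(\Gamma)$ to $\Lambda_R(\Gamma)$ is pinned down by the Iwasawa pairing $\{-,-\}_{\mathrm{Iw}}$, which by construction takes values in $\Lambda_R(\Gamma)$. Using the compatibility $\{(1-\varphi)x,(1-\varphi)y\}_{0,\mathrm{Iw}} = \{x,y\}_{\mathrm{Iw}}$ from Proposition VI.1.2 of \cite{Co10a}, together with the already-established descent in the rank-one case (Kato's result recalled in Remark \ref{rankone}), one sees that $\wedge^{(\zeta)}_{\mathbb{Z}_p^{\times}}$ carries the integral lattice $\Delta^{\mathrm{Iw}}_R(D)$, characterised by the image of $D^{\psi=1}$, into the corresponding integral lattice on the determinant side. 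A direct verification through the trianguline filtration would complete the argument.

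For part (3), assuming (1) and (2), the commutativity of the diagram follows from three inputs: the multiplicativity of $\varepsilon_{R,\zeta}^{\mathrm{Iw}}$ in short exact sequences of $(\varphi,\Gamma)$-modules (axiom (2) of Conjecture \ref{2.2}), Kato's rank-one case (Remark \ref{rankone}), and the multiplicativity of the wedge convolution in exact sequences. In the trianguline case these force commutativity by induction on $r_D$; for general $D$ one would argue by density of trianguline specializations in an appropriate deformation space together with the uniqueness clause in Conjecture \ref{2.2}.

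The main obstacle, I expect, lies in part (1) outside the trianguline locus. The convolution $M^{(\zeta)}_{\mathbb{Z}_p^\times}$ is defined as a limit of partial sums indexed by $(\mathbb{Z}/p^n)^{\times \, n}$, and verifying that the wedge of these limits genuinely identifies with the expected wedge expression requires delicate analytic control of the $\sigma_{j_k}$-entanglement when $n = r_D \geqq 2$. For the rank-two case targeted by the paper, the combinatorics collapses enough to directly identify $\wedge^{(\zeta)}_{\mathbb{Z}_p^{\times}}$ with Colmez's pairing from VI.6 of \cite{Co10b}; in higher rank, a clean identification seems to demand a conceptual reinterpretation of $\wedge^{(\zeta)}_{\mathbb{Z}_p^{\times}}$ rather than brute combinatorial manipulation, and this is where a direct approach appears most likely to stall.
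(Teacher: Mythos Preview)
This statement is a \emph{conjecture}; the paper does not prove it in general but establishes parts (1) and (2) for rank two (Propositions \ref{3.2.1} and \ref{3.3.1}), and defers the higher-rank case to the announced sequel \cite{Na}. Your proposal outlines a strategy for arbitrary rank, so the relevant comparison is with the paper's rank-two argument.

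The paper's route is quite different from yours. For (1) in rank two, it does not use trianguline filtrations or density at all: instead it invokes Colmez's generalized reciprocity law (the identity labelled (\ref{reciprocity}) in the text), which expresses $\wedge^{(\zeta)}_{\mathbb{Z}_p^\times}(x,y)$ directly in terms of the pairing $[x\otimes z^\vee,y]^{(\zeta)}_{\mathrm{Iw}} = \{w_{\delta_D}(x)\otimes z^\vee\otimes\bold{e}_1,\,y\}_{0,\mathrm{Iw}}$. Since the extended Iwasawa pairing $\{-,-\}_{0,\mathrm{Iw}}$ is perfect over $\mathcal{E}_R(\Gamma)$, (1) follows immediately for \emph{every} rank-two $D$, with no density hypothesis. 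For (2), the paper reduces to the universal deformation ring and then to absolutely irreducible crystalline points, where the key input is that $w_{\delta_D}$ carries $\mathcal{C}(D)=(1-\varphi)D^{\psi=1}$ isomorphically onto $\mathcal{C}(D^*)$; this is the $\mathrm{GL}_2(\mathbb{Q}_p)$-compatibility of $(D,\delta_D)$, a deep theorem from $p$-adic local Langlands. Your last paragraph correctly identifies the Colmez pairing as the mechanism in rank two, so you have seen the point, but your actual plan does not exploit it.

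There are two concrete gaps in your approach. First, for (1) your reduction ``specialize to trianguline $D$'' requires embedding an arbitrary $D$ into a family with Zariski-dense trianguline locus; this is doable over a universal deformation ring, but then the filtration-compatibility of $\wedge^{(\zeta)}_{\mathbb{Z}_p^\times}$ that you need is exactly the hard step you flag at the end --- the $\sigma_{j_k}$ cross-terms do not respect a filtration in any obvious way, and ``bookkeeping'' them is the whole problem. The paper avoids this entirely via the reciprocity identity. Second, for (2) your descent sketch (``the Iwasawa pairing takes values in $\Lambda_R(\Gamma)$, so the integral lattice is preserved'') does not go through as stated: one needs that $w_{\delta_D}$ preserves $\mathcal{C}(D)$ up to the twist by $\mathcal{L}_R(D)^\vee\otimes L(1)$, and this is precisely the deep input the paper isolates in Remark \ref{3.5.1}. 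Without it, knowing that $\{-,-\}_{\mathrm{Iw}}$ is $\Lambda_R(\Gamma)$-valued on $(D^*)^{\psi=1,\iota}\times D^{\psi=1}$ does not tell you that $[-,-]^{(\zeta)}_{\mathrm{Iw}}$ is $\Lambda_R(\Gamma)$-valued on $\mathcal{C}(D)\times\mathcal{C}(D)$.
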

\begin{rem}\label{3333}
The condition (3) in the conjecture above
says that, if (2) is true for $D$, then the composite
$(\wedge^{(\zeta)}_{\mathbb{Z}_p^{\times}})^{-1}\circ\varepsilon_{R,\zeta}^{\mathrm{Iw}}(\mathrm{det}_{\mathcal{E}_R}D):\bold{1}_{\Lambda_R(\Gamma)}\isom\Delta_R^{\mathrm{Iw}}(D)$ satisfies 
all the conditions $(1), \cdots, (5)$ in Conjecture \ref{2.2}. For example, since one has $$\wedge_{\mathbb{Z}_p^{\times}}^{(\zeta^a)}=
[\sigma_{a}]^{r_D-1}\cdot \wedge_{\mathbb{Z}_p^{\times}}^{(\zeta)}:\Delta_R^{\mathrm{Iw}}(D)
\isom \Delta_R^{\mathrm{Iw}}(\mathrm{det}_{\mathcal{E}_R}D)$$ 
(which follows from $\wedge_{\mathbb{Z}_p^{\times}}^{(\zeta^a)}=
[\sigma_{a}]^{-(r_D-1)}\cdot \wedge_{\mathbb{Z}_p^{\times}}^{(\zeta)}:\mathrm{det}_{\mathcal{E}_R(\Gamma)}(D^{\psi=0})\rightarrow (\mathrm{det}_{\mathcal{E}_R}D)^{\psi=0}$) and 
\begin{multline*}
\varepsilon_{R,\zeta^a}^{\mathrm{Iw}}(\mathrm{det}_{\mathcal{E}_R}D)=(\mathrm{det}_{\mathcal{E}_{\Lambda_R(\Gamma)}}\bold{Dfm}(\mathrm{det}_{\mathcal{E}_R}D)(a))\cdot \varepsilon_{R,\zeta}^{\mathrm{Iw}}(\mathrm{det}_{\mathcal{E}_R}D)\\
=\mathrm{det}_{\mathcal{E}_R}D(a)\cdot [\sigma_a]^{-1}\cdot \varepsilon_{R,\zeta}^{\mathrm{Iw}}(\mathrm{det}_{\mathcal{E}_R}D)
\end{multline*} 
for any $a\in \mathbb{Z}_p^{\times}$,  we obtain 
\begin{multline*}
(\wedge^{(\zeta^a)}_{\mathbb{Z}_p^{\times}})^{-1}\circ\varepsilon_{R,\zeta^a}^{\mathrm{Iw}}(\mathrm{det}_{\mathcal{E}_R}D)=[\sigma_{a}]^{-r_D+1}\cdot(\mathrm{det}_{\mathcal{E}_R}D(a)\cdot [\sigma_a]^{-1})\cdot (\wedge_{\mathbb{Z}_p^{\times}}^{(\zeta)})^{-1}\circ\varepsilon_{R,\zeta}^{\mathrm{Iw}}(\mathrm{det}_{\mathcal{E}_R}D)
\\
=\mathrm{det}_{\mathcal{E}_{\Lambda_R(\Gamma)}}
\bold{Dfm}(D)(a)\cdot (\wedge_{\mathbb{Z}_p^{\times}}^{(\zeta)})^{-1}\circ \varepsilon_{R,\zeta}^{\mathrm{Iw}}(\mathrm{det}_{\mathcal{E}_R}D), 
\end{multline*}
i.e. the isomorphism $(\wedge_{\mathbb{Z}_p^{\times}}^{(\zeta)})^{-1}\circ\varepsilon_{R,\zeta}^{\mathrm{Iw}}(\mathrm{det}_{\mathcal{E}_R}D)$ satisfies the condition (3) in Conjecture \ref{2.2}.

\end{rem}
\begin{rem}\label{2.13}
In the next section, we prove almost all the parts of the  conjecture above for the rank two case. 
In fact, we can prove many parts of the conjecture even for the higher rank case. However, 
we do not pursue this problem in the present article since 
the main theme of this article is to pursue the connection between the local $\varepsilon$-conjecture with the $p$-adic local Langlands correspondence for $\mathrm{GL}_2(\mathbb{Q}_p)$.
In the next article \cite{Na}, we will prove (1), (almost all the parts of) (2) for the higher rank case, and prove that the isomorphism $(\wedge_{\mathbb{Z}_p^{\times}}^{(\zeta)})^{-1}\circ\varepsilon_{R,\zeta}^{\mathrm{Iw}}(\mathrm{det}_{\mathcal{E}_R}D):\bold{1}_{\Lambda_R(\Gamma)}\isom\Delta_R^{\mathrm{Iw}}(D)$ (obtained by (2)) satisfies the conditions $(1),\cdots, (4)$ in Conjecture \ref{2.2}. 
Moreover, we will prove that this isomorphism satisfies the condition (5) for the crystabelline case. 
\end{rem}

\section{Local $\varepsilon$-isomorphisms for rank two $p$-adic representations of $\mathrm{Gal}(\overline{\mathbb{Q}}_p/\mathbb{Q}_p)$}
In this section, using the $p$-adic local Langlands correspondence for $\mathrm{GL}_2(\mathbb{Q}_p)$, we prove almost all parts of 
Conjecture \ref{2.2} and Conjecture \ref{2.11} for the rank two case. 

\subsection{Statement of the main theorem on the local $\varepsilon$-conjecture}
We start this section by stating our main result concerning the local $\varepsilon$-conjecture for the rank two case. We say that an \'etale $(\varphi,\Gamma)$-module $D$ over $\mathcal{E}_L$ is de Rham,  trianguline, etc. if the corresponding $V(D):=T(D)$ is so. If $D$ is de Rham, we set $\varepsilon_{L,\zeta}^{\mathrm{dR}}(D):=\varepsilon_{L,\zeta}^{\mathrm{dR}}(V(D))$, which we regard as an isomorphism $\bold{1}_L\isom 
\Delta_L(D)$ by the canonical isomorphism $\Delta_L(D)\isom \Delta_L(V(D))$.

\begin{thm}\label{3.0}

\begin{itemize}
\item[(1)]Conjecture $\ref{2.11}\, (1)$ is true for all the $(\varphi,\Gamma)$-modules of rank two. 
\item[(2)]Conjecture $\ref{2.11}\, (2)$ is true for ``almost all" the $(\varphi,\Gamma)$-modules of rank two.
\item[(3)]For $D$ as in $(2)$ $($then we can define an isomorphism
$$\varepsilon_{R,\zeta}^{\mathrm{Iw}}(D):=(\wedge_{\mathbb{Z}_p^{\times}}^{(\zeta)})^{-1}\circ\varepsilon_{R,\zeta}^{\mathrm{Iw}}(\mathrm{det}_{\mathcal{E}_R}D):\bold{1}_{\Lambda_R(\Gamma)}\isom\Delta_R^{\mathrm{Iw}}(D)),$$ we set $\varepsilon_{R,\zeta}(D):\bold{1}_R\isom\Delta_R(D)$ for the 
base change of $\varepsilon_{R,\zeta}^{\mathrm{Iw}}(D)$ by $f_{\bold{1}}:\Lambda_R(\Gamma)\rightarrow R:[\gamma']\mapsto 1\,\, (\gamma'\in \Gamma)$. Then the set of isomorphisms $\{\varepsilon_{R,\zeta}(D)\}_{(R,D)}$, where $D$ run through all the $D$ of rank one or rank two as in $(2)$, satisfies the conditions $(1),\cdots, (4)$ in Conjecture 
$\ref{2.2}$ and satisfies the following: 

For any pair $(L, D)$ such that $D$ is de Rham of rank one or two, 
\begin{itemize}
\item[(i)]if $D$ is trianguline, then we have 
$$\varepsilon_{L,\zeta}(D)=\varepsilon_{L,\zeta}^{\mathrm{dR}}(D),$$
\item[(ii)]if $D$ is non-trianguline with the distinct Hodge-Tate weights $\{0, k\}$ for $k\geqq 1$, then we have 
$$\varepsilon_{\overline{L},\zeta}(D(-r)(\delta))=\varepsilon_{\overline{L},\zeta}^{\mathrm{dR}}(D(-r)(\delta))$$
for any pair $(r,\delta)$ such that $0\leqq r\leqq k-1$ and $\delta:\Gamma\rightarrow \overline{L}^{\times}$ is a homomorphism with finite image.

\end{itemize}

\end{itemize}

\end{thm}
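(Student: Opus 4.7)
The plan is to construct the family $\{\varepsilon_{R,\zeta}(D)\}$ via the Colmez pairing from VI.6 of \cite{Co10b}, using the $p$-adic local Langlands correspondence for $\mathrm{GL}_2(\mathbb{Q}_p)$, and then verify each property in turn. For an absolutely irreducible rank two $D$ over $\mathcal{E}_R$, Colmez attaches a $\mathrm{GL}_2(\mathbb{Q}_p)$-representation $\Pi(D)$ together with a $\mathrm{GL}_2(\mathbb{Q}_p)$-equivariant pairing relating $D\boxtimes\mathbf{P}^1$ with its Tate dual; the key input is the $\mathrm{GL}_2(\mathbb{Q}_p)$-compatibility (in the sense of \S III of \cite{CD14}), which says that the involution $w=\bigl(\begin{smallmatrix}0&1\\1&0\end{smallmatrix}\bigr)$ on $D^{\psi=0}$ coming from the $\mathrm{GL}_2(\mathbb{Q}_p)$-action matches the one predicted by the convolution. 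Using this, the wedge-convolution map $\wedge_{\mathbb{Z}_p^\times}^{(\zeta)}:\det_{\mathcal{E}_R(\Gamma)}D^{\psi=0}\to(\det_{\mathcal{E}_R}D)^{\psi=0}$ can be identified, up to the known rank one normalization, with a piece of the Colmez pairing, yielding (1). For (2), the same pairing is already defined over $\Lambda_R(\Gamma)$ on integral structures for almost all $D$, so the descent is built into the construction; the exceptional locus corresponds to places where integral $\mathrm{GL}_2(\mathbb{Q}_p)$-stable lattices behave badly. Compatibility with base change, exact sequences, Tate duality and the normalization in the variable $\zeta$ (conditions (1)--(4) of Conjecture \ref{2.2}) then follow formally from the corresponding properties of the Colmez pairing and the calculation in Remark \ref{3333}.

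For the interpolation part, the first step is the trianguline case (i). The plan is to compare $\varepsilon_{L,\zeta}(D)$ with the trianguline $\varepsilon$-isomorphism constructed in \cite{Na14b}. Both isomorphisms live on the same fundamental line and differ by a scalar, so after reducing to rank one factors via a triangulation and using multiplicativity in exact sequences (condition (2)), it suffices to match them on one rank two trianguline family. The matching is made explicit by Dospinescu's formula in \cite{Do11} describing the action of $w$ on locally analytic vectors of $\Pi(D)$: this formula computes the Colmez pairing on triangulations and exhibits it as the trianguline $\varepsilon$-isomorphism of \cite{Na14b}, which is known to equal $\varepsilon_{L,\zeta}^{\mathrm{dR}}$ in the de Rham case.

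For (ii), the absolutely irreducible, non-trianguline de Rham case with Hodge--Tate weights $\{0,k\}$, the approach is to prove two explicit formulas (the Proposition \ref{formula1} and Proposition \ref{formula2} alluded to in the introduction) computing both sides: one formula expresses $\varepsilon_L(V(-r)(\delta))$ coming from the Colmez pairing in terms of the Kirillov model of locally algebraic vectors in $\Pi(D)$, using Colmez's analysis in \S VI of \cite{Co10b}; the other expresses $\varepsilon_L^{\mathrm{dR}}(V(-r)(\delta))$ in terms of the classical Kirillov model of the smooth $\mathrm{GL}_2(\mathbb{Q}_p)$-representation attached to $W(V)$ by the classical local Langlands correspondence, where the transform by $w$ is given by the classical Jacquet--Langlands-type formula involving the $\varepsilon$-constant. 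Emerton's theorem in \cite{Em} on the compatibility of $p$-adic and classical local Langlands identifies these two Kirillov models on locally algebraic vectors, and this identification transforms the two explicit formulas into each other. The non-vanishing condition $0\le r\le k-1$ is exactly what guarantees that the relevant vectors are locally algebraic rather than merely locally analytic, so Emerton's theorem applies.

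The main obstacle is the non-trianguline case (ii): setting up the two Kirillov-model formulas requires a careful choice of normalizations so that both the Bloch--Kato exponential exact sequence computing $\theta_L(V)$ and the Colmez pairing computing $\varepsilon_{L,\zeta}(D)$ are expressed in the same Kirillov coordinates, and the transfer across Emerton's compatibility must be tracked with full control over signs, $p$-adic periods $t_\zeta^{h_V}$ and the gamma factors $\Gamma_L(V)$. Managing these normalizations so that the final comparison becomes a known identity about the classical $\mathrm{GL}_2$ Kirillov model is where the technical heart of the proof lies, and this is exactly the content that the introduction flags as \S 3.4 and Corollary \ref{3.12.123}.
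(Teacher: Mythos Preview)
Your proposal is essentially correct and follows the paper's approach: Colmez's pairing and $\mathrm{GL}_2(\mathbb{Q}_p)$-compatibility for parts (1) and (2), Dospinescu's formula for the trianguline case, and the Kirillov model plus Emerton's compatibility for the non-trianguline case.

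Two small points where your description of the logic differs from the paper's actual organization. First, in (3)(i) you invoke ``multiplicativity in exact sequences (condition (2))'' to reduce to rank one, but that multiplicativity for the Colmez-side isomorphism is precisely what is being proved (Theorem \ref{3.1}); the argument goes the other way, using Dospinescu's identity $w_{\delta_D}((1+X)\bold{e}_{\delta_1})=\delta_1(-1)(1+X)\bold{e}_{\delta_1}$ to compute the Colmez pairing directly and show it factors as the product of the rank-one $\varepsilon$-isomorphisms from \cite{Na14b}. Second, in (3)(ii) you describe two Kirillov models (one $p$-adic, one classical) being matched by Emerton's theorem, but in the paper both explicit formulas (Propositions \ref{formula1} and \ref{formula2}) express $\alpha_{(r,\delta)}$ and $\beta_{(r,\delta)}$ as pairings $[\widetilde{x},-]_{\bold{P}^1}$ with test functions $f_{(r,\delta),m}$, $h_{(r,\delta),m}$ in the \emph{same} $p$-adic Kirillov model of $\Pi(D)^{\mathrm{alg}}$; Emerton's theorem then enters only to compute $\Pi(w)\cdot h_{(r,\delta),m}$ via the classical Bushnell--Henniart formula, which produces the $\varepsilon$-constant and yields Proposition \ref{key}. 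These are organizational rather than mathematical discrepancies; the ingredients you list are the right ones.
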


We will prove this theorem in the next subsections:(1) is proved in Proposition \ref{3.2.1}, 
(2) is proved in Proposition \ref{3.3.1} (see this proposition and Remark \ref{3.4} for the precise meaning of ``almost all" in the  theorem above), (3) (i) is proved in \S3.3, (3) (ii) is proved in \S3.4.

\subsection{Definition of the $\varepsilon$-isomorphisms}
In \cite{Co10b}, Colmez constructed a correspondence $D\mapsto \Pi(D)$ from 
(almost all) \'etale $(\varphi,\Gamma)$-modules of rank two to representations of $\mathrm{GL}_2(\mathbb{Q}_p)$. In the construction of $\Pi(D)$, he introduced a mysterious involution 
$w_{\delta_D}: D^{\psi=0}\isom D^{\psi=0}$ (whose definition we recall below) which is intimately related with the action of 
$\begin{pmatrix}0 & 1\\ 1& 0\end{pmatrix}\in\mathrm{GL}_2(\mathbb{Q}_p)$ on $\Pi(D)$. Moreover, he
 proved a formula describing the convolution 
 pairing  $\wedge_{\mathbb{Z}_p^{\times}}^{(\zeta)}$ using the involution $w_{\delta_D}$ and the Iwasawa pairing 
 $\{-,-\}_{0,\mathrm{Iw}}:(D^*)^{\psi=0,\iota}\times D^{\psi=0}\rightarrow \mathcal{E}_R(\Gamma)$, which we also recall below. Since the $\varepsilon$-constant of an irreducible smooth admissible 
 representation of $\mathrm{GL}_2(\mathbb{Q}_p)$ can be described using the action of 
 $\begin{pmatrix}0 & 1\\ 1& 0\end{pmatrix}$ by the classical theory of Kirillov model, this formula 
 is crucial for our application to the local $\varepsilon$-conjecture.
 
 We start this subsection by recalling the definitions of some of analytic operations on $D^{\psi=0}$ 
 defined in \cite{Co10a}, \cite{Co10b}. These operations also depend on the choice of the parameter $X=X_{\zeta}\in \mathcal{E}_R$, i.e., the choice of $\bold{e}_1:=\zeta\in \mathbb{Z}_p(1)$, which we have fixed. 
 
For a continuous homomorphism $\delta:\Gamma\rightarrow R^{\times}$, Colmez defined in V of \cite{Co10a}
the following map 
$$m_{\delta}^{(\zeta)}:D^{\psi=0}\rightarrow D^{\psi=0}:x\mapsto \lim_{n\rightarrow \infty}\sum_{i\in \mathbb{Z}_p^{\times} \text{mod} p^n}\delta(i)(1+X)^i\varphi^n\psi^n((1+X)^{-i}x).$$
We remark that this map satisfies 
$m^{(\zeta^a)}_{\delta}=\delta(a)^{-1}\cdot m_{\delta}^{(\zeta)}$ for $a\in \mathbb{Z}_p^{\times}$, 
$m^{(\zeta)}_{\bold{1}}=\mathrm{id}_{D^{\psi=0}}$ for the trivial homomorphism $\bold{1}:\Gamma\rightarrow R^{\times}$, $m^{(\zeta)}_{\delta_1}\circ m^{(\zeta)}_{\delta_2}=m^{(\zeta)}_{\delta_1\cdot \delta_2}$ for any $\delta_1,\delta_2$, 
and $\sigma_a\circ m^{(\zeta)}_{\delta}=\delta(a)^{-1}m^{(\zeta)}_{\delta}\circ \sigma_a$ for $a\in \mathbb{Z}_p^{\times}$.
In particular, the map 
$$m^{(\zeta)}_{\delta}\otimes \bold{e}_{\delta}:D^{\psi=0}\isom D(\delta)^{\psi=0}:x\mapsto m^{(\zeta)}_{\delta}(x)\otimes \bold{e}_{\delta}$$
is an isomorphism of $\mathcal{E}_R(\Gamma)$-modules. 
In V of \cite{Co10a}, he also defined an involution 
$$w_*^{(\zeta)}:D^{\psi=0}\isom D^{\psi=0}$$ by the formula
$$w^{(\zeta)}_*(x):=\lim_{n\rightarrow +\infty} 
\sum_{i\in \mathbb{Z}^{\times}_p \mathrm{ mod }\, p^n} (1+X)^{1/i}\sigma_{-1/i^2}\cdot 
\varphi^n\psi^n((1+X)^{-i}x)$$ 
and also defined in II of \cite{Co10b} an involution
$$w^{(\zeta)}_{\delta}:=m^{(\zeta)}_{\delta^{-1}}\circ w^{(\zeta)}_*:D^{\psi=0}\isom D^{\psi=0}$$
for any $\delta:\Gamma\rightarrow R^{\times}$.
By definition, the latter satisfies the equalities $w_{\delta}^{(\zeta^a)}(\sigma_a(x))=\sigma_a(w_{\delta}^{(\zeta)}(x))$ and $w^{(\zeta)}_{\delta}(\sigma_a(x))=\delta(a)\sigma_{a^{-1}}(w^{(\zeta)}_{\delta}(x))$ for any $a\in \mathbb{Z}_p^{\times}$. In particular, this induces an $\mathcal{E}_R(\Gamma)$-linear isomorphism 
$$w^{(\zeta)}_{\delta}\otimes \bold{e}_{\delta^{-1}}:D^{\psi=0}\isom D(\delta^{-1})^{\psi=0,\iota}:
x\mapsto w^{(\zeta)}_{\delta}(x)\otimes \bold{e}_{\delta^{-1}}.$$

Now we assume that $D$ is of rank two. Set 
$$\delta_D:=\chi^{-1}\cdot \mathrm{det}_{\mathcal{E}_R}D:\mathbb{Q}_p^{\times}\rightarrow R^{\times}.$$ Using the canonical isomorphism 
$\mathcal{E}_R\otimes_R\mathcal{L}_{R}(D)\isom \mathrm{det}_{\mathcal{E}_R}D:f\otimes x\mapsto f\cdot x$, we obtain a canonical isomorphism 
$$\mathrm{det}_{\mathcal{E}_R}D\otimes_R\mathcal{L}_{R}(D)^{\vee}\isom \mathcal{E}_R\otimes_R\mathcal{L}_{R}(D)\otimes_R\mathcal{L}_{R}(D)^{\vee}\isom \mathcal{E}_R.$$ Using this isomorphism, we define the following canonical isomorphism of $(\varphi,\Gamma)$-modules 
$$D\otimes_R\mathcal{L}_{R}(D)^{\vee}\isom D^{\vee}: x\otimes z^{\vee} \mapsto [y\mapsto (y\wedge x)\otimes z^{\vee}]$$
for $x, y\in D, z\in \mathcal{L}_R(D)^{\times}$, 
by which we identify both sides. By these isomorphisms, 
we also obtain the following canonical  isomorphism 
$$D^{\psi=0}\otimes_R\mathcal{L}_{R}(D)^{\vee}\isom (D^*)^{\psi=0, \iota}:x\otimes z^{\vee}\mapsto w^{(\zeta)}_{\delta_D}(x)\otimes z^{\vee}\otimes \bold{e}_{1}$$ of 
$\mathcal{E}_R(\Gamma)$-modules. 

Using these preliminaries, we define the following $\mathcal{E}_R(\Gamma)$-bilinear perfect pairing 
$$[-,-]^{(\zeta)}_{\mathrm{Iw}}:D^{\psi=0}\otimes \mathcal{L}_R(D)^{\vee}\times D^{\psi=0}\rightarrow \mathcal{E}_R(\Gamma): (x\otimes z^{\vee}, y)\mapsto \{w^{(\zeta)}_{\delta_D}(x)\otimes z^{\vee}\otimes \bold{e}_1, y\}_{0,\mathrm{Iw}}$$
which is a modified version of the Colmez's pairing defined in Corollaire VI.6.2 of \cite{Co10b}. This pairing  is related with the convolution pairing $\wedge^{(\zeta)}_{\mathbb{Z}_p^{\times}}:D^{\psi=0}\times D^{\psi=0}\rightarrow (\mathrm{det}_{\mathcal{E}_R}D)^{\psi=0}$ as follows. Let us consider the $R$-linear map $d:\mathcal{E}_R\rightarrow 
\mathcal{E}_R(1):f(X)\mapsto (1+X)\frac{df(X)}{dX}\otimes \bold{e}_1$. It is easy to see that this does not depend on the choice of $\zeta\in \mathbb{Z}_p(1)$, and satisfies $\sigma_a\circ d=d\circ \sigma_a$ ($a\in \mathbb{Z}_p^{\times}$) and $\varphi\circ d=p\cdot d\circ \varphi$, and induces an $\mathcal{E}_R(\Gamma)$-linear isomorphism 
$d:\mathcal{E}_R^{\psi=0}\isom \mathcal{E}_R(1)^{\psi=0}$. We note that one has $d|_{\mathcal{E}_R^{\psi=0}}=m^{(\zeta)}_{\chi}\otimes \bold{e}_1$ since both are $\mathcal{E}_R(\Gamma)$-linear and one has $d((1+X))=(1+X)\otimes\bold{e}_1=m_{\chi}^{(\zeta)}((1+X))\otimes\bold{e}_1$.

As a consequence of Colmez's generalized reciprocity law (see Th\'eor\`eme VI.2.1 of \cite{Co10a}), he proved, in the proof of Corollaire VI.6.2 \cite{Co10b}, that $[-,-]^{(\zeta)}_{\mathrm{Iw}}$ satisfies the following equality
\begin{equation}\label{reciprocity}
d([x\otimes z^{\vee}, y]^{(\zeta)}_{\mathrm{Iw}}\cdot (1+X))=-\delta_D(-1)\cdot m^{(\zeta)}_{\delta_D^{-1}}(\wedge^{(\zeta)}_{\mathbb{Z}_p^{\times}}(x, y))\otimes z^{\vee}\otimes \bold{e}_1
\end{equation}
in $\mathcal{E}_R(1)^{\psi=0}\isom (\mathrm{det}_{\mathcal{E}_R}D)^{\psi=0}\otimes_R\mathcal{L}_R(D)^{\vee}(1)$. 

Since $\wedge^{(\zeta)}_{\mathbb{Z}_p^{\times}}$ is anti-symmetric, this formula implies that the perfect pairing 
$[-,-]^{(\zeta)}_{\mathrm{Iw}}$ is also anti-symmetric, i.e. we have $[x\otimes z^{\vee} , y]_{\mathrm{Iw}}=-[y\otimes z^{\vee}, x]_{\mathrm{Iw}}$ for any $x, y\in D^{\psi=0}$ and $z\in \mathcal{L}_R(D)$. Therefore, this induces an $\mathcal{E}_R(\Gamma)$-linear isomorphism 
$$\mathrm{det}_{\mathcal{E}_R(\Gamma)}D^{\psi=0}\otimes_R\mathcal{L}_R(D)^{\vee}
\isom \mathcal{E}_R(\Gamma):(x\wedge y)\otimes z^{\vee}\mapsto [\sigma_{-1}]\cdot[x\otimes z^{\vee}, y]^{(\zeta)}_{\mathrm{Iw}}.$$
The last isomorphism, together with (\ref{9.11}), naturally induces 
an $\mathcal{E}_R(\Gamma)$-linear isomorphism (which we denote by)
$$\eta_{R,\zeta}(D):\bold{1}_{\mathcal{E}_R(\Gamma)}\isom\Delta_R^{\mathrm{Iw}}(D)\otimes_{\Lambda_R(\Gamma)}\mathcal{E}_R(\Gamma).$$ 

We first prove the following proposition concerning the alternative description of our conjectural $\varepsilon$-isomorphism, in particular, which proves Conjecture \ref{2.11} (1) for the rank two case.

\begin{prop}\label{3.2.1}
The map $\wedge_{\mathbb{Z}_p^{\times}}^{(\zeta)}: \mathrm{det}_{\mathcal{E}_R(\Gamma)}D^{\psi=0}\rightarrow (\mathrm{det}_{\mathcal{E}_R}D)^{\psi=0}$ is isomorphism, and the isomorphism $\eta_{R,\zeta}(D)$ fits into  the following commutative diagram:
\begin{equation*}
\begin{CD}
\Delta_R^{\mathrm{Iw}}(D)\otimes_{\Lambda_R(\Gamma)}\mathcal{E}_R(\Gamma)@> \wedge_{\mathbb{Z}_p^{\times}}^{(\zeta)} >> \Delta_R^{\mathrm{Iw}}(\mathrm{det}_{\mathcal{E}_R}D)\otimes_{\Lambda_R(\Gamma)}\mathcal{E}_R(\Gamma)\\
@AA \eta_{R,\zeta}(D)A  @ AA \varepsilon_{R,\zeta}^{\mathrm{Iw}}(\mathrm{det}_{\mathcal{E}_R}D)\otimes \mathrm{id}_{\mathcal{E}_R(\Gamma)}A \\
\bold{1}_{\mathcal{E}_R(\Gamma)}@>> \mathrm{id} > \bold{1}_{\mathcal{E}_R(\Gamma)}.
\end{CD}
\end{equation*}

\end{prop}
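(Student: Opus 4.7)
The strategy is to deduce both assertions from Colmez's reciprocity formula (\ref{reciprocity}), which expresses $\wedge^{(\zeta)}_{\mathbb{Z}_p^{\times}}(x,y)$ in terms of $[x\otimes z^{\vee},y]^{(\zeta)}_{\mathrm{Iw}}$ through the operators $d$ and $m_{\delta_D^{-1}}^{(\zeta)}$, both of which are $\mathcal{E}_R(\Gamma)$-module isomorphisms. Once this identity is unwound, both the isomorphism claim and the displayed commutative diagram fall out.

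For the isomorphism claim in part (1), I would first note that $\{-,-\}_{0,\mathrm{Iw}}$ is $\mathcal{E}_R(\Gamma)$-bilinear perfect by Proposition VI.1.2 of \cite{Co10a}, so composing with the $\mathcal{E}_R(\Gamma)$-linear isomorphism $w_{\delta_D}^{(\zeta)}\otimes(-)\otimes\bold{e}_{1}$ and the identification $D\otimes\mathcal{L}_R(D)^{\vee}\isom D^{\vee}$, the modified pairing $[-,-]^{(\zeta)}_{\mathrm{Iw}}$ is also perfect. Its anti-symmetry was derived by Colmez in Corollaire VI.6.2 of \cite{Co10b} from (\ref{reciprocity}) together with the manifest anti-symmetry of $\wedge^{(\zeta)}_{\mathbb{Z}_p^{\times}}$. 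On the rank-two $\mathcal{E}_R(\Gamma)$-module $D^{\psi=0}$, perfectness and anti-symmetry of the pairing produce the isomorphism $\mathrm{det}_{\mathcal{E}_R(\Gamma)}D^{\psi=0}\otimes_R\mathcal{L}_R(D)^{\vee}\isom\mathcal{E}_R(\Gamma)$ defining $\eta_{R,\zeta}(D)^{-1}$. Then (\ref{reciprocity}) exhibits $\wedge^{(\zeta)}_{\mathbb{Z}_p^{\times}}$ as a composition of that isomorphism with further isomorphisms ($d=m_\chi^{(\zeta)}\otimes\bold{e}_1$, $(m_{\delta_D^{-1}}^{(\zeta)})^{-1}$, and the $\mathcal{E}_R(\Gamma)$-linear iso $\mathcal{E}_R(\Gamma)\isom\mathcal{E}_R^{\psi=0},\,\mu\mapsto \mu(1+X)$), hence it is itself an isomorphism.

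For the commutative diagram in part (2), fix a basis $\bold{e}$ of $\mathcal{L}_R(D)\subseteq\mathrm{det}_{\mathcal{E}_R}D$, so that $\mathrm{det}_{\mathcal{E}_R}D=\mathcal{E}_R\bold{e}$ is the rank-one $(\varphi,\Gamma)$-module attached to the character $\delta:=\mathrm{det}_{\mathcal{E}_R}D=\delta_D\chi$. By Remark \ref{rankone}, the base change to $\mathcal{E}_R(\Gamma)$ of $\varepsilon_{R,\zeta}^{\mathrm{Iw}}(\mathrm{det}_{\mathcal{E}_R}D)$, translated via (\ref{9.11}), is the map $\lambda\mapsto\lambda\cdot((1+X)^{-1}\bold{e})\otimes\bold{e}^{\vee}$. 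Unraveling the definitions, the commutativity of the diagram becomes equivalent to the identity
\[ \wedge^{(\zeta)}_{\mathbb{Z}_p^{\times}}(x,y)=[\sigma_{-1}]\cdot[x\otimes\bold{e}^{\vee},y]^{(\zeta)}_{\mathrm{Iw}}\cdot(1+X)^{-1}\bold{e}\]
in $(\mathrm{det}_{\mathcal{E}_R}D)^{\psi=0}$ for all $x,y\in D^{\psi=0}$. Setting $\lambda:=[x\otimes\bold{e}^{\vee},y]^{(\zeta)}_{\mathrm{Iw}}$ and applying $m_{\delta_D}^{(\zeta)}$ to (\ref{reciprocity}) with $z=\bold{e}$, using $d|_{\mathcal{E}_R^{\psi=0}}=m_\chi^{(\zeta)}\otimes\bold{e}_1$ and $m_{\delta_D}^{(\zeta)}\circ m_\chi^{(\zeta)}=m_\delta^{(\zeta)}$, yields $\wedge^{(\zeta)}_{\mathbb{Z}_p^{\times}}(x,y)=-\delta_D(-1)^{-1}m_\delta^{(\zeta)}(\lambda(1+X))\bold{e}$. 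Translating via the $\mathcal{E}_R(\Gamma)$-linear iso $m_\delta^{(\zeta)}\otimes\bold{e}:\mathcal{E}_R^{\psi=0}\isom(\mathrm{det}_{\mathcal{E}_R}D)^{\psi=0}$, and using the direct computations $m_{\delta'}^{(\zeta)}((1+X))=(1+X)$ and $m_{\delta'}^{(\zeta)}((1+X)^{-1})=\delta'(-1)(1+X)^{-1}$ together with the twisted equivariance $\sigma_a\circ m_{\delta'}^{(\zeta)}=\delta'(a)^{-1}m_{\delta'}^{(\zeta)}\circ\sigma_a$, the identity reduces to $\delta(-1)=-\delta_D(-1)$, which is immediate from $\chi(-1)=-1$.

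The main obstacle is not conceptual but careful bookkeeping: one must track the twisted $\mathcal{E}_R(\Gamma)$-module structures arising from the operators $m_{\delta'}^{(\zeta)}$ together with the various canonical identifications, notably $\mathrm{det}_{\mathcal{E}_R}D\otimes\mathcal{L}_R(D)^{\vee}\isom\mathcal{E}_R$ and those defining $[-,-]^{(\zeta)}_{\mathrm{Iw}}$ from $\{-,-\}_{0,\mathrm{Iw}}$ via $w_{\delta_D}^{(\zeta)}$. The essential non-trivial inputs are Colmez's reciprocity formula (\ref{reciprocity}) and the explicit rank-one description in Remark \ref{rankone}.
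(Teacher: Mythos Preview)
Your proposal is correct and follows essentially the same approach as the paper: both reduce the statement to the identity $\wedge^{(\zeta)}_{\mathbb{Z}_p^{\times}}(x,y)=[\sigma_{-1}]\cdot[x\otimes z^{\vee},y]^{(\zeta)}_{\mathrm{Iw}}\cdot((1+X)^{-1}z)$ in $(\mathrm{det}_{\mathcal{E}_R}D)^{\psi=0}$, and prove it by applying $m_{\delta_D}^{(\zeta)}$ (equivalently $m_{\delta_D}\otimes z\otimes\bold{e}_{-1}$) to the reciprocity formula (\ref{reciprocity}), then using the $\mathcal{E}_R(\Gamma)$-linearity of $m_{\delta_0}^{(\zeta)}\otimes z$, the explicit value $m_{\delta_0}^{(\zeta)}(1+X)=(1+X)$, and the sign identity $\delta(-1)=-\delta_D(-1)$ coming from $\chi(-1)=-1$. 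Your final paragraph is somewhat telegraphic about which module each map lands in, but the bookkeeping you outline is exactly what the paper carries out.
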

\begin{proof}
By Remark \ref{rankone}, it suffices to show the equality 
\begin{equation}\label{12.12}
[\sigma_{-1}]\cdot[x\otimes z^{\vee}, y]^{(\zeta)}_{\mathrm{Iw}}\cdot((1+X)^{-1}\cdot z)
=\wedge_{\mathbb{Z}_p^{\times}}^{(\zeta)}(x, y)
\end{equation}
for any $x, y\in D^{\psi=0}$, $z\in \mathcal{L}_R(D)^{\times}=\mathcal{L}_R(\mathrm{det}_{\mathcal{E}_R}D)^{\times}$. 

We prove this equality as follows. We first remark that, since one has $d=m^{(\zeta)}_{\chi}\otimes \bold{e}_1$, the equality (\ref{reciprocity}) is equivalent to the equality 
\begin{equation}
m^{(\zeta)}_{\chi}([x\otimes z^{\vee}, y]^{(\zeta)}_{\mathrm{Iw}}\cdot (1+X))\otimes\bold{e}_1=-\delta_D(-1)\cdot m^{(\zeta)}_{\delta_D^{-1}}(\wedge^{(\zeta)}_{\mathbb{Z}_p^{\times}}(x, y))\otimes z^{\vee}\otimes\bold{e}_1.
\end{equation}
Applying the $\mathcal{E}_R(\Gamma)$-linear 
isomorphism $m_{\delta_D}\otimes z\otimes \bold{e}_{-1}:\mathcal{E}_R(1)^{\psi=0}\isom (\mathrm{det}_{\mathcal{E}_R}D)^{\psi=0}$ to this equality, the right hand side is equal to  \begin{multline*}
-\delta_D(-1)\cdot m_{\delta_D}^{(\zeta)}(m^{(\zeta)}_{\delta_D^{-1}}(\wedge^{(\zeta)}_{\mathbb{Z}_p^{\times}}(x, y))\otimes z^{\vee}\otimes\bold{e}_1)\otimes z\otimes \bold{e}_{-1}\\
=-\delta_D(-1)\cdot m_{\delta_D}^{(\zeta)}(m^{(\zeta)}_{\delta_D^{-1}}(\wedge^{(\zeta)}_{\mathbb{Z}_p^{\times}}(x, y)))\otimes z^{\vee}\otimes\bold{e}_1\otimes z\otimes \bold{e}_{-1}=-\delta_D(-1)\cdot\wedge_{\mathbb{Z}_p^{\times}}^{(\zeta)}(x,y)
\end{multline*}
since one has $m_{\delta}(x\otimes \bold{e}_{\delta'})=m_{\delta}(x)\otimes 
\bold{e}_{\delta'}$ and $m_{\delta}\circ m_{\delta'}=m_{\delta\cdot\delta'}$ 
for any $D$ and $\delta, \delta'$, and the left hand side is equal to (set $\delta_0:=\mathrm{det}_{\mathcal{E}_R}D|_{\mathbb{Z}_p^{\times}}$)

\begin{multline*}
m^{(\zeta)}_{\delta_D}(m^{(\zeta)}_{\chi}([x\otimes z^{\vee}, y]^{(\zeta)}_{\mathrm{Iw}}\cdot (1+X))\otimes\bold{e}_1)\otimes z\otimes \bold{e}_{-1}\\
=m^{(\zeta)}_{\delta_D}(m^{(\zeta)}_{\chi}( [x\otimes z^{\vee}, y]^{(\zeta)}_{\mathrm{Iw}}\cdot (1+X)))\otimes \bold{e}_1\otimes z\otimes \bold{e}_{-1}\\
=m^{(\zeta)}_{\delta_0}([x\otimes z^{\vee}, y]^{(\zeta)}_{\mathrm{Iw}}\cdot (1+X))\otimes 
z=[x\otimes z^{\vee},y]^{(\zeta)}_{\mathrm{Iw}}\cdot (m^{(\zeta)}_{\delta_0}(1+X)\otimes z)\\
=[x\otimes z^{\vee},y]^{(\zeta)}_{\mathrm{Iw}}\cdot ((1+X)\cdot z)=-\delta_D(-1)\cdot[\sigma_{-1}]\cdot[x\otimes z^{\vee},y]_{\mathrm{Iw}}^{(\zeta)}\cdot((1+X)^{-1}\cdot z),
\end{multline*}
where the third equality follows from the $\mathcal{E}_R(\Gamma)$-linearity of $m^{(\zeta)}_{\delta_0}\otimes z$, and the fourth follows from $m^{(\zeta)}_{\delta_0}(1+X)=(1+X)$, from which the equality (\ref{12.12}) follows.

\end{proof}

 Before proving (2) of Conjecture \ref{2.11}, we show the isomorphism $\eta_{R,\zeta}(D)$ 
 satisfies the condition similar to (4) in Conjecture \ref{2.2} (over the ring $\mathcal{E}_R(\Gamma)$). 

 \begin{lemma}\label{llll}
 Let $D$ be an \'etale $(\varphi,\Gamma)$-module over $\mathcal{E}_R$ of rank two. Then the isomorphisms $\eta_{R,\zeta}(D)$ and $\eta_{R,\zeta}(D^*)$ fit into the following commutative diagram: 
 \begin{equation*}
\begin{CD}
\Delta^{\mathrm{Iw}}_{R}(D)\otimes_{\Lambda_R(\Gamma)}\mathcal{E}_R(\Gamma)
@>>> (\Delta^{\mathrm{Iw}}_{R}(D^*)^{\iota}\otimes_{\Lambda_R(\Gamma)}\mathcal{E}_R(\Gamma))^{\vee}\boxtimes(\mathcal{E}_R(\Gamma)(r_T),0) \\
@A \mathrm{det}_{\mathcal{E}_R}D(\sigma_{-1})\cdot \eta_{R,\zeta}(D) AA  @ VV(\eta_{R,\zeta}(D^*)^{\iota})^{\vee}\boxtimes [\bold{e}_{r_T}\mapsto 1]V \\
\bold{1}_{\mathcal{E}_R(\Gamma)}@> \mathrm{can}>> \bold{1}_{\mathcal{E}_R(\Gamma)}\boxtimes \bold{1}_{\mathcal{E}_R(\Gamma)}.
\end{CD}
\end{equation*}
Here the upper horizontal arrow is the base change to $\mathcal{E}_R(\Gamma)$ of the isomorphism 
$\Delta^{\mathrm{Iw}}_{R}(D)\isom (\Delta^{\mathrm{Iw}}_{R}(D^*)^{\iota})^{\vee}\boxtimes(\Lambda_R(\Gamma)(r_T),0)$
defined by the Tate duality.

 \end{lemma}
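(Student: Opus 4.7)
The plan is to unfold both trivializations $\eta_{R,\zeta}(D)$ and $\eta_{R,\zeta}(D^*)^{\iota}$ explicitly in terms of the modified Colmez pairing $[-,-]_{\mathrm{Iw}}^{(\zeta)}$, reduce the commutativity assertion to a compatibility between these two pairings under biduality, and extract the sign $\mathrm{det}_{\mathcal{E}_R}D(\sigma_{-1})$ from the anti-symmetry established in the proof of Proposition \ref{3.2.1} together with the reciprocity law (\ref{reciprocity}).

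First I would record the canonical data on the $D^*$ side. Since $D$ has rank two, biduality gives $D^{**}\isom D$ canonically, and the character identity $\mathrm{det}_{\mathcal{E}_R}D^{*}=\chi^{2}\cdot(\mathrm{det}_{\mathcal{E}_R}D)^{-1}$ yields $\delta_{D^*}=\delta_D^{-1}$; in particular $w^{(\zeta)}_{\delta_{D^*}}=w^{(\zeta)}_{\delta_D^{-1}}$. Wedge duality in rank two also furnishes a canonical isomorphism $\mathcal{L}_R(D^*)\isom\mathcal{L}_R(D)^{\vee}\otimes R\bold{e}_{1}^{\otimes 2}$, so after transport of structure the object $\eta_{R,\zeta}(D^*)^{\iota}$ can be rewritten entirely in terms of $D^{\psi=0}$, $w^{(\zeta)}_{\delta_D}$, and $\{-,-\}_{0,\mathrm{Iw}}$.

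Second, the compatibility between $[-,-]^{(\zeta)}_{\mathrm{Iw},D}$ and $[-,-]^{(\zeta)}_{\mathrm{Iw},D^*}$ under biduality is precisely the statement that $\{-,-\}_{0,\mathrm{Iw}}$ is a perfect $\mathcal{E}_R(\Gamma)$-bilinear pairing (Proposition VI.1.2 of \cite{Co10a}) arising from the Tate duality used to define the upper horizontal arrow in the diagram. Swapping the two arguments of the Iwasawa pairing when passing from $D$ to $D^*$ amounts, on the $D^{\psi=0}$-level, to interchanging the two slots of $[-,-]^{(\zeta)}_{\mathrm{Iw},D}$; by the anti-symmetry proven in Proposition \ref{3.2.1} this introduces a factor $-1$, which, combined with the $[\sigma_{-1}]$ already present in the definition of $\eta_{R,\zeta}(D)$ and with the factor $\delta_D(-1)$ from (\ref{reciprocity}), yields the predicted constant $\mathrm{det}_{\mathcal{E}_R}D(\sigma_{-1})$ after using $\delta_D=\chi^{-1}\mathrm{det}_{\mathcal{E}_R}D$ and $\chi(-1)=-1$.

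The main obstacle I expect is purely bookkeeping: matching the Tate twist $(\Lambda_R(\Gamma)(r_T),0)$ and the evaluation $\bold{e}_{r_T}\mapsto 1$ on the right-hand side of the diagram with the $R\bold{e}_1$ appearing in the identification $D^{\psi=0}\otimes_R\mathcal{L}_R(D)^{\vee}\isom(D^*)^{\psi=0,\iota}$, and then checking that the $\iota$-twist does not introduce any further sign beyond what has already been accounted for. Once these identifications are aligned, the equality (\ref{12.12}) established in the proof of Proposition \ref{3.2.1} and the perfectness of $\{-,-\}_{0,\mathrm{Iw}}$ together force the diagram to commute with the constant $\mathrm{det}_{\mathcal{E}_R}D(\sigma_{-1})$.
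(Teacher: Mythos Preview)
Your outline captures the right architecture but misses the technical heart of the argument. The claim that ``swapping the two arguments of the Iwasawa pairing when passing from $D$ to $D^*$ amounts, on the $D^{\psi=0}$-level, to interchanging the two slots of $[-,-]^{(\zeta)}_{\mathrm{Iw},D}$'' is not justified and, as stated, not correct. The pairing $[-,-]^{(\zeta)}_{\mathrm{Iw},D^*}$ is defined via $w^{(\zeta)}_{\delta_{D^*}}$, not $w^{(\zeta)}_{\delta_D}$; so after you transport $(D^*)^{\psi=0,\iota}$ back to $D^{\psi=0}$ via $w^{(\zeta)}_{\delta_D}\otimes z^{\vee}\otimes\bold{e}_1$, the $D^*$-pairing involves the composite $w^{(\zeta)}_{\delta_{D^*}}$ applied to $w^{(\zeta)}_{\delta_D}(x)\otimes z^{\vee}\otimes\bold{e}_1$. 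Nothing you have invoked (anti-symmetry, perfectness of $\{-,-\}_{0,\mathrm{Iw}}$, or the reciprocity law (\ref{reciprocity})) computes this composite.

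The paper's proof fills exactly this gap: it shows directly that
\[
w^{(\zeta)}_{\delta_{D^*}}\bigl(w^{(\zeta)}_{\delta_D}(x)\otimes z^{\vee}\otimes\bold{e}_1\bigr)=\delta_D(-1)\cdot x\otimes z^{\vee}\otimes\bold{e}_1,
\]
using Colmez's identities $w^{(\zeta)}_*(x\otimes\bold{e}_{\delta})=\delta(-1)\,m^{(\zeta)}_{\delta^{-2}}\circ w^{(\zeta)}_*(x)\otimes\bold{e}_{\delta}$ (Corollaire V.5.2 of \cite{Co10a}), $m^{(\zeta)}_{\delta^{-1}}\circ w^{(\zeta)}_*=w^{(\zeta)}_*\circ m^{(\zeta)}_{\delta}$ (Proposition V.2.4 of \cite{Co10a}), and $(w^{(\zeta)}_*)^2=\mathrm{id}$. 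Only after this does anti-symmetry and the relation $\iota(\{x,y'\}^{D^*}_{0,\mathrm{Iw}})=-\{y',x\}^{D}_{0,\mathrm{Iw}}$ finish the job. There is also an extra sign you have not tracked: the biduality isomorphism $(D\otimes\mathcal{L}_R(D)^{\vee})\otimes\mathcal{L}_R(D^{\vee})^{\vee}\isom(D^{\vee})^{\vee}$ in rank two is $x\otimes z^{\vee}\otimes z\mapsto[f\mapsto -f(x)]$, contributing a further $-1$. Without the $w^{(\zeta)}_{\delta_{D^*}}\circ w^{(\zeta)}_{\delta_D}$ computation and this sign, your proposed sign count ($-1$ from anti-symmetry, $[\sigma_{-1}]$, and $\delta_D(-1)$ from (\ref{reciprocity})) does not close.
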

 
 Before starting the proof, let us introduce the following notation.
In the proof we will use the pairings $[-,-]^{(\zeta)}_{\mathrm{Iw}}$ 
and $\{-,-\}_{\mathrm{Iw},0}$ for $D$ and those for $D^*$ 
simultaneously. In order to distinguish the pairings for $D$ with those for $D^*$,
we will denote, for any \'etale $(\varphi,\Gamma)$-module $D_1$ of rank two,
the pairings $[-,-]^{(\zeta)}_{\mathrm{Iw}}$ 
and $\{-,-\}_{\mathrm{Iw},0}$ for $D_1$
by $[-,-]^{(\zeta)}_{\mathrm{Iw},D_1}$ 
and $\{-,-\}^{D_1}_{\mathrm{Iw},0}$, respectively.
 
 \begin{proof}
 Fix $z\in \mathcal{L}_R(D)^{\times}$. Then we have $z^{\vee}\otimes \bold{e}_2\in \mathcal{L}_R(D^*)^{\times}$.
 By definition, it suffices to show that the following diagram is commutative:
  \begin{equation*}
\begin{CD}
\mathrm{det}_{\mathcal{E}_R(\Gamma)}D^{\psi=0}
@> (a)>> (\mathrm{det}_{\mathcal{E}_R(\Gamma)}(D^*)^{\psi=0,\iota})^{\vee} \\
@V (b) VV  @ AA (c)\boxtimes[\bold{e}_2\mapsto 1] A \\
\mathcal{E}_R(\Gamma)\otimes_R\mathcal{L}_R(D)@> (d) >> 
(\mathcal{E}_R(\Gamma)\otimes_R\mathcal{L}_R(D^*))^{\vee}\otimes_RR(2),
\end{CD}
\end{equation*}
where the horizontal arrows are the natural one defined by the Tate duality, and (b) is defined by $x\wedge y\mapsto \mathrm{det}_{\mathcal{E}_R}D(\sigma_{-1})\cdot [\sigma_{-1}]\cdot[x\otimes z^{\vee} , y]^{(\zeta)}_{\mathrm{Iw}, D}\otimes z$ for $x, y\in D^{\psi=0}$ and 
(c) is the dual of the map $x'\wedge y'\mapsto \iota([\sigma_{-1}]\cdot [x'\otimes 
(z\otimes \bold{e}_{-2}), y']^{(\zeta)}_{\mathrm{Iw}, D^*})\otimes (z^{\vee}\otimes \bold{e}_2)$ for $x', y'\in (D^*)^{\psi=0}$. We prove this commutativity as follows.

Take a basis $\{x,y\}$ of $D^{\psi=0}$. Since we have an isomorphism 
$w^{(\zeta)}_{\delta_D}\otimes z^{\vee}\otimes \bold{e}_1:D^{\psi=0}\isom (D^*)^{\psi=0,\iota}$, 
then $\{w^{(\zeta)}_{\delta_D}(x)\otimes z^{\vee}\otimes \bold{e}_1, w^{(\zeta)}_{\delta_D}(y)\otimes z^{\vee}\otimes \bold{e}_1\}$ is a basis of $(D^*)^{\psi=0, \iota}$. 
Then, (a) sends $x\wedge y$ to $f\in (\mathrm{det}_{\mathcal{E}_R(\Gamma)}(D^*)^{\psi=0,\iota})^{\vee}$ defined by 
\begin{multline*}
f((w^{(\zeta)}_{\delta_D}(x)\otimes z^{\vee}\otimes \bold{e}_1)\wedge(w^{(\zeta)}_{\delta_D}(y)\otimes z^{\vee}\otimes \bold{e}_1))\\
=[x\otimes z^{\vee}, x]^{(\zeta)}_{\mathrm{Iw},D}\cdot [y\otimes z^{\vee}, y]^{(\zeta)}_{\mathrm{Iw},D}
-[x\otimes z^{\vee}, y]^{(\zeta)}_{\mathrm{Iw},D}\cdot [y\otimes z^{\vee}, x]^{(\zeta)}_{\mathrm{Iw},D}
=([x\otimes z^{\vee}, y]^{(\zeta)}_{\mathrm{Iw},D})^2,
\end{multline*}
where the first equality is by definition and the second follows since $[, ]_{\mathrm{Iw}}^{(\zeta)}$ is anti-symmetric. By definition, the composite $((c)\boxtimes[\bold{e}_1\mapsto 1])\circ(d)\circ (b)$ 
sends $x\wedge y$ to $f' \in (\mathrm{det}_{\mathcal{E}_R(\Gamma)}(D^*)^{\psi=0,\iota})^{\vee}$ defined by 
\begin{multline*}
f'((w^{(\zeta)}_{\delta_D}(x)\otimes z^{\vee}\otimes \bold{e}_1)\wedge(w^{(\zeta)}_{\delta_D}(y)\otimes z^{\vee}\otimes \bold{e}_1))\\
=\mathrm{det}_{\mathcal{E}_R}D(\sigma_{-1})\cdot 
[x\otimes z^{\vee}, y]^{(\zeta)}_{\mathrm{Iw},D}\cdot 
\iota([(w^{(\zeta)}_{\delta_D}(x)\otimes z^{\vee}\otimes \bold{e}_1)\otimes (z\otimes \bold{e}_{-2}), 
w^{(\zeta)}_{\delta_D}(y)\otimes z^{\vee}\otimes \bold{e}_1)]^{(\zeta)}_{\mathrm{Iw},D^*}).
\end{multline*}
Therefore, it suffices to show the equality 
\begin{equation}\label{kkkk}
[x\otimes z^{\vee}, y]^{(\zeta)}_{\mathrm{Iw},D}
=\mathrm{det}_{\mathcal{E}_R}D(\sigma_{-1})\cdot \iota([(w^{(\zeta)}_{\delta_D}(x)\otimes z^{\vee}\otimes \bold{e}_1)\otimes (z\otimes \bold{e}_{-2}), 
w^{(\zeta)}_{\delta_D}(y)\otimes z^{\vee}\otimes \bold{e}_1)]^{(\zeta)}_{\mathrm{Iw},D^*}).
\end{equation}

To show this equality, we first remark that one has
\begin{multline*}
w^{(\zeta)}_{\delta_{D^*}}(w^{(\zeta)}_{\delta_D}(x)\otimes z^{\vee}\otimes \bold{e}_1) 
=w^{(\zeta)}_{\delta^{-1}_{D}}(w^{(\zeta)}_{\delta_D}(x)\otimes z^{\vee}\otimes \bold{e}_1)\\
=m^{(\zeta)}_{\delta_D}\circ w^{(\zeta)}_*(m_{\delta_D^{-1}}\circ w^{(\zeta)}_*(x)\otimes z^{\vee}\otimes \bold{e}_1)
=\delta_D(-1)\cdot(m^{(\zeta)}_{\delta_D}\circ m^{(\zeta)}_{\delta_D^{-2}}\circ w^{(\zeta)}_*\circ m^{(\zeta)}_{\delta_D^{-1}}\circ w^{(\zeta)}_*(x))
\otimes z^{\vee}\otimes \bold{e}_1\\
=\delta_D(-1)\cdot(m^{(\zeta)}_{\delta_D}\circ m^{(\zeta)}_{\delta_D^{-2}}\circ m^{(\zeta)}_{\delta_D}\circ w^{(\zeta)}_*\circ w^{(\zeta)}_*(x))
\otimes z^{\vee}\otimes \bold{e}_1=\delta_D(-1)\cdot x\otimes z^{\vee}\otimes \bold{e}_1,
\end{multline*}
where the third equality follows from $w^{(\zeta)}_*(x\otimes \bold{e}_{\delta})=\delta(-1)\cdot m^{(\zeta)}_{\delta}\circ w^{(\zeta)}_*(x)\otimes \bold{e}_{\delta}$ (Corollaire V.5.2. of \cite{Co10a}) and the fourth follows from $m^{(\zeta)}_{\delta^{-1}}\circ w^{(\zeta)}_*=w^{(\zeta)}_*\circ m^{(\zeta)}_{\delta}$ (Proposition V.2.4 of \cite{Co10a}) for 
any $\delta$. Hence, the right hand side of (\ref{kkkk}) is equal to 
\begin{multline*}
\mathrm{det}_{\mathcal{E}_R}D(\sigma_{-1})\cdot\delta_D(-1)\cdot \iota(\{(x\otimes z^{\vee}\otimes \bold{e}_1)\otimes (z\otimes \bold{e}_{-2})\otimes \bold{e}_1, w^{(\zeta)}_{\delta_D}(y)\otimes z^{\vee}\otimes \bold{e}_1\}^{D^*}_{\mathrm{Iw}, 0})\\
=-\mathrm{det}_{\mathcal{E}_R}D(\sigma_{-1})\cdot\delta_D(-1)\cdot \iota(\{x, w^{(\zeta)}_{\delta_D}(y)\otimes z^{\vee}\otimes \bold{e}_1\}^{D^*}_{\mathrm{Iw}, 0})
=\iota(\{x, w^{(\zeta)}_{\delta_D}(y)\otimes z^{\vee}\otimes \bold{e}_1\}^{D^*}_{\mathrm{Iw}, 0})\\
=-\{w^{(\zeta)}_{\delta_D}(y)\otimes z^{\vee}\otimes \bold{e}_1,x\}_{\mathrm{Iw},0}^D
=-[y\otimes z^{\vee}, x]^{(\zeta)}_{\mathrm{Iw},D}=[x\otimes z^{\vee}, y]_{\mathrm{Iw},D}^{(\zeta)},
\end{multline*}
where the first equality follows from the fact that the composite of the canonical isomorphisms 
$(D\otimes_R\mathcal{L}_R(D)^{\vee})\otimes_R\mathcal{L}_R(D^{\vee})^{\vee}\isom D^{\vee}\otimes_R\mathcal{L}_D(D^{\vee})^{\vee}\isom (D^{\vee})^{\vee}$ is given by $x\otimes z^{\vee}\otimes z\mapsto [f\mapsto -f(x)]$ for any $z\in \mathcal{L}_R(D)^{\times}$, which shows the equality (\ref{kkkk}), hence finishes to prove the lemma.

 \end{proof}
 

We next prove (2) of Conjecture \ref{2.11} under the following assumption. 
Let $\overline{V}$ be an $\mathbb{F}$-representation of $G_{\mathbb{Q}_p}$ of over a finite 
field $\mathbb{F}$ of characteristic $p$. We denote by $R_{\overline{V}}$ the universal deformation 
ring of $\overline{V}$ (resp. a versal deformation ring or the universal framed deformation ring)
if it exists (resp. the universal deformation ring does not exist), and denote by $V^{\mathrm{univ}}$ the universal deformation (resp. 
a versal deformation or the underlying representation of the universal framed deformation) of $\overline{V}$ over $R_{\overline{V}}$.  Let $R_0$ be a topological $\mathbb{Z}_p$-algebra satisfying 
the condition (i) in \S2.1. Let $R$ be either $R_0$ or $R_0[1/p]$ (resp. $R=R_0[1/p]$) when $p\geqq 3$ (resp. $p=2$). Let $V$ be an $R$-representation of $G_{\mathbb{Q}_p}$. Set $V_0:=V$ (resp. $V_0$ a $G_{\mathbb{Q}_p}$-stable  $R_0$-lattice of $V$) if $R=R_0$ (resp. $R=R_0[1/p]$). Since $R_0$ is a finite product of local rings, we may assume that $R_0$ is local and denote by $\mathfrak{m}_{R_0}$ the maximal ideal of $R_0$. If we set $\overline{V}:=V_0\otimes_{R_0}R_0/\mathfrak{m}_{R_0}$, then there exists a homomorphism $R_{\overline{V}}\rightarrow R$ such that $V^{\mathrm{univ}}\otimes_{R_{\overline{V}}}R\isom V$. 
Set $\mathcal{X}:=\mathrm{Spec}(R_{\overline{V}}[1/p])$, and denote by 
$\mathcal{X}_0$ the subset of all the closed points in $\mathcal{X}$.

\begin{prop}\label{3.3.1}
Let $D$ be an \'etale $(\varphi,\Gamma)$-module over $\mathcal{E}_R$ or rank two. 
Set $V:=V(D)$ and $\overline{V}:=V_0\otimes_{R_0}R_0/\mathfrak{m}_{R_0}$ for 
an $R_0$-lattice $V_0$ of $V$. Assume one of the following conditions $(1)$ and $(2):$
\begin{itemize}
\item[(1)]$p\geqq 3$.
\item[(2)]$p=2$ and, for an $R_{\overline{V}}$ as above, 
$$\mathcal{X}_{\mathrm{cris}}:=\{x\in \mathcal{X}_0|V_x:=x^*(V^{\mathrm{univ}}) \text{ is absolutely irreducible and crystalline} \}$$  is Zariski dense in $\mathcal{X}$, 

\end{itemize}
then the isomorphism $\eta_{R,\zeta}(D)$ descends to $\Lambda_R(\Gamma)$, which we denote by 
$$\varepsilon_{R,\zeta}^{\mathrm{Iw}}(D):\bold{1}_{\Lambda_R(\Gamma)}\isom \Delta_R^{\mathrm{Iw}}(D).$$

\end{prop}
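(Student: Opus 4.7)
My plan is to reduce to the universal deformation setting and then exploit Zariski density of crystalline (equivalently, trianguline) points, where the descent is already known. Using base-change compatibility of the Iwasawa fundamental lines and of the convolution pairing along the classifying map $R_{\overline{V}} \to R_0$, I would first reduce to the universal situation $R = R_{\overline{V}}[1/p]$, $D = D(V^{\mathrm{univ}}[1/p])$. Since the canonical map $\Lambda_R(\Gamma) \hookrightarrow \mathcal{E}_R(\Gamma)$ is injective, the descent of $\eta_{R,\zeta}(D)$ amounts to showing that the element $\eta_{R,\zeta}(D)(1)$, a priori lying in $\Delta^{\mathrm{Iw}}_R(D) \otimes_{\Lambda_R(\Gamma)} \mathcal{E}_R(\Gamma)$, in fact lies in $\Delta^{\mathrm{Iw}}_R(D)$.

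By Proposition \ref{3.2.1}, the isomorphism $\eta_{R,\zeta}(D)$ is identified, via the map $\wedge^{(\zeta)}_{\mathbb{Z}_p^\times}$, with the base change to $\mathcal{E}_R(\Gamma)$ of the rank-one $\varepsilon$-isomorphism $\varepsilon^{\mathrm{Iw}}_{R,\zeta}(\mathrm{det}_{\mathcal{E}_R} D)$, which already descends to $\Lambda_R(\Gamma)$ by the rank-one case of Kato \cite{Ka93b} (as recalled in Remark \ref{rankone}). So the descent of $\eta_{R,\zeta}(D)$ reduces to showing that $\wedge^{(\zeta)}_{\mathbb{Z}_p^\times}$, viewed as an isomorphism of invertible $\Lambda_R(\Gamma)$-modules after tensoring up to $\mathcal{E}_R(\Gamma)$, has no ``poles'' along the divisors on which $[\gamma_0]-1$ becomes invertible.

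To verify this, I would specialize at absolutely irreducible crystalline closed points $x \in \mathcal{X}_{\mathrm{cris}}$. Under hypothesis (1), the Zariski density of $\mathcal{X}_{\mathrm{cris}}$ in $\mathcal{X}$ is a known deformation-theoretic fact; under hypothesis (2), it is the stated assumption. At every such $x$, the fiber $D_x$ is trianguline, so the $\varepsilon$-isomorphism $\varepsilon^{\mathrm{Iw}}_{L_x,\zeta}(D_x)$ of \cite{Na14b} exists and is $\Lambda_{L_x}(\Gamma)$-integral. One then shows that this integral $\varepsilon$-isomorphism coincides with the specialization of $\eta_{R,\zeta}(D)$ at $x$; this uses the trianguline compatibility of the convolution pairing $[-,-]^{(\zeta)}_{\mathrm{Iw}}$ which will be established in \S 3.3 in the course of proving condition (i) of Theorem \ref{3.0}. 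Thus $\eta_{R,\zeta}(D)$ is $\Lambda$-integral after specialization along a Zariski dense set of points of $\mathcal{X}$.

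The main obstacle, and the delicate step, is upgrading this pointwise integrality into genuine descent over $\Lambda_R(\Gamma)$. Concretely, one needs that $\Lambda_{R_{\overline{V}}[1/p]}(\Gamma)$ is a sufficiently nice noetherian ring — ideally normal, or at least such that an element of its total ring of fractions is integral iff it has no pole along any height-one prime — and that any hypothetical pole of $\eta_{R,\zeta}(D)(1)$ along a height-one prime would be inconsistent with the Zariski density of crystalline specializations cutting through that prime. The reduction and density inputs are essentially formal or known; this final integrality argument is where the main technical work will lie.
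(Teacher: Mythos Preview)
Your reduction to the universal deformation ring and then to a Zariski-dense set of points $x\in\mathcal{X}_{\mathrm{cris}}$ matches the paper's proof exactly. The divergence is in how you handle the pointwise descent at each such $x$, and here there is a genuine gap.

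You propose to use at each crystalline point the trianguline $\varepsilon$-isomorphism of \cite{Na14b} and the comparison to be proved in \S3.3. Two problems: first, the isomorphisms of \cite{Na14b} live over $\mathcal{R}^{\infty}_{L_x}(\Gamma)$, not over $\Lambda_{L_x}(\Gamma)$; Remark~\ref{2.5} says explicitly that the integrality of the local $\varepsilon$-isomorphisms is \emph{not} proved there. So ``the $\varepsilon$-isomorphism of \cite{Na14b} is $\Lambda_{L_x}(\Gamma)$-integral'' is not available to you. Second, Theorem~\ref{3.1} in \S3.3, as stated, concerns $\varepsilon^{\mathrm{Iw}}_{L,\zeta}(D)$, whose very definition (Definition~\ref{3.6.1}) presupposes Proposition~\ref{3.3.1}; invoking it here is circular. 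One could imagine extracting the Dospinescu computation from the proof of Theorem~\ref{3.1} to compare $\eta$ with the \cite{Na14b} isomorphism over $\mathcal{R}_{L_x}(\Gamma)$ and then argue that $\mathcal{E}_{L_x}(\Gamma)\cap\mathcal{R}^{\infty}_{L_x}(\Gamma)=\Lambda_{L_x}(\Gamma)$, but that is not what you wrote, and it is considerably more roundabout.

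The paper's actual argument at each $x$ does not use the trianguline structure at all; it uses only absolute irreducibility. For absolutely irreducible $D$ over $L$, one has $D^{\varphi=1}=D/(\psi-1)D=0$ and $D^{\psi=1}\isom\mathcal{C}(D)$ is free of rank two over $\Lambda_L(\Gamma)$, so $\Delta^{\mathrm{Iw}}_L(D)\isom(\mathrm{det}_{\Lambda_L(\Gamma)}\mathcal{C}(D)\otimes_L\mathcal{L}_L(D)^{\vee},0)^{-1}$ already over $\Lambda_L(\Gamma)$. The key input --- and the one missing from your proposal --- is Colmez's Proposition~V.2.1 of \cite{Co10b} (the $\mathrm{GL}_2(\mathbb{Q}_p)$-compatibility of $(D,\delta_D)$; see Remark~\ref{3.5.1}), which says that $w_{\delta_D}$ carries $\mathcal{C}(D)$ isomorphically onto $\mathcal{C}(D^*)$. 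Combined with the perfectness of $\{-,-\}_{0,\mathrm{Iw}}:\mathcal{C}(D^*)^{\iota}\times\mathcal{C}(D)\to\Lambda_L(\Gamma)$ over $\Lambda_L(\Gamma)$ (Proposition~VI.1.2 of \cite{Co10a}), this shows immediately that $[-,-]^{(\zeta)}_{\mathrm{Iw}}$ restricts to a perfect $\Lambda_L(\Gamma)$-pairing on $\mathcal{C}(D)\otimes\mathcal{L}_L(D)^{\vee}\times\mathcal{C}(D)$, whence $\eta_{L,\zeta}(D)$ descends to $\Lambda_L(\Gamma)$. No detour through the Robba ring is needed.
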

\begin{proof}
We first remark that, when $p=3$, the second condition in (2) (i.e. density of $\mathcal{X}_{\mathrm{cris}}$) always holds for the universal (or a versal) deformation ring $R_{\overline{V}}$ and it is known to be an integral domain 
(in particular $p$-torsion free) by the results of \cite{Co08}, \cite{Ki10}, \cite{Bo10} and \cite{BJ14}. 

By the compatibility with the base change, it suffices to show the proposition for $V^{\mathrm{univ}}$ 
(for $V^{\mathrm{univ}}[1/p]$ if $p=2$). 
Set $R:=R_{\overline{V}}$ and $V:=V^{\mathrm{univ}}$ for simplicity. 
The $p$-torsion freeness of $R$ when $p\geqq 3$ implies that we have $\Lambda_R(\Gamma)=\mathcal{E}_R(\Gamma)\cap \Lambda_{R[1/p]}(\Gamma)$.
Therefore, it suffices to show the theorem for $R[1/p]$ (for any $p$). Moreover, since 
we have $\Lambda_{R[1/p]}(\Gamma)=\mathrm{Ker}(\mathcal{E}_{R[1/p]}(\Gamma)\rightarrow \prod_{x\in \mathcal{X}_{\mathrm{cris}}} \mathcal{E}_{L_x}(\Gamma)/\Lambda_{L_x}(\Gamma))$ (here, $L_x$ is the residue field at $x$) by the assumption on the density, it suffices to show the proposition for $V_x:=x^*(V)$ for any $x\in \mathcal{X}_{\mathrm{cris}}$. 

Let $V$ be an absolutely irreducible $L$-representation for a finite extension $L$ of $\mathbb{Q}_p$ 
corresponding to a point in $\mathcal{X}$. 
Set $D:=D(V)$. Then, one has $D^{\varphi=1}=D/(\psi-1)D=0$ and $D^{\psi=1}\isom (1-\varphi)D^{\psi=1}=:\mathcal{C}(D)$ is a free $\Lambda_L(\Gamma)$-module of rank two by 
\S II, \S VI of \cite{Co10a}, and the same results hold for $D^*$. Hence, as in the case of $\Delta_{R}^{\mathrm{Iw}}(D)\otimes_{\Lambda_R(\Gamma)}\mathcal{E}_R(\Gamma)$, we obtain a canonical isomorphism 
$$\Delta_{L}^{\mathrm{Iw}}(D)\isom (\mathrm{det}_{\Lambda_L(\Gamma)}\mathcal{C}(D)\otimes_L\mathcal{L}_L(D)^{\vee}, 0)^{-1}.$$
 Moreover, the Iwasawa pairing 
$\{-,-\}_{0,\mathrm{Iw}}:\mathcal{C}(D^*)^{\iota}\times \mathcal{C}(D)\rightarrow \Lambda_L(\Gamma)$ is perfect by Proposition VI.1.2 of \cite{Co10a}, and, if we fix $z\in \mathcal{L}_L(D)^{\times}$, one has an isomorphism 
$$\mathcal{C}(D)\isom \mathcal{C}(D^*): x\mapsto w_{\delta_D}(x)\otimes z^{\vee}\otimes \bold{e}_1$$
by Proposition V.2.1 of \cite{Co10b}. Therefore, we obtain an isomorphism 
$$\mathrm{det}_{\Lambda_L(\Gamma)}\mathcal{C}(D)\otimes_L\mathcal{L}_L(D)^{\vee}\isom 
\Lambda_L(\Gamma): (x\wedge y)\otimes z^{\vee}\mapsto \{w_{\delta_D}(x)\otimes z^{\vee}\otimes \bold{e}_1, y\}_{0,\mathrm{Iw}},$$which proves the proposition for $D$ by definition of $\eta_{R,\zeta}(D)$.
\end{proof}
\begin{rem}\label{3.4}
Even when $p=2$, the assumption in the proposition holds for almost all the cases (the author does not know any example which does not satisfies the assumption). For example, for any $L$-representation $V$ for a finite extension $L$ of $\mathbb{Q}_p$, there exists an $\mathcal{O}_L$-lattice $V_0$ of $V$ such that its $R_{\overline{V}}$ satisfies the assumption (see \cite{CDP14a}).

\end{rem}

\begin{rem}\label{3.5.1}
In the proof of the  proposition above, the most subtle point is to show that 
the isomorphism $D^{\psi=0}\isom (D^*)^{\psi=0}:x\mapsto w_{\delta_D}(x)\otimes z^{\vee}\otimes \bold{e}_1$ induces an isomorphism 
$\mathcal{C}(D)\isom \mathcal{C}(D^*)$ when $D$ is absolutely irreducible. This fact is a consequence of the $\mathrm{GL}_2(\mathbb{Q}_p)$-compatibility of the pair $(D,\delta_D)$ for such $D$ (see \S III of \cite{CD14}), which is a very deep result in the theory of $p$-adic local Langlands correspondence for $\mathrm{GL}_2(\mathbb{Q}_p)$. 
In the next article \cite{Na}, we will give another proof of this proposition, and prove a similar proposition even for 
higher rank case under a similar assumption on the density of $\mathcal{X}_{\mathrm{cris}}$ by directly comparing the convolution $\wedge^{(\zeta)}_{\mathbb{Z}_p^{\times}}$ with the local $\varepsilon$-isomorphisms defined in \cite{BB08} and \cite{Na14b}.

\end{rem}

From now on, we only treat the $(\varphi,\Gamma)$-modules of rank two which satisfy the assumption in Proposition \ref{3.3.1} without any comment, which gives no restriction to the results proved in the next sections since any $L$-representations of $G_{\mathbb{Q}_p}$ of rank two satisfies the assumption by Remark \ref{3.4}.

Specializing the  $\varepsilon$-isomorphism above, we define the $\varepsilon$-isomorphism 
$\varepsilon_{R,\zeta}(D)$ as follows.

\begin{defn}\label{3.6.1}
Let $D$ be an \'etale $(\varphi,\Gamma)$-module over $\mathcal{E}_R$ of rank two. We define the  isomorphism $\varepsilon_{R,\zeta}(D)$ to be the base change
$$\varepsilon_{R,\zeta}(D):=\varepsilon_{R,\zeta}^{\mathrm{Iw}}(D)\otimes_{\Lambda_R(\Gamma), f_{\bold{1}}}\mathrm{id}_R:\bold{1}_R\isom \Delta_{R}^{\mathrm{Iw}}(D)\otimes_{\Lambda_R(\Gamma), f_{\bold{1}}}R\isom
\Delta_R(D),$$ where 
$f_{\bold{1}}:\Lambda_R(\Gamma)\rightarrow R$ is defined by $f_{\bold{1}}([\gamma']):=1$ for $\gamma'\in \Gamma$.

\end{defn}

\begin{corollary}
Our local $\varepsilon$-isomorphism $\varepsilon_{R,\zeta}(D)$ defined in Definition $\ref{3.6.1}$ satisfies the conditions $(1), (3), (4)$ of Conjecture $\ref{2.2}$.
\end{corollary}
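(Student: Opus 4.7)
The plan is to reduce each of (1), (3), (4) to the corresponding property either of Kato's rank-one $\varepsilon$-isomorphism $\varepsilon_{R,\zeta}^{\mathrm{Iw}}(\mathrm{det}_{\mathcal{E}_R}D)$ (which is already known to satisfy every item of Conjecture \ref{2.2} by Remark \ref{rankone}) or of the convolution pairing $\wedge_{\mathbb{Z}_p^{\times}}^{(\zeta)}$. By Proposition \ref{3.2.1} and Proposition \ref{3.3.1}, $\varepsilon_{R,\zeta}^{\mathrm{Iw}}(D)$ is the unique $\Lambda_R(\Gamma)$-linear descent of $(\wedge_{\mathbb{Z}_p^{\times}}^{(\zeta)})^{-1}\circ \varepsilon_{R,\zeta}^{\mathrm{Iw}}(\mathrm{det}_{\mathcal{E}_R}D)$ over $\mathcal{E}_R(\Gamma)$, and $\varepsilon_{R,\zeta}(D)$ is its specialization under $f_{\bold{1}}$. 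Both the descent and the specialization are functorial in all the operations appearing in (1), (3), (4), so it suffices to establish each identity after base change to $\mathcal{E}_R(\Gamma)$.

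Condition (1) is immediate: the defining limit for $\wedge_{\mathbb{Z}_p^{\times}}^{(\zeta)}$ involves only the operations $\varphi$, $\psi$, $\sigma_{\star}$ and multiplication by $(1+X)^i$, each of which commutes with any continuous $\mathbb{Z}_p$-algebra map $R\to R'$, so $\wedge_{\mathbb{Z}_p^{\times}}^{(\zeta)}$ is base-change compatible; combining with Kato's base-change for $\varepsilon_{R,\zeta}^{\mathrm{Iw}}(\mathrm{det}_{\mathcal{E}_R}D)$ yields (1) for $\varepsilon_{R,\zeta}^{\mathrm{Iw}}(D)$, hence for $\varepsilon_{R,\zeta}(D)$. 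Condition (3) is essentially contained in Remark \ref{3333}: inserting the identity $\wedge_{\mathbb{Z}_p^{\times}}^{(\zeta^a)}=[\sigma_a]^{r_D-1}\wedge_{\mathbb{Z}_p^{\times}}^{(\zeta)}$ and Kato's equivariance formula for $\varepsilon_{R,\zeta^a}^{\mathrm{Iw}}(\mathrm{det}_{\mathcal{E}_R}D)$ yields
\[
\varepsilon_{R,\zeta^a}^{\mathrm{Iw}}(D)=\mathrm{det}_{\mathcal{E}_{\Lambda_R(\Gamma)}}\bold{Dfm}(D)(a)\cdot \varepsilon_{R,\zeta}^{\mathrm{Iw}}(D).
\]
Specializing via $f_{\bold{1}}$, which sends $[\sigma_a]\mapsto 1$, replaces the scalar by $\mathrm{det}_{\mathcal{E}_R}D(a)$; under the identification of $\mathrm{det}_{\mathcal{E}_R}D$ with the determinant of $V(D)$ via local class field theory, this equals $\mathrm{det}_R V(D)(\mathrm{rec}_{\mathbb{Q}_p}(a))$, which is exactly the scalar required by (3).

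For condition (4), the key input is Lemma \ref{llll}: it asserts that over $\mathcal{E}_R(\Gamma)$ the isomorphisms $\eta_{R,\zeta}(D)$ and $\eta_{R,\zeta}(D^*)^{\iota}$ fit into the Tate-duality diagram up to the scalar $\mathrm{det}_{\mathcal{E}_R}D(\sigma_{-1})$. By Proposition \ref{3.2.1} the same holds with $\varepsilon_{R,\zeta}^{\mathrm{Iw}}(D)$ and $\varepsilon_{R,\zeta}^{\mathrm{Iw}}(D^*)^{\iota}$ in place of $\eta_{R,\zeta}(D)$ and $\eta_{R,\zeta}(D^*)^{\iota}$. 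Since $f_{\bold{1}}\circ\iota=f_{\bold{1}}$, specialization sends $\varepsilon_{R,\zeta}^{\mathrm{Iw}}(D^*)^{\iota}$ to $\varepsilon_{R,\zeta}(D^*)$; meanwhile, applying (3) with $a=-1$ gives $\varepsilon_{R,\zeta^{-1}}(D)=\mathrm{det}_{\mathcal{E}_R}D(\sigma_{-1})\cdot \varepsilon_{R,\zeta}(D)$, so the discrepancy scalar in Lemma \ref{llll} is precisely absorbed by the change of basis $\zeta\leftrightarrow\zeta^{-1}$; the resulting equality is (4). The substantive work — the rank-two Tate-duality computation for $\eta_{R,\zeta}(D)$ — has already been carried out in Lemma \ref{llll} by exploiting the anti-symmetry of $[-,-]_{\mathrm{Iw}}^{(\zeta)}$ (itself deduced from Colmez's reciprocity law \eqref{reciprocity}); the remainder is scalar-factor bookkeeping, and this is the only place where any care is required.
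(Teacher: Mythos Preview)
Your proof is correct and follows essentially the same strategy as the paper's own proof: reduce to the Iwasawa level, invoke Remark \ref{3333} for condition (3) and Lemma \ref{llll} for condition (4), and use the injectivity of $\Lambda_R(\Gamma)\hookrightarrow\mathcal{E}_R(\Gamma)$ to descend identities established over $\mathcal{E}_R(\Gamma)$. The paper's argument is extremely terse (three sentences), whereas you spell out the bookkeeping, in particular how the scalar $\mathrm{det}_{\mathcal{E}_R}D(\sigma_{-1})$ appearing in Lemma \ref{llll} is absorbed by passing from $\zeta$ to $\zeta^{-1}$ via condition (3); this extra detail is helpful and correct.

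One small remark: when you match the scalar in Lemma \ref{llll} against condition (3), note that at the Iwasawa level condition (3) produces the factor $\mathrm{det}_{\mathcal{E}_{\Lambda_R(\Gamma)}}\bold{Dfm}(D)(\sigma_{-1})=\mathrm{det}_{\mathcal{E}_R}D(\sigma_{-1})\cdot[\sigma_{-1}]^{-r_D}$, whereas Lemma \ref{llll} has only $\mathrm{det}_{\mathcal{E}_R}D(\sigma_{-1})$. You circumvent this by first specializing via $f_{\bold{1}}$ and then applying (3) at the $R$-level, which is perfectly valid; alternatively one simply observes that $[\sigma_{-1}]^{2}=1$ in $\Lambda_R(\Gamma)$ so the discrepancy vanishes in the rank-two case. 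Either way the argument goes through.
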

\begin{proof}
That $\varepsilon_{R,\zeta}(D)$ satisfies the condition (1) is trivial by definition. 
To show that $\varepsilon_{R,\zeta}(D)$ satisfies the conditions (3) and (4), it suffices to show that 
$\varepsilon^{\mathrm{Iw}}_{R,\zeta}(D)$ satisfies (3) and (4). Since the canonical map $\Lambda_R(\Gamma)\rightarrow \mathcal{E}_R(\Gamma)$ is injective, this claim follows from 
Remark \ref{3333} and Lemma \ref{llll}.
\end{proof}
\begin{rem}\label{3.7.1}
By definition and Lemma \ref{2.10.1}, we also have  
$$\varepsilon_{R,\zeta}(D(\delta))=\varepsilon_{R,\zeta}^{\mathrm{Iw}}(D)\otimes_{\Lambda_R(\Gamma),f_{\delta}}R$$ for any $\delta:\Gamma \rightarrow R^{\times}$ under the canonical isomorphism $$\Delta_R(D(\delta))\isom \Delta_R^{\mathrm{Iw}}(D)\otimes_{\Lambda_R(\Gamma),f_{\delta}}R.$$
\end{rem}

 

\subsection{The verification of the de Rham condition: the trianguline case}
This and the next subsections are the technical hearts of this article, where we prove that our $\varepsilon$-isomorphism defined in Definition \ref{3.6.1}
satisfies the condition (5) in Conjecture \ref{2.2} which we call the de Rham condition. In this subsection, we prove this condition in the trianguline case by comparing the local $\varepsilon$-isomorphism defined in Definition \ref{3.6.1} with that defined in the previous article \cite{Na14b}. 

In \cite{Na14b}, we generalized the ($p$-adic) local $\varepsilon$-conjecture for rigid analytic families 
of $(\varphi,\Gamma)$-modules over the Robba ring, and proved this generalized version of conjecture for families of trianguline $(\varphi,\Gamma)$-modules, (a special case of) which we briefly recall now. For details, see \cite{KPX14} for the general results on the cohomology theory 
of $(\varphi,\Gamma)$-modules over the Robba ring, and \cite{Na14b} for the generalized version of the 
local $\varepsilon$-conjecture.

We denote by  $|-|:\overline{\mathbb{Q}}_p^{\times}\rightarrow \mathbb{Q}_{> 0}$ the absolute value normalized by 
$|p|:=1/p$. Define topological $L$-algebras $\mathcal{R}_L^{(n)}$ ($n\geqq 1$) and $\mathcal{R}_L$ by 
$$\mathcal{R}^{(n)}_L:=\left\{\left.\sum_{m\in \mathbb{Z}}a_mX^m\right|a_m\in L, \sum_{m\in \mathbb{Z}}a_mX^m
\text{ is convergent on } 
|\zeta_{p^n}-1|\leqq |X| <1\right\}$$
and $\mathcal{R}_L:=\bigcup_{n\geqq 1}\mathcal{R}_L^{(n)}$
on which $\varphi$ and $\Gamma$ act by $\varphi(X)=(1+X)^p-1$ and $\gamma'(X)=(1+X)^{\chi(\gamma')}-1$ ($\gamma'\in \Gamma$). For $n\geqq 1$, we say that $M^{(n)}$ is a $(\varphi,\Gamma)$-module over $\mathcal{R}^{(n)}_L$ if $M^{(n)}$ is a finite free $\mathcal{R}^{(n)}_L$-module with a Frobenius structure 
$$\varphi^*M^{(n)}:=M^{(n)}\otimes_{\mathcal{R}_L^{(n)},\varphi}\mathcal{R}_L^{(n+1)}\isom M^{(n+1)}:=M^{(n)}\otimes_{\mathcal{R}_L^{(n)}}\mathcal{R}_L^{(n+1)}$$ and a continuous semi-linear action of $\Gamma$ which commutes with the Frobenius structure. We say that an $\mathcal{R}_L$-module $M$ is a $(\varphi,\Gamma)$-module over $\mathcal{R}_L$ if it is the base change of a $(\varphi,\Gamma)$-module 
$M^{(n)}$ over $\mathcal{R}_L^{(n)}$ for some $n\geqq 1$. We denote by $n(M)\geqq 1$ the smallest such $n$, and define $M^{(n)}:=M^{(n(M))}\otimes_{\mathcal{R}_L^{(n(M))}}\mathcal{R}_L^{(n)}$ (remark that this is well-defined). 
By the theorems of Cherbonnier-Colmez \cite{CC98} and Kedlaya \cite{Ke04}, one has an exact fully faithful functor 
$$D\mapsto D_{\mathrm{rig}}:=D^{\dagger}\otimes_{\mathcal{E}_L^{\dagger}}\mathcal{R}_L$$ from the category of 
\'etale $(\varphi,\Gamma)$-modules over $\mathcal{E}_L$ to that of $(\varphi,\Gamma)$-modules over $\mathcal{R}_L$, where $D^{\dagger}$ is the largest \'etale $(\varphi,\Gamma)$-submodule 
of $D$ defined over 
$$\mathcal{E}_L^{\dagger}:=\{f(X)\in \mathcal{E}_L|f(X) \text{ is convergent on } 
r\leqq |X| <1 \text{ for some }r<1\}.$$ For any $(\varphi,\Gamma)$-module $M$ over $\mathcal{R}_L$, we can similarly define $\mathrm{C}^{\bullet}_{\varphi,\gamma}(M)$, $\Delta_L(M)$, $\bold{Dfm}(M)$ and $\Delta_L^{\mathrm{Iw}}(M)$, etc. as follows. First, we define 
$$\mathrm{C}^{\bullet}_{\varphi,\gamma}(M),\,\, \mathrm{C}^{\bullet}_{\psi,\gamma}(M) \text{ and }\Delta_{L,1}(M)$$ in the same way as in the \'etale $(\varphi,\Gamma)$-case. 
To define $\Delta_{L,2}(D)$, we first recall that the rank one $(\varphi,\Gamma)$-modules over $\mathcal{R}_L$ are classified by the continuous homomorphisms $\delta:\mathbb{Q}_p^{\times}\rightarrow L^{\times}$, i.e. the rank one $(\varphi,\Gamma)$-module corresponding to $\delta$ is defined 
by 
$$\mathcal{R}_L(\delta):=\mathcal{R}_L\bold{e}_{\delta}$$ on which $\varphi$ and $\Gamma$ act by 
$$\varphi(\bold{e}_{\delta})=\delta(p)\cdot\bold{e}_{\delta} \text{ and }\gamma'(\bold{e}_{\delta})=\delta(\chi(\gamma'))\cdot\bold{e}_{\delta} \text{ for } \gamma'\in \Gamma.$$ For any $(\varphi,\Gamma)$-module $M$ over $\mathcal{R}_L$, we also regard $\mathrm{det}_{\mathcal{R}_L}M$ as continuous homomorphisms $\mathrm{det}_{\mathcal{R}_L}M:\mathbb{Q}_p^{\times}\rightarrow L^{\times}$ or $\mathrm{det}_{\mathcal{R}_L}M:W_{\mathbb{Q}_p}^{\mathrm{ab}}\rightarrow L^{\times}$ by this correspondence and the local class field theory. Using the homomorphism $\mathrm{det}_{\mathcal{R}_L}M$, we define 
$$\mathcal{L}_L(M),\,\, \Delta_{L,2}(M) \text{ and }\Delta_L(M)$$ in the same way as in the \'etale case. To define $\bold{Dfm}(M)$ and $\Delta_L^{\mathrm{Iw}}(M)$, we first define $\Lambda_L(\Gamma)$-algebras $\mathcal{R}_L^{\infty}(\Gamma)$ and $\mathcal{R}_L(\Gamma)$ as follows. Fix a decomposition $\Gamma\isom \Gamma_{\mathrm{tor}}\times \mathbb{Z}_p$ and set $\gamma_0\in \Gamma$ corresponding to $(e,1)$ on the right hand side. Then, we define $$\mathcal{R}_L^{\infty}(\Gamma):=\mathbb{Z}_p[\Gamma_{\mathrm{tor}}]\otimes_{\mathbb{Z}_p}\mathcal{R}_L^{\infty}([\gamma_0]-1) \text{ and }\mathcal{R}_L(\Gamma):= \mathbb{Z}_p[\Gamma_{\mathrm{tor}}]\otimes_{\mathbb{Z}_p}\mathcal{R}_L([\gamma_0]-1),$$ where we set 
$$\mathcal{R}_L([\gamma_0]-1):=\left\{\left.\sum_{m\in \mathbb{Z}}a_m([\gamma_0]-1)^m\right|\sum_{m\in \mathbb{Z}}a_mX^m\in \mathcal{R}_L\right\}$$ and 
$$\mathcal{R}_L^{\infty}([\gamma_0]-1):=\left\{\sum_{m\geqq 0}a_m([\gamma_0]-1)^m\in \mathcal{R}_L([\gamma_0]-1)\right\}.$$ 
We define a $(\varphi,\Gamma)$-module $\bold{Dfm}(M)$
 over 
$\mathcal{R}_L\hat{\otimes}_L\mathcal{R}_L^{\infty}(\Gamma)$ (which is the relative Robba ring 
with coefficients in $\mathcal{R}_L^{\infty}(\Gamma)$) to be
$$\bold{Dfm}(M):=M\hat{\otimes}_L\mathcal{R}_L^{\infty}(\Gamma)$$ as an 
$\mathcal{R}_L\hat{\otimes}_L\mathcal{R}_L^{\infty}(\Gamma)$-module on which $\varphi$ and $\Gamma$ act by 
$$\varphi(x\hat{\otimes} y):=\varphi(x)\hat{\otimes}y\text{ and }\gamma'(x\hat{\otimes}y):=\gamma'(x)\hat{\otimes}[\gamma']^{-1}\cdot y$$ for $x\in M, y\in \mathcal{R}_L^{\infty}(\Gamma)$ and $\gamma'\in \Gamma$. By \cite{KPX14}, 
one similarly has the following canonical quasi-isomorphisms 
$$C^{\bullet}_{\varphi,\gamma}(\bold{Dfm}(M))\isom 
C^{\bullet}_{\psi,\gamma}(\bold{Dfm}(M))\isom C^{\bullet}_{\psi}(M)$$ of complexes of $\mathcal{R}_L^{\infty}(\Gamma)$-modules, and it is known that these are perfect complexes of $\mathcal{R}_L^{\infty}(\Gamma)$-modules. One also has the (extended) Iwasawa pairing 
$$\{-,-\}_{0,\mathrm{Iw}}:(M^*)^{\psi=0,\iota}\times M^{\psi=0}\rightarrow \mathcal{R}_L(\Gamma)$$
by \S4.2 of \cite{KPX14}.
Therefore, we can similarly define the following graded invertible 
$\mathcal{R}_L^{\infty}(\Gamma)$-modules
$$\Delta_{L,1}^{\mathrm{Iw}}(M):=\mathrm{Det}_{\mathcal{R}_L^{\infty}(\Gamma)}(C^{\bullet}_{\varphi,\gamma}(\bold{Dfm}(M)))$$ and 
$$\Delta_{L}^{\mathrm{Iw}}(M):=\Delta_{L,1}^{\mathrm{Iw}}(M)\boxtimes_{\mathcal{R}_L^{\infty}(\Gamma)}(\Delta_{L,2}(M)\otimes_L\mathcal{R}_L^{\infty}(\Gamma))$$
(remark that we have $\Delta_{\mathcal{R}^{\infty}_L(\Gamma),2}(\bold{Dfm}(M))\isom \Delta_{L,2}(M)\otimes_L\mathcal{R}_L^{\infty}(\Gamma)$),  and we can similarly obtain a canonical isomorphism 
$$\Delta_{L}^{\mathrm{Iw}}(M)\otimes_{\mathcal{R}_L^{\infty}(\Gamma)}\mathcal{R}_L(\Gamma)\isom (\mathrm{det}_{\mathcal{R}_L(\Gamma)}M^{\psi=0}\otimes_L\mathcal{L}_L(M)^{\vee}, 0)^{-1}$$ 
using Proposition 4.3.8 (3) \cite{KPX14} (precisely, this proposition is proved under the assumption that 
$M/(\psi-1)=M^*/(\psi-1)=0$, but we can easily prove  the statement (3) of this proposition for general $M$ in a similar way). 

One can also generalize the $p$-adic Hodge theory for $(\varphi,\Gamma)$-modules over $\mathcal{R}_L$. For a field $F$ of characteristic zero and $n\in \mathbb{Z}_{\geqq 1}$, we set $F_n:=F\otimes_{\mathbb{Q}}\mathbb{Q}(\mu_{p^n})$, and set $F_{\infty}:=\bigcup_{n\geqq 1} F_n$.
Set $t:=\mathrm{log}(1+X)\in \mathcal{R}_L$ (remark that this corresponds to $t_{\zeta}\in \bold{B}^+_{\mathrm{dR}}$).
Set $\bold{D}_{\mathrm{cris}}(M):=M[1/t]^{\Gamma}$. For $n\geqq 1$, one has the following $\Gamma$-equivariant injection 
$$\iota_n:\mathcal{R}_L^{(n)}\hookrightarrow L_n[[t]]:f(X)\mapsto f(\zeta_{p^n}\mathrm{exp}(t/p^n)-1).$$
Using this map, we define for $n\geqq n(M)$
$$\bold{D}_{\mathrm{dif},n}^+(M):=M^{(n)}\otimes_{\mathcal{R}_L^{(n)},  \iota_n}L_n[[t]],\,\,\bold{D}_{\mathrm{dif}, n}(M):=\bold{D}_{\mathrm{dif},n}^+(M)[1/t]$$
and $\bold{D}_{\mathrm{dif},\infty}^{*}(M):=\varinjlim_{n\geqq n(M)}\bold{D}_{\mathrm{dif},n}^{*}(M)$ for $*=+$ or $*=\phi$ (the empty set), where the transition maps are the maps $\bold{D}^+_{\mathrm{dif}, n}(M)\rightarrow \bold{D}^+_{\mathrm{dif},n+1}(M):x\otimes y\mapsto \varphi(x)\otimes y$. Using these, we define 
$$\bold{D}_{\mathrm{dR}}(M):=\bold{D}_{\mathrm{dif},\infty}(M)^{\Gamma},\,\,\bold{D}_{\mathrm{dR}}^i(M):=(t^i\bold{D}^+_{\mathrm{dif},\infty}(M))^{\Gamma}, \text{ and } t_M:=\bold{D}_{\mathrm{dR}}(M)/\bold{D}^0_{\mathrm{dR}}(M)$$ 
for $i\in \mathbb{Z}$. Using these (and those for $M\otimes_{\mathcal{R}_L}\mathcal{R}_L(K)$, where $\mathcal{R}_L(K)$ is the Robba ring of a finite extension $K$ of $\mathbb{Q}_p$), one can define the notions of a crystalline  $(\varphi,\Gamma)$-module, a de Rham $(\varphi,\Gamma)$-module, etc.\  over the Robba ring. In particular, for any de Rham $M$ (which is also known to be potentially semi-stable), one can define $\bold{D}_{\mathrm{pst}}(M)$ and its associated $L$-representation $W(M)$ of $'W_{\mathbb{Q}_p}$, as we usually do for a de Rham representation of $G_{\mathbb{Q}_p}$. By \S2 of \cite{Na14b}, one can also generalize the Bloch-Kato's fundamental exact sequence 
\begin{multline}
0\rightarrow \mathrm{H}^0_{\varphi,\gamma}(M)\rightarrow \bold{D}_{\mathrm{cris}}(M)\xrightarrow{(a)}
\bold{D}_{\mathrm{cris}}(M)\oplus t_M\xrightarrow{(b)}\mathrm{H}^1_{\varphi,\gamma}(M)\\
\xrightarrow{(c)}\bold{D}_{\mathrm{cris}}(M^*)^{\vee}\oplus \bold{D}_{\mathrm{dR}}^0(M)\xrightarrow{(d)}\bold{D}_{\mathrm{cris}}(M^{*})^{\vee}\rightarrow \mathrm{H}^2_{\varphi,\gamma}(M)\rightarrow 0,
\end{multline}
as in the exact sequence (\ref{exp}) in \S2.1 for any de Rham $M$. Using this,  one can define the de Rham $\varepsilon$-isomorphism 
$$\varepsilon_{L,\zeta}^{\mathrm{dR}}(M):\bold{1}_L\isom \Delta_L(M)$$
for any de Rham $M$ (see \S3.3 of \cite{Na14b} for the precise definition) in the same way as that for de Rham $V$.

Let $D$ be an \'etale $(\varphi,\Gamma)$-module over $\mathcal{E}_L$. Then, one has the following canonical comparison isomorphisms 
$$\Delta_{L}(D)\isom \Delta_{L}(D_{\mathrm{rig}})$$ 
and 
$$\Delta_{L}^{\mathrm{Iw}}(D)\otimes_{\Lambda_L(\Gamma)}\mathcal{R}_L^{\infty}(\Gamma)
\isom \Delta_{L}^{\mathrm{Iw}}(D_{\mathrm{rig}})$$
by Proposition 2.7 \cite{Li08} and Theorem 1.9 of \cite{Po13} respectively. For $V$ a de Rham $L$-representation of $G_{\mathbb{Q}_p}$, one has canonical isomorphisms 
$\bold{D}_{\mathrm{dR}}(V)\isom \bold{D}_{\mathrm{dR}}(D(V)_{\mathrm{rig}})$, $\bold{D}_{\mathrm{cris}}(V)\isom \bold{D}_{\mathrm{cris}}(D(V)_{\mathrm{rig}})$ and $W(V)\isom W(D(V)_{\mathrm{rig}})$, etc. and one has
\begin{equation}\label{aaaa}\varepsilon_{L,\zeta}^{\mathrm{dR}}(V)=\varepsilon_{L,\zeta}^{\mathrm{dR}}(D(V)_{\mathrm{rig}})\end{equation} under the isomorphism $\Delta_L(V)\isom \Delta_L(D(V))\isom \Delta_L(D(V)_{\mathrm{rig}})$ (see \cite{Na14b}), by which we freely identify the 
both sides of \eqref{aaaa} with each other.

Let us go back to our situation. Let $D$ be an \'etale $(\varphi,\Gamma)$-module over $\mathcal{E}_L$ of rank two. We assume that $D$ is trianguline, which means that there exist a finite extension $L'$ of $L$ and continuous homomorphisms $\delta_1,\delta_2:\mathbb{Q}_p^{\times}\rightarrow (L')^{\times}$ such that $D_{\mathrm{rig}}\otimes_LL'$ sits in an exact sequence of the following form
$$0\rightarrow \mathcal{R}_{L'}(\delta_1)\rightarrow D_{\mathrm{rig}}\otimes_LL'\rightarrow 
\mathcal{R}_{L'}(\delta_2)\rightarrow 0.$$ Since scalar extensions do not affect  our results, we assume that $L'=L$ from now on. In our previous article \cite{Na14b}, we defined an $\varepsilon$-isomorphism 
$$\varepsilon_{L,\zeta}^{\mathrm{Iw}}(\mathcal{R}_L(\delta)):\bold{1}_{\mathcal{R}_L^{\infty}(\Gamma)}\isom \Delta_{L}^{\mathrm{Iw}}(\mathcal{R}_L(\delta))$$ for any continuous homomorphism 
$\delta:\mathbb{Q}_p^{\times}\rightarrow L^{\times}$, and showed that this satisfies the same conditions (1), (3), (4) and (5) in Conjecture \ref{2.2}. The main result of this subsection is the following theorem.

\begin{thm}\label{3.1}
Under the situation above, one has an equality 
$$\varepsilon^{\mathrm{Iw}}_{L,\zeta}(D)\otimes \mathrm{id}_{\mathcal{R}^{\infty}_L(\Gamma)}
=\varepsilon^{\mathrm{Iw}}_{L,\zeta}(\mathcal{R}_L(\delta_1))\boxtimes\varepsilon^{\mathrm{Iw}}_{L,\zeta}(\mathcal{R}_L(\delta_2))$$
under the canonical isomorphism 
$$\Delta_{L}^{\mathrm{Iw}}(D)\otimes_{\Lambda_L(\Gamma)}\mathcal{R}_L^{\infty}(\Gamma)
\isom\Delta^{\mathrm{Iw}}_{L}(D_{\mathrm{rig}})\isom \Delta^{\mathrm{Iw}}_{L}(\mathcal{R}_L(\delta_1))\boxtimes\Delta^{\mathrm{Iw}}_{L}(\mathcal{R}_L(\delta_2))$$ which is induced by the  exact sequence above
$0\rightarrow \mathcal{R}_L(\delta_1)\rightarrow D_{\mathrm{rig}}\rightarrow \mathcal{R}_L(\delta_2)\rightarrow 0$.

\end{thm}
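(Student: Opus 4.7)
The goal is to match two $\varepsilon$-isomorphisms: the one of Definition \ref{3.6.1}, built from the Colmez pairing $[-,-]^{(\zeta)}_{\mathrm{Iw}}$ on $D_{\mathrm{rig}}^{\psi=0}$, and the one of \cite{Na14b}, defined as a tensor product of rank-one $\varepsilon$-isomorphisms using the triangulation. Both are isomorphisms of graded invertible $\mathcal{R}^{\infty}_L(\Gamma)$-modules of the same degree, so by faithful flatness of $\mathcal{R}^{\infty}_L(\Gamma)\hookrightarrow\mathcal{R}_L(\Gamma)$ (inverting $[\gamma_0]-1$) it suffices to check equality after this base change. Over $\mathcal{R}_L(\Gamma)$ both sides become completely explicit: the left-hand side is described by $\eta_{L,\zeta}(D_{\mathrm{rig}})$ via $w^{(\zeta)}_{\delta_D}$ and $\{-,-\}_{0,\mathrm{Iw}}$, while the right-hand side is a $\boxtimes$-product of the rank-one isomorphisms recalled in Remark \ref{rankone}.

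The first reduction is the determinant: since $\det_{\mathcal{R}_L}D_{\mathrm{rig}}\cong \mathcal{R}_L(\delta_1\delta_2)$ and the rank-one $\varepsilon$-isomorphisms of Kato are multiplicative in the character, Proposition \ref{3.2.1} implies that both sides project to the same isomorphism on $\Delta^{\mathrm{Iw}}_L(\det D_{\mathrm{rig}})$. What remains is to compare the ``internal'' trianguline structure on one side with the Colmez pairing on the other. For this I choose a basis of $D_{\mathrm{rig}}^{\psi=0}$ adapted to the triangulation: a basis of $\mathcal{R}_L(\delta_1)^{\psi=0}$ together with a lift of a basis of $\mathcal{R}_L(\delta_2)^{\psi=0}$, and evaluate $[x\otimes z^{\vee},y]^{(\zeta)}_{\mathrm{Iw}}=\{w^{(\zeta)}_{\delta_D}(x)\otimes z^{\vee}\otimes\mathbf{e}_1,y\}_{0,\mathrm{Iw}}$ on these basis vectors.

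The key technical input is Dospinescu's theorem \cite{Do11} on the action of $w=\bigl(\begin{smallmatrix}0&1\\1&0\end{smallmatrix}\bigr)$ on locally analytic vectors of $\Pi(D_{\mathrm{rig}})$: it expresses $w^{(\zeta)}_{\delta_D}$ on elements coming from $\mathcal{R}_L(\delta_1)^{\psi=0}$ in terms of intrinsic data of $\delta_1$, and on the induced action on the $\mathcal{R}_L(\delta_2)$-quotient in terms of $\delta_2$. Combined with the explicit rank-one formula for $\varepsilon^{\mathrm{Iw}}_{L,\zeta}(\mathcal{R}_L(\delta_i))$ from \cite{Na14b} (which is also built from a pairing of the form $w\otimes\{-,-\}_{0,\mathrm{Iw}}$ after inverting $[\gamma_0]-1$), the diagonal contributions to $[-,-]^{(\zeta)}_{\mathrm{Iw}}$ reproduce exactly the product of the rank-one Iwasawa pairings with the correct $\varepsilon$-constant normalization, while the off-diagonal contributions are controlled by the anti-symmetry of $[-,-]^{(\zeta)}_{\mathrm{Iw}}$ established via Colmez's generalized reciprocity law (\ref{reciprocity}).

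The main obstacle is Step 3: the involution $w^{(\zeta)}_{\delta_D}$ does \emph{not} respect the triangulation, so a priori it mixes $\mathcal{R}_L(\delta_1)$ and $\mathcal{R}_L(\delta_2)$ in an uncontrolled way. Dospinescu's theorem is precisely what tames this mixing, but careful bookkeeping of the normalizations is required: the twist by $\delta_D=\chi^{-1}\det$, the sign $\delta_D(-1)$ and the $[\sigma_{-1}]$-factor appearing in the proof of Proposition \ref{3.2.1}, the $(1+X)^{-1}\cdot z$ normalization in the rank-one isomorphism of Remark \ref{rankone}, and the comparison of Iwasawa cohomology over $\Lambda_L(\Gamma)$ and $\mathcal{R}^{\infty}_L(\Gamma)$ (\cite{Po13}) must all be tracked consistently. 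Once this bookkeeping is done, the equality drops out from a direct matrix computation in the chosen basis.
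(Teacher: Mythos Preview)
Your strategy---base change to $\mathcal{R}_L(\Gamma)$ via injectivity of $\mathcal{R}^{\infty}_L(\Gamma)\hookrightarrow\mathcal{R}_L(\Gamma)$, pick a basis of $D_{\mathrm{rig}}^{\psi=0}$ adapted to the triangulation, invoke Dospinescu, and reduce to the rank-one Iwasawa pairing of Lemma~\ref{3.3}---is exactly the paper's approach. But you have misidentified both the content of Dospinescu's result and the shape of the computation, and this would make your write-up more complicated than it needs to be.

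The crucial input from \cite{Do11} is not that $w_{\delta_D}$ ``mixes $\mathcal{R}_L(\delta_1)$ and $\mathcal{R}_L(\delta_2)$ in a controlled way''; it is the opposite. Dospinescu's Proposition~3.2 gives the explicit formula
\[
w_{\delta_D}\bigl((1+X)\bold{e}_{\delta_1}\bigr)=\delta_1(-1)\cdot(1+X)\bold{e}_{\delta_1},
\]
so $w_{\delta_D}$ \emph{preserves} the sub-object $\mathcal{R}_L(\delta_1)^{\psi=0}$ and acts on the standard generator by a sign. Once you know this, there is no matrix, no diagonal/off-diagonal decomposition, and no need to invoke anti-symmetry of $[-,-]^{(\zeta)}_{\mathrm{Iw}}$ again: since $\det_{\mathcal{R}_L(\Gamma)}D_{\mathrm{rig}}^{\psi=0}$ has rank one, a single scalar determines the comparison. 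Taking $x=(1+X)^{-1}\bold{e}_{\delta_1}$ and $y=\delta_2(p)^{-1}(1+X)^{-1}\varphi(\widetilde{\bold{e}}_{\delta_2})$ (a lift of $(1+X)^{-1}\bold{e}_{\delta_2}$), the whole theorem reduces to checking
\[
[\sigma_{-1}]\cdot\bigl\{w_{\delta_D}(x)\otimes(\bold{e}_{\delta_1}\wedge\widetilde{\bold{e}}_{\delta_2})^{\vee}\otimes\bold{e}_1,\ y\bigr\}_{0,\mathrm{Iw}}=1.
\]
Because $w_{\delta_D}(x)$ stays in $\mathcal{R}_L(\delta_1)^{\psi=0}$, tensoring with $(\bold{e}_{\delta_1}\wedge\widetilde{\bold{e}}_{\delta_2})^{\vee}$ lands you in $\mathcal{R}_L(\delta_2^{-1})\subseteq D_{\mathrm{rig}}^{\vee}$ (via $\bold{e}_{\delta_1}\otimes(\bold{e}_{\delta_1}\wedge\widetilde{\bold{e}}_{\delta_2})^{\vee}=-\bold{e}_{\delta_2^{-1}}$), and then the pairing against $y$ is literally the rank-one Iwasawa pairing for $\mathcal{R}_L(\delta_2)$, computed by Lemma~\ref{3.3}. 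The sign bookkeeping you anticipate ($\delta_D(-1)$, $[\sigma_{-1}]$, etc.) collapses via $-\delta_D(-1)\delta_1(-1)\delta_2(-1)=1$. In the reducible (non-absolutely-irreducible) case the same formula $w_{\delta_D}((1+X)\bold{e}_{\delta_1})=\delta_1(-1)(1+X)\bold{e}_{\delta_1}$ can be checked directly on the \'etale sub $\mathcal{E}_L(\delta_1)$ without appealing to \cite{Do11}.
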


Before proving this theorem, we first show the equality $\varepsilon_{L,\zeta}(D)=\varepsilon_{L,\zeta}^{\mathrm{dR}}(D)$ for trianguline and de Rham $D$ as a corollary of this theorem.
\begin{corollary}\label{3.9.1}
Let $D$ be an \'etale $(\varphi,\Gamma)$-module over $\mathcal{E}_L$ of rank two which is de Rham and trianguline, 
then we have
$$\varepsilon_{L,\zeta}(D)=\varepsilon_{L,\zeta}^{\mathrm{dR}}(D).$$

\end{corollary}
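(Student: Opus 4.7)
The plan is to reduce the statement to the rank one case through the triangulation and then appeal to the main theorem of the author's previous work \cite{Na14b}. Concretely, combining Definition \ref{3.6.1} with Remark \ref{3.7.1}, the isomorphism $\varepsilon_{L,\zeta}(D)$ is the $f_{\bold{1}}$-specialization of $\varepsilon^{\mathrm{Iw}}_{L,\zeta}(D)$, and by Theorem \ref{3.1} (after base change to $\mathcal{R}_L^{\infty}(\Gamma)$, which is injective on $\Lambda_L(\Gamma)$-modules of interest) this Iwasawa $\varepsilon$-isomorphism corresponds to the product $\varepsilon^{\mathrm{Iw}}_{L,\zeta}(\mathcal{R}_L(\delta_1))\boxtimes\varepsilon^{\mathrm{Iw}}_{L,\zeta}(\mathcal{R}_L(\delta_2))$ under the identification coming from the triangulation exact sequence $0\to \mathcal{R}_L(\delta_1)\to D_{\mathrm{rig}}\to \mathcal{R}_L(\delta_2)\to 0$.

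Next, I would specialize the Iwasawa equality at $f_{\bold{1}}$. Since base change of $\varepsilon^{\mathrm{Iw}}_{L,\zeta}(\mathcal{R}_L(\delta_i))$ at $f_{\bold{1}}$ produces $\varepsilon_{L,\zeta}(\mathcal{R}_L(\delta_i))$ (the definition in \cite{Na14b} for trianguline families over the Robba ring is designed so that specialization at continuous characters of $\Gamma$ gives the corresponding non-Iwasawa $\varepsilon$-isomorphism, paralleling Remark \ref{3.7.1}), the equality becomes
\begin{equation*}
\varepsilon_{L,\zeta}(D)=\varepsilon_{L,\zeta}(\mathcal{R}_L(\delta_1))\boxtimes\varepsilon_{L,\zeta}(\mathcal{R}_L(\delta_2))
\end{equation*}
under the canonical isomorphism $\Delta_L(D)\isom \Delta_L(\mathcal{R}_L(\delta_1))\boxtimes\Delta_L(\mathcal{R}_L(\delta_2))$ induced by the triangulation.

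The remaining step is to identify the right-hand side with $\varepsilon_{L,\zeta}^{\mathrm{dR}}(D)$. This is exactly the content of the main de Rham compatibility result of \cite{Na14b} for trianguline $(\varphi,\Gamma)$-modules over $\mathcal{R}_L$: the trianguline $\varepsilon$-isomorphism, constructed by taking the $\boxtimes$-product of the rank one $\varepsilon$-isomorphisms of the graded pieces, agrees with the de Rham $\varepsilon$-isomorphism whenever the full module $D_{\mathrm{rig}}$ is de Rham. Crucially, one does \emph{not} need each $\mathcal{R}_L(\delta_i)$ to be individually de Rham — the equality of $\boxtimes$-products only requires de Rhamness of $D_{\mathrm{rig}}$, since the rank-one $\varepsilon$-isomorphisms $\varepsilon_{L,\zeta}(\mathcal{R}_L(\delta_i))$ are defined unconditionally (they agree with $\varepsilon^{\mathrm{dR}}_{L,\zeta}$ on the de Rham locus but interpolate over the full character variety).

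Finally, using the comparison $\varepsilon^{\mathrm{dR}}_{L,\zeta}(V(D))=\varepsilon^{\mathrm{dR}}_{L,\zeta}(D_{\mathrm{rig}})$ recalled at \eqref{aaaa}, together with $\Delta_L(D)\isom \Delta_L(D_{\mathrm{rig}})$, we conclude $\varepsilon_{L,\zeta}(D)=\varepsilon_{L,\zeta}^{\mathrm{dR}}(D)$. The only nontrivial ingredient is Theorem \ref{3.1} itself; the corollary is a formal consequence once the Iwasawa-level factorization is in hand and the rank-one/trianguline de Rham compatibility from \cite{Na14b} is invoked. No obstruction beyond bookkeeping is expected at this stage.
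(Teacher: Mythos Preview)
Your proposal is correct and follows essentially the same route as the paper: specialize Theorem \ref{3.1} at $f_{\bold{1}}$ to get $\varepsilon_{L,\zeta}(D)=\varepsilon_{L,\zeta}(\mathcal{R}_L(\delta_1))\boxtimes\varepsilon_{L,\zeta}(\mathcal{R}_L(\delta_2))$, then invoke the rank-one de Rham comparison (Theorem 3.13 of \cite{Na14b}) together with the multiplicativity of $\varepsilon^{\mathrm{dR}}$ in exact sequences (Lemma 3.9 of \cite{Na14b}). One small remark: your aside that the $\mathcal{R}_L(\delta_i)$ need not be individually de Rham is unnecessary here, since de Rhamness of $D_{\mathrm{rig}}$ automatically forces each graded piece of the triangulation to be de Rham.
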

\begin{proof}
Specializing the equality $\varepsilon^{\mathrm{Iw}}_{L,\zeta}(D)\otimes \mathrm{id}_{\mathcal{R}^{\infty}_L(\Gamma)}
=\varepsilon^{\mathrm{Iw}}_{L,\zeta}(\mathcal{R}_L(\delta_1))\boxtimes\varepsilon^{\mathrm{Iw}}_{L,\zeta}(\mathcal{R}_L(\delta_2))$ in the above theorem by the continuous $L$-algebra morphism $f_{\bold{1}}:\mathcal{R}^{\infty}_L(\Gamma)\rightarrow L:[\gamma']\mapsto 1$ ($\gamma'\in\Gamma$), we obtain an equality 
$$\varepsilon_{L,\zeta}(D)=\varepsilon_{L,\zeta}(\mathcal{R}_L(\delta_1))\boxtimes \varepsilon_{L,\zeta}(\mathcal{R}_L(\delta_2)).$$
Then, the corollary follows from the equalities 
$\varepsilon_{L,\zeta}(\mathcal{R}_L(\delta_i))=\varepsilon_{L,\zeta}^{\mathrm{dR}}(\mathcal{R}_L(\delta_i))$ for $i=1,2$ (Theorem 3.13 of \cite{Na14b}) and 
$\varepsilon_{L,\zeta}^{\mathrm{dR}}(D)=\varepsilon_{L,\zeta}^{\mathrm{dR}}(D_{\mathrm{rig}})
=\varepsilon_{L,\zeta}^{\mathrm{dR}}(\mathcal{R}_L(\delta_1))\boxtimes \varepsilon_{L,\zeta}^{\mathrm{dR}}(\mathcal{R}_L(\delta_2))$ (Lemma 3.9 of \cite{Na14b}).

\end{proof}

We next prove a lemma concerning the explicit description of the 
extended Iwasawa pairing 
$$\{-,-\}_{0,\mathrm{Iw}}:(\mathcal{R}_L(\delta)^{*})^{\psi=0,\iota}\times \mathcal{R}_L(\delta)^{\psi=0}
\rightarrow \mathcal{R}_L(\Gamma)$$ 
for $M=\mathcal{R}_L(\delta)$.
We identify $\mathcal{R}_L(\delta^{-1})$ with $\mathcal{R}_L(\delta)^{\vee}$ via 
the isomorphism $$\mathcal{R}_L(\delta^{-1})\isom \mathcal{R}_L(\delta)^{\vee}:f\bold{e}_{\delta^{-1}}\mapsto [g\bold{e}_{\delta}\mapsto fg].$$

\begin{lemma}\label{3.3}
The extended Iwasawa pairing 
$$\{-,-\}_{0,\mathrm{Iw}}:(\mathcal{R}_L(\delta)^*)^{\psi=0, \iota}
\times \mathcal{R}_L(\delta)^{\psi=0}
\rightarrow \mathcal{R}_L(\Gamma)$$ satisfies the equality
$$\{\lambda_1\cdot_{\iota}((1+X)^{-1}\bold{e}_{\delta^{-1}}\otimes\bold{e}_{1}), \lambda_2\cdot((1+X)\bold{e}_{\delta})\}_{0,\mathrm{Iw}}=\lambda_1\lambda_2$$
for any $\lambda_1,\lambda_2\in \mathcal{R}_L(\Gamma)$.

\end{lemma}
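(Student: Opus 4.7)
The strategy is to reduce by bilinearity to a single normalization check, and then verify this check via the defining property of the extended Iwasawa pairing.

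First, by the $\mathcal{R}_L(\Gamma)$-bilinearity of $\{-,-\}_{0,\mathrm{Iw}}$ (with respect to the $\iota$-twisted $\mathcal{R}_L(\Gamma)$-action on the first factor), the identity immediately reduces to the single case $\lambda_1=\lambda_2=1$, namely
$$\{(1+X)^{-1}\bold{e}_{\delta^{-1}}\otimes\bold{e}_1,\ (1+X)\bold{e}_\delta\}_{0,\mathrm{Iw}}=1.$$

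The key tool is the characterization of the extended pairing (Proposition VI.1.2 of \cite{Co10a} for the \'etale case, and its analogue in \S4.2 of \cite{KPX14} over $\mathcal{R}_L$): $\{-,-\}_{0,\mathrm{Iw}}$ is the unique $\mathcal{R}_L(\Gamma)$-bilinear pairing satisfying $\{(1-\varphi)y^{*},(1-\varphi)y\}_{0,\mathrm{Iw}}=\{y^{*},y\}_{\mathrm{Iw}}$ for $y^{*}\in(\mathcal{R}_L(\delta)^{*})^{\psi=1,\iota}$ and $y\in\mathcal{R}_L(\delta)^{\psi=1}$. I would first treat $\delta$ generic enough that explicit preimages of $(1+X)\bold{e}_\delta$ and $(1+X)^{-1}\bold{e}_{\delta^{-1}}\otimes\bold{e}_1$ under $1-\varphi$ can be written down convergently in $\mathcal{R}_L$ (e.g.\ via geometric-series expansions when $\delta(p)$ avoids the critical values), and then compute $\{y^{*},y\}_{\mathrm{Iw}}$ through the commutative diagram defining $\{-,-\}_{\mathrm{Iw}}$ in terms of $\langle-,-\rangle_{\mathrm{Tate}}$, applying the explicit cup-product formula $[x_1,y_1]\cup[x_2,y_2]=[x_1\otimes\gamma(y_2)-y_1\otimes\varphi(x_2)]$ and the Tate trace $\mathrm{H}^{2}_{\psi,\gamma}(\mathcal{R}_L(\chi))\isom L$.

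Alternatively, one may bypass the direct computation by invoking the rank-one Kato compatibility recalled in Remark \ref{rankone}: Kato's $\varepsilon$-isomorphism for $\mathcal{R}_L(\delta)$ and its Tate dual $\mathcal{R}_L(\delta)^{*}$ is normalized by the generators $(1+X)^{-1}\bold{e}_\delta$ and $(1+X)^{-1}\bold{e}_{\delta^{-1}\chi}$ of the respective $\psi=0$-modules, and by the analysis in \S4.1 of \cite{Na14b} the Poincar\'e duality between these two $\varepsilon$-isomorphisms is precisely mediated by the extended Iwasawa pairing; this pins down the normalization to $1$. Finally, the generic case extends to arbitrary $\delta$ by rigid-analytic continuity, since both sides of the equality depend analytically on $\delta$ and the generic locus is Zariski dense.

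The main obstacle is tracking the Tate-trace normalization through the equivalence between continuous Galois cohomology and the Herr complex $C^{\bullet}_{\psi,\gamma}$, so as to confirm that the constant produced is exactly $1$ rather than some sign or power of $p$; the explicit asymmetry between $(1+X)$ and $(1+X)^{-1}$ in the statement of the lemma is precisely what makes this normalization come out cleanly, but verifying the compatibility with the sign conventions used throughout \S2.2 will require careful bookkeeping.
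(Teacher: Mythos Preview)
Your alternative approach---reducing to the rank-one $\varepsilon$-isomorphism and its Tate-duality compatibility from \cite{Na14b}---is exactly the paper's proof. The paper makes this precise as follows: the isomorphisms $\theta_\zeta(\mathcal{R}_L(\delta))$ and $\theta_\zeta(\mathcal{R}_L(\delta)^*)^\iota$ (sending $\lambda\otimes\bold{e}_\delta\mapsto\lambda\cdot((1+X)^{-1}\bold{e}_\delta)$, and similarly for the dual) are the base-changes of the rank-one $\varepsilon$-isomorphisms, and condition~(4) of Conjecture~\ref{2.2} for $\bold{Dfm}(\mathcal{R}_L(\delta))$, already established in Theorem~3.13 of \cite{Na14b}, says precisely that these two isomorphisms are dual under $\{-,-\}_{0,\mathrm{Iw}}$. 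This gives a commutative square whose content is the desired identity. The $(1+X)$ versus $(1+X)^{-1}$ asymmetry you noticed arises because the duality in condition~(4) compares $\varepsilon_{R,\zeta^{-1}}$ with $\varepsilon_{R,\zeta}$, and one has $(1+X_{\zeta^{-1}})=(1+X_\zeta)^{-1}$; this is the one-line observation that closes the argument.

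Your primary strategy---writing explicit $\psi=1$ preimages for generic $\delta$ and computing the cup product directly---is in principle viable but, as you yourself note, requires chasing the Tate-trace normalization through the Herr-complex comparison, which is exactly the bookkeeping the paper's route avoids. There is no genuine gap in your proposal, but the paper's argument is cleaner because the normalization question was already settled once and for all in \cite{Na14b}; you should simply promote your ``alternative'' to the main line and cite the duality property directly.
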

\begin{proof}
We first remark that the isomorphism 
$$\varepsilon^{\mathrm{Iw}}_{L,\zeta}(\mathcal{R}_L(\delta))\otimes\mathrm{id}_{\mathcal{R}_L(\Gamma)}:\bold{1}_{\mathcal{R}_L(\Gamma)}\isom 
\Delta^{\mathrm{Iw}}_L(\mathcal{R}_L(\delta))\otimes_{\mathcal{R}_L^{\infty}(\Gamma)}\mathcal{R}_L(\Gamma)$$ is equal to the one induced by the isomorphism 
$$\theta_{\zeta}(\mathcal{R}_L(\delta)):\mathcal{R}_L(\Gamma)\otimes_LL(\delta)\isom \mathcal{R}_L(\delta)^{\psi=0}:\lambda\otimes \bold{e}_{\delta}\mapsto \lambda((1+X)^{-1}\bold{e}_{\delta})$$
and the isomorphism $\Delta_{L}^{\mathrm{Iw}}(\mathcal{R}_L(\delta))
\isom (\mathcal{R}_L(\delta)^{\psi=0}\otimes_LL(\delta)^{\vee}, 0)^{-1}$. This follows easily from the definition of $\varepsilon^{\mathrm{Iw}}_L(\mathcal{R}_L(\delta))$ given in \S4.1 of \cite{Na14b}.

Since one has 
$$\varepsilon_{L,\zeta}^{\mathrm{Iw}}(\mathcal{R}_L(\delta)^*)^{\iota}=\varepsilon_{\mathcal{R}_L^{\infty}(\Gamma),\zeta}(\bold{Dfm}(\mathcal{R}_L(\delta))^*)$$
under the canonical isomorphism 
$$\Delta_{L}^{\mathrm{Iw}}(\mathcal{R}_L(\delta)^*)^{\iota}
\isom\Delta_{\mathcal{R}_L^{\infty}(\Gamma)}(\bold{Dfm}(\mathcal{R}_L(\delta))^{*}),$$
the isomorphism $\varepsilon_{\mathcal{R}_L^{\infty}(\Gamma),\zeta}(\bold{Dfm}(\mathcal{R}_L(\delta))^*)\otimes\mathrm{id}_{\mathcal{R}_L(\Gamma)}$ is equal to the one induced by 
 the isomorphism
$$\theta_{\zeta}(\mathcal{R}_L(\delta)^*)^{\iota}:\mathcal{R}_L(\Gamma)\otimes_L L(\delta^{-1})(1)
\isom (\mathcal{R}_L(\delta)^*)^{\psi=0,\iota}:\lambda \otimes(\bold{e}_{\delta^{-1}}\otimes\bold{e}_1)\mapsto 
\lambda\cdot_{\iota}((1+X)^{-1}\bold{e}_{\delta^{-1}}\otimes \bold{e}_1).$$
Under the canonical isomorphism 
$$\Delta_{L}^{\mathrm{Iw}}(\mathcal{R}_L(\delta))\isom (\Delta_L^{\mathrm{Iw}}(\mathcal{R}_L(\delta)^*)^{\iota})^{\vee}\boxtimes(\mathcal{R}_L^{\infty}(\Gamma)\otimes_L L(1), 0)$$ 
defined by the Tate duality (see \S 3.2 of \cite{Na14b}), one has
$$\varepsilon_{L,\zeta^{-1}}^{\mathrm{Iw}}(\mathcal{R}_L(\delta))^{-1}=(\varepsilon_{L,\zeta}^{\mathrm{Iw}}(\mathcal{R}_L(\delta)^{*})^{\iota})^{\vee}\boxtimes[\bold{e}_1\mapsto 1]$$
by the condition (4) of Conjecture \ref{2.2} for $\bold{Dfm}(\mathcal{R}_L(\delta))$ (which is proved in
Theorem 3.13 of \cite{Na14b}).
 Using the isomorphisms $\theta_{\zeta^{-1}}(\mathcal{R}_A(\delta))$ and $\theta_{\zeta}(\mathcal{R}_A(\delta)^*)^{\iota}$, we obtain from this equality the following commutative diagram of $\mathcal{R}_L(\Gamma)$-bilinear pairings:

 \begin{equation*}
 \begin{CD}
  \mathcal{R}_L(\Gamma)\otimes_LL(\delta^{-1})(1)\times  \mathcal{R}_L(\Gamma)\otimes_LL(\delta)
 @>  \theta_{\zeta}(\mathcal{R}_L(\delta)^*)^{\iota}\times  \theta_{\zeta^{-1}}(\mathcal{R}_L(\delta))>>
   (\mathcal{R}_L(\delta)^*)^{\psi=0,\iota}\times  \mathcal{R}_L(\delta)^{\psi=0}\\
 @ VV (1\otimes\bold{e}_{\delta^{-1}}\otimes \bold{e}_1, 1\otimes\bold{e}_{\delta})\mapsto 1\otimes\bold{e}_1V @VV \{-,-\}_{0,\mathrm{Iw}} V \\
 \mathcal{R}_L(\Gamma)\otimes_L L(1) @>\bold{e}_1\mapsto 1>> \mathcal{R}_L(\Gamma).
   \end{CD}
 \end{equation*}
Since one has $(1+X_{\zeta^{-1}})=(1+X_{\zeta})^{-1}=(1+X)^{-1}$, the lemma follows from the commutativity of this diagram.

\end{proof}

Using these preliminaries, we prove the theorem as follows. As we show in the proof, a result of 
Dospinescu \cite{Do11} on the explicit description of the action of $w_{\delta_D}$ on $D_{\mathrm{rig}}^{\psi=0}$, which is intimately related with the action of $w=\begin{pmatrix}0 & 1\\ 1& 0\end{pmatrix}$ on the locally analytic vectors, is crucial for the proof.
\begin{proof}(of Theorem \ref{3.1})
We first show the theorem when $D$ is absolutely  irreducible. 
Since the canonical map $\mathcal{R}_L^{\infty}(\Gamma)\rightarrow \mathcal{R}_L(\Gamma)$ is injective, 
it suffices to show the equality after the base change to $\mathcal{R}_L(\Gamma)$. 

By the results in V.2 of \cite{Co10b}, the involution $w_{\delta_D}:D^{\psi=0}\isom D^{\psi=0}$ 
(first descends to $w_{\delta_D}:D^{\dagger,\psi=0}\isom D^{\dagger,\psi=0}$, and then) uniquely extends to 
$w_{\delta_D}:D_{\mathrm{rig}}^{\psi=0}\isom D_{\mathrm{rig}}^{\psi=0}$, and the isomorphism 
$$\varepsilon_{L,\zeta}^{\mathrm{Iw}}(D)\otimes\mathrm{id}_{\mathcal{R}_L(\Gamma)}: \bold{1}_{\mathcal{R}_L(\Gamma)}\isom
\Delta_{L}^{\mathrm{Iw}}(D)\otimes_{\Lambda_L(\Gamma)} \mathcal{R}_L(\Gamma)(\isom (\mathrm{det}_{\mathcal{R}_L(\Gamma)}D^{\psi=0}_{\mathrm{rig}}\otimes_L \mathcal{L}_L(D)^{\vee}, 0)^{-1})$$ is the one which is naturally 
induced by the isomorphism 
\begin{multline*}
\theta_{\zeta}(D_{\mathrm{rig}}):\mathrm{det}_{\mathcal{R}_L(\Gamma)}D_{\mathrm{rig}}^{\psi=0}\isom \mathcal{R}_L(\Gamma)\otimes_L \mathcal{L}_L(D_{\mathrm{rig}}):\\
(x\wedge y) \mapsto [\sigma_{-1}]\cdot
\{w_{\delta_D}(x)\otimes z^{\vee}\otimes \bold{e}_1, y\}_{0,\mathrm{Iw}}\otimes z
\end{multline*}
for any $z\in \mathcal{L}_L(D_{\mathrm{rig}})^{\times}$.
By the explicit descriptions of $\varepsilon^{\mathrm{Iw}}_{L,\zeta}(\mathcal{R}_L(\delta_i))\otimes\mathrm{id}_{\mathcal{R}_L(\Gamma)}$ and $\varepsilon^{\mathrm{Iw}}_{L,\zeta}(D)\otimes\mathrm{id}_{\mathcal{R}_L(\Gamma)}$, it suffices to show that the following diagram is 
commutative:
 \begin{equation*}
 \begin{CD}
  \mathcal{R}_L(\delta_1)^{\psi=0}\otimes_{\mathcal{R}_L(\Gamma)}\mathcal{R}_L(\delta_2)^{\psi=0}@> x\otimes y\mapsto x\wedge\widetilde{y}>>
  \mathrm{det}_{\mathcal{R}_L(\Gamma)}D_{\mathrm{rig}}^{\psi=0}\\
 @ V \theta_{\zeta}(\mathcal{R}_L(\delta_1))^{-1}\otimes\theta_{\zeta}(\mathcal{R}_L(\delta_2))^{-1}VV @VV \theta_{\zeta}(D_{\mathrm{rig}})V \\
  (\mathcal{R}_L(\Gamma)\otimes_L L(\delta_1))\otimes_{\mathcal{R}_L(\Gamma)}(\mathcal{R}_L(\Gamma)\otimes_LL(\delta_2))@> \bold{e}_{\delta_1}\otimes \bold{e}_{\delta_2}\mapsto 
  \bold{e}_{\delta_1}\wedge\widetilde{\bold{e}}_{\delta_2}>>\mathcal{R}_L(\Gamma)\otimes_L\mathcal{L}_L(D_{\mathrm{rig}}).
   \end{CD}
 \end{equation*}
 Here $\widetilde{y}\in D_{\mathrm{rig}}^{\psi=0}$ (resp. $\widetilde{\bold{e}}_{\delta_2}\in D_{\mathrm{rig}}$) is a lift of $y\in \mathcal{R}_L(\delta_2)^{\psi=0}$ (resp. $\bold{e}_{\delta_2}\in 
 \mathcal{R}_L(\delta_2)$).

By definitions of $\theta_{\zeta}(\mathcal{R}_L(\delta_i))$ and $\theta_{\zeta}(D_{\mathrm{rig}})$, and the $\mathcal{R}_L(\Gamma)$-bilinearity of the pairings in the  diagram above, it suffices to show the equality
\begin{equation}\label{equality}
[\sigma_{-1}]\cdot\{w_{\delta_D}((1+X)^{-1}\bold{e}_{\delta_1})
\otimes  (\bold{e}_{\delta_1}\wedge
\widetilde{\bold{e}}_{\delta_2})^{\vee}\otimes\bold{e}_1, \delta_2(p)^{-1}\cdot(1+X)^{-1}\varphi(\widetilde{\bold{e}}_{\delta_2})\}_{0,\mathrm{Iw}}=1.
\end{equation}

Since one has an equality
$$w_{\delta_D}((1+X)\bold{e}_{\delta_1})=\delta_1(-1)\cdot (1+X)\bold{e}_{\delta_1}$$
by (the proof of) Proposition 3.2 of \cite{Do11}, one also has
\begin{multline}
w_{\delta_D}((1+X)^{-1}\bold{e}_{\delta_1})=\delta_1(-1)\cdot w_{\delta_D}(\sigma_{-1}((1+X)\bold{e}_{\delta_1}))=\delta_D(-1)\cdot \delta_1(-1)\cdot \sigma_{-1}(w_{\delta_D}((1+X)\bold{e}_{\delta_1}))\\
=\delta_D(-1)\cdot \delta_1(-1)\cdot \sigma_{-1}(\delta_1(-1)\cdot (1+X)\bold{e}_{\delta_1})
=\delta_D(-1)\cdot \delta_1(-1)\cdot (1+X)^{-1}\bold{e}_{\delta_1}
\end{multline}
since one has $w_{\delta_D}\circ\sigma_a=\delta_D(a)\cdot \sigma_a^{-1}\circ w_{\delta_D}$ ($a\in \mathbb{Z}_p^{\times}$). 
Using this equality and the equality $\bold{e}_{\delta_1}\otimes (\bold{e}_{\delta_1}\wedge
\widetilde{\bold{e}}_{\delta_2})^{\vee}=-\bold{e}_{\delta_2^{-1}}$ in 
$\mathcal{R}_L(\delta_2^{-1})\subseteq D_{\mathrm{rig}}^{\vee}$, the left hand side of (\ref{equality}) is equal to 
\begin{multline*}
-\delta_D(-1)\cdot \delta_1(-1)\cdot[\sigma_{-1}]\cdot\{(1+X)^{-1}\bold{e}_{\delta_2^{-1}}\otimes \bold{e}_1, \delta_2(p)^{-1}\cdot(1+X)^{-1}\varphi(\widetilde{\bold{e}}_{\delta_2})\}_{0,\mathrm{Iw}}\\
=-\delta_D(-1)\cdot \delta_1(-1)\cdot[\sigma_{-1}]\cdot\{(1+X)^{-1}\bold{e}_{\delta_2^{-1}}\otimes \bold{e}_1, (1+X)^{-1}\bold{e}_{\delta_2}\}_{0,\mathrm{Iw}}\\
=-\delta_D(-1)\cdot \delta_1(-1)\cdot\delta_2(-1)\cdot[\sigma_{-1}]\cdot\{(1+X)^{-1}\bold{e}_{\delta_2^{-1}}\otimes \bold{e}_1, [\sigma_{-1}]\cdot((1+X)\bold{e}_{\delta_2})\}_{0,\mathrm{Iw}}\\
=\{(1+X)^{-1}\bold{e}_{\delta_2^{-1}}\otimes \bold{e}_1, (1+X)\bold{e}_{\delta_2}\}_{0,\mathrm{Iw}}=1,
\end{multline*}
where the last equality follows from the equality $-\delta_D(-1)\cdot\delta_1(-1)\cdot \delta_2(-1)=1$ and Lemma \ref{3.3}.

  When $D$ is not absolutely irreducible, then (after extending scalars $L$), we have an 
  exact sequence 
  $$0\rightarrow \mathcal{E}_{L}(\delta_1)
  \rightarrow D\rightarrow \mathcal{E}_{L}(\delta_2)\rightarrow 0$$
  for some continuous homomorphisms $\delta_1,\delta_2:\mathbb{Q}_p^{\times}\rightarrow 
  \mathcal{O}_L^{\times}$. 
  Then, the involution $w_{\delta_D}$ acts on $\mathcal{E}_L(\delta_1)^{\psi=0}$ (in fact, it acts on any \'etale $(\varphi,\Gamma)$-modules), and one can directly  check that one has $w_{\delta_D}((1+X)\bold{e}_{\delta_1}))=\delta_1(-1)(1+X)\bold{e}_{\delta_1}$. Then, the theorem follows by the same argument as in the absolutely 
  irreducible case.
  
\end{proof}

\begin{rem}
In the last paragraph of the proof above, for any exact sequence 
$0\rightarrow \mathcal{E}_R(\delta_1)\rightarrow D\rightarrow \mathcal{E}_R(\delta_2)\rightarrow 0$ of 
\'etale $(\varphi,\Gamma)$-modules over $\mathcal{E}_R$, we show the equality 
$$\varepsilon^{\mathrm{Iw}}_{R,\zeta}(D)=\varepsilon_{R,\zeta}^{\mathrm{Iw}}(\mathcal{E}_R(\delta_1))\boxtimes \varepsilon_{R,\zeta}^{\mathrm{Iw}}(\mathcal{E}_R(\delta_2))$$ 
under the canonical isomorphism 
$\Delta_R^{\mathrm{Iw}}(D)\isom \Delta_R^{\mathrm{Iw}}(\mathcal{E}_R(\delta_1))\boxtimes
\Delta_R^{\mathrm{Iw}}(\mathcal{E}_R(\delta_2))$, which shows that our $\varepsilon$-isomorphism
 satisfies the condition (2) in Conjecture \ref{2.2}.
\end{rem}



\subsection{The verification of the de Rham condition: the non-trianguline case}
In this subsection, we treat the non-trianguline case. Fix an algebraic closure $\overline{L}$ of $L$. We prove the following theorem. 
\begin{thm}\label{nontrianguline}
Let $D$ be an \'etale $(\varphi,\Gamma)$-module over $\mathcal{E}_L$ of rank two which is de Rham 
and non-trianguline. Assume that the Hodge-Tate weights of $D$ are  $\{0,k\}$ for some $k\geqq 1$. Then, we have 
$$\varepsilon_{\overline{L},\zeta}(D(-r)(\delta))=\varepsilon_{\overline{L},\zeta}^{\mathrm{dR}}(D(-r)(\delta))$$
for any integer $0\leqq r\leqq k-1$ and any character $\delta:\Gamma\rightarrow 
\overline{L}^{\times}$ with finite image。

\end{thm}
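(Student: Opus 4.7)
The plan is to reduce the equality to a comparison of two explicit formulas, one for each side, obtained by computing through the Kirillov model, and then to match them using Emerton's compatibility theorem. By Definition \ref{3.6.1} and Remark \ref{3.7.1}, $\varepsilon_{\overline{L},\zeta}(D(-r)(\delta))$ is obtained by specializing an Iwasawa $\varepsilon$-isomorphism, so it suffices to fix a convenient generator of $\Delta_{\overline{L}}(D(-r)(\delta))$ and compare the two scalars by which $\varepsilon_{\overline{L},\zeta}(D(-r)(\delta))$ and $\varepsilon_{\overline{L},\zeta}^{\mathrm{dR}}(D(-r)(\delta))$ send $1$ to this generator.

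First I would establish an explicit formula (Proposition \ref{formula1} alluded to in the introduction) describing $\varepsilon_{\overline{L},\zeta}(D(-r)(\delta))$. Unfolding Definition \ref{3.6.1} reduces the computation to evaluating $\{w_{\delta_D}(x)\otimes z^{\vee}\otimes \bold{e}_1, y\}_{0,\mathrm{Iw}}$ on well-chosen elements $x,y \in D(-r)^{\psi=0}$, and by Colmez's theory of Kirillov models of locally algebraic vectors (\S VI of \cite{Co10b}) the involution $w_{\delta_D}$ on $D^{\psi=0}$ corresponds to the action of $w=\begin{pmatrix}0&1\\1&0\end{pmatrix}$ on a natural subspace of locally algebraic vectors in $\Pi(D)$; taking test elements arising from $\bold{D}_{\mathrm{dR}}(V(D))$ then yields such a formula. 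In parallel, I would establish Proposition \ref{formula2}, an explicit formula for $\varepsilon_{\overline{L},\zeta}^{\mathrm{dR}}(D(-r)(\delta))$ derived from the Bloch--Kato fundamental exact sequence: the assumption $0\le r\le k-1$ together with non-triangulinity collapses the Bloch--Kato complex, so that $\varepsilon^{\mathrm{dR}}$ is controlled by a single dual exponential map, packaged with the classical $\varepsilon$-factor $\varepsilon(W(V(-r)(\delta)),\zeta)$ and the usual $\Gamma$-factor. The corollary of these two formulas (Corollary \ref{3.12.123}), which characterizes elements of Iwasawa cohomology by their dual-exponential images, is the form in which this computation will be reused in the proof of Theorem \ref{TheoremB}.

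The final step is to identify the right-hand side of Proposition \ref{formula1} with that of Proposition \ref{formula2}. By Emerton's theorem \cite{Em}, for de Rham non-trianguline $D$ of rank two with Hodge--Tate weights $\{0,k\}$ the locally algebraic vectors in $\Pi(D)$ are isomorphic, after the natural $\mathrm{Sym}^{k-1}$-twist, to the smooth irreducible admissible representation $\pi_{\mathrm{sm}}(V(D))$ attached to $W(V(D))$ by the classical local Langlands correspondence for $\mathrm{GL}_2(\mathbb{Q}_p)$; in particular the classical Kirillov model of $\pi_{\mathrm{sm}}(V(D))$ is non-degenerate. The classical explicit formula for the action of $w$ on this Kirillov model expresses it in terms of $\varepsilon(W(V(D)(-r)(\delta)),\zeta)$, and substituting this into Proposition \ref{formula1} converts it into Proposition \ref{formula2}, proving the equality.

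The main obstacle is the careful bookkeeping of normalizations that enter on the two sides: the sign $\delta_D(-1)$ inherited from the antisymmetry of $[-,-]^{(\zeta)}_{\mathrm{Iw}}$, the $\Gamma$-factor $\Gamma_{\overline{L}}(V(-r)(\delta))$, the base change from $L$ to $\overline{L}$ needed both to accommodate $\delta$ with values in $\overline{L}^{\times}$ and to split the Weil--Deligne representation, and the $\mathrm{Sym}^{k-1}$-twist introduced by Emerton's theorem. Once the two formulas are in place and properly normalized, the final comparison amounts to the classical local functional equation for $\mathrm{GL}_2(\mathbb{Q}_p)$; the real technical content lies in establishing the two explicit formulas in compatible normalizations so that this classical comparison even makes sense.
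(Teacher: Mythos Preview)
Your overall strategy is the same as the paper's: reduce to an explicit scalar comparison, compute both sides through Colmez's Kirillov model of locally algebraic vectors, and match them via Emerton's compatibility theorem together with the classical formula for the $w$-action on the smooth Kirillov model (Theorem~\ref{BH} in the paper). That is exactly how the proof runs.

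One structural point is worth sharpening. You describe Proposition~\ref{formula1} as computing the $p$-adic $\varepsilon$-side via Kirillov and Proposition~\ref{formula2} as computing the de Rham side ``derived from the Bloch--Kato fundamental exact sequence.'' In the paper the split is different: the Bloch--Kato reduction is done \emph{first} and symmetrically for both sides (equations~(\ref{3e}) and~(\ref{6e})), expressing $\varepsilon^{\mathrm{dR}}$ through a dual exponential $\alpha_{(r,\delta)}$ on $D(-r)(\delta)$ and $\varepsilon$ through a dual exponential $\beta_{(r,\delta)}\circ w_{\delta_D}$ on $D^*(r)(\delta^{-1})$; this reduces everything to the key scalar identity (Proposition~\ref{key}). \emph{Then} Propositions~\ref{formula1} and~\ref{formula2} are \emph{both} Kirillov-model computations: each expresses its dual exponential as a value of the $\bold{P}^1$-pairing $[\widetilde{x},-]_{\bold{P}^1}$ against an explicit test function ($f_{(r,\delta),m}$ and $h_{(r,\delta),m}$ respectively), via Colmez's formula $[x,y]_{\bold{P}^1}=\sum_i\delta_D(p^i)[\iota_i^+(x),\iota_i^-(y)]_{\mathrm{dif}}$ (Proposition~\ref{Co}). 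The comparison then goes through the $G$-equivariance $[gx,gy]_{\bold{P}^1}=\delta_D(\det g)[x,y]_{\bold{P}^1}$, which converts $\beta_{(r,\delta)}(w_{\delta_D}(x))$ into a pairing of $\widetilde{x}$ against $\Pi(w)\cdot h_{(r,\delta),0}$, and Emerton's theorem plus the classical formula identify $\Pi(w)\cdot h_{(r,\delta),m}$ with a multiple of $f_{(r,\delta),m'}$ (Corollary~\ref{3.12}). So Proposition~\ref{Co} is the hinge you have not named explicitly; without it there is no way to pass from the Iwasawa-pairing definition of $\varepsilon$ to something the classical Kirillov formula can act on. Also note Corollary~\ref{3.12.123} is not used in the proof of Theorem~\ref{nontrianguline} itself; it is a separate consequence of Propositions~\ref{formula1}--\ref{formula2} exported for use in \S4.
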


We first reduce the proof of this theorem to Proposition \ref{key} below by explicitly describing the both sides of the equality in the theorem, and then in the last part of this subsection we prove this key proposition using the Colmez's theory of Kirillov model of locally algebraic vectors 
$\Pi(D)^{\mathrm{alg}}$ of $\Pi(D)$ and the Emerton's theorem on the compatibility of the $p$-adic and the classical local Langlands correspondence. 

Many arguments in this subsection are also valid for the absolutely irreducible and trianguline case, some of which we need in the next section for the application to a functional equation of Kato's Euler system. Hence, in this subsection, we treat $D$ of rank two which satisfies the following conditions:
\begin{itemize}
\item[(i)]$D$ is an absolutely irreducible \'etale $(\varphi,\Gamma)$-module over $\mathcal{E}_L$  which is de Rham with Hodge-Tate weights $\{0,k\}$ for some $k\geqq 1$. 
\item[(ii)]$\bold{D}_{\mathrm{cris}}(D_{\mathrm{rig}}(-r)(\delta))^{\varphi=1}=\bold{D}_{\mathrm{cris}}(D_{\mathrm{rig}}^*(r)(\delta^{-1}))^{\varphi=1}=0$ for any $0\leqq r\leqq k-1$ and any character $\delta:\Gamma\rightarrow 
\overline{L}^{\times}$ with finite image. 
\end{itemize}
We remark that the assumptions (i) and (ii) hold for the non-trianguline case since one has $\bold{D}_{\mathrm{cris}}(D_{\mathrm{rig}}(-r)(\delta))=\bold{D}_{\mathrm{cris}}(D_{\mathrm{rig}}^*(r)(\delta^{-1}))=0$ for any $(r,\delta)$ in this case. The condition $\bold{D}_{\mathrm{cris}}(D_{\mathrm{rig}}(-r)(\delta))^{\varphi=1}\not=0$ or $\bold{D}_{\mathrm{cris}}(D_{\mathrm{rig}}^*(r)(\delta^{-1}))^{\varphi=1}\not=0$ 
for some ($r,\delta)$ corresponds to the exceptional zero case, for which we need some more additional arguments to 
obtain the (both local and global) similar results, which we will study in the next article \cite{NY}.

Here, we explicitly describe the $\varepsilon$-isomorphism and the de Rham $\varepsilon$-isomorphism under the above assumptions. To simplify the notation, we assume that a character $\delta$ as above takes values in $L^{\times}$ (of course, the general case follows from this case by extending scalars). For any such $\delta$, we fix an $L$-basis $\bold{f}_{\delta}=\alpha_{\delta}\bold{e}_{\delta}$ of $L_{\infty}(\delta)^{\Gamma}$ in such a way that $\alpha_{\delta}\cdot\alpha_{\delta^{-1}}=1$ holds
for any $\delta$.  Under the above assumption on the Hodge-Tate weights, one has $\mathrm{dim}_L\bold{D}^i_{\mathrm{dR}}(D_{\mathrm{rig}})=1$ if and only if $-(k-1)\leqq i\leqq 0$.
 We fix a basis $\{f_1,f_2\}$ of $\bold{D}_{\mathrm{dR}}(D_{\mathrm{rig}})$ over $L$ such that $f_1\in \bold{D}^0_{\mathrm{dR}}(D_{\mathrm{rig}})$. Since we assume that $D$ is 
 absolutely irreducible, we have 
 $$\mathrm{H}^0_{\varphi,\gamma}(D_{\mathrm{rig}}(-r)(\delta))=\mathrm{H}^2_{\varphi,\gamma}(D_{\mathrm{rig}}(-r)(\delta))=0, \,\, \mathrm{dim}_L\mathrm{H}^1_{\varphi,\gamma}(D_{\mathrm{rig}}(-r)(\delta))=2$$ 
 and the specialization map 
 $$\iota_{\chi^{-r}\delta}:D^{\psi=1}=(D^{\dagger})^{\psi=1}\rightarrow \mathrm{H}^1_{\varphi,\gamma}(D_{\mathrm{rig}}(-r)(\delta))$$
 is surjective for any pair $(r,\delta)$ as above (since the cokernel is contained in $D/(\psi-1)$ which is zero by the assumption). Hence, we obtain a canonical isomorphism $$\Delta_{L,1}(D_{\mathrm{rig}}(-r)(\delta))\isom (\mathrm{det}_{L}\mathrm{H}^1_{\varphi,\gamma}(D_{\mathrm{rig}}(-r)(\delta)), 2)^{-1}.$$ We also fix a base $\bold{e}_{D}$ of $\mathcal{L}_L(D)=\mathcal{L}_L(D_{\mathrm{rig}})$. 
 Since we have $$\mathrm{det}_L\bold{D}_{\mathrm{dR}}(D_{\mathrm{rig}})=\bold{D}_{\mathrm{dR}}(\mathrm{det}_{\mathcal{R}_L}D_{\mathrm{rig}})=(\frac{1}{t^k}L_{\infty}\bold{e}_{D})^{\Gamma},$$ there exists a unique $\Omega\in L_{\infty}^{\times}$ such that $f_1\wedge f_2=\frac{1}{\Omega\cdot t^k}\bold{e}_D.$

For any pair $(r,\delta)$ as above, we have 
$$\bold{D}^0_{\mathrm{dR}}(D_{\mathrm{rig}}(-r)(\delta))=Lf_1\otimes t^r\bold{e}_{-r}\otimes \bold{f}_{\delta},$$ 
and 
$$t_{D_{\mathrm{rig}}(-r)(\delta)}:=\bold{D}_{\mathrm{dR}}(D_{\mathrm{rig}}(-r)(\delta))/\bold{D}^0_{\mathrm{dR}}(D_{\mathrm{rig}}(-r)(\delta))=L\overline{f}_2\otimes t^r\bold{e}_{-r}\otimes \bold{f}_{\delta}$$ where $\overline{f}_2\in t_{D_{\mathrm{rig}}}$ is the image of  $f_2$. Then, the Bloch-Kato's exact sequence for $D_{\mathrm{rig}}$ becomes as follows
\begin{multline*}
0\rightarrow \bold{D}_{\mathrm{cris}}(D_{\mathrm{rig}}(-r)(\delta))\xrightarrow{(a)}\bold{D}_{\mathrm{cris}}(D_{\mathrm{rig}}(-r)(\delta))\oplus 
t_{D_{\mathrm{rig}}(-r)(\delta)}\xrightarrow{\mathrm{exp}_f\oplus \mathrm{exp}}\mathrm{H}^1_{\varphi,\gamma}(D_{\mathrm{rig}}(-r)(\delta))\\
\xrightarrow{\mathrm{exp}_f^{\vee}\oplus\mathrm{exp}^*}\bold{D}_{\mathrm{cris}}(D_{\mathrm{rig}}^*(r)(\delta^{-1}))^{\vee}\oplus\bold{D}^0_{\mathrm{dR}}(D_{\mathrm{rig}}(-r)(\delta))\xrightarrow{(b)} \bold{D}_{\mathrm{cris}}(D_{\mathrm{rig}}^*(r)(\delta^{-1}))\rightarrow 0,
\end{multline*}
where the map (a) is defined by $x\mapsto ((1-\varphi)x,\overline{x})$ and the map 
$\bold{D}_{\mathrm{cris}}(D^*(r)(\delta^{-1}))^{\vee}\rightarrow\bold{D}_{\mathrm{cris}}(D^*(r)(\delta^{-1}))^{\vee}$ in $(b)$ is the dual of $1-\varphi$.
Since $1-\varphi$ is isomorphism on $\bold{D}_{\mathrm{cris}}(D_{\mathrm{rig}}(-r)(\delta))$ and $\bold{D}_{\mathrm{cris}}(D_{\mathrm{rig}}^*(r)(\delta^{-1}))$ by the assumption (ii), $\mathrm{det}_{L}\mathrm{H}^1_{\varphi,\gamma}(D_{\mathrm{rig}}(-r)(\delta))$ has a basis 
of the form $y\wedge\mathrm{exp}(\overline{f}_2\otimes t^r\bold{e}_{-r}\otimes\bold{f}_{\delta})$ such that $\mathrm{exp}^*(y)\not=0$. 
Using these fixed datum, we define a map
\begin{equation}\label{alpha}
\alpha_{(r,\delta)}:D^{\psi=1}\xrightarrow{\iota_{\chi^{-r}\delta}}\mathrm{H}^1_{\varphi,\gamma}(D_{\mathrm{rig}}(-r)(\delta))\rightarrow L
\end{equation}
by the formula 
$$\mathrm{exp}^*(\iota_{\chi^{-r}\delta}(x)):=\alpha_{(r,\delta)}(x)f_1\otimes t^r\bold{e}_{-r}\otimes\bold{f}_{\delta}$$
for $x\in D^{\psi=1}$. 

For $x\in D^{\psi=1}$, we set $x_{-r,\delta}:=\iota_{\chi^{-r}\delta}(x)\in \mathrm{H}^1_{\varphi,\gamma}(D_{\mathrm{rig}}(-r)(\delta))$. Set 
$$L(D):=\mathrm{det}_L(1-\varphi|\bold{D}_{\mathrm{cris}}(D_{\mathrm{rig}}))$$ for any $D$.
Under this situation, the de Rham $\varepsilon$-isomorphism 
$\varepsilon_{L,\zeta}^{\mathrm{dR}}(D(-r)(\delta))=\varepsilon_{L,\zeta}^{\mathrm{dR}}(D_{\mathrm{rig}}(-r)(\delta))$ is the isomorphism defined as the composite of the following isomorphisms
\begin{multline*}
\varepsilon_{L,\zeta}^{\mathrm{dR}}(D(-r)(\delta)):
\bold{1}_L\xrightarrow{\theta_{1}(D_{\mathrm{rig}}(-r)(\delta))}
\Delta_{L,1}(D_{\mathrm{rig}}(-r)(\delta))\boxtimes \mathrm{Det}_L(\bold{D}_{\mathrm{dR}}(D_{\mathrm{rig}}(-r)(\delta)))\\
\xrightarrow{\mathrm{id}\boxtimes\theta_{2,\zeta}(D_{\mathrm{rig}}(-r)(\delta))}
\Delta_{L,1}(D_{\mathrm{rig}}(-r)(\delta))\boxtimes \Delta_{L,2}(D_{\mathrm{rig}}(-r)(\delta))
=\Delta_L(D_{\mathrm{rig}}(-r)(\delta))
\end{multline*}
where the isomorphisms $\theta_{1}(D_{\mathrm{rig}}(-r)(\delta))$ and $\theta_{2,\zeta}(D_{\mathrm{rig}}(-r)(\delta))$ are respectively induced by the isomorphisms
defined by (for $x\in D^{\psi=1}$ such that $\alpha_{(r,\delta)}(x)\not=0$)
\begin{multline*}
\theta_{1}(D_{\mathrm{rig}}(-r)(\delta)):\mathrm{det}_L\mathrm{H}_{\varphi,\gamma}^1(D_{\mathrm{rig}}(-r)(\delta))\isom \mathrm{det}_L\bold{D}_{\mathrm{dR}}(D_{\mathrm{rig}}(-r)(\delta)):x_{-r,\delta}\wedge \mathrm{exp}(f_2\otimes \bold{e}_{-r}\otimes \bold{f}_{\delta})\\
\mapsto 
\frac{1}{(k-r-1)!}\cdot\frac{r!}{(-1)^r}\cdot\frac{L(D(-r)(\delta))}{L(D^*(r)(\delta^{-1}))}\cdot\mathrm{exp}^*(x_{-r,\delta})\wedge (f_2\otimes t^r\bold{e}_{-r}\otimes\bold{f}_{\delta})
\end{multline*}
and 
\begin{multline*}
\theta_{2,\zeta}(D_{\mathrm{rig}}(-r)(\delta))^{-1}:\mathcal{L}_L(D_{\mathrm{rig}}(-r)(\delta))\isom \mathrm{det}_L
\bold{D}_{\mathrm{dR}}(D_{\mathrm{rig}}(-r)(\delta))\\
:\bold{e}_D\otimes\bold{e}_{-2r}\otimes\bold{e}^{\otimes 2}_{\delta}\mapsto \frac{1}{\varepsilon_L(W(D_{\mathrm{rig}}(-r)(\delta)),\zeta)}\cdot\frac{1}{t^{k-2r}}\cdot\bold{e}_D\otimes \bold{e}_{-2r}\otimes\bold{e}^{\otimes 2}_{\delta}.
\end{multline*}
Hence, using $\alpha_{(r,\delta)}$ and $\Omega$, the isomorphism $\eta_{\zeta}(D(-r)(\delta)):=\theta_{2,\zeta}(D_{\mathrm{rig}}(-r)(\delta))\circ\theta_{1}(D_{\mathrm{rig}}(-r)(\delta))$ is 
explicitly described as follows:
\begin{multline}\label{3e}
\eta_{\zeta}(D(-r)(\delta))(x_{-r,\delta}\wedge \mathrm{exp}(f_2\otimes t^r\bold{e}_{-r}\otimes\bold{f}_{\delta}))\\
= \frac{1}{(k-r-1)!}\cdot\frac{r!}{(-1)^r}\cdot \frac{L(D(-r)(\delta))}{L(D^*(r)(\delta^{-1}))}\cdot \frac{\varepsilon_L(W(D_{\mathrm{rig}}(-r)(\delta)),\zeta)\cdot\alpha_{\delta}^2}{\Omega}\cdot\alpha_{(r,\delta)}(x)\cdot\bold{e}_D\otimes \bold{e}_{-2r}\otimes\bold{e}^{\otimes 2}_{\delta}.
\end{multline}

We next consider the isomorphism $\varepsilon_{L,\zeta}(D(-r)(\delta))$. Let 
$$[-,-]_{\mathrm{dR}}: \bold{D}_{\mathrm{dR}}(D^*_{\mathrm{rig}}(r)(\delta^{-1}))\times \bold{D}_{\mathrm{dR}}(D_{\mathrm{rig}}(-r)(\delta))\rightarrow L$$ 
be the canonical dual pairing. We remarked in the proof of 
Proposition \ref{3.3.1} that, under the assumption that $D$ is absolutely irreducible, the natural map $1-\varphi:D^{\psi=1}\rightarrow D^{\psi=0}$ is injective, by which we identify $D^{\psi=1}$ with $\mathcal{C}(D)$ (and similarly for $D^*$), and one has $\omega_{\delta_D}(\mathcal{C}(D))\otimes_L \mathcal{L}_{L}(D)^{\vee}=\mathcal{C}(D^{\vee})$ under the canonical isomorphism $D\otimes_L\mathcal{L}_L(D)^{\vee}\isom D^{\vee}$. 
By this fact and the definition of $\varepsilon_{L,\zeta}(D(-r)(\delta))$, $\varepsilon_{L,\zeta}(D(-r)(\delta))$ is the isomorphism which is naturally induced by the isomorphism 
$$\eta'_{\zeta}(D(-r)(\delta)):\mathrm{det}_L\mathrm{H}_{\varphi,\gamma}^1(D_{\mathrm{rig}}(-r)(\delta))\isom 
\mathcal{L}_L(D_{\mathrm{rig}}(-r)(\delta))=
((L\bold{e}_D\otimes \bold{e}_{-2r}\otimes\bold{e}^{\otimes 2}_{\delta})^{\vee})^{\vee}$$
defined by the following formula (for $x\in D^{\psi=1}$ such that $\alpha_{(r,\delta)}(x)\not=0$)
\begin{multline}\label{4e}
\eta'_{\zeta}(D(-r)(\delta))(x_{-r,\delta}\wedge\mathrm{exp}(f_2\otimes t^r\bold{e}_{-r}\otimes\bold{f}_{\delta}))((\bold{e}_D\otimes \bold{e}_{-2r}\otimes\bold{e}^{\otimes 2}_{\delta})^{\vee})
\\
=\langle(\sigma_{-1}(\omega_{\delta_D}(x)\otimes\bold{e}_D^{\vee}\otimes \bold{e}_1))_{r,\delta^{-1}}, \mathrm{exp}(f_2\otimes t^r\bold{e}_{-r}\otimes\bold{f}_{\delta})\rangle_{\mathrm{Tate}}\\
=-[\mathrm{exp}^*((\sigma_{-1}(\omega_{\delta_D}(x)\otimes\bold{e}_D^{\vee}\otimes \bold{e}_1))_{r,\delta^{-1}}), f_2\otimes t^r\bold{e}_{-r}\otimes\bold{f}_{\delta}]_{\mathrm{dR}}=:(*),
\end{multline}
where the second equality follows from the definition of $\mathrm{exp}^*$ (see Proposition 2.16 of \cite{Na14a}). Using the canonical isomorphism 
$$\bold{D}_{\mathrm{dR}}^0(D_{\mathrm{rig}}^*(r)(\delta^{-1}))=Lf_1\otimes\Omega t^{k}\bold{e}_D^{\vee}\otimes \frac{1}{t^{r+1}}\bold{e}_{r+1}\otimes\bold{f}_{\delta^{-1}}$$ induced by the canonical isomorphism 
$D\otimes_L\mathcal{L}_{L}(D)^{\vee}\isom D^{\vee}$, 
we define a map 
\begin{equation}\label{beta}
\beta_{(r,\delta)}:D^{\delta_D(p)\psi=1}\xrightarrow{y\mapsto (\sigma_{-1}(y\otimes\bold{e}_D^{\vee}\otimes\bold{e}_1))_{r,\delta^{-1}}}\mathrm{H}^1_{\varphi,\gamma}(D_{\mathrm{rig}}^*(r)(\delta^{-1}))\rightarrow L
\end{equation} by the formula 
$$\mathrm{exp}^*((\sigma_{-1}(y\otimes\bold{e}_D^{\vee}\otimes \bold{e}_1))_{r,\delta^{-1}})
:=\beta_{(r,\delta)}(y)f_1\otimes\Omega t^{k}\bold{e}_D^{\vee}\otimes \frac{1}{t^{r+1}}\bold{e}_{r+1}\otimes\bold{f}_{\delta^{-1}}$$
for $y\in D^{\delta_D(p)\psi=1}$.
Using $\beta_{(r,\delta)}$, the last term $(*)$ in the equalities (\ref{4e}) is equal to 
\begin{multline} \label{44f}
(*)=-[\beta_{(r,\delta)}(w_{\delta_D}(x))f_1\otimes \Omega t^k\bold{e}_D^{\vee}\otimes \frac{1}{t^{r+1}}\bold{e}_{r+1}\otimes\bold{f}_{\delta^{-1}}, f_2\otimes t^r \bold{e}_{-r}\otimes\bold{f}_{\delta}]_{\mathrm{dR}}\\
=\beta_{(r,\delta)}(w_{\delta_D}(x)).
\end{multline}
We see from the formulae \eqref{4e} and \eqref{44f} that the isomorphism 
$$\eta_{\zeta}'(D(-r)(\delta)):\mathrm{det}_L\mathrm{H}^1_{\varphi,\gamma}(D_{\mathrm{rig}}(-r)(\delta))\isom \mathcal{L}_L(D_{\mathrm{rig}}(-r)(\delta))$$ is explicitly described as follows:
\begin{equation}\label{6e}
\eta_{\zeta}'(D(-r)(\delta))(x_{-r,\delta}\wedge \mathrm{exp}(f_2\otimes t^r\bold{e}_{-r}\otimes\bold{f}_{\delta}))
=\beta_{(r,\delta)}(w_{\delta_D}(x))\bold{e}_{D}\otimes \bold{e}_{-2r}
\otimes\bold{e}^{\otimes 2}_{\delta}.
\end{equation}

The formulae (\ref{3e}) and (\ref{6e}) show that the equality $\varepsilon_{L,\zeta}(D(-r)(\delta))
=\varepsilon_{L,\zeta}^{\mathrm{dR}}(D(-r)(\delta))$ follows from the following 
key proposition. Thus the proof of Theorem \ref{nontrianguline} is reduced to this proposition.
\begin{prop}\label{key}
For any $x\in D^{\psi=1}$ and any pair $(r,\delta)$ such that $0\leqq r\leqq k-1$ and $\delta:\Gamma\rightarrow L^{\times}$ with 
finite image, we have 
$$\beta_{(r,\delta)}(w_{\delta_D}(x))=\frac{1}{(k-r-1)!}\cdot\frac{r!}{(-1)^r}\cdot\frac{L(D(-r)(\delta))}{L(D^*(r)(\delta^{-1}))}\cdot \frac{\varepsilon_L(W(D_{\mathrm{rig}}(-r)(\delta)),\zeta)\cdot \alpha_{\delta}^2}{\Omega}\cdot \alpha_{(r,\delta)}(x).$$

\end{prop}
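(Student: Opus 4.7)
The plan is to express both sides of the target identity in terms of integrals of Kirillov-model functions against the character $a \mapsto a^r\delta(a)$ on $\mathbb{Q}_p^{\times}$, and then to reduce the identity to the classical Jacquet--Langlands functional equation relating the $w$-action on the Kirillov model of $\mathrm{GL}_2(\mathbb{Q}_p)$ to the local $\gamma$-factor. The bridge between the $(\varphi,\Gamma)$-side and the Kirillov side is Colmez's construction of the locally algebraic vectors $\Pi(D)^{\mathrm{alg}} \subseteq \Pi(D)$ in VI of \cite{Co10b}, combined with Emerton's theorem identifying $\Pi(D)^{\mathrm{alg}}$ (as an admissible $\mathrm{GL}_2(\mathbb{Q}_p)$-representation) with $\pi_{\mathrm{sm}}(D) \otimes \mathrm{Sym}^{k-1}$, where $\pi_{\mathrm{sm}}(D)$ is the smooth irreducible admissible representation of $\mathrm{GL}_2(\mathbb{Q}_p)$ attached to $W(D_{\mathrm{rig}})$ by the classical local Langlands correspondence.

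First, I would establish an explicit formula for $\alpha_{(r,\delta)}(x)$ throughout the critical range $0 \le r \le k-1$ (this is the paper's Proposition \ref{formula1}). Following Colmez, an element $x \in D^{\psi=1}$ determines a locally algebraic vector $\Phi(x) \in \Pi(D)^{\mathrm{alg}}$ with a Kirillov function $K_{\Phi(x)} \colon \mathbb{Q}_p^{\times} \to L$. For $-(k-1) \le -r \le 0$ the line $\bold{D}^{-r}_{\mathrm{dR}}/\bold{D}^{-r+1}_{\mathrm{dR}}$ is one-dimensional, so the dual exponential of $\iota_{\chi^{-r}\delta}(x)$ can be read off as a single Taylor-type coefficient of $K_{\Phi(x)}$. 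I expect an identity of the shape
$$\alpha_{(r,\delta)}(x) = \frac{r!}{(-1)^r} \cdot \frac{1}{\Omega} \cdot \alpha_{\delta} \cdot \frac{1}{L(D(-r)(\delta))} \cdot \int_{\mathbb{Q}_p^{\times}} K_{\Phi(x)}(a)\, a^r \delta(a)\, d^{\times}\!a,$$
where the Gamma-correction $r!/(-1)^r$ arises from differentiating $t^r$, the period $1/\Omega$ encodes the comparison $f_1 \wedge f_2 = \Omega^{-1} t^{-k} \bold{e}_D$, and the inverse $L$-factor comes from identifying $\iota_{\chi^{-r}\delta}$ with the image of $(1-\varphi)^{-1}$ in the Bloch--Kato exact sequence.

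Second, I would prove the analogous formula for $\beta_{(r,\delta)}(w_{\delta_D}(x))$ (the paper's Proposition \ref{formula2}). The crucial input, which is the source of the $\varepsilon$-factor in the final answer, is Colmez's identification of the involution $w_{\delta_D}$ on $D^{\psi=0}$ with the action of $w = \begin{pmatrix} 0 & 1 \\ 1 & 0 \end{pmatrix}$ on the appropriate Kirillov model of $\Pi(D)^{\mathrm{alg}}$. Running the dual-exponential computation now for $D^*(r)(\delta^{-1})$ should give
$$\beta_{(r,\delta)}(w_{\delta_D}(x)) = \frac{1}{(k-r-1)!} \cdot \alpha_{\delta} \cdot \frac{1}{L(D^*(r)(\delta^{-1}))} \cdot \int_{\mathbb{Q}_p^{\times}} (w\!\cdot\! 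K_{\Phi(x)})(a)\, a^r \delta(a)\, d^{\times}\!a,$$
with $(k-r-1)!$ the Gamma-correction dual to $r!$ (reflecting the dual Hodge--Tate weight $-(k-1-r)$). Taking the ratio of the two displays and plugging in the classical functional equation
$$\int_{\mathbb{Q}_p^{\times}} (w\!\cdot\! \varphi)(a)\, \eta(a)\, d^{\times}\!a = \gamma(\pi_{\mathrm{sm}}(D) \otimes \eta, \zeta) \int_{\mathbb{Q}_p^{\times}} \varphi(a)\, (\eta\,\omega_{\pi_{\mathrm{sm}}(D)})^{-1}(a)\, d^{\times}\!a$$
with $\eta = |\cdot|^r \delta$ and $\varphi = K_{\Phi(x)}$, together with the identity $\gamma = \varepsilon \cdot L(V^*)/L(V)$ and Emerton's compatibility of local factors, will match precisely the ratio of $L$-factors and the factor $\varepsilon_L(W(D_{\mathrm{rig}}(-r)(\delta)),\zeta)$ asserted on the right-hand side of Proposition \ref{key}.

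The main obstacle is the bookkeeping in the first two steps: every choice of basis ($f_1, f_2$, $\bold{f}_{\delta}$, $\bold{e}_D$), the comparison period $\Omega$, the Haar measure and additive character $\psi_{\zeta}$ built into $\varepsilon_L(-,\zeta)$, and the precise Colmez normalizations of $\Phi$ and $w_{\delta_D}$ must be tracked simultaneously through the Emerton isomorphism so that the two Kirillov-type integrals come out with exactly compatible constants, and so that the product $\alpha_{\delta}^2$ (rather than something differing by a sign or a twist) appears. Once this dictionary is pinned down the classical Kirillov functional equation supplies the $\varepsilon$- and $L$-factors on the nose, and the identity of Proposition \ref{key} follows by a formal combination of the two explicit formulas.
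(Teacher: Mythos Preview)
Your high-level plan matches the paper's: reduce to two explicit formulas (Propositions \ref{formula1} and \ref{formula2}) expressing $\alpha_{(r,\delta)}(x)$ and $\beta_{(r,\delta)}(w_{\delta_D}(x))$ in Kirillov-model terms, then invoke Emerton's compatibility and the Bushnell--Henniart formula for the action of $w$ on the Kirillov model of a supercuspidal representation. However, your implementation contains a genuine conceptual error.

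You claim that $x \in D^{\psi=1}$ determines a locally algebraic vector $\Phi(x) \in \Pi(D)^{\mathrm{alg}}$ with a Kirillov function $K_{\Phi(x)}$, and propose to write $\alpha_{(r,\delta)}(x)$ and $\beta_{(r,\delta)}(w_{\delta_D}(x))$ as Mellin integrals of $K_{\Phi(x)}$ and $w\cdot K_{\Phi(x)}$. This is backwards. In Colmez's construction, $x \in D^{\psi=1}$ lifts to $\widetilde{x} \in (D^{\natural}\boxtimes_{\delta_D}\bold{P}^1)^{g_p=1}$, and the isomorphism (\ref{DD}) identifies $D^{\natural}\boxtimes_{\delta_D}\bold{P}^1$ with $\Pi(D)^{\vee}\otimes(\delta_D\circ\mathrm{det})$ --- the \emph{continuous dual} of $\Pi(D)$, not $\Pi(D)$ itself. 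So $\widetilde{x}$ is a linear functional on $\Pi(D)$, not a vector in it; it has no Kirillov function, and your integrals are not defined. (Note also that in the non-trianguline case treated here $\bold{D}_{\mathrm{cris}}=0$, so the $L$-factors you insert into the formulas for $\alpha$ and $\beta$ are identically $1$ and play no role.)

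The paper's mechanism is dual to what you describe. Propositions \ref{formula1} and \ref{formula2} show that $\alpha_{(r,\delta)}(x)$ and $\beta_{(r,\delta)}(w_{\delta_D}(x))$ equal, up to explicit constants, the pairings $[\widetilde{x}, f_{(r,\delta),m}]_{\bold{P}^1}$ and $[w\cdot\widetilde{x}, h_{(r,\delta),m}]_{\bold{P}^1}$, where $f_{(r,\delta),m}, h_{(r,\delta),m} \in \Pi(D)^{\mathrm{alg}}_c$ are explicit \emph{test vectors} (essentially $\xi_{\eta,m}\otimes e_1^j e_2^{k-1-j}$ in the Kirillov model). One then uses the $G$-equivariance $[w\cdot\widetilde{x}, h]_{\bold{P}^1} = \delta_D(\mathrm{det}(w))\,[\widetilde{x}, \Pi(w)\cdot h]_{\bold{P}^1}$ to transfer $w$ from $\widetilde{x}$ to the test vector, and Corollary \ref{3.12} (via Emerton and Theorem \ref{BH}) computes $\Pi(w)\cdot h_{(r,\delta),m}$ as an explicit multiple of some $f_{(r,\delta),m'}$, producing exactly the $\varepsilon$-factor. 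Thus the classical local functional equation is applied to a fixed compactly supported test function, not to a putative Kirillov function of $x$; this is what makes the argument go through.
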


Of course, we have already proved this proposition for the trianguline case as a consequence of Corollary \ref{3.9.1}. 
In the rest of this subsection, we prove this key proposition in the non-trianguline case (the proof is given in the last part of this subsection). Our proof heavily depends on the Colmez's theory of Kirillov model of locally algebraic vectors $\Pi(D)^{\mathrm{alg}}$ of $\Pi(D)$ and the Emerton's theorem on the compatibility of the $p$-adic and the classical local Langlands correspondence, which we recall below. 
Since many arguments below are also valid for the trianguline case, we keep the assumptions (i) and (ii) on $D$ at the beginning of this subsection.

Set $G:=\mathrm{GL}_2(\mathbb{Q}_p)$, $B:=\left\{\begin{pmatrix}* & *\\ 0 & *\end{pmatrix}\in G\right\}$, $P:=\begin{pmatrix}\mathbb{Q}_p^{\times} & \mathbb{Q}_p \\ 
0 &1\end{pmatrix}$, $P^+:=\begin{pmatrix}\mathbb{Z}_p\setminus\{0\} & \mathbb{Z}_p\\ 0 & 1\end{pmatrix}$ and $Z:=\left\{\left.\begin{pmatrix} a & 0 \\ 0 & a\end{pmatrix}\right|a\in \mathbb{Q}_p^{\times}\right\}$.  We identify $Z$ with $\mathbb{Q}_p^{\times}$ via the isomorphism $\mathbb{Q}_p^{\times}\isom Z$ given by $a\mapsto \begin{pmatrix} a & 0 \\ 0 & a\end{pmatrix}$. 

Let us briefly recall the construction of the representation $\Pi(D)$ of $G$ for an absolutely irreducible $D$ of rank two (see \cite{Co10b} for details).  Let the monoid $P^+$ act on $D$ by the rule $\begin{pmatrix}p^na& b \\ 0 & 1\end{pmatrix}\cdot x:=(1+X)^b\cdot \varphi^n(\sigma_a(x))$ for $n\geqq 0$, $a\in \mathbb{Z}_p^{\times}$, $b\in\mathbb{Z}_p$ and $x\in D$. Using the involution $w_{\delta_D}:D^{\psi=0}\isom D^{\psi=0}$, we define a topological $L$-vector space
$$D\boxtimes_{\delta_D}\bold{P}^1:=\{(z_1,z_2)\in D\times D\ |\  w_{\delta_D}((1-\varphi\psi)z_1)
=(1-\varphi\psi)z_2\}$$
and an $L$-linear map 
$$\mathrm{Res}_{\mathbb{Z}_p}:D\boxtimes_{\delta_D}\bold{P}^1\rightarrow D:(z_1,z_2)\mapsto z_1.$$
By II of \cite{Co10b}, one can define a continuous action of $G$ on 
$D\boxtimes_{\delta_D}\bold{P}^1$ with the central character $\delta_D$ such that $\begin{pmatrix}0 & 1 \\ 1 & 0\end{pmatrix}\cdot (z_1,z_2)=(z_2,z_1)$ and the map $\mathrm{Res}_{\mathbb{Z}_p}$ is $P^+$-equivariant. We denote by $D\boxtimes\mathbb{Q}_p$ the topological $L$-vector space consisting of the 
sequences $(z_n)_{n\geqq 0}$ such that $\psi(z_{n+1})=z_n$ for all $n\geqq 0$.  One can define a continuous action of $P$ on  $D\boxtimes\mathbb{Q}_p$ by 
$$\begin{pmatrix}a& 0 \\ 0 & 1\end{pmatrix}\cdot (z_n)_{n\geqq 0}:=(\sigma_a(x_n))_{n\geqq 0}, \,\,\begin{pmatrix}p& 0 \\ 0 & 1\end{pmatrix}\cdot (x_n)_{n\geqq 0}:=(x_{n+1})_{n\geqq 0}$$
and $$\begin{pmatrix}1& b/p^k \\ 0 & 1\end{pmatrix}\cdot (z_n)_{n\geqq 0}:=(\psi^{k}((1+X)^{p^{n}b}\cdot z_{n+k}))_{n\geqq 0}$$ 
for $a\in \mathbb{Z}_p^{\times}, b\in \mathbb{Z}_p$ and $k\geqq 0$. 

Take an 
\'etale $(\varphi,\Gamma)$-submodule $D_0\subseteq D$ over $\mathcal{E}_{\mathcal{O}_L}$ such that $D_0[1/p]=D$. By \cite{Co10a}, there exists the smallest  $\psi$-stable compact $\mathcal{O}_L[[X]]$-submodules of $D_0$, which we denote by $D_0^{\natural}\subseteq D_0$. One also has 
the largest $\psi$-stable compact $\mathcal{O}_L[[X]]$-submodule $D_0^{\sharp}\subseteq D_0$ on which $\psi$ is surjective. We set $D^{\natural}:=D_0^{\natural}[1/p]$ and $D^{\sharp}:=D_0^{\sharp}[1/p]$, which are independent of the choice of $D_0$. We note that one has $D^{\natural}=D^{\sharp}$ under our assumption that $D$ is absolutely irreducible (Corollaire II.5.21 of \cite{Co10a}). One also has $(D^{\natural})^{\psi=1}=(D^{\sharp})^{\psi=1}=D^{\psi=1}$, where the second equality follows from Proposition II.5.6 of \cite{Co10a}. Define a sub $L[B]$-module $D^{\natural}\boxtimes_{\delta_D}\bold{P}^1$ of $D\boxtimes_{\delta_D}\bold{P}^1$ by 
$$D^{\natural}\boxtimes_{\delta_D}\bold{P}^1:=
\left\{z\in D\boxtimes_{\delta_D}\bold{P}^1\ \left|\ \mathrm{Res}_{\mathbb{Z}_p}\left(\begin{pmatrix}p^n & 0 \\ 0 &1 \end{pmatrix}\cdot z\right)\in D^{\natural} \text{ for all } n\geqq 0\right.\right\}.$$
One of the deepest results in the theory of the $p$-adic local Langlands correspondence for $\mathrm{GL}_2(\mathbb{Q}_p)$ is that 
the pair $(D,\delta_D)$ is $G$-compatible, which means that $D^{\natural}\boxtimes_{\delta_D}\bold{P}^1$ is stable under the action of $G$ (Th\'eor\`eme II.3.1 of \cite{Co10b}, Proposition 10.1 of \cite{CDP14a}). Finally, one defines 
$$\Pi(D):=D\boxtimes_{\delta_D}\bold{P}^1/D^{\natural}\boxtimes_{\delta_D}\bold{P}^1$$ which is a topologically irreducible unitary $L$-Banach admissible representation of $G$.

We next recall in detail the Colmez's theory of the Kirillov model of the locally algebraic vectors $\Pi(D)^{\mathrm{alg}}$ of $\Pi(D)$. We set $L_{\infty}[[t]]:=\bigcup_{n\geqq 1}L_n[[t]]$. For the fixed $\zeta=\{\zeta_{p^n}\}_{n\geqq 1}\in \mathbb{Z}_p(1)$, we define a homomorphism 
$$[\overline{\zeta}^{(-)}]:\mathbb{Q}_p^{\times}\rightarrow ((\widetilde{\bold{B}}^{+})^{\times})^{H_{\mathbb{Q}_p}}:a\mapsto [\overline{\zeta}^a].$$ For $V:=V(D)$, we 
set 
$$\widetilde{D}^+:=(V\otimes_{\mathbb{Q}_p}\widetilde{\bold{B}}^+)^{H_{\mathbb{Q}_p}}, \,\,\widetilde{D}:=(V\otimes_{\mathbb{Q}_p}\widetilde{\bold{B}})^{H_{\mathbb{Q}_p}} \text{ and }
 \widetilde{D}^+_{\mathrm{dif}}:=(V\otimes_{\mathbb{Q}_p}\bold{B}^+_{\mathrm{dR}})^{H_{\mathbb{Q}_p}}.$$ One has a canonical isomorphism 
 $$\widetilde{D}^+_{\mathrm{dif}}\isom\bold{D}^+_{\mathrm{dif},\infty}(D_{\mathrm{rig}})\otimes_{\mathbb{Q}_{p, \infty}[[t]]}(\bold{B}_{\mathrm{dR}}^{+})^{ H_{\mathbb{Q}_p}}.$$ The natural inclusion $\iota_0:\widetilde{\bold{B}}^+\hookrightarrow \bold{B}^+_{\mathrm{dR}}$ induces a canonical $\Gamma$-equivariant inclusion 
 $$\iota_0:\widetilde{D}^+\hookrightarrow \widetilde{D}^+_{\mathrm{dif}}.$$ The group $B$ acts on 
 $\widetilde{D}^+$, $\widetilde{D}$ and $\widetilde{D}/\widetilde{D}^+$ by the rule
 (for $z\in  \widetilde{D}^+, \widetilde{D}, \widetilde{D}/\widetilde{D}^+$)
  $$\begin{pmatrix}a& 0 \\ 0 & a\end{pmatrix}\cdot z:=\delta_D(a)z, \,\,
 \begin{pmatrix}p^nb& 0 \\ 0 & 1\end{pmatrix}\cdot z:=\varphi^n(\sigma_b(z)),$$
 $$\begin{pmatrix}1& c \\ 0 & 1\end{pmatrix}\cdot z:=[\overline{\zeta}^{c}] z$$
 for $a\in \mathbb{Q}_p^{\times}$, $b\in \mathbb{Z}_p^{\times}$, $n\in \mathbb{Z}$ and $c\in \mathbb{Q}_p$. 
  
 We denote by $$\mathrm{LP}\left(\mathbb{Q}_p^{\times}, \frac{1}{t^k}\widetilde{D}^+_{\mathrm{dif}}/\widetilde{D}^+_{\mathrm{dif}}\right)^{\Gamma}$$ the $L$-vector space consisting of functions $\phi:\mathbb{Q}_p^{\times}\rightarrow \frac{1}{t^k}\widetilde{D}^+_{\mathrm{dif}}/\widetilde{D}^+_{\mathrm{dif}}$ 
 such that the support is compact in $\mathbb{Q}_p$ (i.e. $\phi(\frac{1}{p^n}\mathbb{Z}_p^{\times})=0$ for any sufficiently large $n$) and 
 $\sigma_a(\phi(x))=\phi(ax)$ for any $a\in \mathbb{Z}_p^{\times}$ and $x\in \mathbb{Q}_p^{\times}$. 
 We equip this space with an action of $B$ by 
 $$\left(\begin{pmatrix}a& 0 \\ 0 & a\end{pmatrix}\cdot \phi\right)(x):=\delta_D(a)\phi(x), \,\,
 \left(\begin{pmatrix}a& 0 \\ 0 & 1\end{pmatrix}\cdot \phi\right)(x):=\phi(ax),$$
 $$
 \left(\begin{pmatrix}1& b \\ 0 & 1\end{pmatrix}\cdot \phi\right)(x):=\iota_0([\overline{\zeta}^{bx}])\cdot\phi(x)$$
 for $a\in \mathbb{Q}_p^{\times}$ and $b\in \mathbb{Q}_p$. Remark that, for $a=\frac{b}{p^n}\in \mathbb{Q}_p^{\times}$ such that $b\in \mathbb{Z}_p$, $n\geqq 0$, one has $\iota_0([\overline{\zeta}^a])=\zeta_{p^n}^b\mathrm{exp}(at)\in L_{\infty}[[t]]^{\times}$.

 For $z\in \bigcup_{n\geqq 0}\frac{1}{\varphi^n(X)^k}\widetilde{D}^+/\widetilde{D}^+$ (this is a $B$-stable subspace of $\widetilde{D}/\widetilde{D}^+$), define a function $\phi_z\in \mathrm{LP}\left(\mathbb{Q}_p^{\times}, \frac{1}{t^k}\widetilde{D}^+_{\mathrm{dif}}/\widetilde{D}^+_{\mathrm{dif}}\right)^{\Gamma}$ by 
 $$\phi_z(x):=\iota_0\left(\begin{pmatrix} x& 0 \\ 0 & 1\end{pmatrix}\cdot z\right)$$
 for $x\in \mathbb{Q}_p^{\times}$.  By Lemme VI.5.4 (i) of \cite{Co10b}, this correspondence induces a $B$-equivariant inclusion 
 $$\bigcup_{n\geqq 0}\frac{1}{\varphi^n(X)^k}\widetilde{D}^+/\widetilde{D}^+\hookrightarrow 
 \mathrm{LP}\left(\mathbb{Q}_p^{\times}, \frac{1}{t^k}\widetilde{D}^+_{\mathrm{dif}}/\widetilde{D}^+_{\mathrm{dif}}\right)^{\Gamma}:z\mapsto \phi_z.$$
 
  Let us write $$\bold{N}^+_{\mathrm{dif},*}(D_{\mathrm{rig}}):=\bold{D}_{\mathrm{dR}}(D_{\mathrm{rig}})\otimes_LL_{*}[[t]]$$ for $*=n\geqq n(D_{\mathrm{rig}})$ or $*=\infty$. We set 
   $$X_{\infty}^{-}:=\bold{N}^+_{\mathrm{dif},\infty}(D)/\bold{D}^+_{\mathrm{dif},\infty}(D).$$
Since $\bold{D}^+_{\mathrm{dif},\infty}(D_{\mathrm{rig}})=L_{\infty}[[t]]f_1\oplus L_{\infty}[[t]](t^kf_2)$, one has 
  $$X_{\infty}^{-}=(L_{\infty}[[t]]/t^kL_{\infty}[[t]])\otimes_LL\overline{f}_2\subseteq \frac{1}{t^k}\widetilde{D}^+_{\mathrm{dif}}/\widetilde{D}^+_{\mathrm{dif}}.$$ We denote by 
  $$\mathrm{LP}(\mathbb{Q}_p^{\times}, X_{\infty}^-)^{\Gamma}$$ 
  the $B$-stable $L$-subspace of $\mathrm{LP}\left(\mathbb{Q}_p^{\times}, \frac{1}{t^k}\widetilde{D}^+_{\mathrm{dif}}/\widetilde{D}^+_{\mathrm{dif}}\right)^{\Gamma}$ consisting of functions 
 $\phi$ with values in $X^-_{\infty}$, in other words, consisting of functions $\phi:\mathbb{Q}_p^{\times}\rightarrow X^-_{\infty}:x\mapsto \sum_{
i=0}^{k-1}\phi_i(x)(xt)^{i}\otimes \overline{f}_2$ such that, for any $0\leqq i\leqq k-1$,  the function 
$\phi_i:\mathbb{Q}_p^{\times}\rightarrow L_{\infty}$ is locally constant with
 compact support in $\mathbb{Q}_p$ and $\phi_i(ax)=\sigma_a(\phi_i(x))$ for any $a\in \mathbb{Z}_p^{\times}$ and $x\in \mathbb{Q}_p^{\times}$. We denote by  $$\mathrm{LP}_{\mathrm{c}}(\mathbb{Q}_p^{\times}, X_{\infty}^-)^{\Gamma}$$ the $B$-stable $L$-subspace of $\mathrm{LP}(\mathbb{Q}_p^{\times}, X_{\infty}^-)^{\Gamma}$ consisting of functions $\phi$ with compact support in $\mathbb{Q}_p^{\times}$, i.e. $\phi_i(p^{\pm n}\mathbb{Z}_p^{\times})=0$ for 
 sufficiently large $n$.

 By Corollaire  II.2.9 (ii) of \cite{Co10b}, one has a canonical $B$-equivariant topological isomorphism 
 $$\widetilde{D}/\widetilde{D}^+\isom \Pi(D)$$ (under the assumption that $D$ is absolutely irreducible), by which we identify the both sides with each other. We denote by $\Pi(D)^{\mathrm{alg}}$  the $G$-stable $L$-subspace of $\Pi(D)$ consisting of locally algebraic vectors, which is non zero due to Th\'eor\`eme VI.6.18 of \cite{Co10b}. By Lemme VI.5.3, Corollaire VI.5.9 of \cite{Co10b}, one has $$\Pi(D)^{\mathrm{alg}}\subseteq \bigcup_{n\geqq 0}\frac{1}{\varphi^n(X)^k}\widetilde{D}^+/\widetilde{D}^+,$$ and the map $z\mapsto \phi_z$ defined above induces a $B$-equivariant injection
  \begin{equation}\label{8s}
  \Pi(D)^{\mathrm{alg}}\hookrightarrow \mathrm{LP}(\mathbb{Q}_p^{\times}, X^-_{\infty})^{\Gamma}
  \end{equation}
  whose image contains 
 $\mathrm{LP}_{\mathrm{c}}(\mathbb{Q}_p^{\times}, X^-_{\infty})^{\Gamma}$. Hence, if we write $\Pi(D)_{\mathrm{c}}^{\mathrm{alg}}\subseteq \Pi(D)^{\mathrm{alg}}$ for the inverse image of $\mathrm{LP}_{\mathrm{c}}(\mathbb{Q}_p^{\times}, X^-_{\infty})^{\Gamma}$, then one obtains a $B$-equivariant  isomorphism 
 \begin{equation}\label{8e}
 \Pi(D)_{\mathrm{c}}^{\mathrm{alg}}\isom \mathrm{LP}_{\mathrm{c}}(\mathbb{Q}_p^{\times}, X^-_{\infty})^{\Gamma}.
 \end{equation}
 
We denote by $$\mathrm{LC}_{\mathrm{c}}(\mathbb{Q}_p,L_{\infty})^{\Gamma}$$ the $L$-vector space consisting of locally constant 
functions $\phi:\mathbb{Q}_p^{\times}\rightarrow L_{\infty}$ such that the support of $\phi$ is compact in $\mathbb{Q}_p^{\times}$ and that $\sigma_a(\phi(x))=\phi(ax)$ for any $a\in \mathbb{Z}_p^{\times}$ and $x\in \mathbb{Q}_p^{\times}$. We similarly define an action of $B$ on this space by the rule
$$\left(\begin{pmatrix}a& 0 \\ 0 & a\end{pmatrix}\cdot \phi\right)(x):=\frac{\delta_D(a)}{a^{k-1}}\phi(x), \,\,
 \left(\begin{pmatrix}a& 0 \\ 0 & 1\end{pmatrix}\cdot \phi\right)(x):=\phi(ax),$$
 $$
 \left(\begin{pmatrix}1& b \\ 0 & 1\end{pmatrix}\cdot \phi\right)(x):=\psi_{\zeta}(bx)\cdot\phi(x)$$
 for $a\in \mathbb{Q}_p^*$ and $b\in \mathbb{Q}_p$, where $\psi_{\zeta}:\mathbb{Q}_p 
 \rightarrow L_{\infty}^{\times}$ is the additive character associated to $\zeta$  (i.e. we define 
 $\psi_{\zeta}(a):=\zeta_{p^n}^b\in L_{\infty}^{\times}$
 for $a=\frac{b}{p^n}\in \mathbb{Q}_p$ with $b\in \mathbb{Z}_p$ and $n\geqq 0$). Let $\mathrm{Sym}^{k-1}L^2$ be 
 the $(k-1)$-th symmetric power of the standard representation $L^2$ of $G$, i.e. 
 $\mathrm{Sym}^{k-1}L^2:=\oplus_{i=0}^{k-1}Le_1^{i}e_2^{k-1-i}$ on which $G$ acts by 
 $$\begin{pmatrix}a& b \\ c & d\end{pmatrix}\cdot e_1^ie_2^{k-1-i}:=(ae_1+ce_2)^i
 \cdot (be_1+de_2)^{k-1-i}.$$ 
 Then, one has a canonical (up to the choice of $f_2$) $B$-equivariant isomorphism 
 \begin{multline}\label{9e}
 \mathrm{LC}_{\mathrm{c}}(\mathbb{Q}_p^{\times},L_{\infty})^{\Gamma}
 \otimes_L \mathrm{Sym}^{k-1}L^2\isom \mathrm{LP}_{\mathrm{c}}(\mathbb{Q}_p^{\times}, 
 X^-_{\infty})^{\Gamma}:\\
 \sum_{i=0}^{k-1}\phi_i\otimes e_1^ie_2^{k-1-i}\mapsto 
 \left[x\mapsto \sum_{i=0}^{k-1}(k-1-i)!\cdot\phi_i(x)(xt)^i\otimes \overline{f}_2\right].
 \end{multline}
 
 Therefore, as the composite of isomorphisms (\ref{8e}) and (\ref{9e}), 
 one obtains a $B$-equivariant isomorphism 
 \begin{equation}\label{10e}
 \Pi(D)^{\mathrm{alg}}_{\mathrm{c}}\isom \mathrm{LC}_{\mathrm{c}}(\mathbb{Q}_p^{\times}, L_{\infty})^{\Gamma}
 \otimes_L\mathrm{Sym}^{k-1}L^2.
 \end{equation}
 
 We next consider the quotient $$J(\Pi(D)^{\mathrm{alg}}):=\Pi(D)^{\mathrm{alg}}/\Pi(D)^{\mathrm{alg}}_{\mathrm{c}}.$$ Set $$D_{\mathrm{crab}}:=\bigcup_{n\geqq 0}D_{\mathrm{rig}}[1/t]^{\Gamma_n},$$ which is a finite dimensional $L$-vector space equipped with the action of $\varphi$ and (a smooth action of) $\Gamma$. Then, $D$ is trianguline if and only if $D_{\mathrm{crab}}\not =0$. If $D$ is trianguline, then $\mathrm{dim}_LD_{\mathrm{crab}}=2$ if $D$ is potentially crystalline, and $\mathrm{dim}_LD_{\mathrm{crab}}=1$ if $D$ is potentially semi-stable with $N\not=0$ on $W(D_{\mathrm{rig}})$. For a locally constant homomorphism $\delta:\mathbb{Q}_p^{\times}\rightarrow L^{\times}$, define an action of $\varphi$ and $\Gamma$ on $L(\delta):=L\bold{e}_{\delta}$ by $\varphi(\bold{e}_{\delta}):=\delta(p), \sigma_a(\bold{e}_{\delta}):=\delta(a)\bold{e}_{\delta}$ for $a\in \mathbb{Z}_p^{\times}$. If $D$ is potentially crystalline, then $D_{\mathrm{crab}}=L(\delta_1)\oplus L(\delta_2)$, or $D_{\mathrm{crab}}$ is the nontrivial extension $0\rightarrow 
L(\delta_1)\rightarrow D_{\mathrm{crab}}\rightarrow L(\delta_1)\rightarrow 0$ for some characters
$\delta_1,\delta_2$. If we define a $B$-action on $D_{\mathrm{crab}}$ by 
$\begin{pmatrix}p^na & 0 \\ 0 &1\end{pmatrix}x:=\varphi^n(\sigma_a(x))$ for $n\in \mathbb{Z}, a\in \mathbb{Z}_p^{\times}$, $\begin{pmatrix}b & 0 \\ 0 &b\end{pmatrix}x:=\frac{\delta_D(b)}{b^{k-1}}x$ 
for $b\in \mathbb{Q}_p^{\times}$ and $\begin{pmatrix}1 & c\\ 0 &1\end{pmatrix}x:=x$ for 
any $c\in \mathbb{Q}_p$, then one has a $B$-equivariant isomorphism 
\begin{equation}\label{Jacquet}
D_{\mathrm{crab}}\otimes_L\mathrm{Sym}^{k-1}L^2\isom J(\Pi(D)^{\mathrm{alg}})
\end{equation} by Th\'eor\`eme VI.6.30 of \cite{Co10b}. In particular, $D$ is non-trianguline if and only if one has
$$\Pi(D)^{\mathrm{alg}}_{\mathrm{c}}=\Pi(D)^{\mathrm{alg}}.$$

Using the map $z\mapsto \phi_z$, we define a 
$\Gamma\isom\begin{pmatrix}\mathbb{Z}_p^{\times} & 0 \\ 0 &1\end{pmatrix}$-equivariant map
$$\iota_i^{-}: \Pi(D)^{\mathrm{alg}}\rightarrow X_{\infty}^-:
z\mapsto \phi_z(p^{-i})(=\iota_0(\varphi^{-i}(z)))$$ 
for each $i\in \mathbb{Z}$.

Set $$X_n^+:=\bold{D}^+_{\mathrm{dif},n}(D_{\mathrm{rig}})/t^k\bold{N}^+_{\mathrm{dif},n}(D_{\mathrm{rig}})\isom (L_n[[t]]/t^kL_n[[t]])\otimes_LL\overline{f}_1$$ for each $n\geqq n(D_{\mathrm{rig}})$, and set 
$$X^+\boxtimes \mathbb{Q}_p:=\varprojlim_{n}X_n^+$$ where the transition maps are the maps induced by $$\frac{1}{p}\mathrm{Tr}_{L_{n+1}/L_{n}}:L_{n+1}((t))f_i\rightarrow L_{n}((t))f_i:\sum_{m\in\mathbb{Z}}a_mt^mf_i\mapsto \sum_{m\in\mathbb{Z}}\frac{1}{p}\mathrm{Tr}_{L_{n+1}/L_{n}}(a_m)t^mf_i$$ for $i=1,2$. Set $g_p:=\begin{pmatrix} p & 0\\ 0 &1\end{pmatrix}$.
For each $i\in \mathbb{Z}$ and $n\geqq n(D_{\mathrm{rig}})$, define a $\Gamma\isom \begin{pmatrix}\mathbb{Z}_p^{\times} & 0 \\ 0 &1\end{pmatrix}$-equivariant map
$$\iota^+_{i,n}:D^{\natural}\boxtimes_{\delta_D}\bold{P}^1\rightarrow X^+_n:z\mapsto \iota_n(\mathrm{Res}_{\mathbb{Z}_p}(g_p^{n-i}\cdot z))
\in X_n^+,$$
where $\iota_n:D_{\mathrm{rig}}^{(n)}\hookrightarrow \bold{D}_{\mathrm{dif},n}^+(D_{\mathrm{rig}})$ is the canonical map (remark that we have $D^{\natural}\subseteq D_{\mathrm{rig}}$ by Corollaire II.7.2 of \cite{Co10a}), which also induces a $\Gamma$-equivariant map
$$\iota_i^+:D^{\natural}\boxtimes \bold{P}^1\rightarrow X_{\infty}^+:z\mapsto (\iota^+_{i,n}(z))_{n\geqq n(D_{\mathrm{rig}})}.$$

Let $$\langle-,-\rangle:D^*\times D\rightarrow \mathcal{E}_L(1)$$ be the canonical $\mathcal{E}_L$-bilinear pairing. Since we have $\bold{D}^+_{\mathrm{dif},n}(\mathcal{R}_L(1))=L_n[[t]]\bold{e}_1$, this pairing also induces an $L_n((t))$-bilinear pairing 
$$\langle-,-\rangle:\bold{D}_{\mathrm{dif},n}(D_{\mathrm{rig}}^*)\times \bold{D}_{\mathrm{dif},n}(D_{\mathrm{rig}})\rightarrow 
L_n((t))\bold{e}_1,$$
by which we identify $\bold{D}^+_{\mathrm{dif},n}(D_{\mathrm{rig}}^*)$ with $\mathrm{Hom}_{L_n[[t]]}(\bold{D}^+_{\mathrm{dif},n}(D_{\mathrm{rig}}),L_n[[t]]\bold{e}_1)$. Then, using the canonical isomorphism 
$\bold{D}^+_{\mathrm{dif},n}(\mathrm{det}_{\mathcal{R}_L}D_{\mathrm{rig}})\isom
\mathcal{L}_{L}(D)\otimes_LL_n[[t]]$, we define a canonical isomorphism
\begin{equation}
\bold{D}^+_{\mathrm{dif},n}(D_{\mathrm{rig}})\otimes_{L}\mathcal{L}_{L}(D)^{\vee}\otimes_LL(1)\isom 
\bold{D}^+_{\mathrm{dif},n}(D_{\mathrm{rig}}^*)
: x\otimes z\otimes \bold{e}_1
\mapsto [y\mapsto z(y\wedge x)\bold{e}_1].
\end{equation} 
Using this isomorphism and the fixed basis $\bold{e}_D\in \mathcal{L}_L(D)$, we define a pairing
$$[-,-]_{\mathrm{dif}}:\bold{D}_{\mathrm{dif},n}(D_{\mathrm{rig}})\times \bold{D}_{\mathrm{dif},n}(D_{\mathrm{rig}}):(x,y)\mapsto \mathrm{res}_L(\langle \sigma_{-1}(x\otimes \bold{e}_D^{\vee}\otimes \bold{e}_1), y\rangle),$$
where $\mathrm{res}_L$ is the map
$$\mathrm{res}_L:L_{n}((t))\bold{e}_1\rightarrow L: 
\sum_{m\in\mathbb{Z}}a_mt^m\bold{e}_1\mapsto \frac{1}{[\mathbb{Q}_{p}(\zeta_{p^n}):\mathbb{Q}_p]}\cdot \mathrm{Tr}_{L_n/L}(a_{-1}).$$ We remark that one has
$$[\sigma_a(x),\sigma_a(y)]_{\mathrm{dif}}=
\delta_D(a)[x,y]_{\mathrm{dif}}$$ for any $a\in \mathbb{Z}_p^{\times}$. This pairing also induces a pairing
$$[-,-]_{\mathrm{dif}}:X^+_{n}\times X^-_n\rightarrow L, $$ 
and,  by taking limits, one also obtains a pairing 
$$[-,-]_{\mathrm{dif}}:X^+\boxtimes\mathbb{Q}_p \times 
X^-_{\infty}\rightarrow L.$$ 

Similarly, using the canonical isomorphism $D\otimes_L\mathcal{L}_L(D)^{\vee}\otimes_LL(1)\isom D^*$, we define a pairing
$$[-,-]:D\times D\rightarrow L 
: (x, y)\mapsto \mathrm{res}_0(\langle\sigma_{-1}(x\otimes \bold{e}_D^{\vee}\otimes \bold{e}_1), y\rangle)$$
using the residue map 
$$\mathrm{res}_0:\mathcal{E}_L(1)\rightarrow L:f(X)\bold{e}_1\mapsto \mathrm{Res}_{X=0}\left(\frac{f(X)}{(1+X)}\right).$$
 This pairing also induces a pairing 
\begin{equation*}
[-,-]_{\bold{P}^1}:D\boxtimes_{\delta_D}\bold{P}^1\times 
D\boxtimes_{\delta_D} \bold{P}^1\rightarrow L
:((z_1,z_2), (z'_1,z'_2))\mapsto [z_1,z'_1]+[\varphi\psi(z_2),\varphi\psi(z_2')],
\end{equation*}
which satisfies $$[gx,gy]_{\bold{P}^1}=\delta_D(\mathrm{det}(g))[x,y]_{\bold{P}^1}$$ for any 
$x,y\in D\boxtimes_{\delta_D}\bold{P}^1$ and $g\in G$ by Th\'eor\`eme II.1.13 of \cite{Co10b}. 
By Th\'eor\`eme II.3.1 of \cite{Co10b}, this pairing $[-,-]_{\bold{P}^1}$ satisfies that $[x, y]_{\bold{P}^1}=0$ for any $x,y\in D^{\natural}\boxtimes_{\delta_D} \bold{P}^1$ and induces a $G$-equivariant topological isomorphism 
\begin{equation}\label{DD}
D^{\natural}\boxtimes_{\delta_D} \bold{P}^1\isom \Pi(D)^{\vee}\otimes_L(\delta_D\circ \mathrm{det}):x\mapsto [y\in \Pi(D) \mapsto [x,y]_{\bold{P}^1}],
\end{equation}
where we set $\Pi(D)^{\vee}:=\mathrm{Hom}^{\mathrm{cont}}_L(\Pi(D), L)$.
Moreover, one also has the following proposition.
\begin{prop}\label{Co}$(\mathrm{Proposition}\,\mathrm{VI}.5.12.(\mathrm{ii}) \,\mathrm{of}$ \cite{Co10b}$)$
For any $x\in D^{\natural}\boxtimes_{\delta_D}\bold{P}^1$ and $y\in \Pi(D)^{\mathrm{alg}}_{\mathrm{c}}$, one has
$$[x,y]_{\bold{P}^1}=\sum_{i\in\mathbb{Z}}\delta_D(p^i)[\iota_i^+(x), \iota^-_i(y)]_{\mathrm{dif}}.$$

\end{prop}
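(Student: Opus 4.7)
The plan is to prove the identity by first using equivariance under the matrix $g_p = \begin{pmatrix} p & 0 \\ 0 & 1 \end{pmatrix}$ to reduce to a single index, and then establishing a residue-comparison between $\mathrm{res}_0$ on $\mathcal{E}_L(1)$ and the normalized trace-residue $\mathrm{res}_L$ on $L_n((t))\bold{e}_1$ via the localization maps $\iota_n$.

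First, both sides are $L$-bilinear and continuous in $(x,y)$, with the right-hand sum finite by the compact support of $\phi_y$ in $\mathbb{Q}_p^{\times}$. A direct unraveling of the definitions gives $\iota_i^+(g_p \cdot x) = \iota_{i-1}^+(x)$ (from $g_p^{n-i} \circ g_p = g_p^{n-(i-1)}$) and $\iota_i^-(g_p \cdot y) = \iota_{i-1}^-(y)$ (since $g_p$ acts on $\widetilde D/\widetilde D^+$ as $\varphi$). Substituting $i \mapsto i+1$ in the sum yields $\sum_i \delta_D(p^i)[\iota_i^+(g_p x), \iota_i^-(g_p y)]_{\mathrm{dif}} = \delta_D(p)\sum_i \delta_D(p^i)[\iota_i^+(x), \iota_i^-(y)]_{\mathrm{dif}}$, matching $[g_p x, g_p y]_{\bold{P}^1} = \delta_D(p)[x,y]_{\bold{P}^1}$. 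Using $L$-linearity in $y$, I decompose $y$ according to the support of $\phi_y$ in $\bigsqcup_{i \in \mathbb{Z}} p^{-i}\mathbb{Z}_p^{\times}$ and translate each piece back to $\mathbb{Z}_p^{\times}$ via powers of $g_p^{-1}$, reducing to the case where $\phi_y$ is supported on $\mathbb{Z}_p^{\times}$ so that only the $i = 0$ term survives on the right.

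Second, for such $y$, I build an explicit lift: under the $B$-equivariant injection $\Pi(D)^{\mathrm{alg}} \hookrightarrow \mathrm{LP}(\mathbb{Q}_p^{\times}, X_\infty^-)^{\Gamma}$, the element $y$ corresponds to a function supported at $x = 1$ whose value is $\phi_y(1) = \iota_0^-(y) \in X_\infty^-$. Lifting $\phi_y(1) \in X_n^-$ to an element $\tilde y \in \frac{1}{\varphi^n(X)^k}\widetilde D^+$ for some $n \geq n(D_{\mathrm{rig}})$, and further to a pair $(z_1', z_2') \in D\boxtimes_{\delta_D}\bold{P}^1$ representing $y$ in the quotient $\Pi(D) = D\boxtimes_{\delta_D}\bold{P}^1/D^\natural \boxtimes_{\delta_D}\bold{P}^1$, the formula unfolds as $[x,y]_{\bold{P}^1} = [z_1, z_1'] + [\varphi\psi(z_2), \varphi\psi(z_2')]$. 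Since the pairing vanishes on $D^\natural \boxtimes_{\delta_D}\bold{P}^1$ on the left factor (where $x$ lives), the result does not depend on the choice of lift, and by adjusting $(z_1', z_2')$ within $D^\natural\boxtimes_{\delta_D}\bold{P}^1$ I may arrange that only the first term contributes and that $z_1'$ is a representative of $\tilde y$ in $D$ adapted to $\iota_n$.

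Third, the crux is the identity $[z_1, z_1'] = [\iota_0^+(x), \iota_0^-(y)]_{\mathrm{dif}}$, which is a compatibility between $\mathrm{res}_0$ (coefficient of $X^{-1}$ in $f(X)/(1+X)$ on $\mathcal{E}_L(1)$) and the normalized trace-residue $\mathrm{res}_L$ at $t=0$ on $L_n((t))\bold{e}_1$ under $\iota_n$. The main obstacle is that $X=0$ does \emph{not} map to $t=0$ under $\iota_n$ (since $\iota_n(X)|_{t=0} = \zeta_{p^n} - 1 \neq 0$ for $n \geq 1$), so this is not a naive change of variables: no single embedding $\iota_n$ converts one residue into the other. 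The argument will instead exploit (i) the freedom to modify $z_1'$ modulo $\widetilde D^+$, hence modulo $\bold{D}^+_{\mathrm{dif}, n}(D_{\mathrm{rig}})$, so that the formal Laurent expansion and its image under $\iota_n$ differ only by a well-controlled element, and (ii) the averaging effect of $\frac{1}{[\mathbb{Q}_p(\zeta_{p^n}):\mathbb{Q}_p]}\mathrm{Tr}_{L_n/L}$ in the definition of $\mathrm{res}_L$, which sums over the Galois conjugates of $\zeta_{p^n}$ and reconstructs the coefficient of $X^{-1}$ at level $n$ via the identity $\sum_{a \in (\mathbb{Z}/p^n)^{\times}} \zeta_{p^n}^a$-type formulae. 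Passing to the limit $n \to \infty$, using the $\Gamma$-invariance built into both pairings and the twist by $\sigma_{-1}$ in the definition of $[-,-]$, will yield the desired compatibility. This local-duality identity between the $X$-residue on $\mathcal{E}_L$ and the $t$-residue on the de Rham period sheaf is the technical heart of the proof.
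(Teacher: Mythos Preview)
The paper does not supply a proof of this proposition: it is quoted verbatim as Proposition VI.5.12.(ii) of \cite{Co10b} and used as a black box. So there is no ``paper's own proof'' to compare your attempt against; any comparison must be with Colmez's original argument.

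Your first reduction is correct and is essentially how Colmez proceeds: the $g_p$-equivariance computations $\iota_i^{\pm}(g_p\cdot -)=\iota_{i-1}^{\pm}(-)$ are straightforward from the definitions, and the decomposition of $y$ along the support of $\phi_y$ in $\bigsqcup_i p^{-i}\mathbb{Z}_p^{\times}$ legitimately reduces to the case where only the $i=0$ term survives on the right-hand side. This is fine.

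The gap is in your second and third steps. In step two you assert that by adjusting the lift $(z_1',z_2')$ of $y$ inside $D^{\natural}\boxtimes_{\delta_D}\bold{P}^1$ you can arrange that only $[z_1,z_1']$ contributes and that $z_1'$ is ``adapted to $\iota_n$''. But the passage from the model $\widetilde D/\widetilde D^+$ of $\Pi(D)$ to the model $D\boxtimes_{\delta_D}\bold{P}^1/D^{\natural}\boxtimes_{\delta_D}\bold{P}^1$ is not transparent at the level of explicit lifts; you have not explained how a lift $\tilde y\in\frac{1}{\varphi^n(X)^k}\widetilde D^+$ (which lives in $\widetilde D$, not in $D$) produces an element $z_1'\in D$ with the properties you need. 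In step three you correctly identify the main obstacle (the embedding $\iota_n$ does not send $X=0$ to $t=0$) and correctly intuit that the normalized trace $\frac{1}{[\mathbb{Q}_p(\zeta_{p^n}):\mathbb{Q}_p]}\mathrm{Tr}_{L_n/L}$ is what bridges the two residues, but you stop at ``will yield the desired compatibility'' without carrying out the computation. This residue comparison is precisely the nontrivial content of Colmez's proof (it rests on earlier lemmas in \cite{Co10b}, \S VI.5, relating $\mathrm{res}_0$ on $\mathcal{E}_L$ to residues at the roots of $\varphi^n(X)$ and then to $\mathrm{res}_L$ via the $\iota_n$), and it cannot be waved through. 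As written, your proposal is an accurate outline of the architecture of Colmez's argument, but the proof of the core residue identity is missing rather than merely sketched.
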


Using these preliminaries, we prove two propositions below (Proposition \ref{formula1} and Proposition \ref{formula2}) which explicitly describe the maps 
$\alpha_{(r,\delta)}$ and $\beta_{(r,\delta)}$ introduced in \eqref{alpha} and \eqref{beta} in terms of the pairing $[-,-]_{\bold{P}^1}$. 

Since $D$ is (assumed to be) absolutely irreducible, one has $D^{\natural}=D^{\sharp}$, and then one has a natural $P$-equivariant isomorphism 
$$D^{\natural}\boxtimes_{w_{\delta_D}}\bold{P}^1\isom D^{\natural}\boxtimes\mathbb{Q}_p:
z\mapsto \left(\mathrm{Res}_{\mathbb{Z}_p}(g_p^n\cdot z)\right)_{n\geqq 0}.$$
Hence, the inverse of the natural isomorphism 
$$D^{\psi=1}=(D^{\sharp})^{\psi=1}=(D^{\natural})^{\psi=1}
\isom(D^{\natural}\boxtimes\mathbb{Q}_p)^{g_p=1}:
z\mapsto (z_n)_{n\geqq 0},$$ 
where $z_n:=z$ for any $n$, induces an isomorphism 
$$ (D^{\natural}\boxtimes_{w_{\delta_D}}\bold{P}^1)^{g_p=1}\isom D^{\psi=1}: z\mapsto \mathrm{Res}_{\mathbb{Z}_p}(z).$$ 
For $x\in D^{\psi=1}$, we denote by $\widetilde{x}\in 
(D^{\natural}\boxtimes_{w_{\delta_D}}\bold{P}^1)^{g_p=1}$ 
the element which corresponds to $x$ via the last isomorphism.
 
For any $m\in \mathbb{Z}$ and any $(r,\delta)$ such that $0\leqq r\leqq k-1$ and $\delta:\Gamma\rightarrow L^{\times}$ with finite image, define 
a function 
$$f_{(r,\delta),m}\in \mathrm{LP}_c(\mathbb{Q}_p^{\times}, X^-_{\infty})^{\Gamma}$$ by 
$$f_{(r,\delta),m}(p^n\cdot a):=\begin{cases}\sigma_a(\alpha^{-1}_{\delta}\cdot\Omega t^{k-1-r})\cdot \overline{f}_2 & \text{ if } n=m\\
0 & \text{ if } n\not=m\end{cases}$$for $n\in \mathbb{Z}$ and $a\in \mathbb{Z}_p^{\times}$. Since we have $\frac{1}{\Omega t^k}\bold{e}_D\in \bold{D}_{\mathrm{dR}}(\mathrm{det}_{\mathcal{R}_L}D_{\mathrm{rig}})$, we have $\sigma_a(\Omega)=\frac{\mathrm{det}_{\mathcal{E}_L}D(\sigma_a)}{a^k}$ for any $a\in \mathbb{Z}_p^{\times}$. Hence, we have $$\sigma_a(\alpha^{-1}_{\delta}\cdot\Omega t^{k-1-r})\cdot \overline{f}_2=\frac{\delta(a)\cdot\mathrm{det}D(\sigma_a)}{a^k}\cdot(\alpha^{-1}_{\delta}\Omega)\cdot(at)^{k-1-r}\cdot \overline{f}_2.$$ 

\begin{prop}\label{formula1}
For any $x\in D^{\psi=1}$, $m\in \mathbb{Z}$ and 
$(r,\delta)$ such that $0\leqq r\leqq k-1$ and $\delta:\Gamma\rightarrow L^{\times}$ with finite image, we have 

$$\alpha_{(r,\delta)}(x)=-\delta_D(p^m)\cdot \frac{p-1}{p}\cdot (-1)^r\delta(-1)\delta_D(-1)\cdot[\widetilde{x}, f_{(r,\delta),m}]_{\bold{P}^1}.$$

\end{prop}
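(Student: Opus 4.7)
The plan is to apply Proposition \ref{Co} and then match the resulting $\bold{D}_{\mathrm{dif}}$-pairing to the residue-theoretic description of the dual exponential map $\mathrm{exp}^*$ defining $\alpha_{(r,\delta)}$.

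First, via the isomorphism \eqref{8e}, regard $f_{(r,\delta),m}$ as the element $z \in \Pi(D)^{\mathrm{alg}}_{\mathrm{c}}$ whose Kirillov function $\phi_z$ is $f_{(r,\delta),m}$. Since $\iota^-_i(z) = \phi_z(p^{-i}) = f_{(r,\delta),m}(p^{-i})$, and $f_{(r,\delta),m}$ is supported on $p^m\mathbb{Z}_p^\times$, only the term $i = -m$ survives in Proposition \ref{Co}, giving
\[
[\widetilde{x}, f_{(r,\delta),m}]_{\bold{P}^1} \;=\; \delta_D(p^{-m}) \cdot [\iota^+_{-m}(\widetilde{x}),\, f_{(r,\delta),m}(p^m)]_{\mathrm{dif}}.
\]
Next, since $\widetilde{x}$ is $g_p$-invariant, one computes directly that $\iota^+_{-m}(\widetilde{x}) = (\iota_n(x))_{n \geq n(D_{\mathrm{rig}})} \in X^+ \boxtimes \mathbb{Q}_p$, independent of $m$. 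By the explicit formula for $f_{(r,\delta),m}$, one has $f_{(r,\delta),m}(p^m) = \alpha_\delta^{-1}\Omega\, t^{k-1-r}\otimes \overline{f}_2$. Combining these with the cancellation $\delta_D(p^m)\cdot\delta_D(p^{-m}) = 1$, the proposition reduces to the identity
\[
\alpha_{(r,\delta)}(x) \;=\; -\frac{p-1}{p}\cdot(-1)^r\delta(-1)\delta_D(-1)\cdot \bigl[(\iota_n(x))_n,\; \alpha_\delta^{-1}\Omega\, t^{k-1-r}\overline{f}_2\bigr]_{\mathrm{dif}}.
\]

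The remaining task is to establish this identity. Unwinding $[-,-]_{\mathrm{dif}}$ at a sufficiently large level $n$ (depending on the conductor of $\delta$), the right-hand pairing equals $\mathrm{res}_L\langle \sigma_{-1}(\iota_n(x)\otimes \bold{e}_D^\vee\otimes \bold{e}_1),\, \alpha_\delta^{-1}\Omega\, t^{k-1-r}\overline{f}_2\rangle$. Under the canonical identification of $\bold{D}^+_{\mathrm{dif},n}(D_{\mathrm{rig}}^*(r)(\delta^{-1}))$ coming from $D\otimes_L\mathcal{L}_L(D)^\vee\otimes_L L(1)\cong D^*$ and the fixed basis $\bold{e}_D$, the test element $\alpha_\delta^{-1}\Omega\, t^{k-1-r}\overline{f}_2$ is designed so that the residue pairs it with $\iota_n(x)$ in precisely the way that extracts the $f_1\otimes t^r\bold{e}_{-r}\otimes \bold{f}_\delta$-coefficient of $\mathrm{exp}^*(\iota_{\chi^{-r}\delta}(x))$, which by definition is $\alpha_{(r,\delta)}(x)$. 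The factor $\tfrac{p-1}{p}$ then comes from the normalizing factor $\tfrac{p-1}{p}\log(\chi(\gamma))\,p_\Delta$ in the definition of $\iota_{\chi^{-r}\delta}$ after being absorbed into the trace-normalization of $\mathrm{res}_L$, while the signs $(-1)^r\delta(-1)\delta_D(-1)$ arise from evaluating $\sigma_{-1}$ on $\bold{e}_{-r}\otimes \bold{f}_\delta\otimes \bold{e}_D^\vee\otimes \bold{e}_1$ in the two identifications.

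The main obstacle is this last step: expressing $\mathrm{exp}^*(\iota_{\chi^{-r}\delta}(x))$ as a concrete residue in $\bold{D}_{\mathrm{dif}}$ and tracking all normalizations (in particular the trace factor $\tfrac{1}{[\mathbb{Q}_p(\zeta_{p^n}):\mathbb{Q}_p]}\mathrm{Tr}_{L_n/L}$ appearing in $\mathrm{res}_L$ versus the $\tfrac{p-1}{p}\log(\chi(\gamma))$ appearing in $\iota_{\chi^{-r}\delta}$), together with the multiple $\sigma_{-1}$-twists. This is essentially a $(\varphi,\Gamma)$-module version of Kato's explicit reciprocity law applied in the Hodge-theoretic "critical range" $0 \leq r \leq k-1$, where the assumption on Hodge-Tate weights guarantees that the relevant residues land in the correct graded piece of $\bold{D}_{\mathrm{dR}}$.
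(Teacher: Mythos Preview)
Your overall strategy is exactly that of the paper: apply Proposition~\ref{Co} to collapse the sum to the single term $i=-m$, use $g_p$-invariance of $\widetilde{x}$ to identify $\iota^+_{-m}(\widetilde{x})=(\iota_n(x))_n$, and then reduce everything to the identity
\[
[\iota_n(x),\,\alpha_\delta^{-1}\Omega\, t^{k-1-r}f_2]_{\mathrm{dif}} \;=\; -\frac{p}{p-1}\cdot(-1)^r\delta(-1)\delta_D(-1)\cdot\alpha_{(r,\delta)}(x).
\]
So far this is correct and matches the paper line for line.

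The gap is precisely where you yourself flag it: you describe the remaining identity as ``essentially a $(\varphi,\Gamma)$-module version of Kato's explicit reciprocity law'' and leave it at a heuristic level. But this step is not a black box one can invoke; it is the actual content of the computation, and the paper carries it out concretely using two ingredients you have not supplied. First, Proposition~2.16 of \cite{Na14a} gives the commutative diagram identifying $\mathrm{exp}^*$ with the natural map $\mathrm{H}^1_{\varphi,\gamma}(D_{\mathrm{rig}}(-r)(\delta))\to \mathrm{H}^1_\gamma(\bold{D}_{\mathrm{dif},\infty}(D_{\mathrm{rig}}(-r)(\delta)))$ sending $[(x,y)]$ to $[\iota_n(x)]$; this is what makes the equality $[\iota_n(x_{-r,\delta})]=[\log(\chi(\gamma))\,\alpha_{(r,\delta)}(x)\,f_1\otimes t^r\bold{e}_{-r}\otimes\bold{f}_\delta]$ in $\mathrm{H}^1_\gamma$ true. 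Second, and this is the point your outline misses entirely, one needs to know that the pairing $[-,\alpha_\delta^{-1}\Omega\,t^{k-1-r}f_2]_{\mathrm{dif}}$ \emph{factors through} $\mathrm{H}^1_\gamma$: that it is unchanged when $y$ is replaced by $p_\Delta(y)$ and vanishes on elements of the form $(\gamma-1)y$. The paper isolates this as a separate lemma (Lemma~\ref{3.9}), proved by a direct computation using $\sigma_a(\alpha_\delta^{-1}\Omega\,t^{k-1-r}\otimes\bold{e}_{-r}\otimes\bold{e}_\delta)=\delta_{D(-r)(\delta)}(a)\cdot(\text{same})$. Without this lemma you cannot pass from the honest element $\iota_n(x)$ to its cohomology class, and hence cannot extract $\alpha_{(r,\delta)}(x)$. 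Once Lemma~\ref{3.9} is in hand, the normalization $\iota_n(x_{-r,\delta})=\tfrac{p-1}{p}\log(\chi(\gamma))\,p_\Delta(\iota_n(x)\otimes\bold{e}_{-r}\otimes\bold{e}_\delta)$ produces the factor $\tfrac{p}{p-1}$ cleanly (the $\log(\chi(\gamma))$ cancels against the one in $\mathrm{H}^1_\gamma$), and a short residue computation with $f_1\wedge f_2=\tfrac{1}{\Omega t^k}\bold{e}_D$ gives the sign $-(-1)^r\delta(-1)\delta_D(-1)$.
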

\begin{proof}
Since $\tilde{x}$ is fixed by $g_p$, for any $i\in \mathbb{Z}$ and $n\geqq n(D_{\mathrm{rig}})$, we have 
$$\iota^+_{i,n}(\widetilde{x})=\iota_n\left(\mathrm{Res}_{\mathbb{Z}_p}(g_p^{n-i}\cdot\widetilde{x})\right)=\iota_n(\mathrm{Res}_{\mathbb{Z}_p}(\widetilde{x}))=\iota_n(x).$$ Hence, we have $$\iota^+_{i}(\widetilde{x})=(\iota^+_{i,n}(\tilde{x}))_{n\geqq n(D_{\mathrm{rig}})}=(\iota_n(x))_{n\geqq n(D_{\mathrm{rig}})}\in X^+\boxtimes\mathbb{Q}_p.$$ Then by Proposition \ref{Co} we have
\begin{multline*}
[\widetilde{x}, f_{(r,\delta),m}]_{\bold{P}^1}=\sum_{i\in \mathbb{Z}}
\delta_D(p^i)[(\iota_n(x))_{n\geqq n(D_{\mathrm{rig}})}, f_{(r,\delta),m}(p^{-i})]_{\mathrm{dif}}\\
=\delta_D(p^{-m})[(\iota_n(x))_{n\geqq n(D_{\mathrm{rig}})}, \alpha^{-1}_{\delta}\cdot\Omega t^{k-1-r}\cdot f_2]_{\mathrm{dif}}=\delta_D(p^{-m})[\iota_n(x), \alpha^{-1}_{\delta}\cdot\Omega t^{k-1-r}\cdot f_2]_{\mathrm{dif}}
\end{multline*}
for any $n\geqq n(D_{\mathrm{rig}})$.

For an $L[\Gamma]$-module $N$, we set $\mathrm{H}^1_{\gamma}(N):=N^{\Delta}/(\gamma-1)N^{\Delta}$ using the fixed $\Delta\subseteq \Gamma_{\mathrm{tor}}$ and 
$\gamma\in \Gamma$ in \S 2.2.

By Proposition 2.16 of \cite{Na14a}, one has a commutative  diagram
\begin{equation*}
 \begin{CD}
  \mathrm{H}^1_{\varphi,\gamma}(D_{\mathrm{rig}}(-r)(\delta))@>>>
  \mathrm{H}^1_{\gamma}(\bold{D}_{\mathrm{dif},\infty}(D_{\mathrm{rig}}(-r)(\delta)))\\
 @ V \mathrm{id} VV @AA x\mapsto \mathrm{log}(\chi(\gamma))[x] A\\
  \mathrm{H}^1_{\varphi,\gamma}(D_{\mathrm{rig}}(-r)(\delta))@> \mathrm{exp}^*>>\bold{D}_{\mathrm{dR}}(D_{\mathrm{rig}}(-r)(\delta)),
     \end{CD}
 \end{equation*}
 where the upper horizontal arrow is the map defined by $[(x,y)]\mapsto [\iota_n(x)]$ for any sufficiently large $n\geqq n(D_{\mathrm{rig}})$ (which is independent of $n$). We remark that the right vertical arrow is isomorphism since $D_{\mathrm{rig}}(-r)(\delta)$ is de Rham. We have 
$$[\iota_n(x_{-r,\delta})]=[\mathrm{log}(\chi(\gamma))\cdot \alpha_{(r,\delta)}(x)\cdot f_1\otimes 
t^r\bold{e}_{-r}\otimes\bold{f}_{\delta}]\in \mathrm{H}^1_{\gamma}(\bold{D}_{\mathrm{dif},\infty}(D_{\mathrm{rig}}(-r)(\delta)))$$
for any $n\geqq n(D_{\mathrm{rig}})$ by definition of $\alpha_{(r,\delta)}$.
Since we have 
$$\iota_n(x_{-r,\delta})=\frac{p-1}{p}\cdot\mathrm{log}(\chi(\gamma))\cdot p_{\Delta}(\iota_n(x)\otimes \bold{e}_{-r}\otimes\bold{e}_{\delta}),$$
we have 
\begin{multline*}
[\iota_n(x), \alpha^{-1}_{\delta}\cdot\Omega\cdot t^{k-1-r}\cdot f_2]_{\mathrm{dif}}
=\frac{p}{p-1}\cdot
[\alpha_{\delta}\cdot\alpha_{(r,\delta)}(x)f_1t^r, \alpha_{\delta}^{-1}\Omega t^{k-1-r}f_2]_{\mathrm{dif}}\\
=\frac{p}{p-1}\cdot\mathrm{Res}_L(\langle \sigma_{-1}(\alpha_{\delta}\alpha_{(r,\delta)}(x)f_1t^r\otimes\bold{e}_D^{\vee}\otimes
\bold{e}_1), \alpha_{\delta}^{-1}\Omega t^{k-1-r}f_2\rangle)\\
=\frac{p}{p-1}\cdot (-1)^r\delta(-1)\delta_D(-1)\cdot \alpha_{(r,\delta)}(x)
\mathrm{Res}_L(\langle f_1t^r\otimes\bold{e}_D^{\vee}\otimes
\bold{e}_1, \Omega t^{k-1-r}f_2\rangle)\\
=-\frac{p}{p-1}\cdot (-1)^r\delta(-1)\delta_D(-1)\cdot \alpha_{(r,\delta)}(x),
\end{multline*}
where the first equality follows form Lemma \ref{3.9} below.
Hence, we obtain the equality 
$$\alpha_{(r,\delta)}(x)=-\delta_D(p^m)\cdot \frac{p-1}{p}\cdot (-1)^r\delta(-1)\delta_D(-1)\cdot[\tilde{x}, f_{(r,\delta),m}]_{\bold{P}^1}.$$

\end{proof}

\begin{lemma}\label{3.9}
For each pair $(r,\delta)$ as above, the following hold.
\begin{itemize}

\item[(i)]For $y\otimes \bold{e}_{-r}\otimes\bold{e}_{\delta}\in \bold{D}_{\mathrm{dif},n}(D_{\mathrm{rig}}(-r)(\delta))=\bold{D}_{\mathrm{dif},n}(D_{\mathrm{rig}})\otimes_LL(-r)(\delta)$, 
set $$p_{\Delta}(y\otimes\bold{e}_{-r}\otimes\bold{e}_{\delta})=:y_1\otimes\bold{e}_{-r}\otimes\bold{e}_{\delta}$$
$($recall $p_{\Delta}:=\frac{1}{|\Delta|}\sum_{\sigma\in \Delta}[\sigma]\in L[\Delta]$$)$. Then we have 
$$[y,\alpha^{-1}_{\delta}\cdot\Omega t^{k-1-r}f_2]_{\mathrm{dif}}=[y_1,
\alpha^{-1}_{\delta}\cdot\Omega t^{k-1-r}f_2]_{\mathrm{dif}}.$$
\item[(ii)]For $y\otimes \bold{e}_{-r}\otimes\bold{e}_{\delta}\in \bold{D}_{\mathrm{dif},n}(D_{\mathrm{rig}}(-r)(\delta))$, 
set $$(\gamma-1)(y\otimes \bold{e}_{-r}\otimes\bold{e}_{\delta})=:y_2\otimes\bold{e}_{-r}\otimes\bold{e}_{\delta}.$$ Then, we have 
$$[y_2, \alpha^{-1}_{\delta}\cdot\Omega t^{k-1-r}\cdot f_2]_{\mathrm{dif}}=0.$$

\end{itemize}
 \end{lemma}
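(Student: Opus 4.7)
The plan is to derive both (i) and (ii) from an explicit computation of how the element $z := \alpha_\delta^{-1}\cdot\Omega\, t^{k-1-r} f_2 \in \bold{D}_{\mathrm{dif},n}(D_{\mathrm{rig}})$ transforms under $\Gamma$, combined with the transformation rule $[\sigma_a(y),\sigma_a(z)]_{\mathrm{dif}} = \delta_D(a)[y,z]_{\mathrm{dif}}$ recorded just after the definition of $[-,-]_{\mathrm{dif}}$. Since $\delta$ takes values in $L^{\times}$ (and $\sigma_a$ acts trivially on $L$), the scalar factors produced by this computation are $\sigma_a$-invariant, which is what makes the cancellation work.

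First I would compute $\sigma_a(z)$ for $a \in \mathbb{Z}_p^{\times}$. The ingredients transform as follows: $\sigma_a(\alpha_\delta^{-1}) = \delta(\sigma_a)\alpha_\delta^{-1}$, since $\bold{f}_\delta = \alpha_\delta\bold{e}_\delta$ is $\Gamma$-invariant; $\sigma_a(t^{k-1-r}) = a^{k-1-r} t^{k-1-r}$ because $\sigma_a(t) = \chi(\sigma_a)t = at$; and $\sigma_a(\Omega) = a^{1-k}\,\delta_D(\sigma_a)\cdot\Omega$, which follows from the $\Gamma$-invariance of $\tfrac{1}{\Omega\, t^k}\bold{e}_D \in \bold{D}_{\mathrm{dR}}(\det_{\mathcal{R}_L}D_{\mathrm{rig}})$ together with $\delta_D = \chi^{-1}\cdot\det D$. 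Multiplying these contributions and using $\sigma_a(f_2) = f_2$ yields the key formula
\[\sigma_a(z) = \delta(\sigma_a)\cdot\delta_D(\sigma_a)\cdot\chi(\sigma_a)^{-r}\cdot z,\]
and hence $\sigma_a^{-1}(z) = \delta(\sigma_a)^{-1}\delta_D(\sigma_a)^{-1}\chi(\sigma_a)^{r}\cdot z$.

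Next I would insert this into the transformation rule for $[-,-]_{\mathrm{dif}}$, yielding the master identity $[\sigma_a(y),z]_{\mathrm{dif}} = \delta(\sigma_a)^{-1}\chi(\sigma_a)^{r}\,[y,z]_{\mathrm{dif}}$. The formulas for the $\Gamma$-action on $D_{\mathrm{rig}}(-r)(\delta)$ immediately give $y_1 = \tfrac{1}{|\Delta|}\sum_{\sigma\in\Delta}\chi(\sigma)^{-r}\delta(\sigma)\sigma(y)$ and $y_2 = \chi(\gamma)^{-r}\delta(\gamma)\gamma(y) - y$. For (i), each summand in
\[[y_1,z]_{\mathrm{dif}} = \tfrac{1}{|\Delta|}\sum_{\sigma\in\Delta}\chi(\sigma)^{-r}\delta(\sigma)\,[\sigma(y),z]_{\mathrm{dif}}\]
collapses to $[y,z]_{\mathrm{dif}}$ by the master identity, giving $[y_1,z]_{\mathrm{dif}} = [y,z]_{\mathrm{dif}}$. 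For (ii), the same substitution yields $[\chi(\gamma)^{-r}\delta(\gamma)\gamma(y),z]_{\mathrm{dif}} = [y,z]_{\mathrm{dif}}$, so $[y_2,z]_{\mathrm{dif}} = 0$.

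I do not anticipate any serious obstacle here: the content of the lemma is essentially that the character by which $z$ transforms under $\Gamma$ matches, up to the factor $\delta_D$ built into the pairing, the character by which the twist basis $\bold{e}_{-r}\otimes\bold{e}_\delta$ transforms. The only step demanding genuine care is the computation of $\sigma_a(\Omega)$ and the consistent identification of $\delta_D$ as a character on $\Gamma$ via $\mathrm{rec}_{\mathbb{Q}_p}$; once this sign/convention bookkeeping is pinned down, both claims reduce to the cancellation of a single character.
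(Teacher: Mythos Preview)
Your proposal is correct and follows essentially the same strategy as the paper: both arguments rest on the fact that $z=\alpha_\delta^{-1}\Omega t^{k-1-r}f_2$ (equivalently $z\otimes\bold{e}_{-r}\otimes\bold{e}_\delta$) is a $\Gamma$-eigenvector whose character exactly cancels the twist, combined with the rule $[\sigma_a(x),\sigma_a(y)]_{\mathrm{dif}}=\delta_D(a)[x,y]_{\mathrm{dif}}$. The only cosmetic difference is that the paper passes to the pairing on the twisted module $\bold{D}_{\mathrm{dif},n}(D_{\mathrm{rig}}(-r)(\delta))$ and transfers $p_\Delta$ (resp.\ $\gamma-1$) to the second argument, while you stay in $\bold{D}_{\mathrm{dif},n}(D_{\mathrm{rig}})$ and track the eigencharacter of $z$ directly.
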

\begin{proof}
We first remark that, for any $x,y\in \bold{D}_{\mathrm{dif},n}(D_{\mathrm{rig}})$ and 
$a\in \mathbb{Z}_p^{\times}$, we have 
$$[x\otimes\bold{e}_{-r}\otimes\bold{e}_{\delta}, y\otimes\bold{e}_{-r}\otimes\bold{e}_{\delta}]_{\mathrm{dif}}=(-1)^r\cdot \delta(-1)\cdot[x,y]_{\mathrm{dif}}$$ and 
$$[\sigma_a(x), y]_{\mathrm{dif}}=\delta_D(a)\cdot[x,\sigma_a^{-1}(y)]_{\mathrm{dif}}.$$ 

Using these, for $y$ and $y_1$ as in (i), we have 
\begin{multline*}
[y_1, \alpha_{\delta}^{-1}\cdot\Omega t^{k-1-r} f_2]_{\mathrm{dif}}=(-1)^r\cdot \delta(-1)\cdot
[y_1\otimes \bold{e}_{-r}\otimes\bold{e}_{\delta}, \alpha_{\delta}^{-1}\cdot\Omega t^{k-1-r} f_2\otimes \bold{e}_{-r}\otimes\bold{e}_{\delta}]_{\mathrm{dif}}\\
=(-1)^r\cdot\delta(-1)\cdot[p_{\Delta}(y\otimes\bold{e}_{-r}\otimes\bold{e}_{\delta}), \alpha_{\delta}^{-1}\cdot\Omega t^{k-1-r} f_2\otimes \bold{e}_r]_{\mathrm{dif}}\\
=(-1)^r\cdot\delta(-1)\cdot[y\otimes\bold{e}_{-r}\otimes\bold{e}_{\delta}, p_{\Delta}^{\delta_D(-r)(\delta)}(\alpha_{\delta}^{-1}\cdot\Omega t^{k-1-r} f_2\otimes \bold{e}_{-r}\otimes\bold{e}_{\delta})]_{\mathrm{dif}}\\
=(-1)^r\delta(-1)[y\otimes\bold{e}_{-r}\otimes\bold{e}_{\delta}, \alpha_{\delta}^{-1}\cdot\Omega t^{k-1-r} f_2\otimes \bold{e}_{-r}\otimes\bold{e}_{\delta}]_{\mathrm{dif}}
=[y, \alpha_{\delta}^{-1}\Omega t^{k-1-r} f_2]_{\mathrm{dif}},
\end{multline*}
where we set $p_{\Delta}^{\delta_{D(-r)(\delta)}}:=\frac{1}{|\Delta|}\sum_{\sigma\in \Delta}\delta_{D(-r)(\delta)}(\sigma)[\sigma]^{-1}\in L[\Delta]$, and the fourth equality follows from the fact that 
$\sigma_a(\alpha_{\delta}^{-1}\cdot\Omega t^{k-1-r}\otimes\bold{e}_{-r}\otimes\bold{e}_{\delta})=\delta_{D(-r)(\delta)}(a)\cdot\alpha_{\delta}^{-1}\cdot\Omega t^{k-1-r}\otimes\bold{e}_{-r}\otimes\bold{e}_{\delta}$ for any $a\in \mathbb{Z}_p^{\times}$. 

Similarly, for $y, y_2$ as in (ii), we have
\begin{multline*}
[y_2, \alpha^{-1}_{\delta}\Omega t^{k-1-r} f_2]_{\mathrm{dif}}=(-1)^r\delta(-1)[y_2\otimes \bold{e}_{-r}\otimes\bold{e}_{\delta}, \alpha^{-1}_{\delta}\Omega t^{k-1-r} f_2\otimes \bold{e}_{-r}\otimes\bold{e}_{\delta}]_{\mathrm{dif}}\\
=(-1)^r\delta(-1)[(\gamma-1)(y\otimes\bold{e}_{-r}\otimes\bold{e}_{\delta}), \alpha^{-1}_{\delta}\Omega t^{k-1-r} f_2\otimes\bold{e}_{-r}\otimes\bold{e}_{\delta}]_{\mathrm{dif}}\\
=(-1)^r\delta(-1)[y\otimes\bold{e}_{-r}\otimes\bold{e}_{\delta}, (\delta_{D(-r)(\delta)}(\chi(\gamma))\gamma^{-1}-1)(\alpha^{-1}_{\delta}\Omega t^{k-1-r}f_2\otimes \bold{e}_{-r}\otimes\bold{e}_{\delta})]_{\mathrm{dif}}=0.
\end{multline*}

\end{proof}

We next consider the map $\beta_{(r,\delta)}:D^{\delta_D(p)\psi=1}\rightarrow L$. We first recall that,
under the canonical inclusions $1-\varphi:D^{\psi=1}\hookrightarrow D^{\psi=0}$ and 
$1-\delta_D(p)^{-1}\varphi:D^{\delta_D(p)\psi=1}\hookrightarrow D^{\psi=0}$, one has 
$w_{\delta_D}(D^{\psi=1})=D^{\delta_D(p)\psi=1}$ by Proposition V.2.1 of \cite{Co10b}.

Similarly for the case $D^{\psi=1}$, one has the following isomorphism 
$$(D^{\natural}\boxtimes_{w_{\delta_D}}\bold{P}^1)^{g_p
=\delta_D(p)}
\isom D^{\delta_D(p)\psi=1}:z\mapsto \mathrm{Res}_{\mathbb{Z}_p}(z),$$ 
which induces the commutative diagram 
(set $g_p=\begin{pmatrix}p & 0\\ 0 &1\end{pmatrix}, w=\begin{pmatrix} 0 &1 \\ 1 & 0\end{pmatrix}\in G$)

\begin{equation*}
 \begin{CD}
  (D^{\natural}\boxtimes_{w_{\delta_D}}\bold{P}^1)^{g_p=1}
  @> z\mapsto \mathrm{Res}_{\mathbb{Z}_p}(z)>> D^{\psi=1}
  \\
 @ V z\mapsto w\cdot z VV @VV z\mapsto w_{\delta_D}(z)V\\
   (D^{\natural}\boxtimes_{w_{\delta_D}}\bold{P}^1)^{g_p=\delta_D(p)}
@> z\mapsto \mathrm{Res}_{\mathbb{Z}_p}(z) >> D^{\delta_D(p)\psi=1}
     \end{CD}
 \end{equation*}
 in which all the arrows are isomorphism. For $x\in D^{\delta_D(p)\psi=1}$, we denote by $\widetilde{x}\in (D^{\natural}\boxtimes_{w_{\delta_D}}\bold{P}^1)^{g_p=\delta_D(p)}$ the element such that $\mathrm{Res}_{\mathbb{Z}_p}(\widetilde{x})=x$.

For any $m\in \mathbb{Z}$ and any pair $(r,\delta)$ as above, define a function 
$$h_{(r,\delta),m}\in \mathrm{LP}_{\mathrm{c}}(\mathbb{Q}_p^{\times}, 
X^{-}_{\infty})^{\Gamma}$$ by 
$$h_{(r,\delta),m}(p^m\cdot a):=\begin{cases}\sigma_a(\alpha_{\delta}\cdot t^r \overline{f}_2)=\delta(a)^{-1}\alpha_{\delta}\cdot(at)^r\overline{f}_2 & \text{ if } n=m\\
0 & \text{ if } n\not=0\end{cases}$$
and  for $n\in \mathbb{Z}$ and $a\in \mathbb{Z}_p^{\times}$.

\begin{prop}\label{formula2}
For any $x\in D^{\delta_D(p)\psi=1}$, $m\in \mathbb{Z}$ and $(r,\delta)$ such that 
$0\leqq r\leqq k-1$ and $\delta:\Gamma\rightarrow L^{\times}$ with finite image, we have
$$\beta_{(r,\delta)}(x)=-\frac{p-1}{p}\cdot[\widetilde{x}, h_{(r,\delta),m}]_{\bold{P}^1}.$$

\end{prop}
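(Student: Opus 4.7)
My plan is to closely mirror the argument given for Proposition \ref{formula1}, with two key modifications: $\widetilde{x}$ is now a $g_p$-eigenvector with eigenvalue $\delta_D(p)$ rather than $1$, and the coefficient to be extracted is $\beta_{(r,\delta)}(x)$, which involves the passage to $\mathrm{H}^1_{\varphi,\gamma}(D^*_{\mathrm{rig}}(r)(\delta^{-1}))$ via the map $x\mapsto\sigma_{-1}(x\otimes\bold{e}_D^\vee\otimes\bold{e}_1)$. First I would apply Colmez's formula (Proposition \ref{Co}) to expand
\[
[\widetilde{x}, h_{(r,\delta),m}]_{\bold{P}^1} = \sum_{i\in\mathbb{Z}}\delta_D(p^i)\bigl[\iota^+_i(\widetilde{x}), \iota^-_i(h_{(r,\delta),m})\bigr]_{\mathrm{dif}}.
\]
Since $h_{(r,\delta),m}$ is supported on $p^m\mathbb{Z}_p^\times$ with $h_{(r,\delta),m}(p^m)=\alpha_\delta t^r\overline{f}_2$, only the index $i=-m$ contributes. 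Combining the $P^+$-equivariance of $\mathrm{Res}_{\mathbb{Z}_p}$ with the eigenvalue relation $g_p\widetilde{x}=\delta_D(p)\widetilde{x}$ yields $\iota^+_{-m,n}(\widetilde{x})=\iota_n(\mathrm{Res}_{\mathbb{Z}_p}(g_p^{n+m}\widetilde{x}))=\delta_D(p^{n+m})\iota_n(x)$ for sufficiently large $n$; the $\delta_D(p^{n+m})$ factor is to be absorbed into the coherence of the pairing $[-,-]_{\mathrm{dif}}\colon X^+\boxtimes\mathbb{Q}_p\times X^-_\infty\to L$ under the transition maps of the projective system $X^+\boxtimes\mathbb{Q}_p$, so that the $n$-independent value is
\[
[\widetilde{x}, h_{(r,\delta),m}]_{\bold{P}^1} = \bigl[\iota_n(x),\alpha_\delta t^r\overline{f}_2\bigr]_{\mathrm{dif}}.
\]

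Second, following the diagrammatic step used in the proof of Proposition \ref{formula1} but now applied to $D^*_{\mathrm{rig}}(r)(\delta^{-1})$, I would use Proposition 2.16 of \cite{Na14a} to identify the class of $\iota_n(\sigma_{-1}(x\otimes\bold{e}_D^\vee\otimes\bold{e}_1)_{r,\delta^{-1}})$ in $\mathrm{H}^1_\gamma(\bold{D}_{\mathrm{dif},\infty}(D^*_{\mathrm{rig}}(r)(\delta^{-1})))$ with $\log(\chi(\gamma))\beta_{(r,\delta)}(x)\cdot f_1\otimes \Omega t^k\bold{e}_D^\vee\otimes\tfrac{1}{t^{r+1}}\bold{e}_{r+1}\otimes\bold{f}_{\delta^{-1}}$ modulo $(\gamma-1)$. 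Third, I would unfold the dif pairing as $\mathrm{res}_L(\langle\sigma_{-1}(\iota_n(x)\otimes\bold{e}_D^\vee\otimes\bold{e}_1),\alpha_\delta t^r\overline{f}_2\rangle)$ and invoke the analog of Lemma \ref{3.9} for $D^*_{\mathrm{rig}}(r)(\delta^{-1})$, proved by the same identities $[\sigma_a u,v]_{\mathrm{dif}}=\delta_D(a)[u,\sigma_a^{-1}v]_{\mathrm{dif}}$ together with the tensor-basis switch formula, to both absorb the projector $p_\Delta$ and kill the $(\gamma-1)$-contribution appearing in the cohomology representative. Using the normalization $\alpha_\delta\alpha_{\delta^{-1}}=1$ and the explicit residue evaluation in the one-dimensional $\bold{D}_{\mathrm{dR}}$-line, this reduces to $[\iota_n(x),\alpha_\delta t^r\overline{f}_2]_{\mathrm{dif}}=-\tfrac{p}{p-1}\beta_{(r,\delta)}(x)$, which combined with Step 1 yields the stated formula.

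The main obstacle is the bookkeeping in Step 1, where the $\delta_D(p^{n+m})$ factor arising from the $g_p$-eigenvalue of $\widetilde{x}$ must cancel cleanly against the $n$-dependence of $\iota_n(x)$ via the coherence of the pairing $[-,-]_{\mathrm{dif}}$ along the projective system $X^+\boxtimes\mathbb{Q}_p$; this requires a careful verification against the transition map $\tfrac{1}{p}\mathrm{Tr}_{L_{n+1}/L_n}$. A secondary but more routine difficulty is tracking the sign and central-character factors in the residue computation so that the extra factor $(-1)^r\delta(-1)\delta_D(-1)$ (visible in Proposition \ref{formula1}) is absent here; I expect this cancellation to come from the dualization built into $\sigma_{-1}(-\otimes\bold{e}_D^\vee\otimes\bold{e}_1)$ together with $\alpha_\delta\alpha_{\delta^{-1}}=1$, so that the final coefficient is the clean $-\tfrac{p-1}{p}$.
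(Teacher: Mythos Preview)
Your approach mirrors the paper's proof: apply Proposition \ref{Co}, exploit the $g_p$-eigenvalue $\delta_D(p)$ to compute $\iota^+_i(\widetilde{x})$, then relate the resulting $[-,-]_{\mathrm{dif}}$-value to $\beta_{(r,\delta)}(x)$ via Proposition~2.16 of \cite{Na14a} and the analogue of Lemma~\ref{3.9} (which is Lemma~\ref{3.16} in the paper). Your sign expectations are also correct: the $(-1)^r\delta(-1)$ factors arising from commuting $\sigma_{-1}$ past $\bold{e}_r\otimes\bold{e}_{\delta^{-1}}$ cancel against those in the residue computation, leaving the clean $-\tfrac{p-1}{p}$.

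There is, however, a concrete error in your Step~1 bookkeeping. Combining the $\delta_D(p^{-m})$ from Proposition~\ref{Co} with $\iota^+_{-m,n}(\widetilde{x})=\delta_D(p)^{n+m}\iota_n(x)$ yields
\[
[\widetilde{x},h_{(r,\delta),m}]_{\bold{P}^1}=\delta_D(p)^{n}\bigl[\iota_n(x),\alpha_\delta t^r\overline{f}_2\bigr]_{\mathrm{dif}},
\]
and this $\delta_D(p)^n$ does \emph{not} disappear via coherence of the projective system: since $\psi(x)=\delta_D(p)^{-1}x$, it is $(\delta_D(p)^{n+m}\iota_n(x))_n$, not $(\iota_n(x))_n$, that is compatible under the transition maps $\tfrac{1}{p}\mathrm{Tr}_{L_{n+1}/L_n}$, so the level-$n$ pairing genuinely carries the factor $\delta_D(p)^n$. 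The cancellation you seek occurs in Step~4 instead: when you compute $\iota_n\bigl((\sigma_{-1}(x\otimes\bold{e}_D^\vee\otimes\bold{e}_1))_{r,\delta^{-1}}\bigr)$ in terms of $\iota_n(x)$, a compensating $\delta_D(p)^n$ appears, so that in fact
\[
[\iota_n(x),\alpha_\delta t^r\overline{f}_2]_{\mathrm{dif}}=-\delta_D(p)^{-n}\cdot\tfrac{p}{p-1}\,\beta_{(r,\delta)}(x).
\]
With this correction the two steps combine to give the proposition.
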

\begin{proof}
By Proposition \ref{Co}, we have 
$$[\widetilde{x}, h_{(r,\delta),m}]_{\bold{P}^1}=\sum_{i\in \mathbb{Z}}\delta_D(p^i)[\iota_i^+(\widetilde{x}), h_{(r,\delta),m}(p^i)]_{\mathrm{dif}}=\delta_D(p)^m[\iota^+_m(\widetilde{x}), \alpha_{\delta}t^rf_2]_{\mathrm{dif}}.$$
Since we have 
$$\iota^+_{m,n}=\iota_n\left(\mathrm{Res}_{\mathbb{Z}_p}(g_p^{n-m}\cdot \widetilde{x})\right)=\delta_D(p)^{n-m} \cdot\iota_n(x)$$
for any $n\geqq n(D_{\mathrm{rig}})$, we have 
$$[\widetilde{x}, h_{(r,\delta),m}]_{\bold{P}^1}=\delta_D(p)^n[\iota_n(x), \alpha_{\delta}t^r f_2]_{\mathrm{dif}}.$$
On the other hand, since we have 
$$\mathrm{exp}^*((\sigma_{-1}(x\otimes\bold{e}_D^{\vee}\otimes\bold{e}_1))_{r,\delta^{-1}})=\beta_{(r,\delta)}(x)f_1\otimes \Omega\cdot t^k\bold{e}_D^{\vee}\otimes 
\frac{1}{t^{r+1}}\bold{e}_{r+1}\otimes\bold{f}_{\delta^{-1}}$$ by definition of $\beta_{(r,\delta)}$, we obtain 
\begin{multline*}
[\iota_n((\sigma_{-1}(x\otimes\bold{e}_D^{\vee}\otimes\bold{e}_1))_{r,\delta^{-1}})]\\
=\mathrm{log}(\chi(\gamma))\beta_{(r,\delta)}(x)\cdot [\alpha_{\delta}^{-1}f_1\otimes \Omega t^k\bold{e}_D^{\vee} \otimes 
\frac{1}{t^{r+1}}\bold{e}_{r+1}\otimes\bold{e}_{\delta^{-1}}]\in \mathrm{H}^1_{\gamma}(\bold{D}_{\mathrm{dif},\infty}(D_{\mathrm{rig}}^*(r)(\delta^{-1})))
\end{multline*}
by Proposition 2.16 of \cite{Na14a}. 
Since we have 
\begin{multline*}
\iota_n((\sigma_{-1}(x\otimes \bold{e}_D^{\vee}\otimes\bold{e}_1))_{r,\delta^{-1}})
=\frac{p-1}{p}\cdot\mathrm{log}(\chi(\gamma))p_{\Delta}(\iota_n(\sigma_{-1}(x\otimes \bold{e}_D^{\vee}\otimes\bold{e}_1)\otimes \bold{e}_r\otimes\bold{e}_{\delta^{-1}}))\\
=\frac{p-1}{p}\cdot\mathrm{log}(\chi(\gamma))p_{\Delta}((-1)^r\delta(-1)\cdot\delta_D(p)^n\cdot\sigma_{-1}(\iota_n(x)\otimes \bold{e}_D^{\vee}\otimes\bold{e}_{r+1}\otimes\bold{e}_{\delta^{-1}}))\\
=(-1)^r\delta(-1)\delta_D(p)^n\cdot\frac{p-1}{p}\cdot\mathrm{log}(\chi(\gamma))p_{\Delta}(\iota_n(x)\otimes \bold{e}_D^{\vee}\otimes\bold{e}_{r+1}\otimes\bold{e}_{\delta^{-1}}),
\end{multline*}
we obtain
\begin{multline*}
[\iota_n(x), \alpha_{\delta}t^rf_2]_{\mathrm{dif}}=(-1)^r\delta(-1)\delta_D(p)^{-n}\cdot\frac{p}{p-1}\cdot\beta_{(r,\delta)}(x)\cdot [\alpha^{-1}_{\delta}\Omega t^{k-r-1}f_1, \alpha_{\delta}t^rf_2]_{\mathrm{dif}}\\
=(-1)^r\delta(-1)\delta_D(p)^{-n}\cdot\frac{p}{p-1}\cdot\beta_{(r,\delta)}(x)\cdot \mathrm{Res}_L(\langle \sigma_{-1}(\alpha^{-1}_{\delta}t^{-r}f_1\otimes \Omega t^k\bold{e}_D^{\vee}\otimes t^{-1}\bold{e}_1), \alpha_{\delta}t^rf_2\rangle)\\
=\delta_D(p)^{-n}\cdot\frac{p}{p-1}\cdot\beta_{(r,\delta)}(x)\cdot \mathrm{Res}_L(\langle t^{-r}f_1\otimes \Omega t^k\bold{e}_D^{\vee}\otimes t^{-1}\bold{e}_1, t^rf_2\rangle)=-\delta_D(p)^{-n}\cdot\frac{p}{p-1}\cdot\beta_{(r,\delta)}(x),
\end{multline*}
where the first equality follows from Lemma \ref{3.16} below. By this equality, we obtain 
$$[\widetilde{x}, h_{(r,\delta),m}]_{\bold{P}^1}=-\frac{p}{p-1}\cdot \beta_{(r,\delta)}(x),$$
which proves the proposition.

\end{proof}
\begin{lemma}\label{3.16}
For each pair $(r,\delta)$ as above, the following hold.
\begin{itemize}
\item[(i)]For $y\otimes\bold{e}_D^{\vee}\otimes \bold{e}_{r+1}\otimes\bold{e}_{\delta^{-1}}\in \bold{D}_{\mathrm{dif},n}
(D_{\mathrm{rig}}^*(r)(\delta^{-1}))$, set 
$$p_{\Delta}(y\otimes\bold{e}_D^{\vee}\otimes \bold{e}_{r+1}\otimes\bold{e}_{\delta^{-1}})
=y_1\otimes\bold{e}_D^{\vee}\otimes\bold{e}_{r+1}\otimes\bold{e}_{\delta^{-1}},$$ then we have 
$$[y, \alpha_{\delta}t^rf_2]_{\mathrm{dif}}=[y_1,\alpha_{\delta}t^rf_2]_{\mathrm{dif}}.$$
\item[(ii)]For $y\otimes\bold{e}_D^{\vee}\otimes \bold{e}_{r+1}\otimes\bold{e}_{\delta^{-1}}\in \bold{D}_{\mathrm{dif},n}
(D_{\mathrm{rig}}^*(r)(\delta^{-1}))$, set 
$$(\gamma-1)(y\otimes\bold{e}_D^{\vee}\otimes \bold{e}_{r+1}\otimes\bold{e}_{\delta^{-1}})
=y_2\otimes\bold{e}_D^{\vee}\otimes \bold{e}_{r+1}\otimes\bold{e}_{\delta^{-1}},$$ then we have 
$$[y_2, \alpha_{\delta}t^rf_2]_{\mathrm{dif}}=0.$$
\end{itemize}
\end{lemma}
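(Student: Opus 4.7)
The proof will run exactly parallel to that of Lemma \ref{3.9}, using the same two structural properties of the pairing $[-,-]_{\mathrm{dif}}$: first, the compatibility
$$[x\otimes\bold{e}_{-r}\otimes\bold{e}_{\delta},y\otimes\bold{e}_{-r}\otimes\bold{e}_{\delta}]_{\mathrm{dif}}=(-1)^{r}\delta(-1)[x,y]_{\mathrm{dif}}$$
(and its analogue for the dual twists $\bold{e}_{D}^{\vee}\otimes\bold{e}_{r+1}\otimes\bold{e}_{\delta^{-1}}$, which carry the inverse characters and cancel the sign), and second, the $\sigma_{a}$-adjointness $[\sigma_{a}(x),y]_{\mathrm{dif}}=\delta_{D}(a)[x,\sigma_{a}^{-1}(y)]_{\mathrm{dif}}$. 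Combined, these give a general ``integration by parts'' formula of the shape $[\lambda(x),y]_{\mathrm{dif}}=[x,\lambda^{*}(y)]_{\mathrm{dif}}$ where, for $\lambda=\sum c_{\sigma}[\sigma]\in L[\Delta]$ acting through $\delta_{D^{*}(r)(\delta^{-1})}$, the adjoint is $\lambda^{*}=\sum c_{\sigma}\delta_{D^{*}(r)(\delta^{-1})}(\sigma)\cdot[\sigma]^{-1}$ acting through $\delta_{D(-r)(\delta)}$ on the second slot.

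For part (i): rewrite the right-hand side by tensoring both arguments appropriately,
$$[y_{1},\alpha_{\delta}t^{r}f_{2}]_{\mathrm{dif}}=\pm\bigl[y_{1}\otimes\bold{e}_{D}^{\vee}\otimes\bold{e}_{r+1}\otimes\bold{e}_{\delta^{-1}},\,\alpha_{\delta}t^{r}f_{2}\otimes\bold{e}_{-r}\otimes\bold{e}_{\delta}\bigr]_{\mathrm{dif}},$$
so that the first entry becomes $p_{\Delta}$ applied to $y\otimes\bold{e}_{D}^{\vee}\otimes\bold{e}_{r+1}\otimes\bold{e}_{\delta^{-1}}$. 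Moving $p_{\Delta}$ to the second slot via the adjointness formula replaces it by $p_{\Delta}^{\delta_{D(-r)(\delta)}}$. Now the key observation is that $\alpha_{\delta}t^{r}f_{2}\otimes\bold{e}_{-r}\otimes\bold{e}_{\delta}=t^{r}f_{2}\otimes\bold{e}_{-r}\otimes\bold{f}_{\delta}$ lies in $\bold{D}_{\mathrm{dR}}(D_{\mathrm{rig}}(-r)(\delta))$ and is hence fixed by $\Gamma$, so $\sigma$ acts on it as the scalar $\delta_{D(-r)(\delta)}(\sigma)$, which precisely cancels the twisting coefficient in $p_{\Delta}^{\delta_{D(-r)(\delta)}}$, leaving the element unchanged. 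This recovers $[y,\alpha_{\delta}t^{r}f_{2}]_{\mathrm{dif}}$.

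For part (ii): apply the same adjointness to $y_{2}\otimes\bold{e}_{D}^{\vee}\otimes\bold{e}_{r+1}\otimes\bold{e}_{\delta^{-1}}=(\gamma-1)(y\otimes\cdots)$. Transporting $(\gamma-1)$ to the second entry against the twisted pairing gives an operator of the form $\delta_{D(-r)(\delta)}(\chi(\gamma))\cdot\gamma^{-1}-1$, and this annihilates the $\Gamma$-fixed element $t^{r}f_{2}\otimes\bold{e}_{-r}\otimes\bold{f}_{\delta}$; the result is $0$.

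The only delicate point — and the sole thing worth checking carefully — is the bookkeeping of the twisting characters: one needs to see that the cocycle relation $\delta_{D^{*}(r)(\delta^{-1})}\cdot\delta_{D(-r)(\delta)}=\delta_{D}\chi$ holds so that the adjointness with $\delta_{D}$ (coming from $[-,-]_{\mathrm{dif}}$ through the comparison isomorphism $D\otimes_{L}\mathcal{L}_{L}(D)^{\vee}\otimes_{L}L(1)\isom D^{*}$) meshes correctly with the twists by $\bold{e}_{D}^{\vee}\otimes\bold{e}_{r+1}\otimes\bold{e}_{\delta^{-1}}$ and $\bold{e}_{-r}\otimes\bold{e}_{\delta}$ on the two slots. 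Once this identity is verified, both statements follow by the same two-line calculation as in Lemma \ref{3.9}.
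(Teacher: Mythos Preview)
Your approach is correct and essentially the same as the paper's: both proofs pass to a twisted pairing, transport $p_{\Delta}$ (respectively $\gamma-1$) to the second slot via the adjointness $[\sigma_{a}(x),y]_{\mathrm{dif}}=\delta_{D}(a)[x,\sigma_{a}^{-1}(y)]_{\mathrm{dif}}$, and then use that $\alpha_{\delta}t^{r}f_{2}$ (suitably twisted) is a $\Gamma$-eigenvector to kill the resulting operator. The one bookkeeping difference is that the paper tensors \emph{both} slots by the same twist $\bold{e}_{D}^{\vee}\otimes\bold{e}_{r+1}\otimes\bold{e}_{\delta^{-1}}$, so that the relevant adjoint is $p_{\Delta}^{\delta_{D^{*}(r)(\delta^{-1})}}$ and the key input is that $\alpha_{\delta}t^{r}f_{2}\otimes\bold{e}_{D}^{\vee}\otimes\bold{e}_{r+1}\otimes\bold{e}_{\delta^{-1}}$ is a $\delta_{D^{*}(r)(\delta^{-1})}$-eigenvector; your mixed-twist formulation (one slot with $\bold{e}_{D}^{\vee}\otimes\bold{e}_{r+1}\otimes\bold{e}_{\delta^{-1}}$, the other with $\bold{e}_{-r}\otimes\bold{e}_{\delta}$) amounts to the same thing under the identity $\delta_{D^{*}(r)(\delta^{-1})}=\delta_{D(-r)(\delta)}^{-1}$, but is slightly awkward since the pairing $[-,-]_{\mathrm{dif}}$ as defined in the paper takes both arguments in the same $\bold{D}_{\mathrm{dif},n}$. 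Also, your sentence ``fixed by $\Gamma$, so $\sigma$ acts on it as the scalar $\delta_{D(-r)(\delta)}(\sigma)$'' reads as self-contradictory; what you mean is that the \emph{twisted} element is $\Gamma$-fixed, hence the \emph{untwisted} $\alpha_{\delta}t^{r}f_{2}$ is an eigenvector with the stated eigenvalue.
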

\begin{proof}
We first remark that we have $$\sigma_a(\alpha_{\delta}t^kf_2\otimes\bold{e}_{D}^{\vee}\otimes\bold{e}_{r+1}\otimes\bold{e}_{\delta^{-1}})=\delta_{D^*(r)(\delta^{-1})}(a)\alpha_{\delta}t^kf_2\otimes\bold{e}_{D}^{\vee}\otimes\bold{e}_{r+1}\otimes\bold{e}_{\delta^{-1}}$$ for any $a\in \mathbb{Z}_p^{\times}$

For $y, y_1$ as in (i), then we have 
\begin{multline*}
[y_1,\alpha_{\delta}t^rf_2]_{\mathrm{dif}}=\delta_D(-1)(-1)^r\delta(-1)[y_1\otimes\bold{e}_D^{\vee}\otimes \bold{e}_{r+1}\otimes\bold{e}_{\delta^{-1}},\alpha_{\delta}t^rf_2\otimes\bold{e}_D^{\vee}\otimes \bold{e}_{r+1}\otimes\bold{e}_{\delta^{-1}}]_{\mathrm{dif}}\\
=\delta_D(-1)(-1)^r\delta(-1)[p_{\Delta}(y\otimes\bold{e}_D^{\vee}\otimes \bold{e}_{r+1}\otimes\bold{e}_{\delta^{-1}}),\alpha_{\delta}t^rf_2\otimes\bold{e}_D^{\vee}\otimes \bold{e}_{r+1}\otimes\bold{e}_{\delta^{-1}}]_{\mathrm{dif}}\\
=\delta_D(-1)(-1)^r\delta(-1)[y\otimes\bold{e}_D^{\vee}\otimes \bold{e}_{r+1}\otimes\bold{e}_{\delta^{-1}},p_{\Delta}^{\delta_{D^*(r)(\delta^{-1})}}(\alpha_{\delta}t^rf_2\otimes\bold{e}_D^{\vee}\otimes \bold{e}_{r+1}\otimes\bold{e}_{\delta^{-1}}]_{\mathrm{dif}}\\
=\delta_D(-1)(-1)^r\delta(-1)[y\otimes\otimes\bold{e}_D^{\vee}\otimes \bold{e}_{r+1}\otimes\bold{e}_{\delta^{-1}}, \alpha_{\delta}t^rf_2\otimes\bold{e}_D^{\vee}\otimes \bold{e}_{r+1}\otimes\bold{e}_{\delta^{-1}}]_{\mathrm{dif}}=[y,\alpha_{\delta}t^rf_2]_{\mathrm{dif}}.
\end{multline*}

We can also prove (ii) in the same way, hence we omit the proof.

\end{proof}

Before proceeding to the proof of the key proposition (Proposition \ref{key}), we prove the following corollary, which is crucial in the proof of the main theorem (Theorem \ref{4.5}) in the next section.
\begin{corollary}\label{3.12.123}
The following maps 
$$D^{\psi=1}\rightarrow \prod_{(r,\delta)}\bold{D}^0_{\mathrm{dR}}(D_{\mathrm{rig}}(-r)(\delta)):x\mapsto 
(\mathrm{exp}^*(x_{-r,\delta}))_{(r,\delta)}$$ 
and 
$$(D^*)^{\psi=1}\rightarrow \prod_{(r,\delta)}\bold{D}^0_{\mathrm{dR}}(D_{\mathrm{rig}}^*(r)(\delta^{-1})):y\mapsto 
(\mathrm{exp}^*(y_{r,\delta^{-1}}))_{(r,\delta)}$$ 
are both injective, where the both products are taken with respect to all the pairs $(r,\delta)$ such that 
$0\leqq r\leqq k-1$ and $\delta:\Gamma\rightarrow \overline{L}^{\times}$ with finite image.
\end{corollary}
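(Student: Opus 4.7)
The plan is to combine Propositions \ref{formula1} and \ref{formula2} with the perfect duality (\ref{DD}) to reduce both injectivity statements to a density statement inside $\Pi(D)$. Concretely, suppose $x \in D^{\psi=1}$ satisfies $\mathrm{exp}^*(x_{-r,\delta})=0$ for all admissible $(r,\delta)$; by definition of $\alpha_{(r,\delta)}$ this means all $\alpha_{(r,\delta)}(x)=0$, and by Proposition \ref{formula1} this forces $[\widetilde{x},f_{(r,\delta),m}]_{\bold{P}^1}=0$ for every $m \in \mathbb{Z}$. Under (\ref{DD}) the continuous functional $\lambda_{\widetilde{x}}:y\mapsto[\widetilde{x},y]_{\bold{P}^1}$ on $\Pi(D)$ determines $\widetilde{x}$, so it suffices to show that the $\overline{L}$-span of the family $\{f_{(r,\delta),m}\}$ is dense in $\Pi(D)\otimes_L\overline{L}$. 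An analogous reduction using Proposition \ref{formula2}, the family $\{h_{(r,\delta),m}\}$, and the identification of $(D^*)^{\psi=1}$ with $D^{\delta_D(p)\psi=1}$ via $y'\mapsto y$ with $y'=\sigma_{-1}(y\otimes\bold{e}_D^{\vee}\otimes\bold{e}_1)$ handles the second statement.

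To control the span, I will use the Kirillov-model isomorphism (\ref{10e}) to identify $\Pi(D)_c^{\mathrm{alg}}$ with $\mathrm{LC}_c(\mathbb{Q}_p^\times,L_\infty)^{\Gamma}\otimes_L\mathrm{Sym}^{k-1}L^2$. A direct computation puts $f_{(r,\delta),m}$ (and similarly $h_{(r,\delta),m}$) into the $e_1^{k-1-r}e_2^r$-summand, supported on $p^m\mathbb{Z}_p^\times$, with value at $p^m a$ (for $a \in \mathbb{Z}_p^\times$) an explicit expression involving $\alpha_\delta^{-1}\Omega$ and transforming under $\sigma_a$ by a character built from $\delta$, $\mathrm{det}_{\mathcal{E}_L}D$ and a power of $\chi$. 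As $r$ ranges over $\{0,\ldots,k-1\}$, $m$ over $\mathbb{Z}$, and $\delta$ over all finite-image characters of $\Gamma$, Fourier analysis on the finite abelian quotients $\mathbb{Z}_p^\times/(1+p^n\mathbb{Z}_p)$ shows that the $\overline{L}$-span of either family equals $\Pi(D)_c^{\mathrm{alg}}\otimes_L\overline{L}$.

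It then remains to promote this to density in $\Pi(D)\otimes_L\overline{L}$. When $D$ is non-trianguline, the isomorphism (\ref{Jacquet}) forces $\Pi(D)_c^{\mathrm{alg}}=\Pi(D)^{\mathrm{alg}}$, and the latter is a nonzero $G$-stable subspace of the topologically $G$-irreducible $\Pi(D)$ by Th\'eor\`eme VI.6.18 of \cite{Co10b}, hence dense. When $D$ is trianguline, the conditions $g_p\widetilde{x}=\widetilde{x}$ and $g_p\widetilde{y}=\delta_D(p)\widetilde{y}$ translate, via the $G$-equivariance of (\ref{DD}) with the twisted dual action $(g\cdot\mu)(v)=\delta_D(\det g)\mu(g^{-1}v)$, into the relations $\lambda_{\widetilde{x}}(g_p v)=\delta_D(p)\lambda_{\widetilde{x}}(v)$ and $\lambda_{\widetilde{y}}(g_p v)=\lambda_{\widetilde{y}}(v)$; in other words, $\lambda_{\widetilde{x}}$ and $\lambda_{\widetilde{y}}$ are $g_p^*$-eigenvectors with eigenvalues $\delta_D(p)$ and $1$, respectively. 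By (\ref{Jacquet}) the $g_p$-eigenvalues on $J(\Pi(D)^{\mathrm{alg}})=D_{\mathrm{crab}}\otimes_L\mathrm{Sym}^{k-1}L^2$ are of the form $\alpha\cdot p^j$ with $\alpha$ a $\varphi$-eigenvalue of $D_{\mathrm{crab}}$ and $0\le j\le k-1$; combining condition (ii) applied to $D_{\mathrm{rig}}(-j)$ and to $D_{\mathrm{rig}}^*(j)$ with the identity $\delta_D=\chi^{-1}\mathrm{det}_{\mathcal{E}_L}D$ rules out the matches $\alpha\cdot p^j=1$ and $\alpha\cdot p^j=\delta_D(p)$. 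Hence $\lambda_{\widetilde{x}}$ and $\lambda_{\widetilde{y}}$ both annihilate the finite-dimensional quotient $J(\Pi(D)^{\mathrm{alg}})$, vanish on $\Pi(D)^{\mathrm{alg}}$, and by density of $\Pi(D)^{\mathrm{alg}}$ in $\Pi(D)$ the two functionals vanish on all of $\Pi(D)$, forcing $\widetilde{x}=\widetilde{y}=0$ and thus $x=0$, $y=0$. The main obstacle will be this last trianguline eigenvalue bookkeeping: precisely matching the two halves of condition (ii) (via the $\varphi$-eigenvalues on $\bold{D}_{\mathrm{cris}}$ of the relevant twists) with the two required absences of $g_p$-eigenvalues on $D_{\mathrm{crab}}\otimes\mathrm{Sym}^{k-1}L^2$ requires careful tracking of Tate-twist and Hodge--Tate-weight conventions.
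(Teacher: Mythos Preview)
Your proposal is correct and follows essentially the same route as the paper: reduce via Propositions \ref{formula1} and \ref{formula2} to vanishing of $[\widetilde{x},-]_{\bold{P}^1}$ on $\Pi(D)_{\mathrm c}^{\mathrm{alg}}$, use the Kirillov identification to see that the $\overline{L}$-span of the test vectors $f_{(r,\delta),m}$ (resp.\ $h_{(r,\delta),m}$) fills $\Pi(D)_{\mathrm c}^{\mathrm{alg}}\otimes_L\overline{L}$, and then pass to all of $\Pi(D)^{\mathrm{alg}}$ by a $g_p$-eigenvalue comparison on the Jacquet module in the trianguline case.

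The only noteworthy difference is in the trianguline bookkeeping. The paper picks an explicit basis $\{\bold{f}_1,\bold{f}_2\}$ of $D_{\mathrm{crab}}$ (treating the non-split case in detail) and, to exclude the $g_p$-eigenvalue $\delta_1(p)p^j$ from matching $\delta_D(p)$, uses condition (ii) for $D$ when $1\le j\le k-1$ and invokes the absolute irreducibility of $D$ for the boundary case $j=0$. Your more uniform formulation---ruling out $\alpha p^j=\delta_D(p)$ via the $D^*$-half of condition (ii) and $\alpha p^j=1$ via the $D$-half---also works: with $\alpha_1\alpha_2=\delta_D(p)p^{-k}$, the relation $\alpha_1 p^j=\delta_D(p)$ forces $\alpha_2=p^{-(k-j)}\in\{p^{-1},\dots,p^{-k}\}$, which is excluded by $\bold{D}_{\mathrm{cris}}(D_{\mathrm{rig}}^*(r)(\delta^{-1}))^{\varphi=1}=0$ for $0\le r\le k-1$. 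So your abstract eigenvalue argument handles the $j=0$ case without separately appealing to irreducibility, and is otherwise the same computation.
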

\begin{rem}
The statement of this corollary is no longer true when $D$ is absolute irreducible and $\bold{D}_{\mathrm{cris}}(D_{\mathrm{rig}}(-r)(\delta))^{\varphi=1}\not=0$ for some $(r,\delta)$ (which corresponds to the exceptional zero case), or when $D$ is reducible. In the reducible case, if we have an exact sequence 
$0\rightarrow D_1\rightarrow D\rightarrow D_2\rightarrow 0$ of \'etale $(\varphi,\Gamma)$-modules such that $D_1$ and $D_2$ are of rank one, then the kernels of the above maps contain respectively $D_1^{\psi=1}$ and $(D_2^*)^{\psi=1}$. In the exceptional zero case, we can make the above maps injective if we add another kind of dual exponential map $$\mathrm{H}^1_{\varphi,\gamma}(D_{\mathrm{rig}})
\rightarrow \bold{D}_{\mathrm{crys}}(D_{\mathrm{rig}})^{\varphi=1/p}$$ (i.e. the dual of $$\mathrm{exp}_f:\bold{D}_{\mathrm{crys}}(D_{\mathrm{rig}}^*)/(1-\varphi)\bold{D}_{\mathrm{crys}}(D_{\mathrm{rig}}^*)\rightarrow \mathrm{H}^1_{\varphi,\gamma}(D_{\mathrm{rig}}^*)).$$ See the next article \cite{NY} for these cases.
\end{rem}
\begin{proof}
We only prove that the first map is injective using Proposition \ref{formula1}, since the proof for the latter map is similar if we use Proposition \ref{formula2} instead.

We assume that $x\in D^{\psi=1}$ satisfies $\mathrm{exp}^*(x_{-r,\delta})=0$ for any pair $(r,\delta)$ 
as above. 
By Proposition \ref{formula1} and definition of $\alpha_{(r,\delta)}$, we have $$[\widetilde{x},f_{(r,\delta), m}]_{\bold{P}^1}=0$$
for any $(r,\delta)$ and $m$. Since we have 
$$\Pi(D)_{\mathrm{c}}^{\mathrm{alg}}\isom \mathrm{LP}_{\mathrm{c}}(\mathbb{Q}_p^{\times}, L_{\infty})^{\Gamma}\subseteq \mathrm{LP}_{\mathrm{c}}(\mathbb{Q}_p^{\times}, \overline{L}_{\infty})^{\Gamma}$$ and $\mathrm{LP}_{\mathrm{c}}(\mathbb{Q}_p^{\times}, \overline{L}_{\infty})^{\Gamma}$ is generated by $\{f_{(r,\delta), m}\}_{(r,\delta),m}$ as $\overline{L}$-vector space, 
the lift $$\widetilde{x}\in (D^{\natural}\boxtimes_{\delta_D}\bold{P}^1)^{g_p=1}\isom (\Pi(D)^{\vee}\otimes \delta_D\circ\mathrm{det})^{g_p=1}$$ of $x$
satisfies $\widetilde{x}|_{\Pi(D)_{\mathrm{c}}^{\mathrm{alg}}}=0.$ We also claim that $\widetilde{x}|_{\Pi(D)^{\mathrm{alg}}}=0$, which proves that $\widetilde{x}=0$ since $\Pi(D)^{\mathrm{alg}}$ is 
dense in $\Pi(D)$ (since $\Pi(D)$ is topologically irreducible), hence $x=\mathrm{Res}_{\mathbb{Z}_p}(\widetilde{x})=0$. 

Hence, it suffices to show this claim. If $D$ is non-trianguline, then we have $\Pi(D)^{\mathrm{alg}}_{\mathrm{c}}=\Pi(D)^{\mathrm{alg}}$, hence there is nothing to prove. Assume that $D$ is trianguline. Then, we have $D_{\mathrm{crab}}\not=0$ and have a $B$-equivariant isomorphism 
$D_{\mathrm{crab}}\otimes_L\mathrm{Sym}^{k-1}L^2\isom \Pi(D)^{\mathrm{alg}}/\Pi(D)^{\mathrm{alg}}_{\mathrm{c}}$. Since we have $\widetilde{x}|_{\Pi(D)_{\mathrm{c}}^{\mathrm{alg}}}=0$, the pairing 
$[\widetilde{x},-]_{\bold{P}^1}$ induces $[\widetilde{x},-]_{\bold{P}^1}:D_{\mathrm{crab}}\otimes_L\mathrm{Sym}^{k-1}L^2\rightarrow L$. Here, we only prove the claim when $D$ is potentially crystalline and $D_{\mathrm{crab}}$ is the non trivial extension $0\rightarrow L(\delta_1)\rightarrow D_{\mathrm{crab}}\rightarrow L(\delta_1)\rightarrow 0$ for a locally constant homomorphism $\delta_1:\mathbb{Q}_p^{\times}\rightarrow L^{\times}$ (since we can similarly prove it in other cases). In this case, we can take a basis $\{\bold{f}_1, \bold{f}_2\}$ of $D_{\mathrm{crab}}$ such that 
$\varphi(\bold{f}_1)=\delta_1(p), \varphi(\bold{f}_2)=\delta_1(p)(\bold{f}_2+\bold{f}_1)$. Then, it suffices to show that we have 
$$[\widetilde{x},\bold{f}_i\otimes e_1^{j}e_2^{k-1-j}]_{\bold{P}^1}=0$$
for any $i=1,2$ and $0\leqq j\leqq k-1$. Since we have $$\left(\begin{pmatrix}p& 0\\ 0& 1\end{pmatrix}-\delta_1(p)p^{j}\right)\cdot\bold{f}_1\otimes e_1^je_2^{k-1-j}=0$$ and $\widetilde{x}\in (\Pi(D)^{\vee}\otimes \delta_D(\mathrm{det}))^{g_p=1}$, we have 
\begin{multline*}
0=\left[\widetilde{x}, \left(\begin{pmatrix}p& 0\\ 0& 1\end{pmatrix}-\delta_1(p)p^{j}\right)\cdot\bold{f}_1\otimes e_1^je_2^{k-1-j}\right]_{\bold{P}^1}\\
=\delta_D(p)\left[\begin{pmatrix}p^{-1}& 0 \\ 0 &1\end{pmatrix}\widetilde{x}, \bold{f}_1\otimes e_1^je_2^{k-1-j}\right]_{\bold{P}^1}-\delta_1(p)p^j[\widetilde{x}, \bold{f}_1\otimes e_1^je_2^{k-1-j}]_{\bold{P}^1}\\
=(\delta_D(p)-\delta_1(p)p^j)[\widetilde{x}, \bold{f}_1\otimes e_1^je_2^{k-1-j}]_{\bold{P}^1}.
\end{multline*}
Since we have $\delta_D(p)=\delta_1(p)^2p^{k}$, we have $\delta_D(p)-\delta_1(p)p^j=\delta_1(p)p^j(\delta_1(p)p^{k-j}-1)$, which is non zero since, for $j=0$, we have $\delta_1(p)\not=p^{-k}$ by the irreducibility of $D$, and, for $1\leqq j\leqq k-1$, we assumed that $1-\varphi$ is isomorphism on $\bold{D}_{\mathrm{cris}}(D(-(k-j)(\delta)))$ for any $\delta$ (in particular, for $\delta=\delta_1^{-1}|_{\mathbb{Z}_p^{\times}}$). Hence, we obtain $[\widetilde{x}, \bold{f}_1\otimes e_1^je_2^{k-1-j}]_{\bold{P}^1}=0$ for any $0\leqq j\leqq k-1$. Finally, using the equality 
$$\left(\begin{pmatrix}p& 0\\ 0& 1\end{pmatrix}-\delta_1(p)p^{j}\right)\cdot\bold{f}_2\otimes e_1^je_2^{k-1-j}=\delta_1(p)p^j\bold{f}_1\otimes e_1^je_2^{k-1-j},$$ we can similarly obtain 
$[\widetilde{x}, \bold{f}_2\otimes e_1^je_2^{k-1-j}]_{\bold{P}^1}=0$, which proves the claim. 
\end{proof}

We next recall the theorem of Emerton on the compatibility of 
the $p$-adic and the classical local Langlands correspondence. 
 Fix an isomorphism $\iota:\overline{L}\isom \mathbb{C}$. Let $\pi'_p(D)$ be the irreducible smooth admissible representation of 
 $G$ defined over $\mathbb{C}$ corresponding to the Weil-Deligne representation $W(D_{\mathrm{rig}})^{\mathrm{ss}}\otimes_{L,\iota}\mathbb{C}$ of $W_{\mathbb{Q}_p}$ over $\mathbb{C}$ of rank two via the unitary normalized local Langlands correspondence, where $W(D_{\mathrm{rig}})^{\mathrm{ss}}$ is the Frobenius semi-simplification of $W(D_{\mathrm{rig}})$. We remark that, under this normalization, the local $L$- and $\varepsilon$-factors attached to $W(D_{\mathrm{rig}})^{\mathrm{ss}}\otimes_{L,\iota}\mathbb{C}$  coincide with those for $\pi'_p(D)$, and the central character of $\pi_p'(D)$ is equal to 
 $$\iota\circ\mathrm{det}_LW(D_{\mathrm{rig}}):Z(\isom \mathbb{Q}_p^{\times})\rightarrow \mathbb{C}^{\times},$$ where we regard $\mathrm{det}_LW(D_{\mathrm{rig}})$ as a character $\mathrm{det}_LW(D_{\mathrm{rig}}):\mathbb{Q}_p^{\times}\rightarrow L^{\times}$ via local class field theory. For our purpose, 
 we need another normalization called Tate's normalization, which we define by 
 $$\pi_p(D)_{\overline{L}}:=(\pi_p'(D)\otimes |\mathrm{det}|_p^{-1/2})\otimes_{\mathbb{C},\iota^{-1}}\overline{L}.$$ 
 Then, it is known that $\pi_p(D)_{\overline{L}}$ does not depend on the choice of $\iota$, and is defined over $L$. Then, we denote $\pi_p(D)$ for the model of $\pi_p(D)_{\overline{L}}$ defined over $L$. Let $\omega_{\pi_p(D)}:\mathbb{Q}_p^{\times}\rightarrow L^{\times}$ be the central character of $\pi_p(D)$.  Since 
 we have $\mathrm{det}_LW(D_{\mathrm{rig}})=\mathrm{det}_{\mathcal{E}_L}D\cdot x^{-k}$, then one has an equality 
 \begin{equation}
 \omega_{\pi_p(D)}=\mathrm{det}_LW(D_{\mathrm{rig}})\cdot |-|_p^{-1}=\delta_D\cdot x^{-(k-1)},
 \end{equation}
 where we define $x^{i}:\mathbb{Q}_p^{\times}\rightarrow L^{\times}:y\mapsto y^{i}$ for $i\in \mathbb{Z}$.
Finally, we set $\pi_p^{\mathrm{m}}(D):=\mathrm{Ind}^G_B(\delta\cdot |-|_p\otimes \delta)_{\mathrm{sm}}$ (which is not irreducible) if 
 $\pi_p(D)=\delta\circ\mathrm{det}:G\rightarrow L^{\times}$ is one dimensional defined by a character $\delta:\mathbb{Q}_p^{\times}\rightarrow L^{\times}$, and $\pi_p^{\mathrm{m}}(D):=\pi_p(D)$ otherwise. Under this situation, one has the following theorem, and the non-trianguline case of this theorem is crucial for our proof of Proposition \ref{key}. 
 
 \begin{thm}
 Under the situation above, there exists a $G$-equivariant isomorphism 
  \begin{equation}\label{Em}
 \Pi(D)^{\mathrm{alg}}\isom \pi^{\mathrm{m}}_p(D)\otimes_L\mathrm{Sym}^{k-1}L^2.
 \end{equation}
 \end{thm}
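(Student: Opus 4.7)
The strategy is to match both sides as $B$-representations together with their Jacquet modules, and then upgrade to a $G$-equivariant identification. The key inputs on the Colmez side are already in hand: the $B$-equivariant isomorphism \eqref{10e} gives
$$\Pi(D)^{\mathrm{alg}}_{\mathrm{c}} \isom \mathrm{LC}_{\mathrm{c}}(\mathbb{Q}_p^{\times}, L_{\infty})^{\Gamma} \otimes_L \mathrm{Sym}^{k-1}L^2,$$
and \eqref{Jacquet} identifies the quotient $J(\Pi(D)^{\mathrm{alg}})$ with $D_{\mathrm{crab}} \otimes_L \mathrm{Sym}^{k-1}L^2$. On the classical side, I would realise $\pi_p^{\mathrm{m}}(D)$ via its (Tate-normalised) Kirillov model, whose ``compactly supported in $\mathbb{Q}_p^{\times}$'' part is $B$-equivariantly isomorphic to $\mathrm{LC}_{\mathrm{c}}(\mathbb{Q}_p^{\times}, L_{\infty})^{\Gamma}$ with central character $\omega_{\pi_p(D)} = \delta_D \cdot x^{-(k-1)}$, matching exactly the $B$-action in \eqref{10e} after the $\mathrm{Sym}^{k-1}L^2$ twist.

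Next, I would verify case by case that the Jacquet quotients match under local Langlands with Tate's normalisation. When $\pi_p(D)$ is supercuspidal (the non-trianguline case), $J_B(\pi_p(D)) = 0 = D_{\mathrm{crab}}$ and $\Pi(D)^{\mathrm{alg}} = \Pi(D)^{\mathrm{alg}}_{\mathrm{c}}$, so the previous step already identifies both sides as $B$-modules. When $\pi_p(D)$ is a principal series or a (twist of) Steinberg, its Jacquet module is the direct sum or nontrivial extension of the two smooth characters $\delta_1,\delta_2$ parametrising $\pi_p(D)$ under local Langlands, and these coincide with the $\varphi$-$\Gamma$-eigencharacters of $D_{\mathrm{crab}}$ after the $|\cdot|_p^{-1/2}$-shift and the $\mathrm{Sym}^{k-1}L^2$ twist; the definition of $\pi_p^{\mathrm{m}}(D)$ in the $1$-dimensional case is designed precisely so that the length and extension class of $J_B(\pi_p^{\mathrm{m}}(D))$ match those of $D_{\mathrm{crab}}$. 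Putting these together yields a $B$-equivariant isomorphism compatible with the short exact sequence $0 \to \Pi(D)^{\mathrm{alg}}_{\mathrm{c}} \to \Pi(D)^{\mathrm{alg}} \to J(\Pi(D)^{\mathrm{alg}}) \to 0$ and its classical counterpart.

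The hard step is upgrading this $B$-equivariant isomorphism to a $G$-equivariant one, i.e.\ matching the action of $w = \begin{pmatrix} 0 & 1 \\ 1 & 0 \end{pmatrix}$ on both sides. On the Colmez side, $w$ acts through the involution $w_{\delta_D}$ on $D^{\psi=0}$ coming from the $G$-structure on $D \boxtimes_{\delta_D}\bold{P}^1$; on the classical side, $w$ acts on the Kirillov model via the Godement--Jacquet functional equation controlled by the $\gamma$-factor of $\pi_p(D)$. A purely local proof would require showing that $w_{\delta_D}$, transported to $\mathrm{LC}_{\mathrm{c}}(\mathbb{Q}_p^{\times}, L_{\infty})^{\Gamma} \otimes_L \mathrm{Sym}^{k-1}L^2$ via Colmez's Kirillov identification, reproduces this classical formula, with Colmez's generalised reciprocity law \eqref{reciprocity} and Dospinescu's explicit formula for $w_{\delta_D}$ on locally algebraic vectors from \cite{Do11} as the main inputs. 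I expect this precise $w$-matching to be the principal obstacle to a direct local argument: it is essentially the deep content of Emerton's theorem \cite{Em}, which he establishes globally via completed cohomology of modular curves together with local--global compatibility at $p$, and in practice one invokes that global argument rather than attempting a purely local verification.
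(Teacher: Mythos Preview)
Your sketch is considerably more detailed than what the paper actually does here: the paper gives no argument at all and simply cites Th\'eor\`eme VI.6.50 of \cite{Co10b} for the trianguline case and Theorem 3.3.22 of \cite{Em} for the non-trianguline case. So in a sense you have gone beyond the paper by outlining the shape of the argument behind those citations.

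Your outline is broadly correct --- the $B$-equivariant identification via the Kirillov model and the matching of Jacquet modules with $D_{\mathrm{crab}}$ are exactly the ingredients Colmez assembles --- but your final paragraph slightly blurs an important dichotomy. In the trianguline case the $w$-matching \emph{is} carried out purely locally by Colmez in \cite{Co10b}: since $\pi_p^{\mathrm{m}}(D)$ is then a principal series or (generalised) Steinberg, the action of $w$ on its Kirillov model is given by explicit intertwining operators, and Colmez verifies directly that his $w_{\delta_D}$-action reproduces these formulas. Dospinescu's result in \cite{Do11} (which the paper uses elsewhere, in the proof of Theorem \ref{3.1}) refines this in the locally analytic setting. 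It is only in the non-trianguline (supercuspidal) case that no purely local proof is known and one must invoke Emerton's global argument via completed cohomology. Your closing sentence ``in practice one invokes that global argument rather than attempting a purely local verification'' applies to the non-trianguline case only; for the trianguline case the local verification is exactly what Colmez does. The paper's Remark following the theorem makes precisely this point.
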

 \begin{proof}
 This is Th\'eor\`eme VI.6.50 of \cite{Co10b} for the trianguline case, and 
 Theorem 3.3.22 of \cite{Em} for the non-trianguline case.
 \end{proof}
 \begin{rem}
 The proof of Theorem 3.3.22 of \cite{Em} is done by a global method using the complete cohomology of 
 modular curves. No purely local proof of this theorem has been known (at least to the author) until now. As we can easily see from the proof below, this theorem is in fact equivalent to Proposition \ref{key}. Hence, our proof of Proposition \ref{key} given below also depends on the global method.
 \end{rem}
 
 We next recall a formula of the action of $w:=\begin{pmatrix}0 & 1\\ 1 & 0\end{pmatrix}\in G$ 
 on the Kirillov model of supercuspidal representations of $G$ following the book of 
 Bushnell-Henniart \cite{BH}. From now on until the end of this subsection, we furthermore assume that $D$ is non-trianguline, which is equivalent to that $\pi^{\mathrm{m}}_p(D)=\pi_p(D)$ and this is a supercuspidal representation of $G$. By the classical theory of Kirillov model, then there exists a $B$-equivariant isomorphism $$\pi_p(D)\isom \mathrm{LC}_{\mathrm{c}}(\mathbb{Q}_p, L_{\infty})^{\Gamma}$$ which is unique up to $L^{\times}$ (see, for example, VI.4 of  \cite{Co10b}). Using this isomorphism, we can uniquely extend the action of $B$ on $\mathrm{LC}_{\mathrm{c}}(\mathbb{Q}_p, L_{\infty})^{\Gamma}$ to that of $G$ such that this isomorphism 
is $G$-equivariant, which we denote by $\pi_p(g)\cdot f$ for any $g\in G$ and $f\in \mathrm{LC}_{\mathrm{c}}(\mathbb{Q}_p^{\times}, L_{\infty})^{\Gamma}$. We now recall a formula on the action of $\pi_p(w)$ on $\mathrm{LC}_{\mathrm{c}}(\mathbb{Q}_p, L_{\infty})^{\Gamma}$ using the $\varepsilon$-factor associated to $\pi_p(D)$. Decompose $L_{\infty}=\prod_{\tau}L_{\tau}$ into a finite product of fields $L_{\tau}$. 
For any $\tau$, fix an isomorphism $\iota_{\tau}:\overline{L}_{\tau}\isom \mathbb{C}$. For any $\tau$, let $$\varepsilon(\pi_p(D)\otimes_{L,\iota_{\tau}}\mathbb{C},s, \iota_{\tau}\circ\psi_{\zeta})\,\,\,(s\in \mathbb{C})$$ be the 
$\varepsilon$-factor associated to $\pi_p(D)\otimes_{L,\iota_{\tau}}\mathbb{C}$ with respect to the additive character 
$\iota_{\tau}\circ\psi_{\zeta}:\mathbb{Q}_p\rightarrow \mathbb{C}^{\times}$. Since $(\pi_p(D)\otimes_{L,\iota_{\tau}}\mathbb{C})\otimes |\mathrm{det}|^{1/2}$ corresponds to $W(D_{\mathrm{rig}})\otimes_{L,\iota_{\tau}}\mathbb{C}$ via the unitary local Langlands correspondence, we have 
\begin{multline*}
\varepsilon(\pi_p(D)\otimes_{L,\iota_{\tau}}\mathbb{C},\frac{1}{2}, \iota_{\tau}\circ\psi_{\zeta})=\varepsilon((\pi_p(D)\otimes_{L,\iota_{\tau}}\mathbb{C})\otimes |\mathrm{det}|^{1/2},0, \iota_{\tau}\circ\psi_{\zeta})\\
=\varepsilon(W(D_{\mathrm{rig}})\otimes_{L,\iota_{\tau}}\mathbb{C}, \iota_{\tau}\circ\psi_{\zeta})=\varepsilon(W(D_{\mathrm{rig}})\otimes_{L}\overline{L}_{\tau}, \psi_{\zeta})\otimes 1\in \overline{L}_{\tau}\otimes_{\overline{L}_{\tau},\iota_{\tau}}\mathbb{C}.
\end{multline*} Hence, $\varepsilon_L(W(D_{\mathrm{rig}}),\zeta)=\prod_{\tau}\varepsilon(W(D_{\mathrm{rig}})\otimes_{L}\overline{L}_{\tau}, \psi_{\zeta})\in (L_{\infty})^{\times}$ satisfies the equality 
\begin{equation}\label{25}
\varepsilon_L(W(D_{\mathrm{rig}}),\zeta)\otimes_{L_{\infty},\iota_{\tau}} 1=\varepsilon(\pi_p(D)\otimes_{L,\iota_{\tau}}\mathbb{C},\frac{1}{2},\iota_{\tau}\circ\psi_{\zeta})
\end{equation} for any $\tau$.

For $k\in \mathbb{Z}$ and a locally constant homomorphism $\eta:\mathbb{Q}_p^{\times}\rightarrow L^{\times}$, define a locally constant function $\xi_{\eta,k}:\mathbb{Q}_p^{\times}\rightarrow L$ with compact support by 
$$\xi_{\eta,k}(x):=\begin{cases} \eta(x) & \text{ if } x\in p^k\mathbb{Z}_p^{\times} \\
                                                     0 & \text{ otherwise }\end{cases}.$$ 
We remark that we have $\alpha_{\eta}^{-1}\cdot\xi_{\eta,k}\in \mathrm{LC}_{\mathrm{c}}(\mathbb{Q}_p^{\times}, L_{\infty})^{\Gamma}$ for the fixed $\bold{f}_{\eta}=\alpha_{\eta}\cdot\bold{e}_{\eta}\in 
L_{\infty}(\eta)^{\Gamma}$ since we have $\sigma_a(\alpha_{\eta})=\eta(a)^{-1}\alpha_{\eta}$ for any $a\in \mathbb{Z}_p^{\times}$.

Under these preliminaries, we have the following formula.
\begin{thm}\label{BH}$($\cite{BH} $37.3)$
For any locally constant homomorphism $\eta:\mathbb{Q}_p^{\times}\rightarrow L^{\times}$ and any $k\in \mathbb{Z}$, we have 
$$\pi_p(w)\cdot (\alpha_{\eta}^{-1}\cdot\xi_{\eta,k})=\eta(-1)\cdot\alpha^{-1}_{\eta}\cdot\varepsilon_L(W(D_{\mathrm{rig}})(\eta^{-1}),\zeta)\cdot \xi_{\eta^{-1}\cdot w_{\pi_p(D)}, -a(W(D_{\mathrm{rig}})(\eta^{-1}))-k},$$
where $a(W(D_{\mathrm{rig}})(\eta^{-1}))$ is the exponent of the Artin conductor of $W(D_{\mathrm{rig}}(\eta^{-1}))$ $($see \cite{De73} for the definition$)$.
\end{thm}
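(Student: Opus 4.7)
The proof is fundamentally an application of the local functional equation for $\mathrm{GL}_2(\mathbb{Q}_p)$, translated carefully into the paper's conventions. My plan is to first recall that, up to scalar, the Kirillov model of $\pi_p(D)$ is uniquely characterized by its $B$-action, so the operator $\pi_p(w)$ on $\mathrm{LC}_{\mathrm{c}}(\mathbb{Q}_p^{\times}, L_{\infty})^{\Gamma}$ is pinned down once we know the central character $\omega_{\pi_p(D)} = \delta_D \cdot x^{-(k-1)}$ and the condition $w^2 = \mathrm{id}$ together with the intertwining property with respect to twists. The key tool is the family of Jacquet--Godement zeta integrals
\begin{equation*}
Z(f, s, \eta) := \int_{\mathbb{Q}_p^{\times}} f(x)\, \eta(x)\, |x|^{s-1/2}\, d^{\times}x,
\end{equation*}
defined for $f \in \mathrm{LC}_{\mathrm{c}}(\mathbb{Q}_p^{\times}, L_\infty)^\Gamma$ (after extending scalars through $\iota_\tau$) and a smooth character $\eta \colon \mathbb{Q}_p^{\times}\rightarrow L^{\times}$, which satisfies the classical local functional equation
\begin{equation*}
Z(\pi_p(w)\cdot f,\, 1-s,\, \eta^{-1} \omega_{\pi_p(D)}^{-1}) = \gamma(\pi_p(D) \otimes \eta,\, s,\, \psi_{\zeta}) \cdot Z(f, s, \eta).
\end{equation*}

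The second step is to exploit that $\pi_p(D)$ is supercuspidal (as $D$ is non-trianguline), so every twisted $L$-factor $L(\pi_p(D)\otimes \eta, s)$ is trivial, and consequently $\gamma(\pi_p(D)\otimes\eta, s, \psi_\zeta) = \varepsilon(\pi_p(D) \otimes \eta, s, \psi_\zeta)$. Under the Tate normalization, this equals $\varepsilon_L(W(D_{\mathrm{rig}})(\eta^{-1}), \zeta)$ up to the factor $|-|_p$-shift that the formula \eqref{25} encodes (after matching half-integer shifts between Tate and unitary normalizations). The third step is a direct computation of both sides of the functional equation for the test vector $f = \alpha_\eta^{-1} \xi_{\eta, k}$. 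The left side is $Z(\alpha_\eta^{-1} \xi_{\eta, k}, s, \eta^{-1}) = \alpha_\eta^{-1}\cdot p^{-k(s-1/2)}\cdot (1 - p^{-1})^{-1}$ (after absorbing the measure of $\mathbb{Z}_p^\times$), a simple power of $p^{-s}$ supported on the single coset $p^k \mathbb{Z}_p^{\times}$.

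The fourth step is to argue that $\pi_p(w)\cdot (\alpha_\eta^{-1} \xi_{\eta, k})$ must itself be supported on a single coset $p^m \mathbb{Z}_p^{\times}$ on which it transforms by a single character. This reduction is where the bulk of the work sits and is carried out in Bushnell--Henniart by observing that the collection $\{\xi_{\eta', m}\}_{\eta', m}$ spans $\mathrm{LC}_{\mathrm{c}}(\mathbb{Q}_p^\times, L_\infty)^\Gamma$ (after scalars) while the orthogonality properties of multiplicative characters of $\mathbb{Z}_p^\times$ together with the single-variable functional equation force $\pi_p(w)\cdot \xi_{\eta, k}$ to be concentrated in one component $(\eta', m)$. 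Combined with the central character relation $\pi_p(w)^2 = \omega_{\pi_p(D)}$, this forces $\eta' = \eta^{-1} \omega_{\pi_p(D)}$. Matching the zeta integrals on both sides at an arbitrary test character then pins down $m = -a(W(D_{\mathrm{rig}})(\eta^{-1})) - k$ via the standard relation $\varepsilon(\rho, s, \psi_\zeta) = \varepsilon(\rho, 1/2, \psi_\zeta) \cdot p^{-a(\rho)(s-1/2)}$ between the $\varepsilon$-factor and the conductor exponent.

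The main obstacle, and the source of most of the bookkeeping, will be to verify the appearance of the sign $\eta(-1)$ and to correctly track all normalization conversions: between the unitary and Tate normalizations of local Langlands, between the $L_\infty$-valued Kirillov model (via the factor $\alpha_\eta$) and the usual $\mathbb{C}$-valued one obtained after $\iota_\tau$, and between the multiplicative and additive Haar measures. The sign $\eta(-1)$ emerges from evaluating $\pi_p(w)^2 = \omega_{\pi_p(D)}(-1)$ on a $\xi_{\eta, k}$ (using the reciprocity $\varepsilon(\rho, \psi_\zeta) \cdot \varepsilon(\rho^\vee, \psi_\zeta) = \mathrm{det}(\rho)(-1)$), while the factor $\alpha_\eta^{-1}$ on the right-hand side is forced by $\Gamma$-equivariance of the output, since $\pi_p(w)$ commutes with the diagonal $\Gamma$-action on $L_\infty$-coefficients and the $\Gamma$-eigencharacter of $\xi_{\eta^{-1}\omega_{\pi_p(D)}, m}$ is $\eta \omega_{\pi_p(D)}^{-1}|_{\mathbb{Z}_p^\times}$, cancelling against $\alpha_\eta^{-1}$ after noting $\alpha_\eta \alpha_{\eta^{-1}} = 1$.
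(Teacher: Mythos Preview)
Your proposal is essentially correct, but it does far more than the paper does. The paper's proof is two sentences: it checks that the right-hand side lies in $\mathrm{LC}_{\mathrm{c}}(\mathbb{Q}_p^{\times}, L_{\infty})^{\Gamma}$ (via the Galois transformation law for $\varepsilon_L(W(D_{\mathrm{rig}})(\eta^{-1}),\zeta)$), records the identity $\varepsilon_L(W(D_{\mathrm{rig}})(\eta^{-1}),\zeta)\otimes_{L_{\infty},\iota_{\tau}} 1=\varepsilon((\pi_p(D)\otimes (\eta^{-1}\circ\mathrm{det}))\otimes_{L,\iota_{\tau}}\mathbb{C},\tfrac{1}{2},\iota_{\tau}\circ\psi_{\zeta})$, and then simply invokes Theorem 37.3 of Bushnell--Henniart as a black box. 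The theorem is labeled as a citation, and the paper treats it as such.

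What you have written is, in effect, a sketch of the proof \emph{of} Bushnell--Henniart 37.3 itself: the zeta-integral functional equation, the vanishing of $L$-factors in the supercuspidal case, the conductor--$\varepsilon$ relation, and the character-orthogonality argument forcing support on a single coset. This is a perfectly reasonable self-contained route, and it is exactly what BH do in their book, but it is a genuinely different level of detail from the paper. One small caution: with the paper's choice $w=\begin{pmatrix}0&1\\1&0\end{pmatrix}$ one has $w^2=\mathrm{id}$, so your line ``$\pi_p(w)^2=\omega_{\pi_p(D)}(-1)$'' is either a slip of convention (thinking of $w=\begin{pmatrix}0&1\\-1&0\end{pmatrix}$) or needs rephrasing; the sign $\eta(-1)$ in BH's formula actually arises directly in the zeta-integral computation rather than from $w^2$. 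If you want to match the paper, simply cite BH 37.3 and verify the two normalization identities; if you want a self-contained argument, your outline is sound but you should tighten the $\eta(-1)$ bookkeeping.
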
    
\begin{proof}
We first remark that the right hand side in the theorem is contained in $\mathrm{LC}_{\mathrm{c}}(\mathbb{Q}_p^{\times}, L_{\infty})^{\Gamma}$ since we have 
$$\sigma_a(\varepsilon_L(W(D_{\mathrm{rig}})(\eta^{-1}),\zeta))=\mathrm{det}_LW(D_{\mathrm{rig}})(a)\cdot\eta(a)^{-2}\cdot \varepsilon_L(W(D_{\mathrm{rig}})(\eta^{-1}),\zeta)$$ for any $a\in \mathbb{Z}_p^{\times}$.
Since we have $$\varepsilon_L(W(D_{\mathrm{rig}})(\eta^{-1}),\zeta)\otimes_{L_{\infty},\iota_{\tau}} 1=\varepsilon(
(\pi_p(D)\otimes (\eta^{-1}\circ\mathrm{det}))\otimes_{L,\iota_{\tau}}\mathbb{C},\frac{1}{2},\iota_{\tau}\circ\psi_{\zeta})$$ for any $\tau$, the theorem follows from 
Theorem 37.3 of \cite{BH}.

\end{proof}

We recall that one has a canonical $B$-equivariant isomorphism 
$$\Pi(D)^{\mathrm{alg}}=\Pi(D)_{\mathrm{c}}^{\mathrm{alg}}\isom \mathrm{LP}_{\mathrm{c}}(\mathbb{Q}_p^{\times}, X^{-}_{\infty})^{\Gamma}$$ 
(under the assumption that $D$ is non-trianguline), by which we extend the action of $B$ on the right hand side to that of $G$ such that this isomorphism becomes $G$-equivariant. We denote this action by $\Pi(g)\cdot f$ for any $g\in G$ and $f\in \mathrm{LP}_{\mathrm{c}}(\mathbb{Q}_p^{\times}, X^{-}_{\infty})^{\Gamma}$.

To show the key proposition, we need the following corollary of Theorem \ref{BH}.

\begin{corollary}\label{3.12}For any pair $(r,\delta)$ such that $0\leqq r\leqq k-1$ and 
$\delta:\Gamma\rightarrow L^{\times}$ with finite image, we have
$$\Pi(w)\cdot h_{(r,\delta),m}=\frac{\delta(\sigma_{-1})\cdot r!}{(k-1-r)!}\cdot\frac{\varepsilon_L(W(D_{\mathrm{rig}}(\delta)),\zeta)}{(w_{\pi_p(D)}(p)\cdot p^{(k-1-r)})^{a(W(D_{\mathrm{rig}}(\delta)))+m}}\cdot\frac{\alpha_{\delta}^2}{\Omega}\cdot f_{(r,\delta),-a(W(D_{\mathrm{rig}}(\delta)))-m}$$

\end{corollary}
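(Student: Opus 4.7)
The plan is to transfer the computation of $\Pi(w)\cdot h_{(r,\delta),m}$ from the $p$-adic side $\Pi(D)^{\mathrm{alg}}$ to the classical Kirillov model of $\pi_p(D)$ and then invoke the explicit Bushnell--Henniart formula (Theorem~\ref{BH}). Via the $B$-equivariant isomorphism (\ref{9e}) composed with Emerton's $G$-equivariant theorem (\ref{Em}), I identify $\Pi(D)^{\mathrm{alg}}$ with $\pi_p(D)\otimes_L\mathrm{Sym}^{k-1}L^2$ as a $G$-representation, the $G$-action on the right-hand side being the tensor action $\Pi(g)=\pi_p(g)\otimes g$.

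First I invert (\ref{9e}) on the definition of $h_{(r,\delta),m}$: at $x=p^m a$ with $a\in\mathbb{Z}_p^{\times}$ the function equals $\delta(a)^{-1}\alpha_\delta (at)^r\overline{f}_2$, and rewriting $(at)^r=p^{-mr}(xt)^r$ shows that $h_{(r,\delta),m}$ corresponds to a scalar multiple of $(\alpha_{\delta^{-1}}^{-1}\xi_{\delta^{-1},m})\otimes e_1^r e_2^{k-1-r}$, using the normalization $\alpha_{\delta^{-1}}=\alpha_\delta^{-1}$. Next, applying $\Pi(w)=\pi_p(w)\otimes w$: on the symmetric power factor a direct computation yields $w\cdot e_1^r e_2^{k-1-r}=e_1^{k-1-r}e_2^r$, and on the Kirillov factor Theorem~\ref{BH} with $\eta=\delta^{-1}$ gives
\[
\pi_p(w)(\alpha_{\delta^{-1}}^{-1}\xi_{\delta^{-1},m})=\delta(-1)^{-1}\alpha_{\delta^{-1}}^{-1}\varepsilon_L(W(D_{\mathrm{rig}})(\delta),\zeta)\,\xi_{\delta\omega_{\pi_p(D)},-a(W(D_{\mathrm{rig}}(\delta)))-m}.
\]

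Analogously, inverting (\ref{9e}) on the definition of $f_{(r,\delta),m'}$ expresses it as a scalar multiple of $\xi_{\delta\omega_{\pi_p(D)},m'}\otimes e_1^{k-1-r}e_2^r$, where the scalar is computed from $\sigma_a(\Omega)=\tfrac{\mathrm{det}_{\mathcal{E}_L}D(\sigma_a)}{a^k}\Omega$ together with the identity $\omega_{\pi_p(D)}=\delta_D\cdot x^{-(k-1)}$. Setting $m'=-a(W(D_{\mathrm{rig}}(\delta)))-m$ and collecting all the factors $\alpha_\delta$, $\Omega$, $\omega_{\pi_p(D)}(p)^{m'}$, and the powers of $p$ produces the stated identity after simplification using $\omega_{\pi_p(D)}(p)=\delta_D(p)p^{-(k-1)}$.

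The main obstacle is the appeal to Emerton's theorem (\ref{Em}) in the non-trianguline case, which is a deep global input (via the completed cohomology of modular curves) and provides the essential bridge allowing the classical Bushnell--Henniart formula for $\pi_p(w)$ to enter the local $p$-adic picture of $\Pi(D)^{\mathrm{alg}}$. The remainder of the argument is a careful bookkeeping of normalizations, in particular the factorials $(k-1-i)!$ appearing in (\ref{9e}), the powers of $p$ arising from $(at)^r$ versus $(xt)^r$ under $x=p^m a$, and the paper's convention identifying $\delta:\Gamma\to L^\times$ with a character of $\mathbb{Z}_p^{\times}$ by $\delta(a):=\delta(\sigma_a)$.
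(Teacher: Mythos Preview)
Your proposal is correct and follows essentially the same route as the paper: transport $h_{(r,\delta),m}$ through the isomorphism (\ref{9e}) and Emerton's theorem (\ref{Em}) to $\pi_p(D)\otimes\mathrm{Sym}^{k-1}L^2$, apply $\Pi(w)=\pi_p(w)\otimes w$ using Theorem~\ref{BH} on the Kirillov factor and the explicit $w$-action on the symmetric power, and then identify the result with the image of $f_{(r,\delta),m'}$ under the same isomorphism. The paper carries out exactly these three steps with the same bookkeeping of factorials, $\alpha_\delta$, $\Omega$, and powers of $p$.
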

\begin{proof}
We first remark that, under the $B$-equivariant isomorphism 
\begin{multline}\label{27}
\mathrm{LC}_c(\mathbb{Q}_p^{\times},L_{\infty})^{\Gamma}
\otimes_L\mathrm{Sym}^{k-1}L^2\isom \mathrm{LP}_c(\mathbb{Q}_p^{\times}, X^{-}_{\infty})^{\Gamma}:\\
\phi_i\otimes e_1^ie_2^{k-1-i}\mapsto [x\mapsto (k-1-i)!\cdot \phi_i(x)(xt)^i]\cdot\overline{f}_2,
\end{multline}
$h_{(r,\delta),m}\in \mathrm{LC}_c(\mathbb{Q}_p^{\times},L_{\infty})^{\Gamma}$ 
corresponds to 
$$\frac{1}{(k-1-r)!}\cdot \alpha_{\delta}\cdot\xi_{\delta^{-1},m}\otimes e_1^re_2^{k-1-r}\in \mathrm{LC}_c(\mathbb{Q}_p^{\times},L_{\infty})^{\Gamma}
\otimes_L\mathrm{Sym}^{k-1}L^2.$$ 
Applying Theorem \ref{BH} to $\alpha_{\delta}\cdot\xi_{\delta^{-1},m}$ (where we regard $\delta:\Gamma\rightarrow L^{\times}$ as $\delta:\mathbb{Q}_p^{\times}\rightarrow L^{\times}$ by $\delta(p)=1$ and $\delta(a)=\delta(\sigma_a)$ for $a\in \mathbb{Z}_p^{\times}$), then
we obtain 
\begin{multline*}
\Pi(w)\cdot h_{(r,\delta),m}=
\frac{1}{(k-1-r)!}\cdot (\pi_p(w)\cdot (\alpha_{\delta}\cdot\xi_{\delta^{-1},m})\otimes (w\cdot (e_1^re_2^{k-1-r}))\\
=\frac{1}{(k-1-r)!}\cdot\delta(\sigma_{-1})\cdot\alpha_{\delta}\cdot\varepsilon_L(W(D_{\mathrm{rig}}(\delta)),\zeta)\cdot \xi_{\delta\cdot w_{\pi_p(D)}, -a(W(D_{\mathrm{rig}}(\delta)))-m}\otimes 
e_1^{k-1-r}e_2^r\\
=\delta(\sigma_{-1})\cdot\frac{r!}{(k-1-r)!}\cdot\frac{\varepsilon_L(W(D_{\mathrm{rig}}(\delta)),\zeta)}{(w_{\pi_p(D)}(p)\cdot p^{(k-1-r)})^{a(W(D_{\mathrm{rig}}(\delta)))+m}}\cdot 
 \frac{\alpha_{\delta}^2}{\Omega} 
\cdot f_{(r,\delta),-a(W(D_{\mathrm{rig}}(\delta)))-m},
\end{multline*}
where (we remark that one has $W(D_{\mathrm{rig}}(\delta))=W(D_{\mathrm{rig}})(\delta)$ and) the third equality follows from the fact that $f_{(r,\delta),m}$ corresponds to 
$$\frac{(w_{\pi_p(D)}(p)\cdot p^{(k-1-r)})^{-m}}{r!}\cdot (\alpha_{\delta}^{-1}\cdot\Omega)\cdot\xi_{\delta\cdot w_{\pi_p(D)}, m}\otimes 
e_1^{k-1-r}e_2^r$$
by the isomorphism (\ref{27}).
\end{proof}     

Finally, we prove Proposition \ref{key}.                                   

\begin{proof}(of Proposition \ref{key})
Take $x\in D^{\psi=1}$. Take $\widetilde{x}\in (D^{\natural}\boxtimes_{w_{\delta_D}}\bold{P}^1)^{g_p=1}$ such that $\mathrm{Res}_{\mathbb{Z}_p}(\widetilde{x})=x$. Then, $w\cdot \tilde{x}\in (D^{\natural}\boxtimes_{w_{\delta_D}}\bold{P}^1)^{g_p=\delta_D(p)}$ satisfies that 
$$\mathrm{Res}_{\mathbb{Z}_p}(w\cdot \widetilde{x})=w_{\delta_D}(x)\in D^{\delta_D(p)\psi=1}.$$ 
By Proposition \ref{formula1}, Proposition \ref{formula2} and Corollary \ref{3.12}, then we have 
\begin{multline*}
\beta_{(r,\delta)}(w_{\delta_D}(x))=-\frac{p-1}{p}
\cdot [w\cdot \tilde{x}, h_{(r,\delta),0}]_{\bold{P}^1}
=-\frac{p-1}{p}
\cdot \delta_D(\mathrm{det}(w))
\cdot [\tilde{x}, \Pi(w)\cdot h_{(r,\delta),0}]_{\bold{P}^1}\\
=-\frac{(p-1)\cdot\delta_D(\sigma_{-1})\cdot \delta(\sigma_{-1})\cdot r!}{p\cdot (k-1-r)!}\cdot \frac{\varepsilon_L(W(D_{\mathrm{rig}}(\delta)),\zeta)}{(w_{\pi_p(D)}(p)\cdot p^{(k-1-r)})^{a(W(D_{\mathrm{rig}}(\delta)))}}\cdot\frac{\alpha_{\delta}^2}{\Omega}\cdot 
[\tilde{x}, f_{(r,\delta),-a(W(D_{\mathrm{rig}}(\delta)))}]_{\bold{P}^1}\\
=\frac{r!}{(-1)^r}\cdot\frac{1}{(k-1-r)!}\cdot\frac{\varepsilon_L(W(D_{\mathrm{rig}}(\delta)),\zeta)}{(w_{\pi_p(D)}(p)\cdot p^{(k-1-r)})^{a(W(D_{\mathrm{rig}}(\delta)))}}\cdot\frac{\alpha_{\delta}^2}{\Omega}
\cdot \delta_D(p)^{a(W(D_{\mathrm{rig}}(\delta)))}\cdot\alpha_{(r,\delta)}(x)\\
=(p^r)^{a(W(D_{\mathrm{rig}}(\delta)))}\cdot
\frac{r!}{(-1)^r}\cdot\frac{1}{(k-1-r)!}\cdot\frac{\varepsilon_L(W(D_{\mathrm{rig}}(\delta)),\zeta)\cdot\alpha_{\delta}^2}{\Omega}
\cdot\alpha_{(r,\delta)}(x)\\
=\frac{r!}{(-1)^r}\cdot\frac{1}{(k-1-r)!}\cdot\frac{\varepsilon_L(W(
D_{\mathrm{rig}}(-r)(\delta)),\zeta)\cdot\alpha_{\delta}^2}{
\Omega}\cdot \alpha_{(r,\delta)}(x)
\end{multline*}
 for any pair $(r,\delta)$ as in Proposition \ref{key},
where the fifth equality follows from the equality $\frac{\delta_D(p)}{w_{\pi_p(D)}(p)\cdot p^{(k-1-r)}}=p^r$, and the sixth equality follows from the equality $$(p^r)^{a(W(D(\delta)))}\cdot\varepsilon_L(W(D_{\mathrm{rig}}(\delta)),\zeta)=\varepsilon_L(W(
D_{\mathrm{rig}}(-r)(\delta)),\zeta)$$
which follows from (5.5.3) of \cite{De73}.

\end{proof}

\section{A functional equation of Kato's Euler system}
Throughout this section, we fix embeddings $\iota_{\infty}:\overline{\mathbb{Q}}\hookrightarrow \mathbb{C}$ and 
$\iota_p:\overline{\mathbb{Q}}\hookrightarrow \overline{\mathbb{Q}}_p$, and fix an isomorphism 
$\iota:\mathbb{C}\isom \overline{\mathbb{Q}}_p$ such that $\iota\circ\iota_{\infty}=\iota_p$. Using this isomorphism, we identify $\Gamma(\mathbb{C}, \mathbb{Z}_l(1))\isom 
\Gamma(\overline{\mathbb{Q}}_p, \mathbb{Z}_l(1))=:\mathbb{Z}_l(1)$, and set $\zeta^{(l)}:=\{\iota(\mathrm{exp}(\frac{2\pi i}{l^n}))\}_{n\geqq 1}\in \mathbb{Z}_l(1)$ for each 
prime $l$.
Let $S$ be a finite set of primes containing $p$. 
Let $\mathbb{Q}_{S}(\subseteq \overline{\mathbb{Q}})$ be the maximal 
Galois extension of $\mathbb{Q}$ which is unramified outside $S\cup\{\infty\}$, and set $G_{\mathbb{Q}, S}:=\mathrm{Gal}(\mathbb{Q}_S/\mathbb{Q})$. Set $c\in G_{\mathbb{Q},S}$ be the restriction by $\iota_{\infty}$ of the complex conjugation. For each $\mathbb{Z}[G_{\mathbb{R}}]$-module $M$ and $k\in \mathbb{Z}$, we define a canonical  $G_{\mathbb{R}}$-equivariant map $M(k):=M\otimes_{\mathbb{Z}}\mathbb{Z}(2\pi i)^k\rightarrow M_{\mathbb{Z}_p}(k):=(M\otimes_{\mathbb{Z}}\mathbb{Z}_p)\otimes_{\mathbb{Z}_p}\mathbb{Z}_p(k)$ by $x\otimes (2\pi i)^k\mapsto x\otimes (\zeta^{(p)})^{\otimes k}$
using the basis $\zeta^{(p)}\in \mathbb{Z}_p(1)$. We set $M^{\pm}:=M^{c=\pm 1}$.

\subsection{The generalized Iwasawa main conjecture and the global $\varepsilon$-conjecture}
In this subsection, we roughly recall the generalized Iwasawa main conjecture and the global $\varepsilon$-conjecture. See the original articles \cite{Ka93a}, \cite{Ka93b} and \cite{FK06} for the precise formulations. 

Let $T$ be an $R$-representation of $G_{\mathbb{Q}, S}$. We set 
$$\mathrm{H}^i(\mathbb{Z}[1/S], T):=\mathrm{H}^i(C_{\mathrm{cont}}^{\bullet}(G_{\mathbb{Q},S}, T))$$ for $i\geqq 0$. For each $l\in S$ and $i=1,2$, we set $$\Delta_{R,i}^{(l)}(T):=\Delta_{R,i}(T|_{G_{\mathbb{Q}_l}})\text{ and }\Delta_{R}^{(l)}(T):=\Delta_{R}(T|_{G_{\mathbb{Q}_l}})$$ which are defined in $\S 2.1$. 
Set $c_T:=\mathrm{rank}_R T(-1)^{+}$ (remark that $T(-1)^{+}$ is a finite projective $R$-module since we have assumed that $2\in R^{\times}$). We define
$$\Delta_{R,1}^{S}(T):=\mathrm{Det}_R(C^{\bullet}_{\mathrm{cont}}(G_{\mathbb{Q},S}, T))^{-1},\,\,\,\,\,\Delta_{R,2}^{S}(T):=
(\mathrm{det}_R(T(-1)^{+}), c_T )^{-1}$$
and 
$$\Delta_R^{S}(T):=\Delta_{R,1}^{S}(T)\boxtimes \Delta_{R,2}^{S}(T).$$
We remark that $\Delta_R^{S}(T)$ is a graded invertible $R$-module of degree zero by the global Euler-Poincar\'e characteristic formula. 

In \cite{Ka93a}, Kato proposed the following conjecture called the generalized Iwasawa main conjecture. See Conjecture 3.2.2 of \cite{Ka93a} or Conjecture 2.3.2 of \cite{FK06} for the precise formulation, in particular, for the interpolation condition of the zeta isomorphism.

\begin{conjecture}\label{GIMC}
For any pair $(R,T)$ such that $T$ is an $R$-representation of $G_{\mathbb{Q},S}$, one can define an $R$-linear isomorphism (which we call the zeta isomorphism)
$$z^S_R(T):\bold{1}_R\isom\Delta_R^S(T)$$
which is compatible with any base change and any exact sequence and satisfies the following: for any pair $(L, V)=(R, T)$ which comes from a motive $M$ over $\mathbb{Q}$ with coefficients in $L$, $z^{S}_L(V)$ can be described by the special value of 
the $L$-function associated to $V^*$ (under the validity of the meromorphic continuation of the $L$-function and the Deligne-Beilinson conjecture for $M$).
\end{conjecture}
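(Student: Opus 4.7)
The plan is to treat Conjecture \ref{GIMC} case by case, since the full statement is a major open problem in arithmetic geometry. I would first construct $z^S_R(T)$ in the rank one case, where $T = \mathbb{Z}_p(r)(\chi)$ for a Dirichlet character $\chi$, using cyclotomic units (for $r=1$) and their higher Beilinson--Soul\'e analogues, together with Kato's rank one $\varepsilon$-isomorphism recalled in Remark \ref{rankone}. The interpolation property in this case reduces to the classical Iwasawa main conjecture for cyclotomic fields, established by Mazur--Wiles and refined equivariantly by Burns--Flach, Greither, Huber--Kings, and others.

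For the cyclotomic deformation $\bold{Dfm}(T_f)$ attached to a Hecke eigen new form $f$, which is the case of primary interest for the present paper, my approach would proceed as follows. First, define a candidate $\widetilde{z}^{\mathrm{Iw},S}_{\mathcal{O}_L}(T_f(r))$ from the $p$-th layer of Kato's Euler system after base change to $Q(\Lambda_{\mathcal{O}_L}(\Gamma))$, exactly as in \S4.2 of this paper. Next, obtain the ``one divisibility'' of characteristic ideals via the Euler system/Kolyvagin--Rubin machinery, which is essentially Kato's theorem in \S12--13 of \cite{Ka04}. Then, establish the reverse divisibility under appropriate hypotheses via the Skinner--Urban method of Eisenstein congruences on unitary groups. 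Finally, combine the two divisibilities to conclude that the candidate descends to an integral isomorphism, which is the content of Conjecture \ref{4.6.1}. The interpolation at critical specializations would follow from Kato's explicit reciprocity law together with the classical functional equation, in the spirit of the argument for Theorem \ref{TheoremB} given in \S4.3 below.

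The main obstacle is threefold. Even in the modular case, the Skinner--Urban step requires residual irreducibility and other technical hypotheses that cannot be removed with present technology, so one obtains the conjecture only conditionally. Furthermore, verifying the interpolation property at every critical specialization requires knowing the local $\varepsilon$-conjecture (Conjecture \ref{2.2}) at all primes in $S$, which for $l=p$ is itself only known in the cases treated by Theorem \ref{TheoremA} of this paper, so the global statement is coupled to the local one. Most seriously, for motives not coming from automorphic forms on $\mathrm{GL}_2/\mathbb{Q}$ --- for instance, higher rank symmetric powers at non-critical weights, or genuinely ``exotic'' motives --- no Euler system is available at all, and consequently there is no known construction of a candidate $z^S_R(T)$ whose interpolation one could even attempt to verify. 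My proposal is therefore really a program for the $\mathrm{GL}_2/\mathbb{Q}$ case rather than a proof strategy for the conjecture in the generality stated, and accordingly I expect the paper will use Conjecture \ref{GIMC} only as motivation and context for the proof of Theorem \ref{TheoremB}, not attempt to establish it.
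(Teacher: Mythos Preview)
Your assessment is correct: the paper does not attempt to prove Conjecture \ref{GIMC}. It is stated purely as background for the global $\varepsilon$-conjecture, and the only commentary the paper offers is Remark \ref{4.2.1}, which records exactly the rank one story you outline (cyclotomic units and Stickelberger elements for the construction; Mazur--Wiles and Rubin for existence; Bloch--Kato, Burns--Greither, Huber--Kings for the interpolation property). Your sketch of the modular-forms case via Kato's Euler system plus a Skinner--Urban reverse divisibility is a reasonable summary of the state of the art, but it goes well beyond anything the paper does; the paper only defines the \emph{candidate} $\widetilde{z}^{\mathrm{Iw},S}_{\mathcal{O}_L}(T_f(r))$ over the fraction field $Q(\Lambda_{\mathcal{O}_L}(\Gamma))$ and proves the functional equation (Theorem \ref{4.5}) relating the candidates for $f$ and $f^*$, leaving the integral descent (Conjecture \ref{4.6.1}) open.
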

\begin{rem}\label{4.2.1}
In the rank one case, Kato (in $\S 3.3$ of \cite{Ka93a}) defined the zeta isomorphism using the cyclotomic units and the Stickelberger elements. In this case, the existence of the zeta isomorphism is essentially equivalent to the Iwasawa main conjecture proved by Mazur-Wiles and Rubin, and the interpolation condition of the zeta isomorphism (i.e. the relation with the 
special values of Dirichlet $L$-functions) is essentially equivalent to the $p$-part of the Bloch-Kato's Tamagawa number conjecture for Artin motives over $\mathbb{Q}$ which is proved by Bloch-Kato \cite{BK90} (in a special case), Burns-Greither \cite{BG03} and Huber-Kings \cite{HK03} (in the general case).

\end{rem}

We next recall the global $\varepsilon$-conjecture. We first need to recall 
the definition of the canonical isomorphism 
\begin{equation}\label{PT}
\Delta_{R}^S(T^*)\isom \boxtimes_{l\in S}\Delta^{(l)}_R(T)\boxtimes \Delta^S_R(T)
\end{equation}
induced by the Poitou-Tate duality. By the Poitou-Tate duality, one has a canonical quasi-isomorphism 
$$\bold{R}\mathrm{Hom}_R(C^{\bullet}_{\mathrm{cont}}(G_{\mathbb{Q},S}, T^*),R)[-2]
\isom \mathrm{Cone}(C^{\bullet}_{\mathrm{cont}}(G_{\mathbb{Q},S}, T)\rightarrow \oplus_{l\in S}C^{\bullet}_{\mathrm{cont}}(G_{\mathbb{Q}_l}, T|_{G_{\mathbb{Q}_l}}))[-1], $$
from which we obtain a canonical isomorphism 
\begin{equation}\label{eq1}
(\Delta_{R,1}^{S}(T^*)^{-1})^{\vee}
\isom \mathrm{Det}_R(\bold{R}\mathrm{Hom}_R(C^{\bullet}_{\mathrm{cont}}(G_{\mathbb{Q},S}, T^*),R))
\isom \boxtimes_{l\in S}
\Delta_{R,1}^{(l)}(T)\boxtimes \Delta_{R,1}^{S}(T).
\end{equation}
We next define the following isomorphism 
\begin{equation}\label{eq2}
(\Delta_{R,2}^S(T^*)^{-1})^{\vee}\isom (\mathrm{det}_RT, r_T)\boxtimes \Delta^S_{R,2}(T)
\isom \boxtimes_{l\in S}\Delta_{R,2}^{(l)}(T)\boxtimes \Delta_{R,2}^S(T), 
\end{equation}
where the first isomorphism is naturally induced by the isomorphism 
$$T^{+}\oplus T(-1)^{+}\isom T:(x, y)\mapsto 2x+\frac{1}{2}y\otimes \zeta^{(p)}$$
and the canonical isomorphism
$$T^{+}\isom((T^{\vee})^{\vee})^{+}
=(T^*(-1)^{\vee})^{+}\isom (T^*(-1)^{+})^{\vee}$$
(where the last map is defined by $f\mapsto f|_{T^*(-1)^{+}}$),
and the second isomorphism is induced by the isomorphism 
$$(\mathrm{det}_RT, r_T)\isom \boxtimes_{l\in S}\Delta_{R,2}^{(l)}(T)$$ defined by (the inverse of) the isomorphism
$$\otimes_{l\in S}R_{a_{l}(T)}\otimes \mathrm{det}_RT\isom \mathrm{det}_RT:(\otimes_{l\in S} x_l)\otimes y\mapsto \left(\prod_{l\in S} x_l\right)\cdot y$$ for 
$x_l\in R_{a_l(T)}$ and $y\in \mathrm{det}_RT$
(remark that one has $\otimes_{l\in S}R_{a_{l}(T)}=R$ since one has
$\prod_{l\in S}a_l(T)=1$ by the global class field theory). Finally, the isomorphism (\ref{PT}) is defined as the product of the isomorphisms (\ref{eq1}) and (\ref{eq2}) (remark that one has $(\Delta^S_{R}(T^*)^{-1})^{\vee}=\Delta^S_{R}(T^*)$ since $\Delta^S_{R}(T^*)$ is of degree zero).


 In \cite{Ka93b}, Kato proposed the following conjecture called the global $\varepsilon$-conjecture 
 (see Conjecture 1.13 of \cite{Ka93b} or Conjecture 3.5.5 of \cite{FK06}). 
 
 \begin{conjecture} For any pair $(R,T)$ such that $T$ is an $R$-representation of $G_{\mathbb{Q},S}$, the conjectural zeta isomorphism
 $z^S_R(T_0):\bold{1}_R\isom\Delta_R^S(T_0)$ for $T_0=T, T^*$ and the conjectural $p$-adic local $\varepsilon$-isomorphism $\varepsilon_{R,\zeta^{(p)}}
 ^{(p)}(T):\bold{1}_R\isom\Delta_R^{(p)}(T)$, and the $l$-adic local $\varepsilon$-isomorphism 
 $\varepsilon_{R,\zeta^{(l)}}^{(l)}(T):\bold{1}_R\isom \Delta_R^{(l)}(T)$ defined by \cite{Ya09} for 
 $l\in S\setminus \{p\}$ satisfy the equality
 \begin{equation}
z_R^S(T^*)=\boxtimes_{l\in S}\varepsilon_{R,\zeta^{(l)}}^{(l)}(T)\boxtimes z_R^S(T)
\end{equation}
under the canonical isomorphism $\Delta_{R}^S(T^*)\isom \boxtimes_{l\in S}\Delta^{(l)}_R(T)\boxtimes \Delta^S_R(T)$.

 \end{conjecture}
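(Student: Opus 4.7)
The plan is to reduce the conjecture, in a series of steps, to the classical functional equation of motivic $L$-functions, following the same philosophy that the paper uses for its Theorem \ref{TheoremB}. The left-hand side $z_R^S(T^*)$ and the right-hand side $\boxtimes_{l\in S}\varepsilon^{(l)}_{R,\zeta^{(l)}}(T)\boxtimes z_R^S(T)$ are both trivializations of $\Delta_R^S(T^*)$, so the conjecture asserts that a certain unit in $R^{\times}$ is equal to $1$; thus it is amenable to density/interpolation arguments once verified on a sufficiently large locus.

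First I would use the compatibility with base change (axioms shared by $z^S_R(-)$, $\varepsilon^{(p)}_{R,\zeta^{(p)}}(-)$, and Yasuda's $\varepsilon^{(l)}_{R,\zeta^{(l)}}(-)$ for $l\neq p$) to reduce to the universal deformation setting. One picks a residual representation $\overline{V}$ and a universal deformation $V^{\mathrm{univ}}$ over $R_{\overline{V}}$, and shows the equality holds provided it holds after specialization to a Zariski dense set of closed points $x\in\mathcal{X}=\mathrm{Spec}(R_{\overline{V}}[1/p])$. The natural candidates for this dense set are the points corresponding to geometric $p$-adic representations, i.e.\ those arising from motives over $\mathbb{Q}$ (for Galois representations attached to modular forms these form a Zariski dense set in Hida/Coleman families, and more generally one expects density via deformation-theoretic methods, cf.\ the discussion in Proposition~\ref{3.3.1} and Remark~\ref{3.4}).

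At a motivic point $(L, V)$ attached to a motive $M$ over $\mathbb{Q}$, the conjectural interpolation property (Conjecture \ref{GIMC}) forces $z_L^S(V)$ and $z_L^S(V^*)$ to be expressible in terms of leading terms of the $L$-function $L(M^*,s)$ at $s=0$ and $L(M,s)$ at $s=0$. The equality in the global $\varepsilon$-conjecture then becomes precisely the functional equation
$$L(M^*, s) = \varepsilon(M,s)\cdot L(M, -s)$$
where $\varepsilon(M,s)$ decomposes into local $\varepsilon$-factors at each finite place (together with the archimedean contribution absorbed into $\Delta_{R,2}^S$). Specifically one would use (i) the classical functional equation to handle the global $L$-factor ratio, (ii) Yasuda's theorem \cite{Ya09} to match the local factors at $l\neq p$ with $\varepsilon^{(l)}_{L,\zeta^{(l)}}(V)$, (iii) the de Rham condition (Theorem \ref{TheoremA}) to match the local factor at $p$ with $\varepsilon^{(p)}_{L,\zeta^{(p)}}(V)$, using the identification of Hodge-Tate weights with infinity-type, and (iv) the definition of $\Delta^S_{R,2}$ via $T(-1)^+$ to handle the archimedean compatibility (sign/gamma factors).

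The hard part will be twofold. First, the zeta isomorphism $z_R^S(-)$ itself is only \emph{conjectural}, so any unconditional proof must construct both $z_R^S(T)$ and $z_R^S(T^*)$ explicitly; this is where Kato's Euler system enters in the paper, producing $\widetilde{z}_{\mathcal{O}_L}^{\mathrm{Iw},S}(T_f(r))$ in $\S 4.2$, so the modular-form case of Theorem \ref{TheoremB} is precisely what can be proved by this strategy. Second, to extend beyond motivic points one needs $p$-adic interpolation of zeta isomorphisms (existence of $p$-adic $L$-functions in families) which, outside the Hida/Coleman setting, is largely open. Thus a realistic approach would prove the conjecture unconditionally only when $T$ admits an Euler system satisfying an explicit reciprocity law, reducing the desired equality to (a) Theorem \ref{TheoremA} at $p$, (b) Yasuda's theorem away from $p$, and (c) the classical functional equation of the underlying motivic $L$-function.
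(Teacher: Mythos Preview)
The statement you are addressing is a \emph{conjecture}, and the paper does not prove it; there is no ``paper's own proof'' to compare against. The paper only establishes the special instance Theorem~\ref{4.5} (the cyclotomic deformation of a modular form under irreducibility and non-exceptional-zero hypotheses), and in the rank one case cites Kato~\cite{Ka93b}. Your final paragraph correctly identifies this: the only cases accessible are those where both $z_R^S(T)$ and $z_R^S(T^*)$ can be \emph{constructed} via Euler systems with explicit reciprocity laws, and the reduction to (a) Theorem~\ref{TheoremA} at $p$, (b) Yasuda at $l\ne p$, (c) the classical functional equation is exactly the architecture of the paper's proof of Theorem~\ref{4.5}.

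Your opening density strategy, however, has a genuine gap even as a heuristic. Reducing via base change to a universal deformation ring of a \emph{global} residual representation and then specializing to motivic points does not work the way it does in the local situation of Proposition~\ref{3.3.1}. First, density of motivic (or even geometric) points in $\mathrm{Spec}(R_{\overline{V}}[1/p])$ for global $\overline{V}$ is itself a deep open problem, far beyond what is available for local deformation rings. Second, and more fundamentally, even \emph{at} a motivic point the zeta isomorphism $z_L^S(V)$ is only defined conditionally on the Deligne--Beilinson conjecture (see the statement of Conjecture~\ref{GIMC}); so you would be reducing one conjecture to another of comparable depth, not to the classical functional equation alone. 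The paper avoids this circularity in the modular-form case precisely because Kato's Euler system furnishes an \emph{unconditional} candidate $\widetilde{z}^{\mathrm{Iw},S}_{\mathcal{O}_L}(T_{f_0}(r))$, and Kato's explicit reciprocity law (Theorem~12.5 of \cite{Ka04}) replaces the conjectural Beilinson-type input. Your proposal should therefore drop the general density reduction and present itself from the outset as a template applicable only when such Euler-system input is available.
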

 \begin{rem}
 In the rank one case and when $S=\{p\}$, this conjecture is also proved by Kato in $\S$ 4.22 of \cite{Ka93b} (and it seems to be possible to prove it for the general $S$ case in a similar way). More precisely, he proved that the zeta isomorphism defined in $\S 3.3$ of \cite{Ka93a} and  the local $p$-adic $\varepsilon$-isomorphism defined in \cite{Ka93b} for the rank one case satisfies the equality in the  conjecture above.
 
 \end{rem}
 

\subsection{Statement of the main theorem on the global $\varepsilon$-conjecture}Let $k, N\geqq 1$ be positive integers.
Let $f(\tau)=\sum_{n=1}^{\infty}a_n(f) q^n\in S_{k+1}(\Gamma_1(N))^{\mathrm{new}}$ be a normalized Hecke eigen new form of level $N$, weight $k+1$, where $\tau\in \mathbb{C}$ such that 
$\mathrm{Im}(\tau)>0$, $q:=\mathrm{exp}(2\pi i\tau)$ and 
 $\Gamma_1(N):=\left\{g\in\mathrm{SL}_2(\mathbb{Z})\,\left|\,g\equiv \begin{pmatrix}1& *\\ 0 &1\end{pmatrix} \text{ mod } N\right.\right\}$. Set 
$f^*(\tau):=\sum_{n=1}^{\infty}\overline{a_n(f)}q^n$ ( $\overline{a_n(f)}$ is the complex conjugation of $a_n(f)$), which is also a Hecke eigen 
new form in $S_{k+1}(\Gamma_1(N))^{\mathrm{new}}$ by the theory of new forms.

 For each homomorphism $\delta:\mathbb{Z}_p^{\times}\rightarrow \mathbb{C}^{\times}$ with finite image (which we naturally regard as a Dirichlet character $\delta:(\mathbb{Z}/p^{n(\delta)})^{\times}\rightarrow \mathbb{C}^{\times}$ ($n(\delta)$ is the conductor of $\delta$), or a Hecke character $\delta:\mathbb{A}_{\mathbb{Q}}^{\times}/\mathbb{Q}^{\times}\rightarrow \mathbb{C}^{\times}$), define 
 $$L(f,\delta,s):=\sum_{n\geqq 1}\frac{a_n(f)\delta(n)}{n^{s}}\text{ and } L_{\{p\}}(f,\delta,s):=\sum_{n\geqq 1, (n,p)=1}\frac{a_n(f)\delta(n)}{n^{s}}$$

 These functions 
 absolutely converge when $\mathrm{Re}(s)>\frac{k}{2}+1$. The $L$-function $L(f,\delta,s)$ is analytically continued to the whole $\mathbb{C}$, and, if we denote by $\pi_f=\otimes'_{v:\text{place of }\mathbb{Q}}\pi_{f,v}$ the automorphic cuspidal representation 
 of $\mathrm{GL}_2(\mathbb{A}_{\mathbb{Q}})$ associated to $f$, then it satisfies the following functional equation 
 \begin{equation}\label{fe}
 \Gamma_{\mathbb{C}}(s)\cdot L(f,\delta,s)=\varepsilon(f,\delta,s)\cdot \Gamma_{\mathbb{C}}(k+1-s)\cdot 
 L(f^*, \delta^{-1}, k+1-s)\,\,\, (s\in \mathbb{C}),
 \end{equation}
 where we set $\Gamma_{\mathbb{C}}(s):=\frac{\Gamma(s)}{(2\pi)^s}$ and $\varepsilon(f,\delta,s)$ is 
 the global $\varepsilon$-factor associated to $\pi_f\otimes (\delta\circ\mathrm{det})$, which is defined as the product of the local $\varepsilon$-factors
 $$\varepsilon(f,\delta,s)=\varepsilon_{\infty}(f,\delta,s)\prod_{l\in S}\varepsilon_{l}(f,\delta,s),$$
 where, for $v\in S\cup\{\infty\}$, $\varepsilon_{v}(f,\delta,s)$ is the local $\varepsilon$-factor associated to the $v$-th component $\pi_{f,v}\otimes (\delta\circ\mathrm{det})$ with respect to the additive character $\psi_v:\mathbb{Q}_v\rightarrow \mathbb{C}^{\times}$ and the Haar measure $dx_v$ on $\mathbb{Q}_v$ which are uniquely characterized by 
 $\psi_{\infty}(a):=\mathrm{exp}(-2\pi i\cdot a)$ ($a\in \mathbb{R}$), $\psi_l(\frac{1}{l^n})
 =\mathrm{exp}(\frac{2\pi i}{l^n})$ ($n\in \mathbb{Z}$), $\int_{\mathbb{Z}_l}dx_l=1$ and $dx_{\infty}$ is the standard Lebesgue measure on $\mathbb{R}$. We remark that one has 
 \begin{equation}
 \varepsilon_{\infty}(f,\delta,s)=i^{k+1}.
 \end{equation}
 
 Set $F:=\mathbb{Q}(\{\iota_{\infty}^{-1}(a_n(f))\}_{n\geqq 1})\subseteq \overline{\mathbb{Q}}$, $L:=\mathbb{Q}_p(\{\iota_p(\iota_{\infty}^{-1}(a_n(f)))\}_{n\geqq 1})\subseteq \overline{\mathbb{Q}}_p$ and $S:=\{l|N\}\cup \{p\}$. Let denote by $\mathcal{O}_F$, $\mathcal{O}_L$ the rings of integers of $F$, $L$ respectively. For $f_0=f,f^*$, let 
$T_{f_0}$ be the $\mathcal{O}_L$-representation of $G_{\mathbb{Q},S}$ of rank two associated to $f_0$ which is obtained as a quotient of the \'etale cohomology (with coefficients) of a modular curve (this is denoted by $V_{\mathcal{O}_{\lambda}}(f_0)$ in $\S$ 8.3 of \cite{Ka04}). Set $V_{f_0}=T_{f_0}[1/p]$.
By the Poincar\'e duality of the \'etale cohomology of a modular curve, one has a canonical $G_{\mathbb{Q},S}$-equivariant isomorphism $$V_{f^*}(1)\isom(V_f(k))^*,$$ 
which induces a canonical isomorphism $\Delta_L^{\mathrm{Iw},S}(V_{f^*}(1))\isom\Delta_{L}^{\mathrm{Iw},S}((V_f(k))^*)$. Since the sub $\Lambda_{\mathcal{O}_L}$-module $\Delta_{\mathcal{O}_L}^{\mathrm{Iw},S}(T)$ of $\Delta_L^{\mathrm{Iw},S}(V)$ is independent of 
the choice of $G_{\mathbb{Q},S}$-stable lattice $T$ of $V$ for any $L$-representation $V$ of $G_{\mathbb{Q},S}$ (because $\Delta_{\mathcal{O}_L}^{\mathrm{Iw},S}(T)$ is of grade zero), the latter also induces a canonical isomorphism 
$$\Delta_{\mathcal{O}_L}^{\mathrm{Iw},S}(T_{f^*}(1))\isom \Delta_{\mathcal{O}_L}^{\mathrm{Iw},S}((T_{f}(k))^*).$$
Therefore, we obtain a canonical isomorphism 
\begin{equation}\label{323}
\Delta_{\mathcal{O}_L}^{\mathrm{Iw},S}(T_{f^*}(1))^{\iota}\isom \Delta_{\mathcal{O}_L}^{\mathrm{Iw},S}((T_{f}(k))^*)^{\iota}\isom \Delta_{\Lambda_{\mathcal{O}_L}(\Gamma)}^S(\bold{Dfm}(T_f(k))^*),
\end{equation}
where the second isomorphism is defined in the same way as in the last part of \S2.1.

We denote by $Q(\Lambda_{\mathcal{O}_L}(\Gamma))$ the total fraction ring of $\Lambda_{\mathcal{O}_L}(\Gamma)$. For a $\Lambda_{\mathcal{O}_L}(\Gamma)$-module or a 
graded invertible $\Lambda_{\mathcal{O}_L}(\Gamma)$-module $M$, we set
$$M_{Q}:=M\otimes_{\Lambda_{\mathcal{O}_L}(\Gamma)}Q(\Lambda_{\mathcal{O}_L}(\Gamma))$$
to simplify the notation.
Using (the $p$-th layer of) the Kato's Euler system associated to $f_0$, we define below a (candidate of the) zeta-isomorphism
$$\widetilde{z}_{\mathcal{O}_L}^{\mathrm{Iw},S}(T_{f_0}(r)):\bold{1}_{Q(\Lambda_{\mathcal{O}_L}(\Gamma))}\isom \Delta^{\mathrm{Iw}, S}
_{\mathcal{O}_L}(T_{f_0}(r))_{Q}$$
for $f_0=f, f^*$ and $r\in \mathbb{Z}$. 

Before defining this isomorphism, we state the second main theorem of this article concerning the global $\varepsilon$-conjecture, whose proof is given in the next subsection. 

\begin{thm}\label{4.5}
 Assume $V_f|_{G_{\mathbb{Q}_p}}$ is absolutely irreducible and 
 $\bold{D}_{\mathrm{cris}}(V_f(-r)(\delta))^{\varphi=1}\allowbreak=0$ for any $0\leqq r\leqq k-1$ and 
$\delta:\Gamma\rightarrow \overline{\mathbb{Q}}_p^{\times}$ with finite image. Then, one has the equality 
$$\widetilde{z}_{\mathcal{O}_L}^{\mathrm{Iw},S}(T_{f^*}(1))^{\iota}=\boxtimes_{l\in S}\left(\varepsilon_{\mathcal{O}_L}^{\mathrm{Iw}, (l)}
(T_f(k))\otimes \mathrm{id}_{Q(\Lambda_{\mathcal{O}_L}(\Gamma))}\right)\boxtimes 
\widetilde{z}_{\mathcal{O}_L}^{\mathrm{Iw},S}(T_{f}(k))$$
under the isomorphism obtained by the base change to $Q(\Lambda_{\mathcal{O}_L}(\Gamma))$ of the canonical isomorphism 
$$
\Delta^{\mathrm{Iw}, S}_{\mathcal{O}_L}(T_{f^*}(1))^{\iota}
\isom \boxtimes_{l\in S}\Delta^{\mathrm{Iw}, (l)}_{\mathcal{O}_L}(T_f(k))\boxtimes \Delta^{\mathrm{Iw},S}_{\mathcal{O}_L}(T_f(k))
$$
defined by $(\ref{PT})$ for 
$(R,T)=\left(\Lambda_{\mathcal{O}_L}(\Gamma), \bold{Dfm}(T_f(k))\right)$ and $(\ref{323})$, 
where  the isomorphism 
$$\varepsilon_{\mathcal{O}_L}^{\mathrm{Iw}, (l)}(T_f(k)):=\varepsilon^{\mathrm{Iw}}_{\mathcal{O}_L,\zeta^{(l)}}
(T_f(k)|_{G_{\mathbb{Q}_l}}):\bold{1}_{\Lambda_{\mathcal{O}_L}(\Gamma)}\isom 
\Delta_{\mathcal{O}_L}^{\mathrm{Iw}, (l)}(T_f(k))$$ is the local $\varepsilon$-isomorphism defined by Theorem \ref{3.0} $($resp. \cite{Ya09}$)$ for $l=p$ $($resp.$l\not=p$$)$ for the pair $(\Lambda_{\mathcal{O}_L}(\Gamma), \bold{Dfm}(T_f(k)|_{G_{\mathbb{Q}_l}}))$.

\end{thm}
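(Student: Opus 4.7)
The goal is to compare two $Q(\Lambda_{\mathcal{O}_L}(\Gamma))$-linear isomorphisms $\bold{1}_Q \isom \Delta_{\mathcal{O}_L}^{\mathrm{Iw},S}(T_{f^*}(1))^{\iota}_Q$. Since both sides trivialize the same graded invertible line of total degree zero, their ratio is a unit $u \in Q(\Lambda_{\mathcal{O}_L}(\Gamma))^{\times}$, and the theorem amounts to showing $u=1$. The plan is to evaluate $u$ at sufficiently many specializations and invoke a density/injectivity argument to conclude. More precisely, consider the $(\varphi,\Gamma)$-module $D := D(V_f(k)|_{G_{\mathbb{Q}_p}})$; by hypothesis it is absolutely irreducible, de Rham with Hodge--Tate weights $\{0,k\}$ (after the twist by $k$, one of them becomes $\{-k,0\}$, etc.), and $\bold{D}_{\mathrm{cris}}(D(-r)(\delta))^{\varphi=1}=0$ for the relevant $(r,\delta)$. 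Hence Corollary~\ref{3.12.123} applies to $D$ and shows that the specialization maps at characters $\chi^{-r}\delta$ (with $0\le r\le k-1$ and $\delta:\Gamma\to\overline L^{\times}$ of finite image) suffice to detect equality in the relevant Iwasawa cohomology up to $Q(\Lambda_{\mathcal{O}_L}(\Gamma))$-scalars. In particular, it is enough to check that the two sides become equal after applying $f_{\chi^{-r}\delta}$ for every such pair $(r,\delta)$.

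I would then specialize the asserted equality at an arbitrary such pair $(r,\delta)$. On the global factors, the definition of $\widetilde{z}^{\mathrm{Iw},S}_{\mathcal{O}_L}(T_f(k))$ via (the $p$-th layer of) Kato's Euler system, combined with Kato's explicit reciprocity law (Theorem~12.5 of \cite{Ka04}), identifies the image of the specialized class under $\mathrm{exp}^{*}$ on the local factor at $p$ with a concrete multiple of the critical value $L_{\{p\}}(f,\delta^{-1},k-r)$ times an explicit period attached to $f$; the analogous statement for $f^*$ yields a corresponding expression involving $L_{\{p\}}(f^*,\delta,r+1)$. On the local side at $l\neq p$, Yasuda's $\varepsilon_{0,L}(-,\zeta^{(l)})$ specializes to the local $\varepsilon$-factor $\varepsilon_l(f,\delta,k-r)$ (up to the sign from $\det(-\mathrm{Fr}_l|V^{I_l})$), and at $l=p$, Theorem~\ref{3.0}(3)(ii) (using the absolute irreducibility and the vanishing of $\bold{D}_{\mathrm{cris}}^{\varphi=1}$) gives $\varepsilon_{L,\zeta^{(p)}}(V_f(k-r)(\delta))=\varepsilon^{\mathrm{dR}}_{L,\zeta^{(p)}}(V_f(k-r)(\delta))$, so the local factor at $p$ specializes to the product of the de Rham $\varepsilon$-constant, a $\Gamma^*$-factor, and the Bloch--Kato dual exponential.

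Putting these ingredients together, the specialized equality becomes an identity between the two expressions coming from Kato's reciprocity law for $f$ and for $f^*$, mediated by the product of all local $\varepsilon$-factors and the archimedean $\Gamma$-factors. After carefully tracking the period comparison between $V_f(k)$ and $V_{f^*}(1)$ (which appears in the identification $T_{f^*}(1)[1/p]\isom T_f(k)[1/p]$ and in the Poitou--Tate isomorphism \eqref{PT}), this identity is precisely the functional equation \eqref{fe} for $L(f,\delta^{-1},s)$ at $s=k-r$:
\[
\Gamma_{\mathbb{C}}(k-r)L(f,\delta^{-1},k-r)
=\varepsilon(f,\delta^{-1},k-r)\Gamma_{\mathbb{C}}(r+1)L(f^*,\delta,r+1),
\]
combined with the archimedean factor $\varepsilon_{\infty}(f,\delta^{-1},k-r)=i^{k+1}$ matching the sign contribution from $\Delta_{R,2}^{S}$.

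I expect the genuine obstacle to lie not in any of the local comparisons in isolation but in the precise bookkeeping that ties them together: matching the Tate-normalized versus unitary-normalized periods of $f$ and $f^*$, ensuring the Poitou--Tate trivialization \eqref{PT} is applied with the correct sign/orientation (the $c_T$ and $r_T$ grading pieces), and checking that the rational factors $\frac{r!}{(-1)^r}\cdot\frac{1}{(k-r-1)!}$ coming from $\Gamma_L(V)$ in the de Rham $\varepsilon$-isomorphism together with the constants appearing in Kato's explicit reciprocity law cancel against $\Gamma_{\mathbb{C}}(k-r)/\Gamma_{\mathbb{C}}(r+1)$ in the functional equation. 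Once this accounting is carried out for one pair $(r,\delta)$, it holds uniformly in $(r,\delta)$, and Corollary~\ref{3.12.123} then upgrades the specialized equalities to the asserted equality of $Q(\Lambda_{\mathcal{O}_L}(\Gamma))$-linear isomorphisms.
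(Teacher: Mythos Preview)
Your overall strategy matches the paper's exactly: reduce to the classical functional equation \eqref{fe} via Kato's explicit reciprocity law on both sides, Yasuda's description of $\varepsilon^{(l)}$ for $l\neq p$, the de Rham property of $\varepsilon^{(p)}$ in the critical range, and the injectivity statement of Corollary~\ref{3.12.123}. The bookkeeping you flag (signs, $\Gamma$-factors, period normalizations) is indeed the bulk of the work and is carried out in the paper exactly as you anticipate.

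There is, however, one genuine gap in how you invoke Corollary~\ref{3.12.123}. That corollary asserts injectivity of
\[
(D^*)^{\psi=1}\longrightarrow \prod_{(r,\delta)}\bold{D}^0_{\mathrm{dR}}(D^*(r)(\delta^{-1})),
\]
i.e.\ it detects equality of \emph{elements} of the free $\Lambda_L(\Gamma)$-module $(D^*)^{\psi=1}$. You instead apply it to the ratio $u\in Q(\Lambda_{\mathcal{O}_L}(\Gamma))^{\times}$ of the two trivializations and claim ``it is enough to check that the two sides become equal after applying $f_{\chi^{-r}\delta}$''. But the homomorphisms $f_{\chi^{-r}\delta}:\Lambda_L(\Gamma)\to\overline{L}$ do not extend to $Q(\Lambda_{\mathcal{O}_L}(\Gamma))$ in general: the zeta isomorphisms $\widetilde{z}$ are only defined after inverting the (possibly vanishing) image of $\Theta_r(f)$, so $u$ may have poles at some of the characters you want to specialize at, and even if it does not, density of these characters in $\mathrm{Spec}\,\Lambda_L(\Gamma)$ does not by itself force a unit of $Q(\Lambda)$ to equal $1$.

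The paper closes this gap by first \emph{unwinding} the equality of trivializations into an equality of two explicit elements of $D_{f^*}(1)^{\psi=1}$ (this is Theorem~\ref{4.8}): one side is (a $\Lambda_L(\Gamma)^{\times}$-multiple of) $w_{\delta_{D_f(k)}}(\bold{z}^{(p)}_{\gamma}(f)(k))\otimes\bold{e}_{\gamma}^{\vee}\otimes\bold{e}_1$, the other is $-\bold{z}^{(p)}_{\gamma_*}(f^*)(1)$. Both live honestly in the $\Lambda_L(\Gamma)$-module $(D_f(k)^*)^{\psi=1}$, so Corollary~\ref{3.12.123} applies directly, and the $\exp^*$-values at each $(r,\delta)$ are then computed via Proposition~\ref{key} (rather than Theorem~\ref{3.0}(3)(ii), which is its repackaging) and Kato's reciprocity law, reducing to \eqref{fe} exactly as you describe. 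You should insert this reformulation step; without it, the passage from Corollary~\ref{3.12.123} to ``specialize $u$'' is not justified.
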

\begin{rem}
For $M\in \mathbb{Z}_{\geqq 1}$ such that $(M,p)=1$, we set 
$\Lambda^{(M)}_{\mathcal{O}_L}:=\mathcal{O}_L[[\mathrm{Gal}(\mathbb{Q}(\mu_{Mp^{\infty}})/\mathbb{Q})]]$ and $S^{(M)}:=S\cup\{l|M\}$. For any $\mathcal{O}_L$-representation $T$ of $G_{\mathbb{Q}, S^{(M)}}$, we define a $\Lambda^{(M)}_{\mathcal{O}_L}$-representation 
$\bold{Dfm}^{(M)}(T):=T\otimes_{\mathcal{O}_L}\Lambda^{(M)}_{\mathcal{O}_L}$ in the same way as for $\bold{Dfm}(T)$. Then, it seems to be possible to generalize the theorem above for
$\bold{Dfm}^{(M)}(T_f(k)|_{G_{\mathbb{Q},S^{(M)}}})$ without any trouble. If we try to prove it for general $M$, we need to slightly generalize Theorem 12.4 and Theorem 12.5 of \cite{Ka04}, which seems to be easy but makes this article longer. Since the present article is already long enough, 
we only treat the $M=1$ case.
\end{rem}

In the rest of this subsection, we define our zeta isomorphism $\widetilde{z}_{\mathcal{O}_L}^{\mathrm{Iw},S}(T_{f_0}(r))$ using the $p$-th layer of the Kato's Euler system which we recall now. Since the definitions for $f$ and $f^*$ are the same, we only define it for $f_0=f$.

For an $\mathcal{O}_L$-representation $T$ of $G_{\mathbb{Q},S}$ (which we also regard
as a smooth $\mathcal{O}_L$-sheaf on the \'etale site over $\mathrm{Spec}(\mathbb{Z}[1/S])$), we define 
a $\Lambda_{\mathcal{O}_L}(\Gamma)$-module 
 $$\bold{H}^i(T):=\mathrm{H}^i_{\mathrm{Iw}}(\mathbb{Z}[1/p], T)
 :=\varprojlim_{n\geqq 0}\mathrm{H}^i(\mathbb{Z}[1/p, \zeta_{p^n}], T)$$ for $i\geqq 0$, where we define for $n\geqq 0$
 $$\mathrm{H}^i(\mathbb{Z}[1/p, \zeta_{p^n}], T):=\mathrm{H}^i_{\text{\'et}}(\mathrm{Spec}(\mathbb{Z}[1/p, \zeta_{p^n}]), (j_{n})_{*}(T|_{\mathrm{Spec}(\mathbb{Z}[1/S, \zeta_{p^n}])}))$$ 
 for the canonical inclusion $j_n:\mathrm{Spec}(\mathbb{Z}[1/S, \zeta_{p^n}])\hookrightarrow \mathrm{Spec}(\mathbb{Z}[1/p,\zeta_{p^n}])$. For $V=T[1/p]$, we set
 $\bold{H}^i(V):=\bold{H}^i(T)[1/p].$ 
  
 For the eigen form $f$, Kato defined in Theorem 12.5 \cite{Ka04} an $L$-linear map 
 $$V_f\rightarrow \bold{H}^1(V_f):\gamma\mapsto \bold{z}_{\gamma}^{(p)}(f)$$ which interpolates the critical values of the $L$-functions $L(f^*,\delta,s)$ for all $\delta$, whose precise meaning we explain in the next subsection. 
 By Theorem 12.4 of \cite{Ka04}, $\bold{H}^1(T_f)$ is torsion free over $\Lambda_{\mathcal{O}_L}(\Gamma)$, and $\bold{H}^1(V_f)$ is a free $\Lambda_L(\Gamma)$-module of rank one, and $\bold{H}^2(T_f)$ is a torsion $\Lambda_{\mathcal{O}_L}(\Gamma)$-module (and $\bold{H}^i(T_f)=0$ for $i\not= 1,2$). The restriction map $\bold{H}^i(T_f)\rightarrow \mathrm{H}_{\mathrm{Iw}}^i(\mathbb{Z}[1/S], T_f)$ induces an isomorphism 
 $$\bold{H}^1(T_f)\isom \mathrm{H}^1_{\mathrm{Iw}}(\mathbb{Z}[1/S], T_f)$$ 
 and an exact sequence 
 $$0\rightarrow \bold{H}^2(T_f)\rightarrow \mathrm{H}^2_{\mathrm{Iw}}(\mathbb{Z}[1/S], T_f)\rightarrow \bigoplus_{l\in S\setminus \{p\}}\mathrm{H}^2_{\mathrm{Iw}}(\mathbb{Q}_l, T_f)\rightarrow 0,$$
 which follow from (for example) the proof of Lemma 8.5 of \cite{Ka04}. Since $\mathrm{H}^2_{\mathrm{Iw}}(\mathbb{Q}_l, T)$ is a torsion $\Lambda_{\mathcal{O}_L}(\Gamma)$-module for any $l$ by Proposition A.2.3 of \cite{Pe95}, $\mathrm{H}^2_{\mathrm{Iw}}(\mathbb{Z}[1/S], T_f)$ is also a torsion $\Lambda_{\mathcal{O}_L}(\Gamma)$-module by the above exact sequence. By these facts, we obtain a canonical $Q(\Lambda_{\mathcal{O}_L}(\Gamma))$-linear isomorphism 
 \begin{equation}
 \Delta^{\mathrm{Iw},S}_{\mathcal{O}_L,1}(T_f(r))_{Q}
 \isom (\bold{H}^1(T_f(r))_{Q}, 1)
 \end{equation}
 for $r=0$. For general $r\in \mathbb{Z}$, we also define the  isomorphism above induced by 
 that for $=0$ using the canonical (not $\Lambda_{\mathcal{O}_L}(\Gamma)$-linear) isomorphism 
 $$\bold{H}^i(T_f)\isom \bold{H}^i(T_f(r)):\bold{z}\mapsto \bold{z}(r)$$ 
 which is induced by the isomorphism 
 $$T_f\otimes_{\mathcal{O}_L}\Lambda_{\mathcal{O}_L}(\Gamma)\isom 
 T_f(r)\otimes_{\mathcal{O}_L}\Lambda_{\mathcal{O}_L}(\Gamma):x\otimes y\mapsto (x\otimes \bold{e}_r)\otimes g_{\chi^r}(y)$$
 defined in the same way as in the proof of Lemma \ref{2.10.1}.

 For each $l\in S\setminus \{p\}$ and $r\in \mathbb{Z}$, we set 
 $$L^{(l)}_{\mathrm{Iw}}(T_f(r)):=\mathrm{det}_{\Lambda_{\mathcal{O}_L}(\Gamma)}(1-\varphi_l|\bold{Dfm}(T_f(r))^{I_l})
 =1-a_l(f)\cdot l^{-r}\cdot[\sigma_l]
 \in \Lambda_{\mathcal{O}_L}(\Gamma)\cap Q(\Lambda_{\mathcal{O}_L}(\Gamma))^{\times}$$
 (remark that $\bold{Dfm}(T_f(r))^{I_l}
 =T_f^{I_l}(r)\otimes_{\mathcal{O}_L}\Lambda_{\mathcal{O}_L}(\Gamma)$ is free over
 $\Lambda_{\mathcal{O}_L}(\Gamma)$), where the second equality follows from the global-local compatibility of the Langlands correspondence proved by \cite{La73}, \cite{Ca86}.

Denote the sign of $(-1)^r$ by $\mathrm{sgn}(r)\in \{\pm\}$. Set $\Lambda^{\pm}:=\{\lambda\in \Lambda_{\mathcal{O}_L}(\Gamma)|[\sigma_{-1}]\cdot \lambda =\pm\lambda\}$.
 
 Using these, we define an isomorphism $$\widetilde{z}^{\mathrm{Iw},S}_{\mathcal{O}_L}(T_f(r)):\bold{1}_{Q(\Lambda_{\mathcal{O}_L}(\Gamma))}\isom \Delta^{\mathrm{Iw},S}_{\mathcal{O}_L}(T_f(r))_{Q}$$ which corresponds to the isomorphism
    $$\theta_r(f):
 \Delta^{\mathrm{Iw},S}_{\mathcal{O}_L,2}(T_f(r))^{-1}_Q\isom (\bold{H}^1(T_f(r))_Q, 1)$$ defined as the base change to $Q(\Lambda_{\mathcal{O}_L}(\Gamma))$ of the $\Lambda_{\mathcal{O}_L}(\Gamma)$-linear morphism
 \begin{multline}
 \Theta_r(f):(\bold{Dfm}(T_f(r))(-1))^+=T_f^{\mathrm{sgn}(r-1)}(r-1)\otimes_{\mathcal{O}_L} \Lambda^+\oplus 
 T_f^{\mathrm{sgn}(r)}(r-1)\otimes_{\mathcal{O}_L} \Lambda^-\rightarrow \bold{H}^1(V_f(r)):\\
 (\gamma\otimes \bold{e}_{r-1}\otimes \lambda^+, 
 \gamma'\otimes \bold{e}_{r-1}\otimes \lambda^-)\mapsto \prod_{l\in S\setminus \{p\}}
 L^{(l)}_{\mathrm{Iw}}(T_{f^*}(1+k-r))^{\iota}\cdot (
 \lambda^+\cdot(\bold{z}^{(p)}_{\gamma}(f)(r))+ \lambda^-\cdot(\bold{z}^{(p)}_{\gamma'}(f)(r))),
 \end{multline}
 where we set $\lambda^{\iota}:=\iota(\lambda)$ for $\lambda\in Q(\Lambda_L(\Gamma))$, and the fact that the base change to $Q(\Lambda_{\mathcal{O}_L}(\Gamma))$ of this morphism is isomorphism 
 follows from Theorem 12.5 of \cite{Ka04}.

  \subsection{Proof of Theorem \ref{4.5}}
 In this subsection, we give a proof of Theorem \ref{4.5}. 
 We first precisely recall the interpolation property of the Kato's Euler system which is so called 
 called the explicit reciprocity law (Theorem 12.5 (1) of \cite{Ka04}), which is crucial in our 
 proof of the theorem. 
 
Using the comparison between the Betti and the \'etale cohomologies, one has a canonical $\mathcal{O}_F$-lattice $T_{f,\mathcal{O}_F}$ of $T_f$ which is 
 stable by the action of $G_{\mathbb{R}}(\subseteq G_{\mathbb{Q},S})$. Set $V_{f,F}:=T_{f,\mathcal{O}_F}\otimes_{\mathcal{O}_F}F$, which is a $G_{\mathbb{R}}$-stable $F$-lattice of $V_f$. Using the comparison theorem in the $p$-adic Hodge theory, 
 oen has a canonical 
 $F$-lattice $S(f)=F\cdot f$ of $\bold{D}_{\mathrm{dR}}^1(V_f)=\bold{D}_{\mathrm{dR}}^{k}(V_f)$. 

 By the theory of Eichler-Shimura, one has a canonical $F$-linear map 
 $$\mathrm{per}_f:S(f)\hookrightarrow V_{f,\mathbb{C}}:=V_{f,F}\otimes_{F,\iota_{\infty}}\mathbb{C}. $$
 
 For each pair $(r,\delta)$ such that $0\leqq r\leqq k-1$ and $\delta:\Gamma\rightarrow \overline{\mathbb{Q}}^{\times}$ a homomorphism with finite image (which we regard as a homomorphism with values in $\mathbb{C}^{\times}$ or $\overline{\mathbb{Q}}_p^{\times}$ by the fixed embeddings $\iota_{\infty}$ or $\iota_p$), we set $V_f(k-r)(\delta):=V_f(k-r)\otimes_L\overline{\mathbb{Q}}_p(\delta)$,  and define an $L$-linear map 
 \begin{equation}\label{sp}
 \bold{H}^1(V_f)\rightarrow \bold{D}^0_{\mathrm{dR}}(V_f(k-r)(\delta))=L(f\otimes\frac{1}{t^{k-r}}\bold{e}_{k-r})\otimes_{L}(\overline{\mathbb{Q}}_{p,\infty}(\delta))^{\Gamma}
 \end{equation}
 as the composites of the following morphisms
 \begin{equation*}
 \bold{H}^1(V_f)\xrightarrow{\mathrm{can}} \mathrm{H}^1_{\mathrm{Iw}}(\mathbb{Q}_p, V_f)
 \xrightarrow{\mathrm{sp}_{\chi^{k-r}\cdot\delta}} \mathrm{H}^1(\mathbb{Q}_p, V_f(k-r)(\delta))
 \xrightarrow{\mathrm{exp}^*}\bold{D}^0_{\mathrm{dR}}(V_f(k-r)(\delta)),
   \end{equation*}
 
 For $\gamma\in V_{f,F}$, we decompose $\gamma=\gamma^++\gamma^{-}$ such that $\gamma^{\pm} \in V_{f,F}^{\pm}$. For each $(r,\delta)$ as above, we denote by $\mathrm{sgn}(r,\delta)\in \{\pm\}$ the sign of $\delta(-1)(-1)^{r}$. 
 

Under these preliminaries, the interpolation property of $\bold{z}_{\gamma}^{(p)}(f)$ can be described as follows. By Theorem 12.5 (1) of \cite{Ka04}, the image of 
 $\bold{z}^{(p)}_{\gamma}(f)$ by the map ($\ref{sp}$) is contained in the sub $F$-vector space 
 $$F(f\otimes \frac{1}{t^{k-r}}\bold{e}_{k-r})\otimes_{\mathbb{Q}}(\overline{\mathbb{Q}}_{\infty}(\delta))^{\Gamma}$$ of $\bold{D}^0_{\mathrm{dR}}(V_f(k-r)(\delta))$, and is sent to 
  \begin{equation}\label{law}
  (2\pi i)^{k-r-1}\cdot L_{\{p\}}(f^*, \delta^{-1}, r+1)\cdot \gamma^{\mathrm{sgn}(k-r-1,\delta)}\in V_{f,\mathbb{C}}^{\mathrm{sgn}(k-r-1,\delta)}.
   \end{equation} by the injection map defined by the following composite
 $$\mathrm{per}_{f}^{(k-r,\delta)}:F(f\otimes \frac{1}{t^{k-r}}\bold{e}_{k-r})\otimes_{F}(\overline{\mathbb{Q}}_{\infty}(\delta))^{\Gamma}\rightarrow V_{f,\mathbb{C}}
 \rightarrow V_{f,\mathbb{C}}^{\mathrm{sgn}(k-r-1,\delta)}$$
 where the first map is defined by 
 $$(f\otimes \frac{1}{t^{k-r}}\bold{e}_{k-r})\otimes (\sum_{i\in I}
 b_i\otimes c_i\bold{e}_{\delta})\mapsto \iota_{\infty}(\sum_{i\in I}b_ic_i)\mathrm{per}_f(f)$$ 
 for $b_i\in \overline{\mathbb{Q}}, c_i\in \mathbb{Q}(\zeta_{p^{\infty}})(\subseteq \overline{\mathbb{Q}})$, 
 and the second map is the canonical projection $V_{f,\mathbb{C}}\rightarrow V_{f,\mathbb{C}}^{\pm}:x\mapsto \frac{1}{2}(x\pm c(x))$.

 
 
 From now on until the end of the article, we assume that 
 $V_f|_{G_{\mathbb{Q}_p}}$ is absolutely irreducible and 
 $\bold{D}_{\mathrm{cris}}(V_f(-r)(\delta))^{\varphi=1}\allowbreak=0$ for any $0\leqq r\leqq k-1$ and 
$\delta:\Gamma\rightarrow \overline{\mathbb{Q}}_p^{\times}$ with finite image. 
 
  Under this assumption, we next reduce Theorem \ref{4.5} to the following Theorem \ref{4.8} below concerning the equality of the two elements in $D_{f^*}(1)^{\psi=1}$ respectively defined by using $\bold{z}_{\gamma}^{(p)}(f)(k)$ and $\bold{z}_{\gamma'}^{(p)}(f^*)(1)$. We denote by $D_{f_0}$ the \'etale $(\varphi,\Gamma)$-module over $\mathcal{E}_L$ associated to $V_{f_0}|_{G_{\mathbb{Q}_p}}$ for $f_0=f,f^*$. 
 By the following canonical morphisms (for $r\in \mathbb{Z}$)
 $$\bold{H}^1(V_{f_0}(r))\xrightarrow{\mathrm{can}}  \mathrm{H}^1_{\mathrm{Iw}}(\mathbb{Q}_p, V_{f_0}(r))
 \isom D_{f_0}(r)^{\psi=1}\xrightarrow{1-\varphi} D_{f_0}(r)^{\psi=0}, $$
 we freely regard $\bold{z}_{\gamma}^{(p)}(f_0)(r)$ as an element in these modules.

Fix an $\mathcal{O}_F$-basis $\gamma^{\pm}$ of $T_{f,\mathcal{O}_F}^{\pm}$ for each $\pm$, and set 
$$\gamma:=\gamma^++\gamma^-\in T_{f,\mathcal{O}_F}\text{ and } \bold{f}_{\gamma}:=(\gamma^{\mathrm{sgn}(k)}\bold{e}_k)\wedge (\gamma^{\mathrm{sgn}(k-1)}\bold{e}_k)\in \mathrm{det}_{\mathcal{O}_F}T_{f,\mathcal{O}_F}(k).$$ 
We define the basis $\gamma_*^{\pm}$ of $V_{f^*, F}^{\pm}$ such that $\{\gamma_*^{+}, \gamma_*^{-}\}$ is the dual basis of $\{\gamma^{\mathrm{sgn}(k)}\bold{e}_k, \gamma^{\mathrm{sgn}(k-1)}\bold{e}_k\}$ under
 the canonical $F$-bilinear perfect pairing 
 $$V_{f^*,F}\times V_{f,F}(k)\rightarrow F$$
 induced by the Poincar\'e duality. We also set 
 $$\gamma_*:=\gamma_*^{+}+\gamma_*^{-}\in V_{f^*,F}.$$ 
 
 For each $l \in S\setminus \{p\}$, set 
 $$\varepsilon^{(l)}_{0,\mathcal{O}_L}(T_f(k)):=\varepsilon_{0,\mathcal{O}_L}(T_f(k)|_{G_{\mathbb{Q}_l}},\zeta^{(l)})\in (\mathcal{O}_L)^{\times}_{a_l(T_f(k))}$$
 the $\varepsilon_0$-constant associated to the triple $(\mathcal{O}_L,T_f(k)|_{G_{\mathbb{Q}_l}},\zeta^{(l)})$ defined by \cite{Ya09} (see Remark \ref{Ya}). Using the canonical isomorphism $$\otimes_{l\in S}(\mathcal{O}_L)_{a_l(T_f(k))}\isom \mathcal{O}_L:\otimes_{l\in S} x_l\mapsto \prod_{l\in S} x_l,$$ we define 
 $$\varepsilon_{0}:=\otimes_{l\in S\setminus \{p\}}\varepsilon^{(l)}_{0,\mathcal{O}_L}(T_f(k))^{-1}\in 
 (\otimes_{l\not =p\in S}(\mathcal{O}_L)_{a_l(T_f(k))^{-1}})^{\times}=(\mathcal{O}_L)^{\times}_{a_p(T_f(k))}.$$  Using this, we define a basis 
 $$\bold{e}_{\gamma}:=\bold{f}_{\gamma}\otimes \varepsilon_{0}\in \mathcal{L}_{\mathcal{O}_L}(T_f(k))=\mathrm{det}_{\mathcal{O}_L}T_f(k)\otimes_{\mathcal{O}_L} (\mathcal{O}_L)_{a_{p}(T_f(k))}.$$
 
 For $l\in S$ and an $L$-representation $V$ of $G_{\mathbb{Q},S}$ such that $V|_{G_{\mathbb{Q}_p}}$ is de Rham, we denote by
 $a^{(l)}(V)$ the exponent of the Artin conductor of $W(V|_{G_{\mathbb{Q}_l}})$ defined in 8.12 of \cite{De73}, 
 and set $L^{(l)}(V):=\mathrm{det}_L(1-\varphi_l|\bold{D}_{\mathrm{cris}}(V|_{G_{\mathbb{Q}_l}}))\in L$ and $\varepsilon_L^{(l)}(V):=\varepsilon_L(W(V|_{G_{\mathbb{Q}_l}}), \zeta^{(l)})\in L_{\infty}$, which we also regard as elements in $\mathbb{C}$ by the injection $L\subseteq \overline{\mathbb{Q}}_p\xrightarrow{\iota^{-1}}\mathbb{C}$ and the projection $L_{\infty}=L\otimes_{\mathbb{Q}}\mathbb{Q}(\zeta_{p^{\infty}})\rightarrow \mathbb{C}:a\otimes b\mapsto \iota^{-1}(a)\cdot \iota_{\infty}(b)$.

 \begin{thm}\label{4.8}
 Under the  situation above, one has the equality 
 $$\prod_{l\in S\setminus\{p\}} \frac{[\sigma_l]^{-a^{(l)}(V_f(k))}}{\mathrm{det}_L(-\varphi_l|V_f(k)^{I_l})} \cdot (w_{\delta_{D_f(k)}}(\bold{z}^{(p)}_{\gamma}(f)(k))\otimes \bold{e}_{\gamma}^{\vee}\otimes 
 \bold{e}_1)=-\bold{z}_{\gamma_*}^{(p)}(f^*)(1)$$
 
 under the canonical isomorphism 
 $$w_{\delta_{D_f(k)}}(D_f(k)^{\psi=1})\otimes_L\mathcal{L}_L(D_f(k))^{\vee}(1)\isom (D_f(k)^*)^{\psi=1}\isom D_{f^*}(1)^{\psi=1},$$ where the first isomorphism is defined in \S3 and the second one is induced by the canonical isomorphism $D_f(k)^*\isom D_{f^*}(1)$ defined by the Poincar\'e duality.

 \end{thm}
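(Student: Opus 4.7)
The plan is to reduce Theorem~\ref{4.8} to the classical functional equation~(\ref{fe}) of $L(f,\delta,s)$, invoking Corollary~\ref{3.12.123}, Proposition~\ref{key}, and Kato's explicit reciprocity law~(\ref{law}). First I would apply the appropriate injectivity statement of Corollary~\ref{3.12.123} (available because the assumptions of Theorem~\ref{4.8} guarantee its hypotheses for $D_f$, whose Hodge--Tate weights are $\{0,k\}$): after twisting by $\chi^k$ to move between $D_f^{*}$ and $D_{f^{*}}(1)\cong D_f(k)^{*}$, this reduces Theorem~\ref{4.8} to checking, for each pair $(r,\delta)$ with $0\le r\le k-1$ and $\delta:\Gamma\to\overline{L}^{\times}$ of finite image, that the two sides of Theorem~\ref{4.8} have equal image under the dual exponential map $\mathrm{exp}^{*}$ composed with specialization at the matching cyclotomic twist. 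Since the target is a one-dimensional $L$-vector space, this becomes a numerical identity.

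Next, to evaluate the left-hand side at $(r,\delta)$, I observe that the composition ``tensor with $\bold{e}_\gamma^{\vee}\otimes\bold{e}_1$, apply $\sigma_{-1}$, specialize, and take $\mathrm{exp}^{*}$'' produces exactly the pairing $\beta_{(r,\delta)}(w_{\delta_{D_f(k)}}(-))$ defined in~(\ref{beta}). The key Proposition~\ref{key} (proven in the non-trianguline case via the Kirillov-model computation culminating in Corollary~\ref{3.12} and Emerton's local--global compatibility theorem, and extended to the absolutely irreducible trianguline case by Corollary~\ref{3.9.1}) rewrites this pairing as an explicit multiple of $\alpha_{(r,\delta)}(\bold{z}_\gamma^{(p)}(f)(k))$, the factor involving $\varepsilon_L^{(p)}(V_f(k-r)(\delta))$, local Euler factors $L^{(p)}$, the period $\Omega$, and the Gamma-factor prefactor $r!/((-1)^r(k-r-1)!)$. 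But $\alpha_{(r,\delta)}(\bold{z}_\gamma^{(p)}(f)(k))$ is by construction the coefficient in the basis chosen at~(\ref{alpha}) of $\mathrm{exp}^{*}\circ\iota_{\chi^{k-r}\delta}(\bold{z}_\gamma^{(p)}(f))$, and Kato's explicit reciprocity law~(\ref{law}) identifies this with the critical value $(2\pi i)^{k-r-1}L_{\{p\}}(f^{*},\delta^{-1},r+1)$, computed through the period $\mathrm{per}_f$ and projected onto $\gamma^{\mathrm{sgn}(k-r-1,\delta)}$. Thus the left-hand side is an explicit constant (depending on $p$-adic $\varepsilon$-factors, local Euler factors at $l\ne p$, and periods) times a critical value of $f^{*}$.

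For the right-hand side, I would apply Kato's reciprocity law~(\ref{law}) directly to $\bold{z}_{\gamma_{*}}^{(p)}(f^{*})(1)$, converting it into the critical value $(2\pi i)^{-r}L_{\{p\}}(f,\delta,k-r)$ of $f$ (through the period $\mathrm{per}_{f^{*}}$ and projection onto $\gamma_{*}^{\mathrm{sgn}(-r-1,\delta^{-1})}$). The equality of the two critical-value expressions is then exactly the classical functional equation~(\ref{fe}) of $L(f,\delta,s)$ at $s=k-r$: the ratio of the two values is $\varepsilon(f,\delta,k-r)\cdot\Gamma_{\mathbb{C}}(r+1)/\Gamma_{\mathbb{C}}(k-r)$, and the discrepancy between $L$ and $L_{\{p\}}$ (i.e.\ the removed Euler factor at $p$) is precisely what is accounted for by the ratio of local $L$-factors appearing in the Proposition~\ref{key} formula.

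The main obstacle is the remaining bookkeeping: one must verify that the Euler prefactor $\prod_{l\in S\setminus\{p\}}[\sigma_l]^{-a^{(l)}(V_f(k))}/\mathrm{det}_L(-\varphi_l|V_f(k)^{I_l})$, the $p$-adic $\varepsilon$- and Gamma-factors produced by Proposition~\ref{key}, the archimedean sign $i^{k+1}=\varepsilon_\infty(f,\delta,k-r)$, the signs $(-1)^r\delta(-1)$, and the Poincar\'e-duality comparison of $\mathrm{per}_f$ with $\mathrm{per}_{f^{*}}$ through the duality between $\gamma^{\pm}$ and $\gamma_{*}^{\pm}$ and the constants $\varepsilon_{0,\mathcal{O}_L}^{(l)}(T_f(k))$ absorbed into $\bold{e}_\gamma$, all combine to reproduce exactly the global $\varepsilon$-factor $\varepsilon(f,\delta,k-r)$ of the functional equation together with the correct overall sign $-1$. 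This is where the global product formula for the $\varepsilon$-factor becomes essential, and where the deliberate normalization choices of \S4.2 are designed to make every factor match on the nose.
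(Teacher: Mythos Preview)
Your proposal is correct and follows essentially the same route as the paper: reduce via Corollary~\ref{3.12.123} to an equality of $\mathrm{exp}^*$-values at each $(r,\delta)$, compute the left side through Proposition~\ref{key} and Kato's reciprocity law~(\ref{law}) for $f$, compute the right side through~(\ref{law}) for $f^*$, and then match the resulting critical values using the functional equation~(\ref{fe}) together with the product formula for the global $\varepsilon$-factor. The only slips are cosmetic (for instance the right-hand side produces $(2\pi i)^{r}L_{\{p\}}(f,\delta,k-r)$ rather than $(2\pi i)^{-r}$, and the sign is $\mathrm{sgn}(r,\delta^{-1})$), but these do not affect the structure of the argument.
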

 \begin{rem}\label{4.8.5}
 Since one has 
 $$[\sigma_{-1}]\cdot \bold{z}^{(p)}_{\gamma^{\pm}}(f)=\mp\bold{z}^{(p)}_{\gamma^{\pm}}(f)$$ (and similarly for $\bold{z}^{(p)}_{\gamma^{\pm}_*}(f^*))$ by Theorem 12.5 of 
 \cite{Ka04}, the equality in 
 the  theorem above is equivalent to the equation
 $$\prod_{l\in S\setminus\{p\}} \frac{[\sigma_l]^{-a^{(l)}(V_f(k))}}{\mathrm{det}_L(-\varphi_l|V_f(k)^{I_l})} \cdot[\sigma_{-1}]\cdot (w_{\delta_{D_f(k)}}(\bold{z}^{(p)}_{\gamma'}(f)(k))\otimes \bold{e}_{\gamma}^{\vee}\otimes 
 \bold{e}_1)=\pm\bold{z}_{\gamma'_*}^{(p)}(f^*)(1)$$
 for each $(\gamma', \gamma'_*,\pm)\in \{(\gamma^{\mathrm{sgn}(k)}, \gamma_*^-,+), (\gamma^{\mathrm{sgn}(k-1)}, \gamma_*^+,-)\}$.

 \end{rem}
 
 Before proceeding to the proof of Theorem \ref{4.8}, we first prove Theorem \ref{4.5} using 
 this Theorem (as can be easily seen from the proof below, Theorem \ref{4.8} is in fact equivalent to Theorem \ref{4.5}).
 
 \begin{proof} (of Theorem \ref{4.5})
 We first explicitly describe the base change to $Q(\Lambda_{\mathcal{O}_L}(\Gamma))$ of the $l$-adic $\varepsilon$-isomorphism 
 $$\varepsilon^{\mathrm{Iw}, (l)}(T_f(k)):\bold{1}_{\Lambda_{\mathcal{O}_L}(\Gamma)}\isom  \Delta_{\mathcal{O}_L}^{\mathrm{Iw}, (l)}(T_f(k))$$ for $l\in S\setminus \{p\}$ which is defined in \cite{Ya09} 
 (and Remark \ref{Ya}). We first remark that, if we set $\varepsilon_{0, \mathcal{O}_L}
 ^{\mathrm{Iw}, (l)}(T_f(k)):=\varepsilon_{0, \Lambda_{\mathcal{O}_L}(\Gamma)}(
 \bold{Dfm}(T_f(k))|_{G_{\mathbb{Q}_l}}, \zeta^{(l)})$, then we have 
 \begin{equation}\label{39.123}
 \varepsilon_{0, \mathcal{O}_L}
 ^{\mathrm{Iw}, (l)}(T_f(k))=[\sigma_l]^{a^{(l)}(V_f(k))+\mathrm{dim}_LV_f(k)^{I_l}}\cdot \varepsilon^{(l)}_{0,\mathcal{O}_L}(T_f(k))
 \end{equation}
 
 since one has
 \begin{equation}\label{40.111}
 \varepsilon_{0, \overline{\mathbb{Q}}_p}(V_f(k)(\delta)|_{G_{\mathbb{Q}_l}},\zeta^{(l)})=\delta(\sigma_l)^{-(a^{(l)}(V_f(k))+\mathrm{dim}_LV_f(k)^{I_l})}\cdot \varepsilon^{(l)}_{0,\mathcal{O}_L}(T_f(k))
 \end{equation} for any 
 continuous homomorphism $\delta:\Gamma\rightarrow \overline{\mathbb{Q}}_p^{\times}$ by 
(5.5.1) and (8.12.1) of \cite{De73}. Since $\mathrm{H}^{i}_{\mathrm{Iw}}(\mathbb{Q}_l, T_f(k))$ is a torsion 
 $\Lambda_{\mathcal{O}_L}(\Gamma)$-module for any $i$ by Proposition A.2.3 of \cite{Pe95}, we have 
 $$\Delta^{\mathrm{Iw}, (l)}_{\mathcal{O}_L,1}(T_f(k))_Q=\bold{1}_{Q(\Lambda_{\mathcal{O}_L}(\Gamma))}.$$ Then, the base change to $Q(\Lambda_{\mathcal{O}_L}(\Gamma))$ of the isomorphism $\bold{1}_{\Lambda_{\mathcal{O}_L}(\Gamma)}\isom\Delta^{\mathrm{Iw}, (l)}_{\mathcal{O}_L, 1}(T_f(k))$ defined in Remark \ref{Ya} is explicitly defined by 
 \begin{multline}\label{40.123}
 \bold{1}_{Q(\Lambda_{\mathcal{O}_L}(\Gamma))}\isom\Delta^{\mathrm{Iw}, (l)}_{\mathcal{O}_L, 1}(T_f(k))_Q=\bold{1}_{Q(\Lambda_{\mathcal{O}_L}(\Gamma))}:\\
 1\mapsto \frac{\mathrm{det}_{\Lambda_{\mathcal{O}_L}(\Gamma)}
 (1-\varphi_l^{-1}|\bold{Dfm}(T_f(k))^{I_l})} {\mathrm{det}_{\Lambda_{\mathcal{O}_L}(\Gamma)}
 (1-\varphi_l|\bold{Dfm}(T_f^*(1))^{I_l})^{\iota}}
=\frac{\mathrm{det}_L(-\varphi_l^{-1}|V_f^{I_l}(k))}{[\sigma_l]^{\mathrm{dim}_LV_f(k)^{I_l}}}
\cdot \frac{L_{\mathrm{Iw}}^{(l)}(T_f(k))}{L^{(l)}_{\mathrm{Iw}}(T_{f^*}(1))^{\iota}}.
 \end{multline}

 Since we have 
 $$(\Lambda_{\mathcal{O}_L}(\Gamma))_{a_l(\bold{Dfm}(T))}=(\mathcal{O}_L)_{a_l(T)}\otimes_{\mathcal{O}_L}\Lambda_{\mathcal{O}_L}(\Gamma)$$ for any $\mathcal{O}_L$-representation 
 $T$ of $G_{\mathbb{Q}_l}$, (\ref{39.123}) and (\ref{40.123}) imply that the base change to $Q(\Lambda_{\mathcal{O}_L}(\Gamma))$ of 
 $\varepsilon_{\mathcal{O}_L}^{\mathrm{Iw}, (l)}(T_f(k))$ can be explicitly defined by 
  \begin{multline}\label{47.111}
\bold{1}_{Q(\Lambda_{\mathcal{O}_L}(\Gamma))}\isom\Delta^{\mathrm{Iw}, (l)}_{\mathcal{O}_L}(T_f(k))_Q
 =\Delta^{(l)}_{\mathcal{O}_L,2}(T_f(k))\otimes_{\mathcal{O}_L} Q(\Lambda_{\mathcal{O}_L}(\Gamma)):\\
 1\mapsto \varepsilon^{(l)}_{0,\mathcal{O}_L}(T_f(k))\otimes\frac{[\sigma_l]^{a^{(l)}(V_f(k))}}{\mathrm{det}_L(-\varphi_l|V_f^{I_l}(k))}
 \cdot \frac{L_{\mathrm{Iw}}^{(l)}(T_f(k))}{L^{(l)}_{\mathrm{Iw}}(T_{f^*}(1))^{\iota}}.
 \end{multline}
 
 Using this explicit expression, we prove Theorem \ref{4.5} as follows. 

 To show the theorem, it suffices to show the following diagram 
 of $Q(\Lambda_{\mathcal{O}_L}(\Gamma))$-linear isomorphisms is commutative:
 \begin{equation}\label{42.123}
\begin{CD}
\bold{Dfm}(T_f(k))(-1)^+_Q\otimes_{Q(\Lambda_{\mathcal{O}_L}(\Gamma))} (\bold{Dfm}(T_{f^*}(1))^{\iota}(-1)^+_Q)^{\vee}@> (a)>> (\mathrm{det}_{\Lambda_{\mathrm{O}_L}(\Gamma)}\bold{Dfm}(T_f(k)))_Q\\
@VV \Theta_k(f)\otimes ((\Theta_1(f^*)^{\iota})^{-1})^{\vee} V  @ AA (c) A \\
\bold{H}^1(T_f(k))_Q
\otimes_{Q(\Lambda_{\mathcal{O}_L}(\Gamma))} (\bold{H}^1(T_{f^*}(1))^{\iota}_Q)^{\vee}
@>> (b) > \mathrm{det}_{Q(\Lambda_{\mathcal{O}_L}(\Gamma))}\mathrm{H}^1_{\mathrm{Iw}}(\mathbb{Q}_p, T_f(k))_Q.
\end{CD}
\end{equation}
Here the arrows (a), (b) and (c) in the  diagram above is defined as follows. 

First, the isomorphism (a) is the base change to 
$Q(\Lambda_{\mathcal{O}_L}(\Gamma))$ of the canonical isomorphism
\begin{multline}
\bold{Dfm}(V_f(k))(-1)^+\otimes_{\Lambda_{L}(\Gamma)} (\bold{Dfm}(V_{f^*}(1))^{\iota}(-1)^+)^{\vee}\rightarrow (\mathrm{det}_{\Lambda_{L}(\Gamma)}\bold{Dfm}(V_f(k))):\\
(\lambda_1^+\cdot\gamma^{\mathrm{sgn}(k-1)}\bold{e}_{k-1}+\lambda_1^-\cdot\gamma^{\mathrm{sgn}(k)}\bold{e}_{k-1})\otimes (\lambda_2^{+}\cdot_{\iota}(\gamma^+_*)^{\vee}+\lambda_2^{-}\cdot_{\iota}(\gamma^-_*)^{\vee})\mapsto (\lambda_1^-\cdot \lambda_2^--\lambda_1^+\cdot \lambda_2^+)\cdot\bold{f}_{\gamma}
\end{multline}
for $\lambda^{\pm}_{i}\in \Lambda^{\pm}$ ($i=1,2$) (remark that we have $(\gamma^+_*)^{\vee}=\gamma^{\mathrm{sgn}(k)}\bold{e}_k, (\gamma_*^-)^{\vee}
=\gamma^{\mathrm{sgn}(k-1)}\bold{e}_k$). 

The isomorphism (b) is the isomorphism naturally induces by the short exact sequence
\begin{equation}\label{nnnn}
0\rightarrow \bold{H}^1(T_f(k))_Q\rightarrow 
\mathrm{H}^1_{\mathrm{Iw}}(\mathbb{Q}_p, T_f(k))_Q
\rightarrow (\bold{H}^1(T_{f^*}(1))^{\iota})^{\vee}_Q\rightarrow 0,
\end{equation}
which is obtained by the base change to $Q(\Lambda_{\mathcal{O}_L}(\Gamma))$ of the Poitou-Tate 
exact sequence for the pair $(\Lambda_{\mathcal{O}_L}(\Gamma), \bold{Dfm}(T_f(k)))$ (and the 
$\Lambda_{\mathcal{O}_L}(\Gamma)$-torsionness of $\mathrm{H}^2_{\mathrm{Iw}}(\mathbb{Z}[1/S], T_{f_0}(r))$ for $f_0=f,f^*$, $r\in \mathbb{Z}$, and that of $\mathrm{H}^1_{\mathrm{Iw}}(\mathbb{Q}_l, T_f(k))$ for $l\in S\setminus \{p\}$).

Finally, the isomorphism (c) 
$$(\mathrm{det}_{\Lambda_{L}(\Gamma)}\mathrm{H}^1_{\mathrm{Iw}}(\mathbb{Q}_p, V_f(k)))_Q
\isom (\mathrm{det}_{\Lambda_{L}(\Gamma)}\bold{Dfm}(V_f(k)))_Q$$
is defined by sending $(x\wedge y)\otimes \lambda$ for $x,y \in \mathrm{H}^1_{\mathrm{Iw}}(\mathbb{Q}_p, V_f(k))\isom D_f(k)^{\psi=1}$, $\lambda\in Q(\Lambda_{\mathcal{O}_L}(\Gamma))$ to 
$$
\lambda\cdot\{[\sigma_{-1}](w_{\delta_{D_f(k)}}(x)\otimes \bold{e}_{\gamma}^{\vee}\otimes \bold{e}_1), y\}_{\mathrm{Iw}}\cdot\prod_{l\in S \setminus \{p\}}\frac{[\sigma_l]^{a^{(l)}(V_f(k))}}{\mathrm{det}_L(-\varphi_l|V_f^{I_l}(k))} \cdot \frac{L_{\mathrm{Iw}}^{(l)}(T_f(k))}{L^{(l)}_{\mathrm{Iw}}(T_{f^*}(1))^{\iota}}\cdot\bold{f}_{\gamma}$$
(remark that here we use the definition of $\varepsilon^{\mathrm{Iw},(p)}_{\mathcal{O}_L}(T_f(k))$ given in 
\S3 and the explicit description (\ref{47.111}) of $\varepsilon^{\mathrm{Iw},(l)}_{\mathcal{O}_L}(T_f(k))$ for $\in S\setminus\{p\})$.

By definition of these maps, the commutativity of the diagram (\ref{42.123}) is equivalent to the equalities 
$$\prod_{l\in S \setminus \{p\}}\frac{[\sigma_l]^{a^{(l)}(V_f(k))}}{\mathrm{det}_L(-\varphi_l|V_f^{I_l}(k))}\cdot \{[\sigma_{-1}]\cdot (w_{\delta_{D_f(k)}}(\bold{z}^{(p)}_{\gamma'}(f)(k))\otimes \bold{e}_f^{\vee}\otimes \bold{e}_1), \widetilde{(\bold{z}^{(p)}_{\gamma'_*}(f^*)(1))^{\vee}}\}_{\mathrm{Iw}}
=\pm 1$$
for each $(\gamma',\gamma'_*,\pm)\in \{(\gamma^{\mathrm{sgn}(k)}, \gamma_*^{-},+), (\gamma^{\mathrm{sgn}(k-1)}, \gamma_*^+,-)\}$, where we denote by $\widetilde{y}\in \mathrm{H}^1_{\mathrm{Iw}}(\mathbb{Q}_p, T_f(k))_Q$ a lift of $y\in (\bold{H}^1(T_{f^*}(1))^{\iota}_Q)^{\vee}$ by the surjection in (\ref{nnnn}). This equality follows from Theorem 
\ref{4.8} and Remark \ref{4.8.5}.

 \end{proof}
 
 Finally, we prove Theorem \ref{4.8}.

\begin{proof} (of Theorem \ref{4.8}) In this proof, we freely use the notations which are used in 
$\S3.3$. To simplify the notation, we set $D:=D_f(k)$. We identify $D^*\isom D_{f^*}(1)$ by the canonical isomorphism induced the Poincar\'e duality.

Applying Corollary \ref{3.12.123}, it suffices to show the equality
\begin{multline}\label{32}
\prod_{l\in S\setminus\{p\}}\frac{(l^{r}\cdot\delta(\sigma_l)^{-1})^{a^{(l)}(V_f(k))}}{
\mathrm{det}_L(-\varphi_l|V_f(k)^{I_l})}\cdot \mathrm{exp}^*((w_{\delta_D}(\bold{z}^{(p)}_{\gamma}(f)(k))\otimes \bold{e}^{\vee}_{\gamma}\otimes\bold{e}_1)_{r,\delta^{-1}})\\
=-\mathrm{exp}^*((\bold{z}_{\gamma_*}^{(p)}(f^*)(1))_{r,\delta^{-1}})
\end{multline}
in $\bold{D}_{\mathrm{dR}}^0(D^*(r)(\delta^{-1}))$ for all the pairs $(r,\delta)$ such that 
$0\leqq r\leqq k-1$ and $\delta:\Gamma\rightarrow \overline{\mathbb{Q}}_p^{\times}$ with finite image.

Take any pair $(r,\delta)$ as above. We first remark that we have 
\begin{multline}
\bold{D}_{\mathrm{dR}}^0(D^*(r)(\delta^{-1}))=\bold{D}_{\mathrm{dR}}^0(V_{f^*}(1+r)(\delta^{-1}))\\
=\overline{\mathbb{Q}}_p\cdot f^*\otimes \frac{1}{t^{1+r}}\bold{e}_{1+r}\otimes \bold{f}_{\delta^{-1}}\subseteq \overline{\mathbb{Q}}_{p,\infty}\cdot 
f^*\otimes \frac{1}{t^{1+r}}\bold{e}_{1+r}\otimes \bold{e}_{\delta^{-1}},
\end{multline}
and also have 
\begin{multline}
\bold{D}_{\mathrm{dR}}^0(D(-r)(\delta))=\bold{D}^0_{\mathrm{dR}}(V_f(k)(-r)(\delta))\\
=\overline{\mathbb{Q}}_p\cdot f\otimes \frac{1}{t^{k-r}}\bold{e}_{k-r}\otimes \bold{f}_{\delta}\subseteq 
\overline{\mathbb{Q}}_{p,\infty}\cdot f\otimes \frac{1}{t^{k-r}}\bold{e}_{k-r}\otimes \bold{e}_{\delta}.
\end{multline}

By the interpolation property (\ref{law}) of $\bold{z}^{(p)}_{\gamma_*}(f^*)$, if we set 
$$\mathrm{exp}^*((\bold{z}^{(p)}_{\gamma_*}(f^*)(1))_{r,\delta^{-1}})
=:a_{(r,\delta)}\cdot f^*\otimes \frac{1}{t^{r+1}}\bold{e}_{1+r}\otimes \bold{e}_{\delta^{-1}}\in \overline{\mathbb{Q}}_{p,\infty}\cdot 
f^*\otimes \frac{1}{t^{1+r}}\bold{e}_{1+r}\otimes \bold{e}_{\delta^{-1}},$$ then we have 
$a_{(r,\delta)}\in \overline{\mathbb{Q}}_{\infty}$ and 
\begin{multline*}
\mathrm{per}^{(r+1,\delta^{-1})}_{f^*}(a_{(r,\delta)}\cdot f^*\otimes \frac{1}{t^{r+1}}\bold{e}_{1+r}\otimes \bold{e}_{\delta^{-1}})=(2\pi i)^{r}\cdot L_{\{p\}}(f,\delta, k-r)\cdot
\gamma_*^{\mathrm{sgn}(r,\delta^{-1})}\\
=:\alpha_{(r,\delta)}\cdot\gamma_*^{\mathrm{sgn}(r,\delta^{-1})}.
\end{multline*}

By (\ref{law}) for $\bold{z}^{(p)}_{\gamma}(f)$, if we set 
$$\mathrm{exp}^*((\bold{z}_{\gamma}^{(p)}(f)(k)))_{-r,\delta})
=b_{(r,\delta)}\cdot f\otimes \frac{1}{t^{k-r}}\bold{e}_{k-r}\otimes \bold{e}_{\delta}\in \overline{\mathbb{Q}}_{p,\infty}\cdot 
f\otimes\frac{1}{t^{k-r}}\bold{e}_{k-r}\otimes \bold{e}_{\delta},$$ then we have $b_{(r,\delta)}\in \overline{\mathbb{Q}}_{\infty}$ and 
\begin{multline*}
\mathrm{per}^{(k-r,\delta)}_f(b_{(r,\delta)}\cdot f\otimes \frac{1}{t^{k-r}}\bold{e}_{k-r}\otimes\bold{e}_{\delta})=(2\pi i)^{k-r-1}\cdot L_{\{p\}}(f^*, \delta^{-1}, r+1)\cdot 
\gamma^{\mathrm{sgn}(k-r-1,\delta)}\\
=:\beta_{(r,\delta)}\cdot\gamma^{\mathrm{sgn}(k-r-1,\delta)}.
\end{multline*}


By Proposition \ref{key}, 
if we set
\begin{multline}
\mathrm{exp}^*((w_{\delta_D}(\bold{z}_{\gamma}^{(p)}(f)(k))\otimes\bold{e}_{\gamma}^{\vee}\otimes\bold{e}_1)_{r,\delta^{-1}})
=:c_{(r,\delta)}\cdot f\otimes t^k\bold{e}_{\gamma}^{\vee}\otimes \frac{1}{t^{k+r+1}}\bold{e}_{k+r+1}
\otimes \bold{e}_{\delta^{-1}}\\
\in \bold{D}_{\mathrm{dR}}^0(D^*(r)(\delta^{-1}))
\subseteq \overline{\mathbb{Q}}_{p,\infty}\cdot (f\otimes\frac{1}{t^k}\bold{e}_k)\otimes t^k\bold{e}_{\gamma}^{\vee}\otimes \frac{1}{t^{r+1}}\bold{e}_{r+1}
\otimes \bold{e}_{\delta^{-1}},
\end{multline}
 then we have 
 \begin{multline*}
c_{(r,\delta)}=\frac{\delta(-1)(-1)^r}{(k-r-1)!}\cdot\frac{r!}{(-1)^r}\cdot \frac{L^{(p)}(V_f(k-r)(\delta))}{L^{(p)}(V_{f^*}(1+r)(\delta^{-1}))}\cdot\varepsilon^{(p)}_L(V_f(k-r)(\delta))\cdot b_{(r,\delta)}
\\
=:d_{(r,\delta)}\cdot b_{(r,\delta)}
\end{multline*}
(remark that we have 
$$(w_{\delta_D}(\bold{z}_{\gamma}^{(p)}(f)(k))\otimes\bold{e}_{\gamma}^{\vee}\otimes\bold{e}_1)_{r,\delta^{-1}}=\delta(-1)(-1)^r\cdot([\sigma_{-1}]\cdot(w_{\delta_D}(\bold{z}_{\gamma}^{(p)}(f)(k))\otimes\bold{e}_{\gamma}^{\vee}\otimes\bold{e}_1))_{r,\delta^{-1}}).$$

Therefore, it suffices to show the equality
\begin{multline}\label{33}
\prod_{l\in S\setminus\{p\}}\frac{(l^{r}\cdot\delta(\sigma_l)^{-1})^{a^{(l)}(V_f(k))}}
{\mathrm{det}_L(-\varphi_l|V_f(k)^{I_l})}\cdot d_{(r,\delta)}\cdot \beta_{(r,\delta)}\cdot \gamma^{\mathrm{sgn}(k-r-1,\delta)}\bold{e}_k\otimes \bold{e}_{\gamma}^{\vee}\\
=-\alpha_{(r,\delta)}\cdot \gamma^{\mathrm{sgn}(r,\delta^{-1})}_*.
\end{multline}

 Remark that $\gamma^{\mathrm{sgn}(k)}\bold{e}_k\otimes\bold{f}_{\gamma}^{\vee}$ 
(resp. $\gamma^{\mathrm{sgn}(k-1)}\bold{e}_k\otimes \bold{f}_{\gamma}^{\vee}$) is sent to 
$-\gamma_*^-$ (resp. $\gamma_*^+$) under the canonical isomorphism
$$T_{f,\mathcal{O}_F}(k)\otimes_{\mathcal{O}_F}(\mathrm{det}_{\mathcal{O}_F}(T_{f,\mathcal{O}_F}(k)))^{\vee}\isom \mathrm{Hom}_{\mathcal{O}_F}(T_{f,\mathcal{O}_F}(k), \mathcal{O}_F)\isom T_{f^*,\mathcal{O}_F},$$
where the first isomorphism is defined by 
$$x\otimes\bold{f}_{\gamma}^{\vee}\mapsto 
[y\mapsto \bold{f}_{\gamma}^{\vee}(y\wedge x)],$$ and the second isomorphism 
is defined by the Poincar\'e duality.

 Hence, we obtain 
$$\gamma^{\mathrm{sgn}(k-r-1,\delta)}\bold{e}_k\otimes \bold{e}_{\gamma}^{\vee}=(-1)^{r}\delta(-1)\cdot \varepsilon_{0}^{-1}\cdot \gamma_*^{\mathrm{sgn}(r,\delta^{-1})}.$$
Therefore, the equality (\ref{33}) is equivalent to the equality 
\begin{equation}\label{34}
\delta(-1)(-1)^{r}\cdot\prod_{l\in S\setminus\{p\}}\frac{(l^{r}\cdot\delta(\sigma_l)^{-1})^{a^{(l)}
(V_f(k))}}{\mathrm{det}_L(-\varphi_l|V_f(k)^{I_l})}\cdot \varepsilon_{0}^{-1}\cdot d_{(r,\delta)}\cdot \beta_{(r,\delta)}=-\alpha_{(r,\delta)}.
\end{equation}
Since we have 
\begin{multline*}
\frac{(l^{r}\cdot\delta(\sigma_l)^{-1})^{a^{(l)}
(V_f(k))}}{\mathrm{det}_L(-\varphi_l|V_f(k)^{I_l})}\cdot\varepsilon_{0,\mathcal{O}_L}^{(l)}(T_f(k))
=\frac{(l^{-r}\cdot\delta(\sigma_l))^{-(a^{(l)}
(V_f(k))+\mathrm{dim}_LV_f(k)^{I_l})}}{\mathrm{det}_L(-\varphi_l|V_f(k-r)(\delta)^{I_l})}\cdot\varepsilon_{0,L}^{(l)}(V_f(k))
\\
=\frac{\varepsilon^{(l)}_{0,L}(V_f(k-r)(\delta))}{\mathrm{det}_L(-\varphi_l|V_f(k-r)(\delta)^{I_l})}
=\varepsilon_L^{(l)}(V_f(k-r)(\delta)),
\end{multline*} 
the left hand side of (\ref{34}) is equal to 
\begin{equation}\label{54.000}
\frac{(-1)^r\cdot r!}{(k-r-1)!}\cdot\prod_{l\in S}\varepsilon_L^{(l)}(V_f(k-r)(\delta))\cdot 
\frac{L^{(p)}(V_f(k-r)(\delta))}{L^{(p)}(V_{f^*}(r+1)(\delta^{-1}))}\cdot (2\pi i)^{k-r-1}\cdot L_{\{p\}}(f^*,\delta^{-1},r+1).
\end{equation}

Since we have 
$$\frac{(k-r-1)!}{(2\pi)^{k-r}}\cdot L(f,\delta,k-r)=i^{k+1}\cdot\prod_{l\in S}\varepsilon_l(f,\delta,k-r)
\cdot \frac{r!}{(2\pi)^{r+1}}\cdot L(f^*,\delta^{-1},r+1)$$
by evaluating at $s=k-r$ of the functional equation (\ref{fe}) of $L(f,\delta,s)$ 
and the $\varepsilon$-and $L$-constants for $V_f$ correspond to those for $\pi_f$ 
by the global-local compatibility (\cite{La73}, \cite{Ca86} for $l\not=p$, and \cite{Sc90}, \cite{Sa97} for $l=p$), the value (\ref{54.000}) is equal to 
\begin{multline*}
\frac{(-1)^r\cdot (2\pi)^{k}\cdot L^{(p)}(V_f(k-r)(\delta))}{i^{r+2}\cdot(k-r-1)!}
\cdot \left(i^{k+1}\cdot\prod_{l\in S}\varepsilon_l(f,\delta,k-r)\cdot\frac{r!}{(2\pi)^{r+1}}\cdot L(f^*,\delta^{-1},r+1)\right)\\
=\frac{(-1)^r\cdot (2\pi)^{k}\cdot L^{(p)}(V_f(k-r)(\delta))}{i^{r+2}\cdot(k-r-1)!}
\cdot\left(\frac{(k-r-1)!}{(2\pi)^{k-r}}\cdot L(f,\delta,k-r)\right)\\
=\frac{(-1)^r}{i^{r+2}}\cdot (2\pi)^r\cdot L_{\{p\}}(f,\delta,k-r)
=-(2\pi i)^r\cdot L_{\{p\}}(f,\delta,k-r)=-\alpha_{(r,\delta)},
\end{multline*}
which shows the equality (\ref{34}), hence finishes to prove the theorem.

\end{proof}

\subsection*{Acknowledgement}

The author thanks Seidai Yasuda for reading the manuscript carefully. 
He also thanks Iku Nakamura for constantly encouraging him. 
This work is supported in part by the Grant-in-aid 
(NO. S-23224001) for Scientific Research, JSPS.

\end{document}